\newcommand{\R}{\mathbb{R}}
\newtheorem{theorem}{Theorem}[section]
\newtheorem{lemma}[theorem]{Lemma}
\newtheorem{remark}[theorem]{Remark}
\newtheorem{definition}[theorem]{Definition}
\numberwithin{equation}{section}
\newcommand{\dis}{\displaystyle}
\begin{document}

\title[Global quasineutral Euler limit for VPL system]{Global quasineutral Euler limit for the Vlasov-Poisson-Landau system with rarefaction waves}

\author[R.-J. Duan]{Renjun Duan}
\address[R.-J. Duan]{Department of Mathematics, The Chinese University of Hong Kong, Shatin, Hong Kong,
        People's Republic of China}
\email{rjduan@math.cuhk.edu.hk}

\author[D.-C. Yang]{Dongcheng Yang}
\address[D.-C. Yang]{Department of Mathematics, The Chinese University of Hong Kong, Shatin, Hong Kong,
	People's Republic of China}
\email{dcyang@math.cuhk.edu.hk}

\author[H.-J. Yu]{Hongjun Yu}
\address[H.-J. Yu]{School of Mathematical Sciences, South China Normal University, Guangzhou 510631, People's Republic of China}
\email{yuhj2002@sina.com}


\begin{abstract}
In the paper, we consider the Cauchy problem on the spatially one-dimensional Vlasov-Poisson-Landau system modelling the motion of ions under a generalized Boltzmann relation. Let the Knudsen number and the Debye length be given as $\varepsilon>0$ and $\varepsilon^{b}$ with $\frac{3}{5}\leq b\leq 1$, respectively. As $\varepsilon\to 0$ the formal Hilbert expansion gives the fluid limit to the quasineutral compressible Euler system.  We start from the small-amplitude rarefaction wave of the Euler system that admits a smooth approximation with a parameter $\delta\sim\varepsilon^{\frac{3}{5}-\frac{2}{5}a}$, where  the wave strength is independent of $\varepsilon$ and we take  $\frac{2}{3}\leq a\leq 1$ if $\frac{2}{3}\leq b\leq 1$ and $4-5b\leq a\leq 1$ if $\frac{3}{5}\leq b< \frac{2}{3}$. Under the scaling $(t,x)\to (\varepsilon^{-a}t,\varepsilon^{-a}x)$, for well-prepared initial data we construct the unique global classical solution to the Vlasov-Poisson-Landau system around the rarefaction wave in the vanishing  limit $\varepsilon\to 0$ and also obtain the global-in-time convergence of solutions toward the rarefaction wave with rate $\varepsilon^{\frac{3}{5}-\frac{2}{5}a}|\ln\varepsilon|$ in the $L^{\infty}_xL^2_v$ norm. The best rate is $\varepsilon^{\frac{1}{3}}|\ln\varepsilon|$ with the choice of $a=\frac{2}{3}$ and $\frac{2}{3}\leq b\leq 1$. Note that the nontrivial electric potential in the solution connects two fixed distinct states at the far fields $x=\pm\infty$ for all $t\geq 0$ and tends asymptotically as $\varepsilon\to 0$ toward a profile determined by the macro density function under the quasineutral assumption. Our strategy is based on an intricate weighted energy method capturing the quartic dissipation to give uniform bounds of  the nonlinear dynamics around rarefaction waves.
\end{abstract}

\subjclass[2020]{35Q83, 35Q84; 35B35, 35B40}

\keywords{Vlasov-Poisson-Landau system, quasineutral Euler equations, rarefaction wave, hydrodynamic limit, quasineutral limit, rate of convergence,   
macro-micro decomposition, weighted energy method, quartic energy dissipation}

\maketitle
\thispagestyle{empty}

\setcounter{tocdepth}{1}
\tableofcontents

\section{Introduction}


The Vlasov-Poisson-Landau system is one of fundamental models governing the motion of charged particles in plasma physics, cf.~\cite{Chen-F, Krall-Trivelpiece}. Much attention has been paid to the mathematical study of the model in recent years and we refer to \cite{Bedrossian, Bedrossian-1,Bobylev-1,CDL,Chaturvedi, ChenYM, Deng,Dong-Guo, Guo-JAMS, LYZ, Lei-1, Strain-Zhu, Wang, YuHJ04} and references therein.
When the magnetic effect is absent, the dynamics of ions in collisional plasma
with slab symmetry can be described at the kinetic level by the spatially one-dimensional Vlasov-Poisson-Landau (VPL in short) system
\begin{equation}
\label{1.1}
  \left\{
\begin{array}{rl}
\dis \partial_{t}F+v_{1}\partial_{x}F-\partial_{x}\dis \phi\partial_{v_{1}}F&\dis =\frac{1}{\varepsilon}Q(F,F),
\\
\dis -\lambda^{2}\partial^{2}_{x}\phi&\dis =\rho-\rho_{\mathrm{e}}(\phi), \quad \rho=\int_{\mathbb{R}^{3}}F\,dv.
\end{array} \right.
\end{equation}
Here, the unknown $F=F(t,x,v)\geq0$ is the density
distribution function of ions with velocity $v=(v_{1},v_{2},v_{3})\in\mathbb{R}^{3}$ at position
$x\in\mathbb{R}$ and time $t\geq0$.  
The slab symmetry with respect to the first coordinate $x$ in the spatial domain $\mathbb{R}^{3}$
has been assumed. 
The collision term is given by the Landau operator:
\begin{equation*}
Q(F_{1},F_{2})(v)=\nabla_{v}\cdot\int_{\mathbb{R}^{3}}\Phi(v-v_{*})\left\{F_{1}(v_{*})\nabla_{v}F_{2}(v)-F_{2}(v)\nabla_{v_{*}}F_{1}(v_{*})\right\}\,dv_{*},
\end{equation*}
where for the Landau collision kernel $\Phi(z)$ with $z=v-v_\ast$, we consider only the case of physically most realistic Coulomb interactions throughout the paper, namely,
\begin{equation}
\label{1.2b}
\Phi_{ij}(z)=\frac{1}{|z|}\left(\delta_{ij}-\frac{z_iz_j}{|z|^{2}}\right),\quad 1\leq i,j\leq 3,
\end{equation}
with $\delta_{ij}$ being the Kronecker delta. The self-consistent electric potential $\phi=\phi(t,x)$ is induced by the total charges through the Poisson equation.
The density of electrons $\rho_{\mathrm{e}}(\cdot)$ is a given function and depends only on the potential in
the form of an analogue of the classical Boltzmann relation 
\begin{equation}
\label{1.3b}
\rho_\mathrm{e}(\phi)=e^{\frac{\phi}{A_{\mathrm{e}}}}
\end{equation}
with a constant $A_\mathrm{e}>0$ related to the temperature of electrons; more general assumptions on $\rho_{\mathrm{e}}(\cdot)$ will be addressed in Section \ref{sec2.1} later. We refer to \cite{Bastdos-Golse2018, Charles, Duan-Liu-2015,Guo-Pausader, GGPS} for the derivation of the model for ions dynamics as well as relevant studies under the Boltzmann relation \eqref{1.3b}.

Recall that the problem on the incompressible viscous electro-magneto hydrodynamics of collisional kinetic models described by the Vlasov-Maxwell-Boltzmann system has been extensively studied in \cite{ASR} by Ars\'enio and Saint-Raymond. Motivated by some recent work \cite{Duan-Yang-Yu-1, DYY-VMB, Duan-Yang-Yu-2, Duan-Yang-Yu, Duan-Yu1, DY31}, we rather focus on the compressible fluid limit of such models related to the Landau collision operator.  
In the VPL system \eqref{1.1} above, two parameters $\varepsilon>0$ and  $\lambda>0$ denote the Knudsen number and the Debye length, respectively. The asymptotic problems as  $\varepsilon\to 0$ and or $\lambda\to 0$ can be described in Figure \ref{fig.asp} below:
\begin{figure}[h]
        \scalebox{1}{\begin{tikzpicture}
            \begin{scope}[align=flush center, anchor=mid]
                \node (VPL) at (0, 0) [draw, thick, fill=white, fill opacity=0.7,
                minimum width=2cm, minimum height=0.8cm] {VPL system};
                
                \node (phantom-top) [right=of VPL.east] {};
                
                \node (Landau) [right=of phantom-top, draw, fill=white, fill opacity=0.7,
                minimum width=2cm, minimum height=0.8cm] {Landau~with~a~nonlocal~force};
                
                \node (phantom-right) [below=of VPL] {};
                
                \node (EulerPoisson) [below=of phantom-right, draw, fill=white, fill opacity=0.7,
                minimum width=2cm, minimum height=0.8cm] {Euler-Poisson};
                
                \node (phantom-bottom) [right=of EulerPoisson] {};
                
                \node (QuasineutralEuler) [below=of Landau.south, right=of phantom-bottom,
                draw, fill=white, fill opacity=0.7, minimum width=2cm, minimum height=0.8cm]
                {Quasineutral Euler system};
            \end{scope}
            
            \begin{scope}[->, shorten <=2pt, shorten >=2pt, align=flush center]
                \draw (VPL) -- (Landau) node[midway, above]{$\lambda \to 0$};
                \draw (VPL) -- (EulerPoisson) node[midway, left] {$\varepsilon \to 0$};
                \draw (Landau.south -| QuasineutralEuler.north) -- (QuasineutralEuler)
                node [midway, right]{$\varepsilon \to 0$};
                \draw (EulerPoisson) -- (QuasineutralEuler) node[pos=0.45, above] {$\lambda \to 0$};
                \draw (VPL.south east) -- (QuasineutralEuler.north west)
                node [pos=0.4, right]{$\lambda=\varepsilon^b \to 0$, \\ $\varepsilon \to 0$};
            \end{scope}
        \end{tikzpicture}}
        \caption{Asymptotic limits of the VPL system}
        \label{fig.asp}	
    \end{figure}
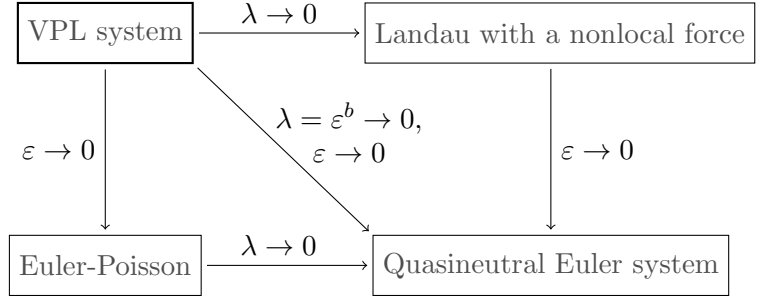

\noindent Here, $\varepsilon\to 0$ is the fluid dynamic limit while $\lambda\to 0$ is the quasineutral limit. In particular, at the formal level,   
the VPL system \eqref{1.1} tends to the following compressible Euler-Poisson system as $\varepsilon\to 0$:
 \begin{equation}
	\label{1.4A}
	\left\{
	\begin{aligned}
		\partial_{t}\rho +\partial_{x}(\rho u_1)&=0,
		\\
		\partial_{t}(\rho  u_{1})+\partial_{x}(\rho  u_{1}^{2})+\partial_{x}P+\rho \partial_{x}\phi&=0,\\
		-\lambda^{2}\partial^{2}_{x}\phi&=\rho -\rho_{\mathrm{e}}(\phi),
	\end{aligned} \right.
\end{equation}
in the isentropic case when
\begin{equation}\notag
S:=-\frac{2}{3}\ln\rho+\ln(\frac{4\pi}{3}\theta)+1\text{ is a constant}
\end{equation}
and hence
$P=\frac{2}{3}\rho\theta=\frac{e^{S-1}}{2\pi}\rho^{5/3}$,
where we have taken the gas constant as $R=2/3$. We refer to \cite{DYY-VMB, Guo-Jang, Lei-1} and references therein for relevant work on the topic. Here, in connection with the current work we point out that Guo-Jang \cite{Guo-Jang} gave the first rigorous proof of fluid limit from the Vlasov-Poisson-Boltzmann system to the compressible Euler-Poisson system in the full space of three dimensions via the Hilbert expansion. On the other hand, the VPL system \eqref{1.1} tends to the following model of the Landau equation with a density-dependent force as $\lambda\to 0$:
\begin{equation}
\label{le.ddf}
\partial_{t}F+v_{1}\partial_{x}F-\partial_{x}\rho_\mathrm{e}^{-1}(\int_{\mathbb{R}^{3}} F\,dv)\partial_{v_{1}}F=\frac{1}{\varepsilon}Q(F,F).
\end{equation} 
When there is no collision, it is related to the quasineutral limit from the Vlasov-Poisson system to the so-called kinetic Euler equations and the mathematical studies of such topic can be traced back to \cite{BrGr, Gr}; see recent progress \cite{GPI20, Han-Kwan, Han-Kwan-2015, HKI-JDE, HKI-CMS} as well as a nice survey \cite{GPI} by Griffin-Pickering and Iacobelli. Though, it seems there are few results on the quasineutral limit in the presence of collisions. 

In the paper, we are interested in the simultaneous limits 
\begin{equation}
\label{ll}
\varepsilon\to 0~~\text{and}~~\lambda\to 0\quad \text{ with } \frac{1}{\lambda^{2}} \gg \frac{1}{\varepsilon}, 
\end{equation}
which corresponds to the regime where the strength of Vlasov force is much stronger than the strength of collision; see a recent work \cite{Bobylev-1} by Bobylev and Potapenko for physical discussions. For the technical reason,
 we consider only the special case when $\lambda$ is $\varepsilon$-dependent in the sense of
\begin{equation}
\label{1.4B}
\lambda=\varepsilon^{b},\quad \mbox{for}\quad \frac{3}{5}\leq b\leq 1.
\end{equation}
Some further comments will be given in Remark \ref{rem2.5} later on. The formal asymptotic limit under \eqref{ll} gives the quasineutral Euler system of the conservative form again in the isentropic case:
 \begin{equation}
	\label{qnEuler}
	\left\{
	\begin{aligned}
		\partial_{t}\rho+\partial_{x}(\rho u_1)&=0,
		\\
		\partial_{t}(\rho u_{1})+\partial_{x}(\rho  u_{1}^{2})+\partial_{x}[P(\rho)+P^\phi(\rho)]&=0.
	\end{aligned} \right.
\end{equation}
Here $P^\phi(\cdot)$ is the density-dependent pressure induced under the quasineutral assumption such that
\begin{equation}\notag
\frac{dP^\phi(\rho)}{d\rho}=\rho \frac{d\rho_\mathrm{e}^{-1}(\rho)}{d\rho},
\end{equation}
where $\rho_\mathrm{e}^{-1}$ is the inverse function of $\rho_\mathrm{e}$, for instance, one has $\rho_\mathrm{e}^{-1}(\rho)=A_\mathrm{e}\ln \rho$ and $P^\phi=A_\mathrm{e}\rho$ in case of the classical Boltzmann relation \eqref{1.3b}. Observe that \eqref{qnEuler} also can be formally obtained by taking the quasineutral limit $\lambda\to 0$ from the Euler-Poisson system \eqref{1.4A}  
or the fluid limit $\varepsilon\to 0$ from the Landau model with forces \eqref{le.ddf}; we refer to \cite{Bobylev-1, Cordier, Han-Kwan-2013} for relevant topics. 

The main goal of this paper is to rigorously establish the convergence of the VPL system \eqref{1.1} to the 
quasineutral Euler system \eqref{qnEuler}  for solutions with rarefaction waves whose strength is small but independent of $\varepsilon$ under the assumption \eqref{1.4B}. We first give an imprecise statement of the main result in order not to go into too much details on energy norms for the sense of convergence.  We postpone the rigorous description to the next section when the explicit construction of rarefaction wave and its smooth approximation is made and more notations on norms and function spaces with high-order derivatives are introduced. 

To the end, we always denote an equilibrium associated with fluid quantities $(\rho,u,\theta)$ as 
\begin{equation*}
M_{[\rho,u,\theta]}=\frac{\rho}{(2\pi R\theta)^{3/2}}\exp(-\frac{|v-u|^2}{2R\theta}).
\end{equation*}  
Our main result is roughly stated as follows.      

\begin{theorem}\label{thm.rs}
Assume \eqref{1.2b} and \eqref{1.4B}. Consider the one-dimensional Cauchy problem on the VPL system \eqref{1.1} with the Riemann data
\begin{equation}
\label{thm.rd}
F(0,x,v)=F_0^R(x,v):=
\begin{cases}
M_{[\rho_+,u_+,\theta_+]},\quad x>0,
\\
M_{[\rho_-,u_-,\theta_-]},\quad x<0,
\end{cases}
\end{equation}
where the constant states $(\rho_\pm,u_\pm,\theta_\pm)$ with $u_{\pm}=(u_{1\pm},0,0)$ and ${\rho_+^{2/3}}/{\theta_+}={\rho^{2/3}_{-}}/{\theta_{-}}$ satisfy the Rankine-Hugoniot condition for the quasineutral Euler system \eqref{qnEuler} in the isentropic case such that $(\rho_+,u_{1+},\theta_+)$ is connected to $(\rho_-,u_{1-},\theta_-)$ by the Riemann rarefaction wave solution $(\rho^{R},u^R,\theta^R)(x/t)$ with $u^{R}=(u^R_1,0,0)$ and $\theta^R=\frac{e^{S-1}}{2\pi} (\rho^R)^{2/3}$. Then, there is $\varepsilon_0>0$ such that if $0<\varepsilon< \varepsilon_0$, $0<|\rho_+-\rho_-|+|u_{1+}-u_{1-}|+|\theta_+-\theta_-|< \varepsilon_0$ and $|\rho_\pm-1|+|u_{1\pm}|+|\theta_\pm-\frac{3}{2}|< \varepsilon_0$, then there exists a class of smooth approximate $(\varepsilon,a)$-dependent initial data $F^{\varepsilon,a}_0(x,v)$ with 
$F^{\varepsilon,a}_0(x,v)\to F^R_0(x,v)$ as $\varepsilon\to 0$ in a sense,
such that the Cauchy problem on \eqref{1.1} with $F(0,x,v)=F^{\varepsilon,a}_0(x,v)$ admits a unique global classical solution $[F^{\varepsilon,a,b}(t,x,v),\phi^{\varepsilon,a,b}(t,x)]$ with $(t,x,v)\in [0,\infty)\times \R\times \R^3$ satisfying that for any $t>0$,
$F^{\varepsilon,a,b}(t,x,v)\to M_{[\rho^{R},u^R,\theta^R](x/t)}$ as $\varepsilon\to 0$ in the sense that 
 for any $\ell>0$, 
 \begin{multline}
	\label{thm.con}
	\lim\limits_{\varepsilon\to 0^+}\sup\limits_{t\geq \ell}
	(\|\frac{F^{\varepsilon,a,b}(t,x,v)-M_{[\rho^{R},u^{R},\theta^{R}](x/t)}(v)}{\sqrt{\mu}}\|_{L_{x}^{\infty}L_{v}^{2}}\\
	+\|\phi^{\varepsilon,a,b}(t,x)-\phi^{R}(\frac{x}{t})\|_{L^{\infty}_{x}})
	=0,
\end{multline}	
with the estimate on rate of convergence:
\begin{multline}
	\label{thm.rate}
	\sup_{t\geq \ell}\|\frac{F^{\varepsilon,a,b}(t,x,v)-M_{[\rho^{R},u^{R},\theta^{R}](x/t)}(v)}{\sqrt{\mu}}\|_{L_{x}^{\infty}L_{v}^{2}}\\
	+\sup_{t\geq \ell}\|\phi^{\varepsilon,a,b}(t,x)-\phi^{R}(\frac{x}{t})\|_{L^{\infty}_{x}}
	\leq C_{\ell}\varepsilon^{\frac{3}{5}-\frac{2}{5}a}|\ln\varepsilon|,
\end{multline}
where $\mu=M_{[1,0,3/2]}(v)$ is a global Maxwellian, $\phi^R=\rho_\mathrm{e}^{-1}(\rho^R)$ is the asymptotic profile of the potential defined through the quasineutral assumption,  $C_\ell$ is a constant independent of $\varepsilon$, and 
\begin{multline}
\label{def.ab}
(a,b)\in \mathcal{S}:=\{(a,b)\in [\frac{2}{3},1]\times [\frac{3}{5},1]:  \text{$\frac{2}{3}\leq a\leq 1$ if $\frac{2}{3}\leq b\leq 1$} \\
\text{and $4-5b\leq a\leq 1$ if $\frac{3}{5}\leq b< \frac{2}{3}$}\}.
\end{multline}
Note that the best rate   in \eqref{thm.rate} is $\varepsilon^{\frac{1}{3}}|\ln\varepsilon|$ with the choice of $a=\frac{2}{3}$ and $\frac{2}{3}\leq b\leq 1$. 

\end{theorem}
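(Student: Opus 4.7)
Proof proposal: The plan is to follow the standard strategy for rarefaction-wave stability in kinetic hydrodynamic limits, adapted to the quasineutral regime. First I rescale via $(t,x)\to(\varepsilon^{-a}t,\varepsilon^{-a}x)$ as in the theorem, so that the Riemann profile $(\rho^{R},u^{R},\theta^{R})(x/t)$ becomes a self-similar solution of \eqref{qnEuler} in the new variables, while the Knudsen-type coefficient in front of $Q$ and the Debye-type coefficient in front of $\partial_{x}^{2}\phi$ become $\varepsilon$-dependent powers in competition with the wave. I then replace the discontinuous Riemann wave by its smooth approximation $(\bar\rho,\bar u,\bar\theta)(t,x)$ via the Matsumura--Nishihara construction through a regularised inviscid Burgers profile with smoothing parameter $\delta\sim\varepsilon^{3/5-2a/5}$, and set $\bar\phi=\rho_{\mathrm{e}}^{-1}(\bar\rho)$ as the corresponding quasineutral potential. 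The well-prepared data $F_{0}^{\varepsilon,a}$ are chosen so that the initial fluctuation from $M_{[\bar\rho,\bar u,\bar\theta]}|_{t=0}$, together with the Hilbert correctors to be introduced next, is small in the working weighted Sobolev norm.

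Next I perform a macro--micro decomposition around the smoothed wave: write $F=M_{[\bar\rho,\bar u,\bar\theta]}+\sqrt{\mu}\{G+f\}$ and $\phi=\bar\phi+\varphi$, where $G$ collects finitely many Chapman--Enskog/Hilbert correctors needed to push the source for $f$ to sufficiently high order in $\varepsilon$ and $\delta$, and split $f=\mathbf{P}f+(\mathbf{I}-\mathbf{P})f$ via the standard five-dimensional macroscopic projection. Using $\bar\rho=\rho_{\mathrm{e}}(\bar\phi)$, the Poisson equation for $\varphi$ linearises to a uniformly coercive elliptic equation with a positive zeroth-order coefficient $(1/A_{\mathrm{e}})e^{\bar\phi/A_{\mathrm{e}}}$ and right-hand side given by the macroscopic density perturbation plus a small wave forcing; this yields pointwise-in-time elliptic control of $\varphi$ (and as many $x$-derivatives as needed) in terms of $f$ and the smoothed profile, at the cost of a factor depending on the scaled Debye coefficient.

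The core of the argument is then an intricate weighted energy estimate for the remainder $f$. Because the Coulomb Landau kernel \eqref{1.2b} is of soft-potential type and $v$-derivatives produce essentially no extra decay in $v$, I would carry time--velocity weights $w^{2(\ell-|\beta|)}(\tau,v)$ of the type used by the authors in prior work, with $|\beta|$ the order of the $v$-derivative. The Landau operator yields a $\sigma$-norm dissipation for $(\mathbf{I}-\mathbf{P})f$ with a small $\varepsilon^{a-1}$-type coefficient, and the good sign $\partial_{x}\bar u_{1}\ge 0$ of the rarefaction wave supplies the Matsumura--Nishihara type fluid dissipation $\int|\partial_{x}\bar u_{1}|(|\rho-\bar\rho|^{2}+\cdots)\,dx$ for the macroscopic part, with $\varphi$ absorbed via the elliptic estimate. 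The hard part will be to close the estimate uniformly in $\varepsilon$ for $(a,b)\in\mathcal{S}$: the weakness of collision, the $\delta^{-1}$-type derivative growth of the smoothed wave, and the possible singularity of the rescaled Poisson equation all compete, and the definition of $\mathcal{S}$ is precisely what makes these three scales balance when $\delta=\varepsilon^{3/5-2a/5}$.

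To absorb cubic nonlinear terms that cannot be controlled by the linear dissipation alone, I would use the quartic energy dissipation mechanism highlighted by the authors: multiply the basic energy inequality by a low-order energy and combine it with the low-order inequality itself to convert bad $\mathcal{E}^{1/2}\mathcal{D}$-type terms into quartic products $\mathcal{E}\cdot\mathcal{D}$ which are absorbed by the quartic form of the dissipation, thereby avoiding any bootstrap loss in $\varepsilon$. Once the a priori bound is closed uniformly in $\tau\ge 0$, local existence together with a standard continuity argument yields the global classical solution $[F^{\varepsilon,a,b},\phi^{\varepsilon,a,b}]$. The convergence statements \eqref{thm.con}--\eqref{thm.rate} then follow by going back to the original variables, using the Sobolev embedding $H^{1}_{x}\hookrightarrow L^{\infty}_{x}$ to upgrade $L^{2}$-bounds on $f$ and $\varphi$ to $L^{\infty}_{x}L^{2}_{v}$ and $L^{\infty}_{x}$ respectively, and invoking the standard $L^{1}$-in-time bound on $\partial_{x}\bar u_{1}$ which contributes the extra $|\ln\varepsilon|$ factor multiplying $\varepsilon^{3/5-2a/5}$.
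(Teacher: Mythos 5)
Your overall architecture (the scaling $(t,x)\to(\varepsilon^{-a}t,\varepsilon^{-a}x)$, the smoothed Burgers rarefaction wave with $\delta\sim\varepsilon^{\frac{3}{5}-\frac{2}{5}a}$, the quasineutral profile $\bar\phi=\rho_{\mathrm{e}}^{-1}(\bar\rho)$, weighted energy estimates closed by a continuity argument, and then comparison with the exact Riemann wave) matches the paper, and your correctors play the same role as the subtraction of $\overline{G}$. But two of your core steps would not go through as described. First, you expand around the wave Maxwellian $M_{[\bar\rho,\bar u,\bar\theta]}$ with Hilbert/Chapman--Enskog correctors and keep the macroscopic part $\mathbf{P}f$ inside the kinetic remainder. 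The paper instead uses the Liu--Yu macro--micro decomposition around the solution's \emph{own} local Maxwellian, $F=M_{[\rho,u,\theta]}+\overline{G}+\sqrt{\mu}f$, precisely so that $f$ is purely microscopic, while the fluid perturbation $(\widetilde\rho,\widetilde u,\widetilde\theta,\widetilde\phi)$ is estimated from the exact conservation laws via the entropy--entropy flux pair; that is where the Matsumura--Nishihara dissipation $\|\sqrt{\bar u_{1y}}(\widetilde\rho,\widetilde u_{1},\widetilde\theta)\|^{2}$ actually comes from, not from the linearized kinetic equation around $\bar M$. With $\mathbf{P}f$ inside the remainder, the zeroth-order macroscopic component is not in the dissipation rate, and in the one-dimensional whole space the trilinear estimate of type \eqref{7.30} cannot be recovered (the 1D embedding is too weak); Subsection \ref{sec.2.3} flags exactly this obstruction. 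So your dissipation bookkeeping for the macroscopic part is not justified as stated.

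Second, the mechanism you invoke for the quartic dissipation is not the one that closes the estimate. Multiplying the energy inequality by a low-order energy does not remove the genuinely problematic term $\int(|\partial_\tau\phi|+|\partial_y\phi|)|\partial^{\alpha}_{\beta}f|^{2}\,dv\,dy$: since the potential connects two distinct end states, no time decay of $\phi$ is available, Guo's argument based on $\int_0^\infty(\|\phi_t\|_{L^\infty}+\|\phi_x\|_{L^\infty})\,dt<\infty$ fails, and the Landau dissipation only controls a $\langle v\rangle^{-\frac12}$ weight while this term needs $\langle v\rangle^{+\frac12}$. The paper's device is the weight \eqref{3.12}, $w(\alpha,\beta)=\langle v\rangle^{2(l-|\alpha|-|\beta|)}e^{q(\tau)\langle v\rangle}$ with $q'(\tau)=-q_2q_3(\tau)$ and $q_3$ as in \eqref{3.14} built from norms of $\phi$; differentiating the weighted norm then produces the quartic dissipation $q_2q_3(\tau)\mathcal{H}_{2,l,q}(\tau)$ that absorbs the bad term, and the integrability of $q_3$ is supplied by the dissipation of $\widetilde\phi$ plus the decay of $\bar\phi$. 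Your proposal also leaves untouched the third-order expansion of $\rho_{\mathrm{e}}(\phi)$ around $\bar\phi$ in \eqref{4.24} and the sign condition ($\mathcal{A}_3$) giving the cancellation \eqref{4.34b}, which are needed for the coupling term $\rho\phi_x$, and it asserts rather than derives the constraint set \eqref{def.ab} (which arises from explicit requirements such as $a\geq 4-5b$, $a\geq\frac{21-10b}{24}$, $\frac23\leq a\leq 1$, $b\leq 1$ in specific estimates). Finally, the $|\ln\varepsilon|$ factor in \eqref{thm.rate} comes from Lemma \ref{lem7.1}(iii), i.e. $\|(\bar\rho,\bar u,\bar\theta,\bar\phi)-(\rho^{R},u^{R},\theta^{R},\phi^{R})\|_{L^\infty}\leq C\delta t^{-1}\{\ln(1+t)+|\ln\delta|\}$ with $\delta\sim\varepsilon^{\frac35-\frac25 a}$, not from an $L^{1}$-in-time bound on $\partial_x\bar u_{1}$.
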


Theorem \ref{thm.rs} above turns out to provide an explicit way for constructing smooth approximation solutions with rarefaction wave to the VPL system  \eqref{1.1} with Riemann data \eqref{thm.rd} in terms of the fluid limit  to smooth approximation solutions with rarefaction wave to the quasineutral compressible Euler equations \eqref{qnEuler} with the corresponding Riemann data. We refer to Remark \ref{rmk2.7} later on for an explicit construction of smooth approximation initial data. Note that it is unclear how to directly solve in a lower-regularity function space the Riemann problem with rarefaction wave \eqref{1.1} and  \eqref{thm.rd} for the VPL system and it is also the case even for the pure Landau or Boltzmann equation, which seems largely open so far.

The proof of Theorem \ref{thm.rs} is based on a scaling argument 
\begin{equation}
\label{def.sc}
(t,x)\to (\frac{t}{\varepsilon^a},\frac{x}{\varepsilon^a})
\end{equation} 
with a suitable choice of the parameter $a$. Note that the rarefaction wave solution is invariant under the scaling \eqref{def.sc} so that such scaling essentially helps change the order of the Knudsen number from $\varepsilon$ to $\varepsilon^{1-a}$; see \eqref{prefeq} later on where the Debye length is also changed correspondingly from $\varepsilon^b$ to $\varepsilon^{b-a}$. Related to the above scaling, it is also a key to make a suitable construction of smooth approximation solutions with rarefaction wave for the quasineutral compressible Euler equations \eqref{qnEuler}. In particular, this is related to the choice of another parameter $\delta$ depending on $\varepsilon$ such that $\delta\to 0$ as $\varepsilon\to 0$; see \eqref{1.25} later on. This kind of argument was first developed by Xin \cite{Xin} to justify the vanishing viscosity limit of the one-dimensional Navier-Stokes equations with rarefaction waves, where $a=\frac{3}{4}$ was chosen and the convergence rate obtained is $\varepsilon^{\frac{1}{4}}|\ln \varepsilon|$. The first result on the compressible Euler limit with rarefaction waves from the one-dimensional Boltzmann equation for the hard sphere model was given by Xin-Zeng \cite{XinZeng}, where $a=1$ was taken and the rate is $\varepsilon^{\frac{1}{5}}|\ln \varepsilon|$. The result was later improved by Li \cite{Li} with the choice of $a=\frac{2}{3}$ and the rate given by $\varepsilon^{\frac{1}{3}}|\ln \varepsilon|^2$. 

In a recent work \cite{Duan-Yang-Yu}, the same authors of this paper considered the similar problem on the construction of smooth solutions with rarefaction waves for the one-dimensional Landau equation for Coulomb potentials. We made use of the more general range on the scaling parameter as $\frac{2}{3}\leq a\leq 1$ and obtained the convergence rate as $\varepsilon^{\frac{3}{5}-\frac{2}{5}a}|\ln \varepsilon|$. In this paper, our result stated in Theorem \ref{thm.rs}, in particular, the obtained rate in \eqref{thm.rate}, is analogous to that of \cite{Duan-Yang-Yu}. Extra efforts are made for overcoming the effect of the nontrivial potential force which connects two distinct end states.  We explain the choice of the range for $a$ in Section \ref{sec.3.4} due to the techniques of proof and also point out that $a=\frac{2}{3}$ is sharp in terms of Remark \ref{rmk.ea}.  

Moreover, the quasineutral limit $\lambda\to 0$ under the assumption \eqref{ll} is simultaneously involved. As indicated by \cite{Bobylev-1}, the vanishing limit \eqref{ll} corresponds to a regime where  the strength of Vlasov force is much stronger than the strength of collision and hence the collisions are weak relative to the self-consistent force. Furthermore, the restriction \eqref{1.4B} with $\frac{3}{5}\leq b\leq 1$ arises from the techniques of proof. We will explain all the difficulties of the proof in Section \ref{sec.2.3}. It would be also interesting to study the asymptotic problem on \eqref{1.1} in the limits $\varepsilon\to 0$ and $\lambda\to 0$ when collisions dominate self-consistent forces, that is $\frac{1}{\lambda^{2}} \ll \frac{1}{\varepsilon}$. 

Another motivation of studying the convergence rate in \eqref{thm.rate} is inspired by our recent work \cite{DYY-VMB} where the compressible fluid limit to the Euler-Maxwell system from the Vlasov-Maxwell-Boltzmann system for solutions to the Cauchy problems around constant equilibrium was justified. The same approach in \cite{DYY-VMB}  together with techniques in \cite{Duan-Yang-Yu-2,Duan-Yang-Yu} should be able to provide a similar result for the compressible limit from the VPL system under consideration to the Euler-Poisson system for ions dynamics. However, the convergence rate in the small Knudsen number in  \cite{DYY-VMB} holds up to an almost global time interval $[0,\varepsilon^{-\alpha}]$ with some $\alpha>0$. Therefore, by showing \eqref{thm.rate} we provide an example in the current work for the fluid limit from the VPL system such that the convergence can be established uniformly global in time as long as the initial layer is avoided.              

The rest of this paper is organized as follows.  
In Section \ref{sec.2}, we first present the macro-micro
decomposition of the solution for the VPL system and we construct the rarefaction wave for the corresponding compressible quasineutral Euler system.
Then we give the precise statement of the main resulst in Theorem \ref{thm2.1} and we 
also show the main ideas of the proofs of Theorem \ref{thm2.1}.
In Section \ref{sec.3}, we reformulate the VPL system and make the a priori assumption
such that we can perform energy analysis conveniently. Some basic estimates are obtained
in Section \ref{sec.4} in order to proceed the proof of the main theorem. Sections \ref{sec.5}, \ref{sec.6} and \ref{sec.7}
are the main parts of the proof for establishing the a priori estimates
including zeroth order energy estimates, high order energy estimates and weighted energy
estimates, respectively.  In Section \ref{sec.8} we are devoted to proving Theorem \ref{thm2.1} so as to conclude Theorem \ref{thm.rs}. In the appendix Section \ref{sec.9},
we give the details of deriving an identity \eqref{4.7} for completeness.

\section{Main results}\label{sec.2}
In this section, we give the precise statement of main results on the limit from the VPL system \eqref{1.1} under the assumptions \eqref{ll} and \eqref{1.4B} to the  compressible quasineutral Euler equations \eqref{qnEuler} for solutions around rarefaction waves. For the purpose, we supplement \eqref{1.1} with initial data 
\begin{equation}
\label{1.1id}
F(0,x,v)=F_{0}(x,v)
\end{equation}
that connects two distinct global Maxwellians
at the far fields $x=\pm\infty$ such that
\begin{equation}
	\label{1.4a}
	F_{0}(x,v)\to M_{[\rho_{\pm},u_{\pm},\theta_{\pm}]}(v):=\frac{\rho_\pm}{{(2\pi R\theta_\pm)^{\frac{3}{2}}}}\exp\big(-\frac{|v-u_\pm|^2}{2R\theta_\pm}\big), \ \ \mbox{as}\ \ x\to\pm\infty,
\end{equation}
where $\rho_\pm>0$, $\theta_\pm>0$ and $u_{1\pm}$ with $u_{\pm}=(u_{1\pm},0,0)$ are constants. 
For the boundary data of $\phi$ at $x=\pm\infty$, we assume that 
\begin{equation}
	\label{1.5a}
	\lim_{x\to\pm\infty}\phi(t,x)=\phi_{\pm}, \quad\rho_{\pm}=\rho_{\mathrm{e}}(\phi_{\pm}),
\end{equation}
where $\rho_{\pm}=\rho_{\mathrm{e}}(\phi_{\pm})$ corresponds to the quasineutral assumption at the far fields.
It should be pointed out that the ending states $\rho_{\pm}$ and thus $\phi_\pm$ can be distinct. Note that later on we will construct a class of smooth initial data $F_0(x,v)$ that can approximate the Riemann data 
\eqref{thm.rd}.  

\subsection{Assumption on $\rho_\mathrm{e}(\cdot)$}\label{sec2.1}
Specifically, throughout this paper we make the following assumptions, cf. \cite{Duan-Liu-2015}

\medskip
\noindent($\mathcal{A}$):~~~~~$\rho_{\mathrm{e}}(\phi):(\phi_{m},\phi_{M})\to (\rho_{m},\rho_{M})$ is a positive smooth function with
$$
\rho_{m}:=\inf_{\phi_{m}<\phi<\phi_{M}}\rho_{\mathrm{e}}(\phi),\quad\rho_{M}:=\sup_{\phi_{m}<\phi<\phi_{M}}\rho_{\mathrm{e}}(\phi),
$$
satisfying the following:
\begin{itemize}
\item{($\mathcal{A}_{1}$):~~$\rho_{\mathrm{e}}(0)=1$ with $0\in(\phi_{m},\phi_{M})$;}
\item{($\mathcal{A}_{2}$):~~$\rho_{\mathrm{e}}(\phi)>0$, ~~$\rho'_{\mathrm{e}}(\phi)>0$ for each $\phi\in(\phi_{m},\phi_{M})$;}
\item{($\mathcal{A}_{3}$):~~$\rho_{\mathrm{e}}(\phi)\rho''_{\mathrm{e}}(\phi)\leq[\rho'_{\mathrm{e}}(\phi)]^{2}$ for each $\phi\in(\phi_{m},\phi_{M})$.}
\end{itemize}
\medskip

\noindent The assumption ($\mathcal{A}_{1}$)
just means that the electron density has been normalized to be unit when the
potential is zero, since the electric potential 
can be up to an arbitrary constant.
The other two assumptions ($\mathcal{A}_{2}$) and ($\mathcal{A}_{3}$) guarantee that \eqref{1.21} and \eqref{4.34b}
hold true in the later energy analysis.
A typical example satisfying ($\mathcal{A}$) can be given as
\begin{equation}
\label{1.3}
\rho_{\mathrm{e}}(\phi)=\left(1+\frac{\gamma_{\mathrm{e}}-1}{\gamma_{\mathrm{e}}}\frac{\phi}{A_{\mathrm{e}}}\right)^{\frac{1}{\gamma_{\mathrm{e}}-1}},\quad
\phi_{m}=-\frac{\gamma_{\mathrm{e}}}{\gamma_{\mathrm{e}}-1}A_{\mathrm{e}}, \quad \phi_{M}=+\infty,
\end{equation}
with $\gamma_{\mathrm{e}}\geq1$ and $A_{\mathrm{e}}>0$ being constants. Note that $\rho_{\mathrm{e}}(\phi)\to \exp({\phi}/{A_{\mathrm{e}}})$
and $\phi_{m}\to -\infty$ as $\gamma_{\mathrm{e}}\to 1$ , which corresponds to the classical Boltzmann relation. 
The classical Boltzmann relation has been extensively used in the mathematical
study of both the fluid dynamic equation, cf. \cite{Guo-Pausader,Suzuki},
and the kinetic Vlasov-type equations, cf. \cite{Charles,Han-Kwan,Han-Kwan-2015}.
In fact, \eqref{1.3} can
be deduced from the momentum equation of the isentropic Euler-Poisson system for
the fluid of electrons with the adiabatic exponent $\gamma_{e}$ under the zero-limit of electron
mass, namely, $\partial_{x}(A_{\mathrm{e}}\rho^{\gamma}_{\mathrm{e}})=\rho_{\mathrm{e}}\partial_{x}\phi$, cf. \cite{Bastdos-Golse2018,Sentis}.

\subsection{Macro-micro decomposition}
Instead of using either Hilbert expansion or Chapman-Enskog expansion, we present the macro-micro decomposition of the solution for the VPL system
with respect to the local Maxwellian, that was initiated by Liu-Yu \cite{Liu-Yu} and developed by Liu-Yang-Yu \cite{Liu-Yang-Yu}
for the Boltzmann equation.

For a given solution $F(t,x,v)$ of the VPL system \eqref{1.1}, we define five macroscopic (fluid) quantities:
the mass density, momentum and
energy density, respectively, given as
\begin{equation}
\label{1.5}
	\left\{
\begin{aligned}
\rho(t,x)&=\int_{\mathbb{R}^{3}}\psi_{0}(v)F(t,x,v)\,dv,
\\
\rho u_{i}(t,x)&=\int_{\mathbb{R}^{3}}\psi_{i}(v)F(t,x,v)\,dv, \quad \mbox{for $i=1,2,3$,}
\\
\rho(\mathcal{E}+\frac{1}{2}|u|^{2})(t,x)&=\int_{\mathbb{R}^{3}}\psi_{4}(v)F(t,x,v)\,dv,
\end{aligned} \right.
\end{equation}
and the corresponding local Maxwellian:
\begin{equation}
\label{1.7}
M=M_{[\rho,u,\theta](t,x)}(v):=\frac{\rho(t,x)}{(2\pi R\theta(t,x))^{3/2}}\exp\big(-\frac{|v-u(t,x)|^{2}}{2R\theta(t,x)}\big).
\end{equation}
Here $u(t,x)=(u_{1},u_{2},u_{3})(t,x)$ is the bulk velocity, $\mathcal{E}(t,x)>0$
is the internal energy depending on the temperature $\theta(t,x)$ by $\mathcal{E}=\frac{3}{2}R\theta=\theta$ with 
$R=\frac{2}{3}$ being taken for convenience, and $\psi_{i}(v)$ $(i=0,1,2,3,4)$ are
five collision invariants given by
$$
\psi_{0}(v)=1, \quad \psi_{i}(v)=v_{i}~(i=1,2,3),\quad \psi_{4}(v)=\frac{1}{2}|v|^{2},
$$
satisfying
\begin{equation}\notag
\int_{\mathbb{R}^{3}}\psi_{i}(v)Q(F,F)\,dv=0,\quad \mbox{for $i=0,1,2,3,4$.}
\end{equation}
Let $\langle h,g\rangle =\int_{\mathbb{R}^{3}}h(v)g(v)\,d v$ denote the $L^{2}_{v}(\mathbb{R}^{3})$ inner product.
The macroscopic kernel space is spanned by the following orthonormal basis:
\begin{equation}
\label{1.11b}
	\left\{
\begin{aligned}
\chi_{0}(v)&=\frac{1}{\sqrt{\rho}}M,\\
\chi_{i}(v)&=\frac{v_{i}-u_{i}}{\sqrt{R\rho\theta}}M, \quad \mbox{for $i=1,2,3$,}
\\
\chi_{4}(v)&=\frac{1}{\sqrt{6\rho}}(\frac{|v-u|^{2}}{R\theta}-3)M,
\\
\langle \chi_{i},\frac{\chi_{j}}{M}\rangle&=\delta_{ij},
\quad \mbox{for ~~$i,j=0,1,2,3,4$}.
\end{aligned} \right.
\end{equation}
In view of \eqref{1.11b} above, the macroscopic projection $P_{0}$ and microscopic projection  $P_{1}$
can be respectively defined as
\begin{equation}
\label{1.10}
P_{0}h=\sum_{i=0}^{4}\langle h,\frac{\chi_{i}}{M}\rangle\chi_{i},\quad P_{1}h=h-P_{0}h.
\end{equation}
Note that a function $h(v)$ is called microscopic or non-fluid if
\begin{equation}\notag
\langle h(v),\psi_{i}(v)\rangle=0, \quad \mbox{for $i=0,1,2,3,4$}.
\end{equation}

Using the notations above,
the solution $F$ of the VPL system \eqref{1.1} can be decomposed into the
macroscopic (fluid) part, i.e., the local Maxwellian $M$ defined in \eqref{1.7}, and the microscopic (non-fluid) part, i.e. $G$:
\begin{equation}
\label{1.12}
F=M+G, \quad P_{0}F=M, \quad P_{1}F=G.
\end{equation}
 By the fact that $Q(M,M)=0$, applying \eqref{1.12}, we can rewrite the first equation of \eqref{1.1} as
\begin{equation}
\label{1.13}
(M+G)_{t}+v_{1}(M+G)_{x}-\phi_{x}\partial_{v_{1}}(M+G)=\frac{1}{\varepsilon}L_{M}G+\frac{1}{\varepsilon}Q(G,G),
\end{equation}
where the linearized Landau operator $L_{M}$ around the local Maxwellian $M$ is defined as
\begin{equation}
\label{1.8}
L_{M}h:=Q(h,M)+Q(M,h).
\end{equation}
Note that the null space $\mathcal{N}$ of $L_{M}$ is spanned by $\chi_{i}~(i=0,1,2,3,4)$.

Multiplying \eqref{1.13} by the collision invariants $\psi_{i}(v)$ $(i=0,1,2,3,4)$
and integrating the resulting equations with respect to $v$ over $\mathbb{R}^{3}$, we get
\begin{equation}
\label{1.14}
	\left\{
\begin{aligned}
&\rho_{t}+(\rho u_{1})_{x}=0,
\\
&(\rho u_{1})_{t}+(\rho u_{1}^{2})_{x}+P_{x}+\rho\phi_{x}=-\int_{\mathbb{R}^{3}} v^{2}_{1}G_{x}\,dv,
\\
&(\rho u_{i})_{t}+(\rho u_{1}u_{i})_{x}=-\int_{\mathbb{R}^{3}} v_{1}v_{i}G_{x}\,dv, ~~i=2,3,
\\
&[\rho (\theta+\frac{|u|^{2}}{2})]_{t}+[\rho u_{1}(\theta+\frac{|u|^{2}}{2})+Pu_{1}]_{x}+\rho u_{1}\phi_{x}
=-\int_{\mathbb{R}^{3}} \frac{1}{2}v_{1}|v|^{2}G_{x}\,dv,
\end{aligned} \right.
\end{equation}
with the pressure $P=\frac{2}{3}\rho\theta$.

Applying $P_{1}$ to \eqref{1.13}, we obtain
\begin{equation}
\label{1.15}
G_{t}+P_{1}(v_{1}G_{x})+P_{1}(v_{1}M_{x})-\phi_{x}\partial_{v_{1}}G
=\frac{1}{\varepsilon}L_{M}G+\frac{1}{\varepsilon}Q(G,G).
\end{equation}
Since $L_{M}$
is invertible on $\mathcal{N}^\perp$, we can rewrite \eqref{1.15} to present $G$ as
\begin{equation}
\label{1.16}
G=\varepsilon L^{-1}_{M}[P_{1}(v_{1}M_{x})]+L^{-1}_{M}\Theta,
\end{equation}
with
\begin{equation*}
\Theta:=\varepsilon G_{t}+\varepsilon P_{1}(v_{1}G_{x})-\varepsilon\phi_{x}\partial_{v_{1}}G-Q(G,G).
\end{equation*}
Substituting \eqref{1.16} into \eqref{1.14}, together with the Poisson equation, 
we obtain the following fluid-type system in the Navier-Stokes-Poisson form
\begin{equation}
	\label{1.18}
	\left\{
\begin{aligned}
		&\rho_{t}+(\rho u_{1})_{x}=0,
		\\
		&(\rho u_{1})_{t}+(\rho u_{1}^{2})_{x}+P_{x}+\rho\phi_{x}=\frac{4}{3}\varepsilon(\mu(\theta)u_{1x})_{x}-(\int_{\mathbb{R}^{3}} v^{2}_{1}L^{-1}_{M}\Theta \,dv)_{x},
		\\
		&(\rho u_{i})_{t}+(\rho u_{1}u_{i})_{x}=\varepsilon(\mu(\theta)u_{ix})_{x}-(\int_{\mathbb{R}^{3}} v_{1}v_{i}L^{-1}_{M}\Theta \,dv)_{x}, ~~i=2,3,
		\\
		&[\rho (\theta+\frac{|u|^{2}}{2})]_{t}+[\rho u_{1}(\theta+\frac{|u|^{2}}{2})+Pu_{1}]_{x}+\rho u_{1}\phi_{x}\\
		&=\varepsilon(\kappa(\theta)\theta_{x})_{x}+\frac{4}{3}\varepsilon(\mu(\theta)u_{1}u_{1x})_{x}
		\\
		&\quad+\varepsilon(\mu(\theta)u_{2}u_{2x})_{x}+\varepsilon(\mu(\theta)u_{3}u_{3x})_{x}
		-\frac{1}{2}(\int_{\mathbb{R}^{3}}v_{1}|v|^{2}L^{-1}_{M}\Theta \,dv)_{x},
		\\
		&-\varepsilon^{2b}\phi_{xx}=\rho-\rho_{\mathrm{e}}(\phi).
	\end{aligned} \right.
\end{equation}
Here the viscosity coefficient $\mu(\theta)>0$ and the heat conductivity coefficient $\kappa(\theta)>0$, that both are smooth functions depending only on $\theta$,
are represented by 
\begin{eqnarray*}
	\mu(\theta)=&&- R\theta\int_{\mathbb{R}^{3}}\hat{B}_{ij}(\frac{v-u}{\sqrt{R\theta}})
	B_{ij}(\frac{v-u}{\sqrt{R\theta}})dv>0,\quad i\neq j,
	\notag\\
	\kappa(\theta)=&&-R^{2}\theta\int_{\mathbb{R}^{3}}\hat{A}_{j}(\frac{v-u}{\sqrt{R\theta}})
	A_{j}(\frac{v-u}{\sqrt{R\theta}})dv>0,
\end{eqnarray*}
where  $\hat{A}_{j}(\cdot)$ and $\hat{B}_{ij}(\cdot)$ are Burnett functions (cf. \cite{Bastdos-Golse,Guo-CPAM}) defined as
\begin{equation}
	\label{7.1}
	\hat{A}_{j}(v)=\frac{|v|^{2}-5}{2}v_{j}\quad \mbox{and} \quad \hat{B}_{ij}(v)=v_{i}v_{j}-\frac{1}{3}\delta_{ij}|v|^{2} \quad \mbox{for }  i,j=1,2,3,
\end{equation}
and $A_{j}(\cdot)$ and $B_{ij}(\cdot)$ satisfying $P_{0}A_{j}(\cdot)=0$ and $P_{0}B_{ij}(\cdot)=0$ are given by
\begin{equation}
	\label{7.2}
	A_{j}(v)=L^{-1}_{M}[\hat{A}_{j}(v)M]\quad
	\mbox{and} \quad B_{ij}(v)=L^{-1}_{M}[\hat{B}_{ij}(v)M].
\end{equation}

\subsection{Quasineutral Euler system and rarefaction waves}
When the Knudsen number $\varepsilon$, the Debye length $\lambda$ and the microscopic part $\Theta$ all are
set to be zero, the fluid-type system \eqref{1.18} is formally reduced to the 1D compressible quasineutral Euler system
\begin{equation}
\label{1.4}
\left\{
\begin{aligned}
		&\rho_{t}+(\rho u_{1})_{x}=0,
		\\
		&(\rho u_{1})_{t}+(\rho u_{1}^{2})_{x}+P_{x}+\rho\phi_{x}=0,
		\\
		&(\rho u_{i})_{t}+(\rho u_{1}u_{i})_{x}=0, ~~i=2,3,
		\\
		&[\rho (\theta+\frac{|u|^{2}}{2})]_{t}+[\rho u_{1}(\theta+\frac{|u|^{2}}{2})
		+Pu_{1}]_{x}+\rho u_{1}\phi_{x}=0,
		\\
		&\rho=\rho_{\mathrm{e}}(\phi),
	\end{aligned} \right.
\end{equation}
with the pressure $P=\frac{2}{3}\rho\theta$, where the last algebraic equation corresponds to the quasineutral relation. 

Our goal is to prove that the solution $F(t,x,v)$ of \eqref{1.1} tends to the local Maxwellian $M_{[\rho^{R},u^{R},\theta^{R}](x/t)}(v)$ as $\varepsilon\to 0$,
where $(\rho^{R},u^{R},\theta^{R})(x/t)$  with $u^{R}(x/t)=(u_{1}^{R}(x/t),0,0)$
is defined to be the center-rarefaction wave solution to the system \eqref{1.4}
with the following Riemann initial data
\begin{equation}
\label{1.19}
(\rho,u,\theta)(t,x)|_{t=0}=(\rho^{R}_{0},u^{R}_{0},\theta^{R}_{0})(x)
=\begin{cases}
(\rho_{+},u_{+},\theta_{+}),\quad x>0,
\\
(\rho_{-},u_{-},\theta_{-}),\quad x<0.
\end{cases}
\end{equation}
Here $\rho_\pm>0$, $\theta_\pm>0$ and $u_{1\pm}$ with $u_{\pm}=(u_{1\pm},0,0)$ are the same constants as in \eqref{1.4a}.
Note that $\rho^{-1}_{\mathrm{e}}(\cdot)$ exists due to the assumptions ($\mathcal{A}_{1}$) and ($\mathcal{A}_{2}$), and hence the quasineutral equation $\rho=\rho_{\mathrm{e}}(\phi)$ implies $\phi=\rho^{-1}_{\mathrm{e}}(\rho)$.
We expect that as $\varepsilon\to 0$ the electric potential $\phi(t,x)$ of \eqref{1.1} tends to
$$
\phi^{R}(\frac{x}{t})=\rho^{-1}_{\mathrm{e}}(\rho^{R}(\frac{x}{t})).
$$

Let's now construct the rarefaction wave $(\rho^{R},u^{R},\theta^{R})(x/t)$  with $u^{R}(x/t)=(u_{1}^{R}(x/t),0,0)$
for the Riemann problem \eqref{1.4} and \eqref{1.19}. Define a macroscopic entropy $S:=-\frac{2}{3}\ln\rho+\ln(\frac{4\pi}{3}\theta)+1$. Then we rewrite the system \eqref{1.4} in terms of $(\rho,u,S)$ with $u=(u_{1},0,0)$ as
\begin{equation}
\label{1.20}
\left\{
\begin{aligned}
&\rho_{t}+(\rho u_{1})_{x}=0,
\\
&u_{1t}+u_{1}u_{1x}+\frac{1}{\rho}P_{x}+[\rho_{\mathrm{e}}^{-1}(\rho)]_{x}=0,
\\
&S_{t}+u_{1}S_{x}=0,
	\end{aligned} \right.
\end{equation}
with $P=\frac{2}{3}\rho\theta=\frac{\exp(S-1)}{2\pi}\rho^{5/3}$. Note that the above system is formally reduced to \eqref{qnEuler} in the isentropic case.
As in \cite{Duan-Liu-2015}, we define a pressure generated by the potential force, as
$$
P^{\phi}(\rho):=\int^{\rho}\frac{\varrho}{\rho'_{\mathrm{e}}(\rho^{-1}_{\mathrm{e}}(\varrho))}\,d\varrho,
$$
which is a positive, increasing, and convex function of $\rho\in(\rho_{m},\rho_{M})$
under the quasineutral assumption $\rho=\rho_{\mathrm{e}}(\phi)$ and the assumptions ($\mathcal{A}_{2}$) and ($\mathcal{A}_{3}$).
Note that $(P^{\phi}(\rho))_{x}=\rho\phi_{x}$ due to $\phi=\rho^{-1}_{\mathrm{e}}(\rho)$ and $\rho'_{\mathrm{e}}(\phi)\phi_\rho=1$. 
Under the assumptions ($\mathcal{A}_{2}$) and ($\mathcal{A}_{3}$), for any $ \rho\in(\rho_{m},\rho_{M})$, it holds that
\begin{equation}
\label{1.21}
\partial_{\rho}P^{\phi}(\rho)=\frac{\rho_{\mathrm{e}}(\phi)}{\rho'_{\mathrm{e}}(\phi)}>0, \quad
\partial^{2}_{\rho}P^{\phi}(\rho)=\frac{[\rho'_{\mathrm{e}}(\phi)]^{2}-\rho_{\mathrm{e}}(\phi)\rho''_{\mathrm{e}}(\phi)}{[\rho'_{\mathrm{e}}(\phi)]^{3}}\geq0.
\end{equation}
It is straightforward to check that the system \eqref{1.20} has three distinct eigenvalues
\begin{eqnarray*}
\lambda_{i}(\rho,u_{1},S)&=&u_{1}+(-1)^{\frac{i+1}{2}}\sqrt{\partial_{\rho}P(\rho,S)+\partial_{\rho}P^{\phi}(\rho)},~i=1,3,\\
\lambda_{2}(\rho,u_{1},S)&=&u_{1}. 
\end{eqnarray*}
In terms of the two Riemann invariants
of the third eigenvalue $\lambda_{3}(\rho,u_{1},S)$,
we define the 3-rarefaction wave curve for the given
left constant state $(\rho_{-},u_{1-},\theta_{-})$ with $\rho_{-}>0$ and  $\theta_{-}>0$ as below
\begin{multline*}
R_{3}(\rho_{-},u_{1-},\theta_{-})\equiv \big\{(\rho,u_{1},\theta)\in\mathbb{R}_{+}\times\mathbb{R}\times\mathbb{R}_{+}\mid
\frac{\rho^{2/3}}{\theta}=\frac{\rho^{2/3}_{-}}{\theta_{-}},
\notag\\
u_{1}-u_{1-}=\int^{\rho}_{\rho_{-}}\frac{\sqrt{\partial_{\varrho}P(\varrho,S_{*})+\partial_{\varrho}P^{\phi}(\varrho)}}{\varrho}\,d\varrho,
~~\rho>\rho_{-},~~u_{1}>u_{1-}\big\}.
\end{multline*}
Here and to the end, $S_{*}=-\frac{2}{3}\ln\rho_{\pm}+\ln(\frac{4}{3}\pi\theta_{\pm})+1$ is a constant.

Without loss of generality, we consider only the simple 3-rarefaction wave
to the quasineutral Euler system \eqref{1.4} and \eqref{1.19} in this paper, and  the case for 1-rarefaction wave can be treated similarly.
As in \cite{Matsumura-Nishihara,Xin}, we need to consider the Riemann problem on the inviscid Burgers equation
\begin{equation}
\label{1.23}
\begin{cases}
\omega_{t}+\omega\omega_{x}=0,
\\
\omega(0,x)=
\begin{cases}
\omega_{-},\quad x<0,
\\
\omega_{+},\quad x>0.
\end{cases}
\end{cases}
\end{equation}
If two constants $\omega_{-}<\omega_{+}$ are chosen,
then \eqref{1.23} admits a centered rarefaction wave solution
$\omega^{R}(x,t)=\omega^{R}(z)$ with $z=\frac{x}{t}$ connecting $\omega_{-}$ and $\omega_{+}$ in the form of
\begin{equation*}
\omega^{R}(z)=
\left\{
\begin{aligned}
&\omega_{-},\quad z\leq\omega_{-},
\\
&z,\quad\omega_{-}< z\leq\omega_{+},
\\
&\omega_{+},\quad z>\omega_{+}.
	\end{aligned} \right.
\end{equation*}
For $(\rho_{+},u_{1+},\theta_{+})\in R_{3}(\rho_{-},u_{1-},\theta_{-})$,
the 3-rarefaction wave $(\rho^{R},u^{R},\theta^{R})(z)$ with
$u^{R}(z)=(u_{1}^{R},0,0)(z)$ and $\phi^{R}(z)$ to the Riemann problem \eqref{1.4} and \eqref{1.19}
can be defined explicitly by
\begin{equation}
\label{1.24}
\left\{
\begin{aligned}
&\lambda_{3}(\rho^{R}(z),u^{R}_{1}(z),S_{*})=
\begin{cases}
\lambda_{3}(\rho_{-},u_{1-},S_{*}),\quad z\leq \lambda_{3}(\rho_{-},u_{1-},S_{*}),
\\[1mm]
z, \quad  \lambda_{3}(\rho_{-},u_{1-},S_{*})<z\leq \lambda_{3}(\rho_{+},u_{1+},S_{*}),
\\[1mm]
\lambda_{3}(\rho_{+},u_{1+},S_{*}),\quad z>\lambda_{3}(\rho_{+},u_{1+},S_{*}),
\end{cases}
\\
&u^{R}_{1}(z)-u_{1-}=\int^{\rho^{R}(z)}_{\rho_{-}}\sqrt{\frac{5}{3}\frac{1}{2\pi e}\varrho^{-\frac{4}{3}}e^{S_{*}}
+\varrho^{-1}\frac{d}{d\rho}\rho^{-1}_{\mathrm{e}}(\varrho)}\,d\varrho,
\\
&\theta^{R}(z)=\frac{3}{2}\frac{1}{2\pi e}e^{S_{*}}(\rho^{R})^{\frac{2}{3}}(z),\\
&  \phi^{R}(z)=\rho^{-1}_{\mathrm{e}}(\rho^{R}(z)).
	\end{aligned} \right.
\end{equation}

Since the above 3-rarefaction wave is only Lipschitz continuous, we need to construct an approximate smooth rarefaction wave
to the 3-rarefaction wave defined in \eqref{1.24}. The approximate smooth rarefaction wave can be constructed
by the Burgers equation
\begin{equation}
\label{1.25}
\left\{
\begin{array}{rl}
&\overline{\omega}_{t}+\overline{\omega}\,\overline{\omega}_{x}=0,
\\
&\overline{\omega}(0,x)=\overline{\omega}_{\delta}(x)=\overline{\omega}(\frac{x}{\delta})=\frac{\omega_{+}+\omega_{-}}{2}+\frac{\omega_{+}
-\omega_{-}}{2}\tanh(\frac{x}{\delta}),
	\end{array} \right.
\end{equation}
where $\delta>0$  is a small constant depending on the Knudsen number $\varepsilon>0$ and $\tanh z:=\frac{e^{2z}-1}{e^{2z}+1}$ with $z\in \R$ is the usual hyperbolic tangent.
In fact, as given in \eqref{3.21} later on, we will choose
$\delta=\frac{1}{k}\varepsilon^{\frac{3}{5}-\frac{2}{5}a}$ 
for a suitably small constant $k>0$ independent of $\varepsilon$; see Figure \ref{fig.ab2} below.
\begin{figure}[h]
        \scalebox{.8}{\begin{tikzpicture}
            \draw[->, thick](0,0)--(0,3.5) node[above] {$\overline{\omega}_\delta$} coordinate(y axis);
            \draw[->, thick](-5, 0)--(0,0)--(5,0) node[right] {$x$} coordinate(x axis);
            
            \draw[domain=-5:5,smooth, very thick] plot(\x,{(exp(\x) - exp(-\x)) / (exp(\x) + exp(-\x)) + 2});
            \draw (1 cm,2pt) -- (1 cm,0pt) node[anchor=north] {$\delta$};
            \draw (-1 cm,2pt) -- (-1 cm,0pt) node[anchor=north] {$-\delta$};
            \draw[dash pattern=on 2pt off 3pt] (1, 0) -- (1, 3.2);
            \draw[dash pattern=on 2pt off 3pt] (-1, 0) -- (-1, 3.2);
            \draw[dash pattern=on 2pt off 3pt] (-5, 3.2) -- (5, 3.2) node[above] (5, 3.2) {$\omega_{+} = \lambda_3(\rho_{+}, u_{1+}, S_{*})$};
            \draw[dash pattern=on 2pt off 3pt] (-5, 0.8) -- (5, 0.8) node at (-5, 0.5) {$\omega_{-} = \lambda_3(\rho_{-}, u_{1-}, S_{*})$};	
            \node at (0, -1) {$S_{*} = \ln (\frac{4\pi }{3}\frac{\theta_{-}}{\rho_{-}^{2/3}}) + 1 = \ln (\frac{4\pi }{3}\frac{\theta_{+}}{\rho_{+}^{2/3}})+ 1$,
                ~~~~~~~$\delta = \frac{1}{k} \varepsilon^{\frac{3}{5}-\frac{2}{5}a}$};
        \end{tikzpicture}}	
        \caption{Smooth approximation of rarefaction waves}
        \label{fig.ab2}
    \end{figure}
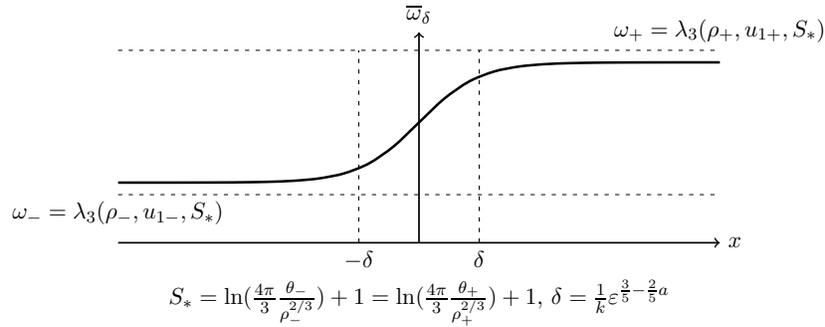

Corresponding to \eqref{1.24}, we define the approximate smooth 3-rarefaction wave  $(\bar{\rho}_{\delta},\bar{u}_{\delta},\bar{\theta}_{\delta})(t,x)$ with $\bar{u}_{\delta}=(\bar{u}_{1\delta},0,0)$ and $\bar{\phi}_{\delta}(t,x)$ by
\begin{equation}
\label{1.27}
\left\{
\begin{aligned}
&\overline{\omega}_{\delta}(t,x)=\lambda_{3}(\bar{\rho}_{\delta}(t,x),\bar{u}_{1\delta}(t,x),S_{*}),\ (\rho_{+},u_{1+},\theta_{+})\in R_{3}(\rho_{-},u_{1-},\theta_{-}),
\\
&\bar{u}_{1\delta}(z)-u_{1-}=\int^{\bar{\rho}_{\delta}(t,x)}_{\rho_{-}}\sqrt{\frac{5}{3}\frac{1}{2\pi e}\varrho^{-\frac{4}{3}}e^{S_{*}}
+\varrho^{-1}(\frac{d}{d\rho}\rho^{-1}_{\mathrm{e}}(\varrho))}\,d\varrho,
\\
&\bar{\theta}_{\delta}(t,x)
=\frac{3}{2}\frac{1}{2\pi e}e^{S_{*}}\bar{\rho}_{\delta}^{\frac{2}{3}}(t,x),\\
& \bar{\phi}_{\delta}(t,x)=\rho^{-1}_{\mathrm{e}}[\bar{\rho}_{\delta}(t,x)],
\\
&\lim\limits_{x\to\pm\infty}(\bar{\rho}_{\delta},\bar{u}_{1\delta},\bar{\theta}_{\delta})(t,x)=(\rho_{\pm},u_{1\pm},\theta_{\pm}),
\end{aligned} \right.
\end{equation}
with $\overline{\omega}_{\delta}(t,x)$ being the solution to the Burgers equation \eqref{1.25}.
From now on, we would omit the explicit dependence of $(\bar{\rho}_{\delta},\bar{u}_{\delta},\bar{\theta}_{\delta},\bar{\phi}_{\delta})(t,x)$ on $\delta$ and denote it by $(\bar{\rho},\bar{u},\bar{\theta},\bar{\phi})(t,x)$ for simplicity. Then the approximate smooth 3-rarefaction wave $(\bar{\rho},\bar{u},\bar{\theta})(t,x)$ with $\bar{u}=(\bar{u}_{1},0,0)$ and $\bar{\phi}(t,x)$ satisfy the compressible  quasineutral Euler system
\begin{equation}
\label{1.28}
\left\{
\begin{aligned}
&\bar{\rho}_{t}+(\bar{\rho}\bar{u}_{1})_{x}=0,
\\
&(\bar{\rho} \bar{u}_{1})_{t}+(\bar{\rho}\bar{u}_{1}^{2})_{x}+\bar{P}_{x}+\bar{\rho}\bar{\phi}_{x}=0,
\\
&(\bar{\rho} \bar{u}_{i})_{t}+(\bar{\rho}\bar{u}_{1}\bar{u}_{i})_{x}=0,\quad i=\mbox{2,3},
\\
&[\bar{\rho}(\bar{\theta}+\frac{|\bar{u}|^{2}}{2})]_{t}+[\bar{\rho}\bar{u}_{1}(\bar{\theta}+\frac{|\bar{u}|^{2}}{2})
+\bar{P}\bar{u}_{1}]_{x}+\bar{\rho}\bar{u}_{1}\bar{\phi}_{x}=0,
\\
&\bar{\rho}=\rho_{\mathrm{e}}(\bar{\phi}),
\end{aligned} \right.
\end{equation}
with $\bar{P}=\frac{2}{3}\bar{\rho}\bar{\theta}$. It should  be emphasized that
the initial function $\overline{\omega}_{\delta}$ in \eqref{1.25} connecting two distinct constant states is strictly increasing and smooth, so  $\overline{\omega}(t,x)$ and $(\bar{\rho},\bar{u},\bar{\theta},\bar{\phi})(t,x)$ are smooth in $t$ and $x$ up to any order by 
 \cite{Matsumura-Nishihara,Xin}. Their relevant properties are outlined in Section \ref{sec.4}.

\subsection{Notation, weight and norm}
First of all, we introduce
\begin{equation}
\label{2.1}
\overline{M}:=M_{[\bar{\rho},\bar{u},\bar{\theta}](t,x)}(v)
=\frac{\bar{\rho}(t,x)}{(2\pi R\bar{\theta}(t,x))^{3/2}}\exp\big(-\frac{|v-\bar{u}(t,x)|^{2}}{2R\bar{\theta}(t,x)}\big),
\end{equation}
with its  fluid quantities $(\bar{\rho},\bar{u},\bar{\theta})(t,x)$ being the approximate smooth rarefaction wave
constructed in \eqref{1.27}.
For convenience of the proof, we fix a normalized global Maxwellian with the fluid constant state
$(1,0,\frac{3}{2})$ by
\begin{equation}
\label{2.3}
\mu:=M_{[1,0,\frac{3}{2}]}(v)=(2\pi)^{-\frac{3}{2}}\exp\big(-\frac{|v|^{2}}{2}\big)
\end{equation}
as a reference equilibrium, and choose both the far-field  states $(\rho_+,u_+,\theta_+)$ and $(\rho_-,u_-,\theta_-)$
in \eqref{1.19} to be close enough to the constant state $(1,0,\frac{3}{2})$ such that the approximate smooth rarefaction wave further satisfies that
\begin{equation}
\label{2.2}
\sup_{t\geq 0,x\in \R}(|\bar{\rho}(t,x)-1|+|\bar{u}(t,x)|+|\bar{\theta}(t,x)-\frac{3}{2}|)<\eta_{0},
\end{equation}
for some suitably small  constant $\eta_{0}>0$  independent of $\varepsilon$.

\begin{figure}[h]
        \begin{tikzpicture}
            \usetikzlibrary{decorations.pathmorphing}
            \begin{scope}[thick, discont/.style={semithick, decoration={zigzag, segment length=4pt, amplitude=2pt}, decorate}]
                \draw (0, 0) -- (0.3, 0);
                \draw[discont] (0.3, 0) -- (0.6, 0);
                \draw[->] (0.6, 0) -- (3.5, 0) node[right] {$b$};
                \draw (0, 0) -- (0, 0.3);
                \draw[discont] (0, 0.3) -- (0, 0.6);
                \draw[->] (0, 0.6) -- (0, 3.5) node[above] {$a$};
            \end{scope}
            
            \foreach \x/\xtext in {1.2/\frac{3}{5}, 1.5/\frac{2}{3}, 3/1}
                \draw (\x, 1pt) -- (\x, -1pt) node[anchor=north] {$\xtext$};
            
            \foreach \y/\ytext in {1.5/\frac{2}{3}, 3/1}
                \draw (1pt, \y) -- (-1pt, \y) node[anchor=east] {$\ytext$};
            
            \filldraw[fill=black, opacity=0.7] (1.2, 3) -- (3, 3) -- (3, 1.5) -- (1.5, 1.5) -- cycle;
            
            \begin{scope}[dash pattern=on 2pt off 3pt]
                \draw (1.2, 0) -- (1.2, 3);
                \draw (1.5, 0) -- (1.5, 1.5);
                \draw (3, 0) -- (3, 1.5);
                \draw (0, 1.5) -- (1.5, 1.5);
                \draw (0, 3) -- (1.2, 3);
            \end{scope}
            \draw[thick] (1.5, 1.5) -- (1.5, 3);
        \end{tikzpicture}
        \caption{Ranges of parameters $a$ and $b$}
        \label{fig.ab}
    \end{figure}
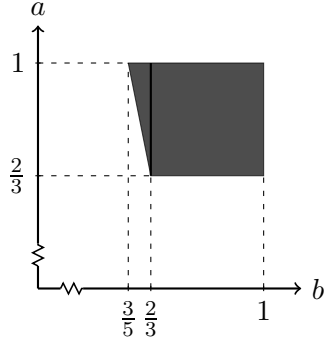  

As in \cite{Duan-Yang-Yu}, we define the scaled independent variables
\begin{equation}
	\label{3.1}
	y=\frac{x}{\varepsilon^{a}}, \quad \tau=\frac{t}{\varepsilon^{a}},
\end{equation}
with the parameter $a$ satisfying $(a,b)\in \mathcal{S}$ as in \eqref{def.ab}, see Figure \ref{fig.ab}, that is, 
\begin{equation}
	\label{3.2a}
\left\{
\begin{array}{rl}
&\frac{2}{3}\leq a\leq1, \quad \quad \quad \mbox{when}\quad \frac{2}{3}\leq b\leq1,
		\\
&4-5b\leq a\leq 1, \quad \mbox{when}\quad \frac{3}{5}\leq b\leq\frac{2}{3}.
\end{array} \right.
\end{equation}
Here the range of the parameter $a$ is determined by the requirements \eqref{6.3b} in the later proof under the conditions
\eqref{1.4B}, \eqref{3.21} and \eqref{3.22}; see also Remark \ref{rmk.ea} for a rough explanation. 
We remark that the scaling transformation in \eqref{3.1} is crucial in the energy estimates.

Therefore, the hydrodynamic limit problem on \eqref{1.1} with \eqref{1.4B}, \eqref{1.1id}, \eqref{1.4a} and \eqref{1.5a}, in particular to establish convergence rates in $\varepsilon$, is transferred to showing the global-in-time dynamical stability of the rarefaction waves to the following VPL system under the scaling \eqref{3.1}:
\begin{equation}
	\label{prefeq}
	\left\{
	\begin{aligned}
		F_{\tau}+v_{1}F_{y}-\phi_{y}\partial_{v_{1}}F&=\varepsilon^{a-1}Q(F,F),
		\\
		-\varepsilon^{2b-2a}\phi_{yy}&=\rho-\rho_{\mathrm{e}}(\phi), \quad \rho=\int_{\mathbb{R}^{3}}F\,dv,
	\end{aligned} \right.
\end{equation}
still satisfying 
\begin{equation}
\label{prefeqid1}
F(0,y,v)=F_0(y,v),
\end{equation}
and 
\begin{equation}
	\label{prefeqid2}
	\lim_{y\to\pm\infty}F_{0}(y,v)=M_{[\rho_{\pm},u_{\pm},\theta_{\pm}]}(v), \quad \lim_{y\to\pm\infty}\phi(t,y)=\phi_{\pm}, \quad \rho_{\pm}=\rho_{\mathrm{e}}(\phi_{\pm}).
\end{equation}
Note that initial data $F_0(y,v)=F_0(\frac{x}{\varepsilon^a},v)$ is allowed to depend on parameters $\varepsilon$ and $a$.

Correspondingly, we  set the scaled  macroscopic perturbation
\begin{equation}
	\label{3.2}
(\widetilde{\rho},\widetilde{u},\widetilde{\theta},\widetilde{\phi})(\tau,y)
	=({\rho}-\overline{\rho},u-\overline{u},\theta-\overline{\theta},\phi-\overline{\phi})(t,x),
\end{equation}
and the scaled microscopic perturbation
\begin{equation}
	\label{3.3}
\sqrt{\mu}{f}(\tau,y,v)=G(t,x,v)-\overline{G}(t,x,v),
\end{equation}
where the correction term $\overline{G}(t,x,v)$ is given by
\begin{equation}
	\label{3.4}
	\overline{G}=\varepsilon^{1-a} L_{M}^{-1}\big\{P_{1}v_{1}M(\frac{|v-u|^{2}
		\bar{\theta}_{y}}{2R\theta^{2}}+\frac{(v-u)\cdot\bar{u}_{y}}{R\theta})\big\}.
\end{equation}
The reason for introducing the subtraction $G-\overline{G}$ is that the term $P_{1}(v_{1}M_{x})$ in \eqref{1.15} contains  $\|(\bar{u}_{y},\bar{\theta}_{y})(\tau)\|^{2}$
in the $L^2$ estimate under the scaling transform \eqref{3.1} and
the time decay of $\|(\bar{u}_{y},\bar{\theta}_{y})(\tau)\|^{2}$  is $\varepsilon^{a}(\delta+\varepsilon^{a}\tau)^{-1}$ by Lemma \ref{lem7.2}, which is not integrable in $\tau$ over $(0,+\infty)$. 

The following notations are needed in the energy analysis for convenience of
presentation. Throughout the paper, generic positive constants are denoted  by either $c$ or $C$ varying from line to line, and
they are independent of the parameters $\varepsilon$, $\delta$, $k$  and the time $\tau$.  We use
$\langle \cdot , \cdot \rangle$ to denote the standard $L^{2}$ inner product in $\mathbb{R}_{v}^{3}$
with its corresponding $L^{2}$ norm $|\cdot|_2$, and $( \cdot , \cdot )$ to denote $L^{2}$ inner product in
$\mathbb{R}_{y}$ or $\mathbb{R}_{y}\times \mathbb{R}_{v}^{3}$  with its corresponding $L^{2}$ norm $\|\cdot\|$.
Let  $\alpha$ and $\beta$ be a nonnegative integer and a multi-index $\beta=[\beta_{1},\beta_{2},\beta_{3}]$, respectively. Denote
	$\partial_{\beta}^{\alpha}=\partial_{y}^{\alpha}
	\partial_{v_{1}}^{\beta_{1}}\partial_{v_{2}}^{\beta_{2}}\partial_{v_{3}}^{\beta_{3}}$.
If each component of $\beta$ is not greater than the corresponding one  of
$\overline{\beta}$, we use the standard notation
$\beta\leq\overline{\beta}$. And $\beta<\overline{\beta}$ means that
$\beta\leq\overline{\beta}$ and $|\beta|<|\overline{\beta}|$.
$C^{\bar\beta}_{\beta}$ is the usual  binomial coefficient. Sometimes we use the notation
 $A\approx B$ to denote that there exists $C_{0}>1$
such that $C_{0}^{-1}B\leq A\leq C_{0}B$. We also use $h_y=\partial_yh$ to denote  the space derivative of $h$ with respect to $y$.

Motivated by \cite{Duan-Yang-Yu-1} and \cite{Guo-JAMS}, a crucial point in the
proof is to introduce a new weight function which also depends on the electric potential function $\phi(\tau,y)$.  

\begin{definition}
Let
\begin{equation}
	\label{3.12}
	w(\alpha,\beta)(\tau,v):=\langle v\rangle^{2(l-|\alpha|-|\beta|)}e^{q(\tau)\langle v\rangle},\quad  l\geq |\alpha|+|\beta|,
\end{equation}
with $\langle v\rangle=\sqrt{1+|v|^{2}}$ and
\begin{equation}
	\label{3.13}
	q(\tau):=q_{1}-q_{2}\int^{\tau}_{0}q_{3}(s)\,ds,
\end{equation}
where the constants $q_{1}>0$ and $q_{2}>0$ will be chosen in the proof later, see also Theorem \ref{thm2.1},  and the function $q_{3}(\tau)$, depending on the electric potential and  the Knudsen number is given by
\begin{equation}
	\label{3.14}
	q_{3}(\tau):=\varepsilon^{1-a}(\|\phi_\tau\|^{2}_{L_{y}^{\infty}}+\|\phi_y\|^{2}_{L_{y}^{\infty}}
	+\|\phi_{yy}\|^{2}+\|\phi_{yyy}\|^{2}).
\end{equation}
In addition, we require that $q_3(\cdot)$ is integrable in time satisfying
\begin{equation*}
	q_\infty:=q_{1}-q_{2}\int^{\infty}_{0}q_{3}(s)\,ds>0,
\end{equation*}
so that  $q(\tau)$ is a strictly positive continuous function monotonically decreasing from $q_1>0$ at $\tau=0$ to $q_\infty>0$ as $\tau\to+\infty$.
\end{definition}
It should be pointed out that such weight function \eqref{3.12} is a little different from  those used in \cite{Guo-JAMS,Duan-Yu1} because of the specific choice of $q_3(\tau)$ depending on norms of the electric potential function $\phi(\tau,y)$ as in \eqref{3.13} and \eqref{3.14}. Our weight is very useful to deal with the large-velocity growth in the nonlinear electric potential term when the time-decay rate of electric potential is unavailable. In particular, the weighted energy estimate induces a quartic energy dissipation from the time derivative of such weight function; see \eqref{def.qedrf} and \eqref{2.5} later on. 
 
\begin{remark}\label{rem.q12}
We emphasize that since $\phi=\bar{\phi}+\widetilde{\phi}$ for the background profile $\bar{\phi}$ and the perturbation $\widetilde{\phi}$, the non-negative functional $q_3(\tau)$ in \eqref{3.14} contains two parts
	\begin{eqnarray*}
	q_3^I(\tau)&&:=\varepsilon^{1-a}(\|\bar{\phi}_\tau\|^{2}_{L_{y}^{\infty}}+\|\bar{\phi}_{y}\|^{2}_{L_{y}^{\infty}}
	+\|\bar{\phi}_{yy}\|^{2}+\|\bar{\phi}_{yyy}\|^{2}),
	\\
	\quad q_3^{I\!I}(\tau)&&:=\varepsilon^{1-a}(\|\widetilde{\phi}_\tau\|^{2}_{L_{y}^{\infty}}+\|\widetilde{\phi}_{y}\|^{2}_{L_{y}^{\infty}}
	+\|\widetilde{\phi}_{yy}\|^{2}+\|\widetilde{\phi}_{yyy}\|^{2}).
\end{eqnarray*}
Here the electric potential profile part  $q_3^I(\tau)$ has a fast decay as $\varepsilon^{2a}(\delta+\varepsilon^{a}\tau)^{-2}$  in terms of Lemma \ref{lem7.2}.
And the perturbation part $q_3^{I\!I}(\tau)$ is part of the energy dissipation functional \eqref{3.18} that can be proved to be integrable over $(0,+\infty)$
	in terms of \eqref{3.22}
	but it does not enjoy any explicit time decay because of the techniques of the proof.
\end{remark}
 Corresponding to the reference global Maxwellian $\mu$ in \eqref{2.3},
the Landau collision frequency is given by
\begin{equation}
	\label{3.15}
	\sigma^{ij}(v):=\Phi_{ij}\ast \mu=\int_{{\mathbb R}^3}\Phi_{ij}(v-v_{\ast})\mu(v_{\ast})\,dv_{\ast}.
\end{equation}
Here $[\sigma^{ij}(v)]_{1\leq i,j\leq 3}$ is a positive-definite self-adjoint matrix.
We denote the weighted $L^2$ norms as
$$
|\partial^\alpha_\beta
g|^2_{w}\equiv|w(\alpha,\beta)\partial^\alpha_\beta
g|^2_{2}:=\int_{{\mathbb
R}^3}w^{2}(\alpha,\beta)|\partial^\alpha_\beta g(y,v)|^2\,dv,
$$
and
$$
\|\partial^\alpha_\beta
g\|^2_{w}\equiv\|w(\alpha,\beta)\partial^\alpha_\beta
g\|^2_{2}:=\int_{\mathbb R}\int_{{\mathbb
R}^3}w^{2}(\alpha,\beta)|\partial^\alpha_\beta g(y,v)|^2\,dv\,dy.
$$
With \eqref{3.15}, we define the weighted dissipation norms as
$$
|g|^2_{\sigma,w}:=\sum_{i,j=1}^3\int_{{\mathbb
		R}^3}w^{2}[\sigma^{ij}\partial_{v_i}g\partial_{v_j}g+\sigma^{ij}\frac{v_i}{2}\frac{v_j}{2}g^2]\,dv,\ \mbox{and}\ \ \|g\|_{\sigma,w}:=\big\||g|_{\sigma,w}\big\|.
$$
Also $|g|_{\sigma}=|g|_{\sigma,1}$ and $\|g\|_{\sigma}=\|g\|_{\sigma,1}$.
From \cite[Lemma 5]{Strain-Guo-2008}, one has
\begin{equation}
\label{3.16}
	|g|_{\sigma,w}\approx |w\langle v\rangle^{-\frac{1}{2}}g|_2+\big|w\langle v\rangle^{-\frac{3}{2}}\nabla_vg\cdot\frac{v}{|v|}\big|_2+\big|w\langle v\rangle^{-\frac{1}{2}}\nabla_vg\times\frac{v}{|v|}\big|_2.
\end{equation}

In order to construct the global-in-time solutions for the VPL system near local Maxwellians as in \eqref{2.1},
a key point is to establish uniform energy estimates on the macroscopic perturbation $(\widetilde{\rho},\widetilde{u},\widetilde{\theta},\widetilde{\phi})(\tau,y)$
and the microscopic perturbation $f(\tau,y,v)$. For small $q(\tau)$ in \eqref{3.12} and the parameters $a$ and $b$ in \eqref{3.2a},  we define the following instant energy functional $\mathcal{E}_{2,l,q}(\tau)$ as
\begin{eqnarray}
	\label{3.17}
	\mathcal{E}_{2,l,q}(\tau):=&&
	\sum_{|\alpha|\leq1}(\|\partial^{\alpha}(\widetilde{\rho},\widetilde{u},\widetilde{\theta})(\tau)\|^{2}
	+\|\partial^{\alpha}f(\tau)\|_{w}^{2})+\sum_{|\alpha|+|\beta|\leq 2,|\beta|\geq1}\|\partial^{\alpha}_{\beta}f(\tau)\|_{w}^{2}
	\notag\\
	&&\hspace{0.5cm}
	+\varepsilon^{2-2a}\sum_{|\alpha|=2}(\|\partial^{\alpha}(\widetilde{\rho},\widetilde{u},\widetilde{\theta})(\tau)\|^{2}+\|\partial^{\alpha}f(\tau)\|_{w}^{2})+E(\tau),
\end{eqnarray}
with
\begin{eqnarray*}
	E(\tau):=&&\sum_{|\alpha|\leq1}(\|\partial^{\alpha}\widetilde{\phi}(\tau)\|^{2}
	+\varepsilon^{2b-2a}\|\partial^{\alpha}\widetilde{\phi}_{y}(\tau)\|^{2})
	\notag\\
	&&\quad+\varepsilon^{2-2a}\sum_{|\alpha|=2}
	(\|\partial^{\alpha}\widetilde{\phi}(\tau)\|^{2}+\varepsilon^{2b-2a}\|\partial^{\alpha}\widetilde{\phi}_{y}(\tau)\|^{2}).
\end{eqnarray*}
Correspondingly, the normal {\bf quadric} energy dissipation rate functional $\mathcal{D}_{2,l,q}(\tau)$ is given by
\begin{eqnarray}
	\label{3.18}
	\mathcal{D}_{2,l,q}(\tau):=&&\varepsilon^{1-a}\sum_{1\leq|\alpha|\leq 2}(\|\partial^{\alpha}(\widetilde{\rho},\widetilde{u},\widetilde{\theta})(\tau)\|^{2}
	+\|\partial^{\alpha}\widetilde{\phi}(\tau)\|^{2}
	+\varepsilon^{2b-2a}\|\partial^{\alpha}\widetilde{\phi}_{y}(\tau)\|^{2})
	\notag\\
	&&+\varepsilon^{a-1}(\sum_{|\alpha|\leq 1}\|\partial^{\alpha}f(\tau)\|_{\sigma,w}^{2}
	+\sum_{|\alpha|+|\beta|\leq 2,|\beta|\geq1}\|\partial^{\alpha}_{\beta}f(\tau)\|_{\sigma,w}^{2})\notag\\
	&&+\varepsilon^{1-a}\sum_{|\alpha|=2}\|\partial^{\alpha}f(\tau)\|_{\sigma,w}^{2},
\end{eqnarray}
while the {\bf quartic} energy dissipation rate functional is given by
\begin{equation}
\label{def.qedrf}
q_2q_3(\tau)\mathcal{H}_{2,l,q}(\tau)
\end{equation} 
where $q_2>0$ is a constant, $q_3(\tau)$ is defined in \eqref{3.14} and $\mathcal{H}_{2,l,q}(\tau)$ is denoted as 
\begin{eqnarray}
		\label{4.60}
		\mathcal{H}_{2,l,q}(\tau):=&&\sum_{|\alpha|\leq 1}\|\langle v\rangle^{\frac{1}{2}}\partial^{\alpha}f(\tau)\|_{w}^{2}+
		\varepsilon^{2-2a}\sum_{|\alpha|=2}\|\langle v\rangle^{\frac{1}{2}}\partial^{\alpha}f(\tau)\|_{w}^{2}
		\notag\\
		&&\hspace{1cm}+\sum_{|\alpha|+|\beta|\leq 2,|\beta|\geq1}\|\langle v\rangle^{\frac{1}{2}}\partial^{\alpha}_{\beta}f(\tau)\|_{w}^{2}.
	\end{eqnarray}
Here we restrict the highest order derivatives in all functionals to the second order for simplicity. Much higher orders can be considered so that smooth solutions can be obtained.

\subsection{Main theorem}\label{sec.2.1}
With the above preparation of notations, the main result can be stated as follows.  It is  more precise than that in Theorem \ref{thm.rs} since the convergence for both initial data and solutions as $\varepsilon\to 0$ can be made more explicit in terms of  energy functional $\mathcal{E}_{2,l,q}$ in \eqref{3.17}. 

\begin{theorem}\label{thm2.1}
Let $\lambda=\varepsilon^{b}$ with $b\in[\frac{3}{5},1]$ in \eqref{1.1} satisfying \eqref{ll}.
Let $(\rho^{R},u^{R},\theta^{R},\phi^{R})(\frac{x}{t})$ be the Riemann solution to the compressible quasineutral Euler system \eqref{1.4}-\eqref{1.19} defined by \eqref{1.24} with the wave strength $\delta_r=|\rho_{+}-\rho_{-}|+|u_{+}-u_{-}|+|\theta_{+}-\theta_{-}|$ small enough.
Under the scaling transformation $(\tau,y)=(\varepsilon^{-a}t,\varepsilon^{-a}x)$
with the parameter $a$ satisfying \eqref{3.2a}, if the initial data $F_{0}(y,v)\geq0$ satisfies 
\begin{equation}
\label{3.20}
\mathcal{E}_{2,l,q}(\tau)\mid_{\tau=0}\,\leq\, k^{\frac{1}{3}}\varepsilon^{\frac{2}{3}},
\end{equation}
for a small constant $k>0$ independent of $\varepsilon$, where we have choosen the constants $q_{1}= k^{\frac{1}{24}}$ 
and $q_{2}=k^{-\frac{1}{12}}$ in \eqref{3.12} and \eqref{3.13}, then there exists a sufficiently small constant $\varepsilon_0$ with $\varepsilon_0< k$
such that for each $\varepsilon\in (0,\varepsilon_0)$,
the Cauchy problem  on the Vlasov-Poisson-Landau system \eqref{prefeq} with \eqref{prefeqid1} and \eqref{prefeqid2}
 admits a unique global-in-time solution $[F(\tau,y,v),\phi(\tau,y)]$ satisfying that 
$F(\tau,y,v)\geq 0$ and 
 \begin{equation}
 	\label{2.5}
 	\sup_{\tau\geq 0}\mathcal{E}_{2,l,q}(\tau) +\int^{\infty}_{0}\mathcal{D}_{2,l,q}(\tau)\,d\tau +q_2\int_0^\infty q_3(\tau)\mathcal{H}_{2,l,q}(\tau)\,d\tau\leq k^{\frac{1}{6}}\varepsilon^{\frac{6}{5}-\frac{4}{5}a}.
 \end{equation}
\end{theorem}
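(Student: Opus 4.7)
The plan is to prove Theorem \ref{thm2.1} via a standard continuity argument applied to a local-in-time solution, with closure driven by a three-layer weighted energy method performed on the macro-micro perturbation $(\widetilde{\rho},\widetilde{u},\widetilde{\theta},\widetilde{\phi},f)$ around the approximate profile $(\bar\rho,\bar u,\bar\theta,\bar\phi,\overline{M}+\overline{G})$ constructed via \eqref{1.27} and \eqref{3.4}. First, I would subtract the profile equations \eqref{1.28} from the rescaled fluid system \eqref{1.18} to obtain the perturbation system for $(\widetilde\rho,\widetilde u,\widetilde\theta)$, a linearized Poisson equation $-\varepsilon^{2b-2a}\widetilde\phi_{yy}=\widetilde\rho-[\rho_{\mathrm e}(\bar\phi+\widetilde\phi)-\rho_{\mathrm e}(\bar\phi)]$, and the microscopic equation from \eqref{1.15}; the correction $\overline{G}$ in \eqref{3.4} is chosen precisely to cancel the $O(1)$ but non-time-integrable contribution of $P_1(v_1 M_y)$. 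Local existence and positivity of $F$ being standard, the whole game is the closed a priori estimate
\begin{equation*}
\sup_{\tau\in[0,T]}\mathcal{E}_{2,l,q}(\tau)+\int_0^T\mathcal{D}_{2,l,q}(\tau)\,d\tau + q_2\int_0^T q_3(\tau)\mathcal{H}_{2,l,q}(\tau)\,d\tau\leq k^{1/6}\varepsilon^{6/5-4a/5},
\end{equation*}
established under the bootstrap assumption that the left-hand side is bounded by, say, twice this quantity.

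The first layer is the zeroth-order estimate. I would multiply the fluid equations for $(\widetilde\rho,\widetilde u,\widetilde\theta,\widetilde\phi)$ by suitable multipliers chosen as a relative-entropy variable adapted to the quasineutral Euler system; the convexity of $P^\phi$ encoded in \eqref{1.21} and the monotonicity $\bar u_{1y}\geq 0$ of the 3-rarefaction wave produce the good dissipation $\int \bar u_{1y}(\widetilde\rho^{2}+\widetilde u_1^{2}+\widetilde\theta^{2})\,dy$ plus the corresponding $\widetilde\phi$ contribution via $\rho'_{\mathrm e}(\bar\phi)^{-1}$. Coupled with an $L^2_{y,v}$ estimate for $f$ using the coercivity $\langle L_M f,f\rangle\lesssim -|f|_\sigma^2$ on $\mathcal{N}^\perp$, this produces the zeroth order part of $\mathcal{D}_{2,l,q}$. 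The second layer applies $\partial^\alpha$ to the whole system for $|\alpha|\leq 2$; the top order is weighted by $\varepsilon^{2-2a}$ in \eqref{3.17} in order to absorb the factor $\varepsilon^{a-1}$ that appears when $\partial^\alpha Q(M,\cdot)$ hits the macroscopic variables. The sources of the form $\|\partial^\alpha(\bar u_y,\bar\theta_y,\bar\phi_y)\|_{L^\infty_y}\approx \varepsilon^{a}(\delta+\varepsilon^a\tau)^{-1}$ coming from the rarefaction wave (Lemma \ref{lem7.2}) are time-integrable, and with the calibration $\delta=k^{-1}\varepsilon^{3/5-2a/5}$ their time integral evaluates to exactly the target power $\varepsilon^{6/5-4a/5}$.

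The third and most delicate layer is the weighted-velocity estimate associated to $w(\alpha,\beta)=\langle v\rangle^{2(l-|\alpha|-|\beta|)}e^{q(\tau)\langle v\rangle}$. Differentiating $w^2$ in $\tau$ produces, thanks to \eqref{3.13}, the positive term $-2q_2q_3(\tau)\langle v\rangle w^2 f^2$, whose $y$-$v$ integral is the quartic dissipation $q_2q_3(\tau)\mathcal{H}_{2,l,q}(\tau)$. This extra dissipation is engineered to absorb the dangerous large-velocity contributions of the Vlasov transport $-\phi_y\partial_{v_1}f$ and of the commutators with $\partial_\tau w$: a crude Cauchy-Schwarz on $(\phi_y\partial_{v_1}f,wf)$ and on the analogous $\phi_\tau$-term yields exactly the building blocks $\varepsilon^{1-a}(\|\phi_\tau\|_{L^\infty_y}^2+\|\phi_y\|_{L^\infty_y}^2+\cdots)|\langle v\rangle^{1/2}f|_w^2$ of $q_3(\tau)\mathcal{H}_{2,l,q}$, which the choice $q_1=k^{1/24}$, $q_2=k^{-1/12}$ renders absorbable. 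The splitting of $q_3$ in Remark \ref{rem.q12} is then exploited: the background part $q_3^{I}$ is controlled pointwise via Lemma \ref{lem7.2}, while the perturbation part $q_3^{I\!I}$ is already part of $\mathcal{D}_{2,l,q}$ and hence integrable by the bootstrap.

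The principal obstacle is the simultaneous degeneracy of three mechanisms: the Debye factor $\varepsilon^{2b-2a}$ in front of $\widetilde\phi_{yy}$, which may be smaller than $1$ when $a>b$ and therefore loses control of $\widetilde\phi_y$; the cubic large-velocity decay of $\sigma^{ij}$ for Coulomb interactions, which makes the quadratic Landau dissipation weak at large $|v|$; and the absence of any uniform-in-$\tau$ decay for $\widetilde\phi$ itself, since the far-field values $\phi_{\pm}$ are distinct. Our novel weight addresses all three, but the price is an intricate $\varepsilon$-power accounting: matching the gains and losses in each of the three layers forces the range $(a,b)\in\mathcal{S}$ of \eqref{def.ab}, and in particular the condition $a\geq 4-5b$ when $b<2/3$. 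I expect the book-keeping here to be the hardest step, since even a single $\varepsilon$-power mismatch breaks the closure; the choice $\delta\sim k^{-1}\varepsilon^{3/5-2a/5}$ together with $q_1\sim k^{1/24}$ and $q_2\sim k^{-1/12}$ is the unique self-consistent calibration, as announced in \eqref{3.20}--\eqref{2.5}. Once the a priori bound is closed on $[0,T]$ with constant $k^{1/6}\varepsilon^{6/5-4a/5}\ll 1$, standard continuity plus local existence extends the solution to all $\tau\geq 0$, and the monotonicity of $q(\tau)$ guarantees $q_\infty>0$, justifying the weight a posteriori.
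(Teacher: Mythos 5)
Your plan follows essentially the same route as the paper: macro--micro decomposition around the smooth rarefaction profile with the correction $\overline{G}$, entropy--entropy-flux zeroth-order estimates exploiting $\bar u_{1y}>0$ and \eqref{1.21}, derivative estimates with the $\varepsilon^{2-2a}$ scaling at order two, the velocity weight $w(\alpha,\beta)$ whose time derivative generates the quartic dissipation $q_2q_3(\tau)\mathcal{H}_{2,l,q}$ with the split of $q_3$ as in Remark \ref{rem.q12}, the calibration $\delta=k^{-1}\varepsilon^{3/5-2a/5}$, and a continuity argument, which is exactly how the paper closes \eqref{2.5}. Two small corrections: the perturbed Poisson equation must keep the full potential, $-\varepsilon^{2b-2a}\phi_{yy}=\widetilde\rho+\rho_{\mathrm e}(\bar\phi)-\rho_{\mathrm e}(\phi)$ as in \eqref{3.10}, since the residual $\varepsilon^{2b-2a}\bar\phi_{yy}$ (absent from your linearized equation) is precisely the slowly decaying term whose treatment in \eqref{4.23} and \eqref{4.25a} forces $a\geq 4-5b$; and the Debye factor $\varepsilon^{2b-2a}$ is small (degenerate) when $b>a$, not when $a>b$.
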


Several remarks are in order.
\begin{remark}
To the best of our knowledge, Theorem \ref{thm2.1}  (or its rough version Theorem \ref{thm.rs}) seems to provide the first result regarding the hydrodynamic limit
with rarefaction waves for the kinetic equation under the effect of the electrostatic potential force. Moreover, the approach developed in this paper 
can  be applied to  other types of kinetic equations, such as Vlasov-Poisson-Boltzmann system with or without angular cutoff.
\end{remark}
\begin{remark}\label{rem2.5}
	We choose the Debye length $\lambda=\varepsilon^{b}$  with $b\in[\frac{3}{5},1]$ in Theorem \ref{thm2.1} in order to get a uniform convergence rate in $\varepsilon$. The range of the parameters $b$ is determined by the requirements \eqref{6.4b} in the later proof due to the slow time-decay rate in the non-trivial profile of the electric potential and the appearance of linear term in the electric potential; see \eqref{4.23} and \eqref{7.33}, respectively.
	This  phenomena is caused by the solution $F$ connecting two distinct global Maxwellians and the electric potential $\phi$
	connecting two  distinct constant states.
\end{remark}

\begin{remark}
The energy inequality \eqref{2.5} holds true at $\tau=0$ under the choice of the initial data \eqref{3.20}.	 Back to the original $(t,x)$ variable, the estimate \eqref{thm.rate} shows that the uniform convergence rate in small Knudsen number $\varepsilon>0$ can be variable with respect to the scaling parameter $a$
satisfying \eqref{3.2a}. From the proof, the obtained convergence rate in \eqref{2.5} seems optimal; see the detailed explanations of this point in
subsection \ref{sec.3.4}. In particular, choosing $a=\frac{2}{3}$ can give the fastest convergence rate $\varepsilon^{\frac{1}{3}}|\ln\varepsilon|$.
This is somewhat mysterious given that the $\varepsilon^{\frac{1}{3}}$ threshold is also sharp for Landau damping in the weakly collisional limit of kinetic equations
in \cite{Chaturvedi} for the VPL system and  \cite{Bedrossian} for the Vlasov-Poisson-Fokker-Planck system.
\end{remark}

\begin{remark}\label{rmk2.7}
We point out that the set of nonnegative smooth initial data $F_0$ satisfying \eqref{3.20} is nonempty. In fact, the initial value \eqref{1.1id} can be specifically chosen as
	\begin{equation}
		\label{2.50b}
		F_0(x,v)=M_{[\bar{\rho},\bar{u},\bar{\theta}]}(0,x,v), 
	\end{equation}
with $\phi_0(x)=\bar{\phi}(0,x)$, which automatically satisfies \eqref{3.20}, \eqref{1.4a} and \eqref{1.5a}. Here $(\bar{\rho},\bar{u},\bar{\theta},\bar{\phi})(0,x)$ are smooth in $x$ up to any order 
satisfying \eqref{1.27}.
	Indeed, in view of \eqref{2.50b} and the decomposition $F(t,x,v)=M_{[\rho,u,\theta]}(t,x,v)+G(t,x,v)$
	we have $M_{[\rho,u,\theta]}(0,x,v)=M_{[\bar{\rho},\bar{u},\bar{\theta}]}(0,x,v)$ and $G(0,x,v)=0$.
	This implies that $(\rho,u,\theta)(0,x)=(\bar{\rho},\bar{u},\bar{\theta})(0,x)$ and $\overline{G}(0,x,v)+\sqrt{\mu}f(0,x,v)=0$.
	Then it holds that
		$(\widetilde{\rho},\widetilde{u},\widetilde{\theta},\widetilde{\phi})(0,y)
		=({\rho}-\bar{\rho},u-\bar{u},\theta-\bar{\theta},\phi-\bar{\phi})(0,x)=0$
	and 
	$$
	f(0,x,v)=-\frac{\overline{G}(0,x,v)}{\sqrt{\mu}}.
	$$
	Under the scaling transformation \eqref{3.1}, we have from \eqref{3.17}, \eqref{7.24}, \eqref{7.25},  Lemma \ref{lem7.2} and 
	\eqref{3.21} that
	\begin{eqnarray*}
		\mathcal{E}_{2,l,q}(0)=&& 
		\sum_{|\alpha|\leq1}\|\partial^{\alpha}[\frac{\overline{G}(0)}{\sqrt{\mu}}]\|_{w}^{2}+\sum_{|\alpha|+|\beta|\leq 2,|\beta|\geq1}\|\partial^{\alpha}_{\beta}[\frac{\overline{G}(0)}{\sqrt{\mu}}]\|_{w}^{2}
		\notag\\
		&&
		+\varepsilon^{2-2a}\sum_{|\alpha|=2}\|\partial^{\alpha}[\frac{\overline{G}(0)}{\sqrt{\mu}}]\|_{w}^{2}\leq Ck\varepsilon^{\frac{7}{5}-\frac{3}{5}a}
		\leq Ck\varepsilon^{\frac{6}{5}-\frac{4}{5}a}.
	\end{eqnarray*}
	Therefore, one can use \eqref{2.50b} as a smooth approximation to Riemann data \eqref{thm.rd} in case of rarefaction waves.
\end{remark}

\begin{remark}
	Those  results 	
	\cite{Guo-JAMS,Chaturvedi,Dong-Guo,Duan-Yu1,Strain-Zhu,Wang}
for  the well-posedness theories of the VPL system rely crucially on the strong explicit time-decay rate of the solutions 
	to close the energy estimates. However, the time-decay rate of solutions is unavailable in the current problem with rarefaction wave,
	so we have developed a new weight function as in  \eqref{3.12} to overcome similar difficulties, see the detailed explanations of this point in subsection \ref{sec.2.3}.
	We remark that the explicit time rates of convergence to viscous rarefaction wave is still open
	for the compressible Navier-Stokes equations.
\end{remark}

\subsection{Other relevant literature}
We review some relevant literature including part of works mentioned before. First of all, let two parameters $\varepsilon>0$ and $\lambda>0$ be fixed to be unit. In the absence of self-consistent forces, the VPL system is reduced to the Landau equation. There are extensive studies on the pure Landau equation; among them, we only mention \cite{CaMi, Degond-Lemou, Guo-2002, KGH}. In particular, \cite{Guo-2002} gave the first global existence result for classical solutions of the Landau equation around global Maxwellians in a torus domain $\mathbb{T}^3$. Later, the result in \cite{Guo-2002} was extent in \cite{Guo-JAMS} to the VPL system in $\mathbb{T}^3$ through a more delicate energy method. The corresponding stability result for the VPL system in the whole space $\R^3$ was obtained by \cite{Strain-Zhu}; see also \cite{Duan-Yang-Zhao,Wang}. We mention two other recent important results \cite{Bedrossian-1, Dong-Guo} for the VPL system.
Moreover, regarding solutions around non-trivial asymptotic profiles in large time, the first and third authors of this paper \cite{Duan-Yu1, DY31} also constructed global classical solutions to the two-species VPL system around local Maxwellians with rarefaction waves either in the one-dimensional line with slab symmetry or in the three-dimensional infinite channel domain $\R\times \mathbb{T}^2$. The large time behavior of the electric potential in \cite{Duan-Yu1, DY31}  is trivial and the nontrivial case was considered in \cite{Duan-Liu-2015} for the Vlasov-Poisson-Boltzmann system describing the dynamics of only ions similar to \eqref{1.1}.


There is  numerous literature for the fluid limit problem as $\varepsilon\to 0$ in the context of the Boltzmann equation. We refer readers to \cite{Golse,Saint-Raymond} and references therein for many classical results. Here we mention  \cite{Caflish,Nishida, Guo-Jang-Jiang-2010} for the compressible fluid limit. In the setting of one space dimension, regarding the limit of the cutoff Boltzmann equation to the compressible Euler system which admits
specific solutions of basic wave patterns, we refer to \cite{Yu1} for shock wave, \cite{XinZeng} for rarefaction wave,
\cite{Huang-Wang-Yang} for contact discontinuity, and \cite{Hang-Wang2-Yang} for the composition of basic wave patterns.
These results are proved by using an energy method based on the macro-micro decomposition initiated by \cite{Liu-Yu} and developed by \cite{Liu-Yang-Yu}. 

For the fluid limit problem, it is much harder to treat the Landau equation than the cutoff Boltzmann equation since the Landau collision operator
exposes the velocity diffusion property. In the incompressible regime, a diffusive expansion to the Landau equation
in the framework of classical solutions in high-order Sobolev spaces was developed by \cite{Guo-CPAM};
see also two recent works \cite{CaMi-1} and \cite{Rachid}.
In the compressible regime, the compressible Euler limit for smooth solutions to the Landau equation near global Maxwellians was recently studied by the same authors of this paper \cite{Duan-Yang-Yu-2} and  \cite{Lei-1} via different energy methods. Regarding solutions with wave patterns, we also obtained  in \cite{Duan-Yang-Yu} an analogous result of \cite{XinZeng} for the convergence rate in small Knudsen number in case of rarefaction waves. As mentioned before, we follow the strategy of  \cite{Duan-Yang-Yu} to deal with the current problem on the convergence rate.


When $\lambda=0$ is further taken,  the limiting system  is  the compressible quasineutral Euler system that is \eqref{qnEuler} in the isentropic case. Hence, the compressible quasineutral Euler system  can be formally derived from the VPL system as both $\varepsilon$ and $\lambda$ tend to zero. However, the rigorous mathematical justification of
establishing such limit  still remains unknown, although it has been extensively studied for the Euler-Poisson system
and Vlasov-Poisson system, see for instance \cite{Cordier,Han-Kwan,Han-Kwan-2013,Han-Kwan-2015,Herda} and references therein. The current work is motivated to address the issue.

There is also a hard direction to study the asymptotic limit of the VPL system in the weak-collision regime corresponding to $\varepsilon\to +\infty$ or equivalently $1/{\varepsilon}\to 0$. Recently, by combining Guo's weighted energy method with the hypocoercive estimates and the commuting-vector-field techinque, a significant progress on the Landau damping of the VPL system  in the weakly collisional regime 
was obtained in \cite{Chaturvedi} for  the case $\varepsilon\to +\infty$ and $\lambda=1$; see also \cite{Bedrossian} for an earlier study of the relevant toy model. The non-cutoff Boltzmann case was recently investigated in \cite{BCD}. Readers may refer to \cite{Bedrossian,BCD, Chaturvedi} for more discussions on the subject. It should be very interesting to see whether or not the current approach basing on the macro-micro decomposition is also applicable in the weak-collision limit.

\subsection{Ideas of the proof}\label{sec.2.3}
We now outline some of the main difficulties and techniques involved in the paper.
The main analytical tool is the combination of techniques we have developed in \cite{Duan-Yang-Yu} for the Landau equation
with small Knudsen number and some new techniques 
for treating the major mathematical difficulties created by the nonlinear electric potential term
under the appropriate  scaling transformation \eqref{3.1}.
Compared to the previous work \cite{Duan-Yang-Yu}, the main difficulty in the proof for the VPL
system is to treat the electric potential $\phi$ and the Debye length $\lambda$ on the uniform energy estimates.
 
Since the linearized Landau operator with Coulomb interaction has no spectral gap
that results in the very weak velocity dissipation, the large-velocity growth in the nonlinear electric potential terms are hard to control.
To overcome those difficulties,  \cite{Guo-JAMS} introduced a useful weight function
$\langle v\rangle^{2(l-|\alpha|-|\beta|)}e^{\phi}$ and then established the following energy estimate for the solution of
\begin{equation}
\label{2.50a}
\frac{d}{dt}\widetilde{\mathcal{E}}_{2,l,q}(t)+\widetilde{\mathcal{D}}_{2,l,q}(t)
\leq C(\|\partial_{t}\phi(t)\|_{L^{\infty}}+\|\nabla_{x}\phi(t)\|_{L^{\infty}})\widetilde{\mathcal{E}}_{2,l,q}(t),	
\end{equation}
where $\widetilde{\mathcal{E}}_{2,l,q}(t)$ and $\widetilde{\mathcal{D}}_{2,l,q}(t)$ denote some energy functional and dissipation
functional respectively. In order to close this estimate, one key point of the proof is to obtain
the strong explicit time-decay rate of the electric potential such that 
\begin{equation}
	\label{2.50A}
\int^{+\infty}_0(\|\partial_{t}\phi(t)\|_{L^{\infty}}+\|\nabla_{x}\phi(t)\|_{L^{\infty}})\,dt\leq C.	
\end{equation}
A more detailed explanation of this point can be found in the
introduction of \cite{Guo-JAMS}. This type of method has been applied to many nice works, 
see \cite{Chaturvedi,Dong-Guo,Duan-Yu1,Duan-Yang-Zhao,Strain-Zhu,Wang} and references therein.

However, the approach in \cite{Guo-JAMS} cannot be directly applied to the current problem,
because the time-decay rate of the electric potential  is unavailable and the similar estimate \eqref{2.50A} is not longer true.
Therefore we have to develop new ideas. First of all, similar to the one in \cite{Guo-JAMS},  we use the
weight function $e^{\frac{1}{2}\phi}$ to  cancel the nonlinear electric potential term
$\frac{v_{1}}{2}\partial_{y}\phi\partial_{\beta}^{\alpha}f$
with the linear streaming term $v_{1}\partial_{\beta}^{\alpha}\partial_{y}f$, and then one has to deal with the following trouble term
\begin{equation}
 \label{2.8}
 \int_{\mathbb{R}}\int_{\mathbb{R}^{3}}(|\partial_{\tau}\phi|+|\partial_{y}\phi|)|\partial_{\beta}^{\alpha}f|^{2}\,dv\,dy.
\end{equation}
This term can no longer be controlled by the similar term on the right hand side of \eqref{2.50a}. 
Fortunately, we observe that \eqref{2.8} can be controlled by
\begin{multline}
\label{2.9}
\eta\varepsilon^{a-1}\int_{\mathbb{R}}\int_{\mathbb{R}^{3}}|\langle v\rangle^{-\frac{1}{2}}\partial_{\beta}^{\alpha}f|^{2}\,dv\,dy\\
+C_{\eta}\varepsilon^{1-a}\|(\partial_{\tau}\phi,\partial_{y}\phi)\|^{2}_{L_{y}^{\infty}}\int_{\mathbb{R}}\int_{\mathbb{R}^{3}}|\langle v\rangle^{\frac{1}{2}}\partial_{\beta}^{\alpha}f|^{2}\,dv\,dy.
\end{multline} 
The first term in \eqref{2.9} can be absorbed by  the linearized Landau operator associated with \eqref{3.16}.
To control the second term in \eqref{2.9}, we design a new important velocity-time-$\varepsilon$ dependent weight function
$w(\alpha,\beta)(\tau,v)$ given in \eqref{3.12}.
The factor $\exp\{q(\tau)\langle v\rangle^{2}\}$ in \eqref{3.12} is used to induce an extra
{\it quartic} energy dissipation term
\begin{multline}
	\label{2.54A}
q_2q_3(\tau)\big\{\sum_{|\alpha|\leq 1}\|\langle v\rangle^{\frac{1}{2}}\partial^{\alpha}f(\tau)\|_{w}^{2}+
\sum_{|\alpha|+|\beta|\leq 2,|\beta|\geq1}\|\langle v\rangle^{\frac{1}{2}}\partial^{\alpha}_{\beta}f(\tau)\|_{w}^{2}\\
+\varepsilon^{2-2a}\sum_{|\alpha|=2}\|\langle v\rangle^{\frac{1}{2}}\partial^{\alpha}f(\tau)\|_{w}^{2}\big\},
\end{multline}
for absorbing the second term in \eqref{2.9}. One of the key observations is that
the function $q_{3}(\tau)$ in $q(\tau)$ constructed by \eqref{3.14} is integrable about all time $\tau$ by using the high-order dissipation rate of 
$\widetilde{\phi}$ and the time-decay properties of the profile $\bar{\phi}$.
Note that the extra dissipation \eqref{2.54A} is quartic due to the dependence of $q_3(\tau)$ on the normal energy dissipation as mentioned in Remark \ref{rem.q12}.
The other factor $\langle v\rangle^{2(l-|\alpha|-|\beta|)}$ in \eqref{3.12} is used to take care of the derivative estimates of the free streaming term and
the velocity derivative of $f$. We would emphasize that such weighted function looks
more robust with possible applications to many other problems in the similar situation where the time-decay of solutions is very slow or unvaivalable.

Since the electric potential may take distinct constant states at both far fields,
and then the profile of the electric potential is constructed on the basis of the quasineutral assumption as $\lambda\to 0$,
and hence the potential profile $\bar{\phi}$ has a slow time-decay, see Lemma \ref{lem7.1}.  It is quite nontrivial to estimate the coupling
term $\partial_x\phi$ in the momentum equation as in \eqref{1.18}. For instance, we need to carry out the energy estimates on the integral term in \eqref{4.22}. For the purpose,  the key point is to use the good dissipative property from the Poisson equation after expanding the electron density function $\rho_{\mathrm{e}}(\phi)$ to the third order so that we can observe some new cancellations and obtain the higher order nonlinear terms, see the estimate in \eqref{4.24}.

As mentioned in \cite{Duan-Yang-Yu}, we need to subtract $\overline{G}(t,x,v)$ in \eqref{3.4} from $G(t,x,v)$ to remove the slow time-decay inhomogeneous terms,
because those terms induce the energy term  $\|(\bar{u}_{y},\bar{\theta}_{y})(\tau)\|^{2}$
in the $L^2$ estimate under the scaling transform \eqref{3.1} and
the time decay of $\|(\bar{u}_{y},\bar{\theta}_{y})(\tau)\|^{2}$  is $\varepsilon^{a}(\delta+\varepsilon^{a}\tau)^{-1}$ by Lemma \ref{lem7.2}, which is not integrable in $\tau$ over $(0,+\infty)$.  Note that the inverse of linearized operator $L^{-1}_M$ defined as \eqref{1.16}  is very complicated  due to the velocity diffusion effect of collision operator.
In order to handle the terms involving $L^{-1}_M$, we make use of the Burnett functions and velocity-decay properties to deal with them. This makes that the estimates
can be obtained in a clear way, see the identities \eqref{4.11} and \eqref{4.12} for details. 

Similar to the one in \cite{Duan-Yang-Yu,Duan-Yu1}, we use the decomposition 
$$
F=M+\overline{G}+\sqrt{\mu}f
$$ 
to improve the previous decomposition in \cite{Liu-Yu,Liu-Yang-Yu} such that some similar
basic estimates on the linearized Landau operator $\mathcal{L}$ and nonlinear
Landau operator $\Gamma$ around global Maxwellians in \cite{Strain-Guo-2008,Guo-JAMS} can be
adopted in a convenient way for the current problem, for instance, see Lemma \ref{lem7.5} and Lemma \ref{lem7.6}.
Note that the unknown $f$ is purely microscopic and it is essentially different from the one in \cite{Guo-2002,Guo-JAMS}, 
and hence  we can obtain the nice trilinear estimate, such as \eqref{7.30} in the one-dimensional
setting. However, whenever $f$ involves any macroscopic part, it seems impossible to obtain the trilinear estimate \eqref{7.30} in
case of the one-dimensional whole space $\mathbb{R}$  because the zero order macroscopic part is no longer in the dissipation rate
and  the Sobolev imbedding inequality in one dimension is quite different with that in three dimensions.

The last important ingredient of the proof is  based on a scaling transformation of the independent variables takes the form of $y=\varepsilon^{-a}x$, $\tau=\varepsilon^{-a}t$ involving with a free parameter $a$, which is related to the parameter $b$ in the Debye length;  see the detailed explanations of this point in \eqref{3.2a}. Under this scaling transformation, the obtained convergence rate in \eqref{thm.rate} seems optimal, but the calculation is more complicated due to two free parameters.
In particular, one has to deal with some difficulties caused by the higher order derivatives estimates such as \eqref{4.89A}.
Because of $\varepsilon^{a-1}$ factor in front of the fluid part $\|\partial^{\alpha}(\widetilde{\rho},\widetilde{u},\widetilde{\theta})\|^{2}$ in \eqref{4.89A}, one has to multiply the estimate \eqref{4.89A} by $\varepsilon^{2-2a}$
so that the fluid term can be controlled by $\mathcal{D}_{2,l,q}(\tau)$  in \eqref{3.18}. Hence, this motives how we include the Knudsen number $\varepsilon$ to the highest-order derivative $|\alpha|=2$ for the energy functional $\mathcal{E}_{2,l,q}(\tau)$ in \eqref{3.17}.
In the end, the desired goal is to obtain the uniform a priori estimate \eqref{2.5} and derive the convergence rate in \eqref{thm.rate}.

\section{A priori assumption}\label{sec.3}
In this section, we reformulate the VPL system and make the a priori assumption for the solutions 
under the scaling transformation \eqref{3.1}.

\subsection{Reformulation of equations}
To prove Theorem \ref{thm2.1}, recall that we are concerned with 
\begin{equation}
	\label{3.5}
	\left\{
	\begin{aligned}
		F_{\tau}+v_{1}F_{y}-\phi_{y}\partial_{v_{1}}F&=\varepsilon^{a-1}Q(F,F),
		\\
		-\varepsilon^{2b-2a}\phi_{yy}&=\rho-\rho_{\mathrm{e}}(\phi), \quad \rho=\int_{\mathbb{R}^{3}}F\,dv,
	\end{aligned} \right.
\end{equation}
with 
\begin{equation}
	\label{3.7b}
	\lim_{y\to\pm\infty}F_{0}(y,v)=M_{[\rho_{\pm},u_{\pm},\theta_{\pm}]}(v), \quad \lim_{y\to\pm\infty}\phi(t,y)=\phi_{\pm}.
\end{equation}

Next we derive the equations of $f$ in \eqref{3.3} and $(\widetilde{\rho},\widetilde{u},\widetilde{\theta},\widetilde{\phi})$ in \eqref{3.2}.
From \eqref{1.15} and \eqref{3.1}, one has
\begin{equation}
	\label{3.6}
	G_{\tau}+P_{1}(v_{1}G_{y})+P_{1}(v_{1}M_{y})-\phi_{y}\partial_{v_{1}}G=\varepsilon^{a-1}L_{M}G+\varepsilon^{a-1}Q(G,G).
\end{equation}
Define
\begin{equation}
	\label{3.8}
	\Gamma(h,g):=\frac{1}{\sqrt{\mu}}Q(\sqrt{\mu}h,\sqrt{\mu}g),
	\quad \mathcal{L}h:=\Gamma(\sqrt{\mu},h)+\Gamma(h,\sqrt{\mu}).
\end{equation}
Then we obtain the equation for $f$ in terms of $G=\overline{G}+\sqrt{\mu}f$ and \eqref{3.6} as
\begin{eqnarray}
	\label{3.7}
	&&	f_{\tau}+v_{1}f_{y}+\frac{v_{1}}{2}\phi_{y}f-\phi_{y}\partial_{v_{1}}f
	-\varepsilon^{a-1}\mathcal{L}f
	\notag\\
	=&&\varepsilon^{a-1}\{\Gamma(f,\frac{M-\mu}{\sqrt{\mu}})+
	\Gamma(\frac{M-\mu}{\sqrt{\mu}},f)+\Gamma(\frac{G}{\sqrt{\mu}},\frac{G}{\sqrt{\mu}})\}+\frac{P_{0}(v_{1}\sqrt{\mu}f_{y})}{\sqrt{\mu}}
	\notag\\
	&&-\frac{1}{\sqrt{\mu}}P_{1}\{v_{1}M(\frac{|v-u|^{2}
		\widetilde{\theta}_{y}}{2R\theta^{2}}+\frac{(v-u)\cdot\widetilde{u}_{y}}{R\theta})\}\notag\\
&&	+\frac{\phi_{y}\partial_{v_{1}}\overline{G}}{\sqrt{\mu}}-\frac{P_{1}(v_{1}\overline{G}_{y})}{\sqrt{\mu}}
	-\frac{\overline{G}_{\tau}}{\sqrt{\mu}}.
\end{eqnarray}
Here the following two identities have been used:
\begin{equation*}
	P_{1}(v_{1}M_{y})=P_{1}\{v_{1}M(\frac{|v-u|^{2}
		\widetilde{\theta}_{y}}{2R\theta^{2}}+\frac{(v-u)\cdot\widetilde{u}_{y}}{R\theta})\}
	+\varepsilon^{a-1}L_{M}\overline{G},
\end{equation*}
and
\begin{equation*}
	\frac{1}{\sqrt{\mu}}L_{M}(\sqrt{\mu}f)
	=\mathcal{L}f+\Gamma(f,\frac{M-\mu}{\sqrt{\mu}})+
	\Gamma(\frac{M-\mu}{\sqrt{\mu}},f).
\end{equation*}
For the linearized operator  $\mathcal{L}$ in \eqref{3.8}, one has the following standard facts \cite{Guo-2002}. First, 
$\mathcal{L}$ is self-adjoint and non-positive, and its null space $\ker\mathcal{L}$ is spanned by the basis $\{\sqrt{\mu},v\sqrt{\mu},|v|^{2}\sqrt{\mu}\}$. Moreover, there exists $c_{1}>0$ such that
\begin{equation}
	\label{3.9}
	-\langle\mathcal{L}g, g \rangle\geq c_{1}|g|^{2}_{\sigma}
\end{equation}
for any $g\in (\ker\mathcal{L})^{\perp}$. We would point out that $f\in (\ker\mathcal{L})^{\perp}$ in \eqref{3.7} is purely
microscopic since $G$ and $\overline{G}$ are purely microscopic.

On the other hand, subtracting \eqref{1.28} from \eqref{1.18} and using \eqref{3.1},
we obtain the equations for $(\widetilde{\rho},\widetilde{u},\widetilde{\theta},\widetilde{\phi})$ as 
\begin{equation}
	\label{3.10}
	\left\{
	\begin{aligned}
		&\widetilde{\rho}_{\tau}+(\rho\widetilde{u}_{1})_{y}+\bar{u}_{1}\widetilde{\rho}_{y}+\bar{u}_{1y}\widetilde{\rho}=0,
		\\
		&\widetilde{u}_{1\tau}+\bar{u}_{1}\widetilde{u}_{1y}+\widetilde{u}_{1}u_{1y}
		+\frac{P_{y}}{\rho}-\frac{\bar{P}_{y}}{\bar{\rho}}+\widetilde{\phi}_{y}\\
		&\quad=\varepsilon^{1-a}\frac{4}{3\rho}(\mu(\theta)u_{1y})_{y}-\frac{1}{\rho}(\int_{\mathbb{R}^{3}} v^{2}_{1}L^{-1}_{M}\Theta\,dv)_{y},
		\\
		&\widetilde{u}_{i\tau}+u_{1}\widetilde{u}_{iy}
		=\varepsilon^{1-a}\frac{1}{\rho}(\mu(\theta)u_{iy})_{y}-\frac{1}{\rho}(\int_{\mathbb{R}^3} v_{1}v_{i}L^{-1}_{M}\Theta\,dv)_{y}, ~~i=2,3,
		\\
		&\widetilde{\theta}_{\tau}+\frac{2}{3}\theta\widetilde{u}_{1y}+\frac{2}{3}\widetilde{\theta}\bar{u}_{1y}
		+\theta_{y}\widetilde{u}_{1}+\bar{u}_{1}\widetilde{\theta}_{y}\\
		&\quad=\varepsilon^{1-a}\frac{1}{\rho}(\kappa(\theta)\theta_{y})_{y}
		+\varepsilon^{1-a}\frac{4}{3\rho}\mu(\theta)u^{2}_{1y}
		 +\varepsilon^{1-a}\frac{1}{\rho}\mu(\theta)(u^{2}_{2y}+u^{2}_{3y})\\
		&\hspace{0.9cm}+\frac{1}{\rho}u\cdot(\int_{\mathbb{R}^3} v_{1}v L^{-1}_{M}\Theta\, dv)_{y}
		-\frac{1}{\rho}(\int_{\mathbb{R}^3}v_{1}\frac{|v|^{2}}{2}L^{-1}_{M}\Theta\, dv)_{y},
		\\
		&-\varepsilon^{2b-2a}\phi_{yy}=\widetilde{\rho}+\rho_{\mathrm{e}}(\bar{\phi})-\rho_{\mathrm{e}}(\phi),
	\end{aligned} \right.
\end{equation}
where $\Theta$ is given by
\begin{equation}
	\label{3.11}
	\Theta:=\varepsilon^{1-a}G_{\tau}+\varepsilon^{1-a}P_{1}(v_{1}G_{y})-\varepsilon^{1-a}\phi_{y}\partial_{v_{1}}G-Q(G,G).
\end{equation}
Similarly, from \eqref{1.28}, \eqref{1.14} and \eqref{3.1}, we get
\begin{equation}
	\label{4.31}
	\left\{
	\begin{aligned}
		&\widetilde{\rho}_{\tau}+(\rho\widetilde{u}_{1})_{y}+\bar{u}_{1}\widetilde{\rho}_{y}+\bar{u}_{1y}\widetilde{\rho}=0,
		\\
		&\widetilde{u}_{1\tau}+\widetilde{u}_{1}u_{1y}+\bar{u}_{1}\widetilde{u}_{1y}\\
		&\qquad\qquad+\frac{2}{3}\widetilde{\theta}_{y}
		+\frac{2}{3}\rho_{y}\frac{\bar{\rho}\widetilde{\theta}-\widetilde{\rho}\bar{\theta}}
		{\rho\bar{\rho}}+\frac{2\bar{\theta}}{3\bar{\rho}}\widetilde{\rho}_{y}+\widetilde{\phi}_{y}
		=-\frac{1}{\rho}\int_{\mathbb{R}^3}  v^{2}_{1}G_{y}\,dv,
		\\
		&\widetilde{u}_{i\tau}+\widetilde{u}_{1}\widetilde{u}_{iy}+\bar{u}_{1}\widetilde{u}_{iy}
		=-\frac{1}{\rho}\int_{\mathbb{R}^3}  v_{1}v_{i}G_{y}\,dv, ~~i=2,3,
		\\
		&\widetilde{\theta}_{\tau}+\frac{2}{3}\theta\widetilde{u}_{1y}+\frac{2}{3}\widetilde{\theta}\bar{u}_{1y}
		+\theta_{y}\widetilde{u}_{1}+\bar{u}_{1}\widetilde{\theta}_{y}=-\frac{1}{\rho}\int_{\mathbb{R}^3} \frac{1}{2}v_{1}v\cdot(v-2u)G_{y}\,dv.
	\end{aligned} \right.
\end{equation}

\subsection{A priori assumption}\label{sec.3.4}

The local existence of solutions for the Cauchy problem \eqref{3.5} and \eqref{3.7b} of the VPL system can be proved by following the same strategy as in \cite{Guo-JAMS} and details of the proof are omitted for brevity.
To obtain the global-in-time existence of solutions, we mainly focus on establishing the uniform a priori estimates of solutions.
For this, we first choose $\delta$ in \eqref{1.25} as
\begin{equation}
	\label{3.21}
	\delta=\frac{1}{k}\varepsilon^{\frac{3}{5}-\frac{2}{5}a},
\end{equation}
for a small constant $k>0$ as in \eqref{3.20} independent of $\varepsilon$, and we further let $\varepsilon>0$ 
be arbitrarily chosen such that $k\gg \varepsilon$ and $0<\delta<\delta_{0}$ for a small constant $\delta_{0}>0$. Then, for
an arbitrary time $\tau_{1}\in(0,+\infty)$, 
we make the a priori assumption:
\begin{equation}
	\label{3.22}
	\sup_{0\leq\tau\leq\tau_{1}}\mathcal{E}_{2,l,q}(\tau)+\int^{\tau_{1}}_{0}\mathcal{D}_{2,l,q}(\tau)\,d\tau
	\leq k^{\frac{1}{6}}\varepsilon^{\frac{6}{5}-\frac{4}{5}a}.
\end{equation}
Here $\mathcal{E}_{2,l,q}(\tau)$ and $\mathcal{D}_{2,l,q}(\tau)$ are given by \eqref{3.17} and \eqref{3.18} respectively, and $a$ satisfies \eqref{3.2a}.

With \eqref{3.21}, \eqref{3.22}, \eqref{3.13}, \eqref{3.14}, Lemma \ref{lem7.2} and the assumptions in Theorem \ref{thm2.1} in hand,
for any $\tau\in(0,\tau_{1}]$ and given a constant $C_{1}>0$ with $C_1 k^{\frac{1}{24}}<1$, we can claim that
\begin{equation}\label{3.23}
\left\{\begin{aligned}
&\int_0^{\tau}q_3(s)\,ds\leq C_1k^{\frac{1}{6}},\\ &\varepsilon^{1-a}q_{2}q_{3}(\tau)+q_2\int_0^\tau q_3(s)\,ds\leq 2C_1k^{\frac{1}{12}},\\
&\sup_{0\leq \tau\leq \tau_{1}}q(\tau)\in(0,q_{1}).
\end{aligned}\right.
\end{equation}
	Here we have used \eqref{4.56} such that
	$$
	\|\widetilde{\phi}_\tau\|^{2}_{L_{y}^{\infty}}\leq C\|(\widetilde{\rho}_{y},\widetilde{u}_{y})\|^2+C\|(\widetilde{\rho}_{yy},\widetilde{u}_{yy})\|^2
	+C\varepsilon^{\frac{7}{5}+\frac{1}{15}a}(\delta+\varepsilon^{a}\tau)^{-\frac{4}{3}}.
	$$
Using the following 1D embedding inequality
\begin{equation*}
	\|g\|_{L^{\infty}(\mathbb{R})}\leq \sqrt{2}\|g\|^{\frac{1}{2}}\|g'\|^{\frac{1}{2}},
	\quad \mbox{for any}\  g=g(y)\in H^{1}(\mathbb{R}),
\end{equation*}
and \eqref{3.22} as well as  \eqref{2.2}, it holds uniformly in all $(t,x)$ that
\begin{equation}\label{3.24}
\left\{\begin{aligned}
&	|\rho(t,x)-1|+|u(t,x)|+|\theta(t,x)-\frac{3}{2}|<C(\eta_{0}+k^{\frac{1}{12}}\varepsilon^{\frac{3}{5}-\frac{2}{5}a}),\\
&	1<\theta(t,x)<3.
\end{aligned}\right.
\end{equation}

In what follows we formally explain why one has to choose such $\delta$ as \eqref{3.21} and the a priori assumption as \eqref{3.22}. Indeed, if one assumes that
$$
\sup_{0\leq\tau\leq\tau_{1}}\mathcal{E}_{2,l,q}(\tau)+\int^{\tau_{1}}_{0}\mathcal{D}_{2,l,q}(\tau)\,d\tau\leq O(1)\varepsilon^{p}
$$
with a constant $p\geq0$, then this together with the 1D embedding inequality and Lemma \ref{lem7.1} gives rise to
\begin{eqnarray*}
	&&\|(\rho,u,\theta,\phi)(t,x)-(\rho^{R},u^{R},\theta^{R},\phi^{R})(\frac{x}{t})\|_{L^{\infty}(\mathbb{R})}
	\notag\\
	&&\leq\|(\widetilde{\rho},\widetilde{u},\widetilde{\theta},\widetilde{\phi})(\tau,y)\|_{L^{\infty}(\mathbb{R})}
	+\|(\bar{\rho},\bar{u},\bar{\theta},\bar{\phi})(t,x)-(\rho^{R},u^{R},\theta^{R},\phi^{R})(\frac{x}{t})\|_{L^{\infty}(\mathbb{R})}
	\notag\\
	&&\leq C\varepsilon^{\frac{p}{2}}+Ct^{-1}\delta(\ln(1+t)+|\ln\delta|),
\end{eqnarray*}
for any $t>0$. Choosing $\delta=O(1)\varepsilon^{\frac{p}{2}}$, we can see that
the above estimate in the vanishing Knudsen number $\varepsilon>0$ is optimal.
Another important ingredient of the proof is to deal with the slow time decay of the term in \eqref{4.14A} in the way that
\begin{eqnarray*}
	\varepsilon^{1-a}\int^{\infty}_{0}\|\widetilde{\theta}\|^{\frac{2}{3}}\|\bar{\theta}_{yy}\|^{\frac{4}{3}}_{L^{1}}\,d\tau
	&&\leq C\varepsilon^{1-a}\int^{\infty}_{0}\varepsilon^{\frac{p}{3}}\varepsilon^{\frac{4}{3}a}(\delta+\varepsilon^{a}\tau)^{-\frac{4}{3}}\,d\tau
	\notag\\
	&&\leq C\varepsilon^{1-a+\frac{1}{3}p+\frac{1}{3}a}\delta^{-\frac{1}{3}}=O(1)\varepsilon^{1-\frac{2}{3}a+\frac{1}{6}p},
\end{eqnarray*}
where in the last identity we have used $\delta=O(1)\varepsilon^{\frac{p}{2}}$.
To close the a priori assumption, one has to require that
\begin{equation*}
	\varepsilon^{1-\frac{2}{3}a+\frac{1}{6}p}\leq \varepsilon^{p}, \quad \mbox{that~~~is}\quad p\leq\frac{6}{5}-\frac{4}{5}a.
\end{equation*}
Note that the convergence rate is fastest by taking $p=\frac{6}{5}-\frac{4}{5}a$. We thereby
obtain the sharp convergence rate under the condition of \eqref{3.22} and \eqref{3.21}.

\begin{remark}\label{rmk.ea}
Given the optimal choice of $p=\frac{6}{5}-\frac{4}{5}a$, we further explain that $a=\frac{2}{3}$ is sharp for closing the energy estimate. In fact, recall that the Knudsen number for the scaled problem \eqref{3.5} is given as $\varepsilon^{1-a}$. Hence, the macroscopic dissipation based on the Navier-Stokes equations much take the form of
$$
\varepsilon^{1-a}\sum_{1\leq|\alpha|\leq 2}\|\partial^{\alpha}(\widetilde{\rho},\widetilde{u},\widetilde{\theta})(\tau)\|^{2}
$$ 
as in \eqref{3.18}. In the  derivative estimate, one need to treat bounds of some cubic nonlinear terms, for instance, see \eqref{4.54a}. Therefore, it is necessary to require that 
$
\varepsilon^{\frac{p}{2}-(1-a)}
$
is bounded for any small $\varepsilon>0$, which then implies $\frac{p}{2}-(1-a)\geq 0$ and hence $a\geq \frac{2}{3}$.  
\end{remark}

\section{Basic estimates}\label{sec.4}
In this section, we give some  necessary estimates that will be used in the later energy analysis. We emphasize that in all estimates below, the generic constant $C$ at different places is independent of the
time $\tau_1$ as well as all small parameters $\varepsilon$, $k$, $\delta$ and $\eta_{0}$. 
\subsection{Properties of rarefaction wave}
We first list the properties of the smooth approximate 3-rarefaction wave $(\bar{\rho},\bar{u},\bar{\theta},\bar{\phi})(t,x)$ constructed in \eqref{1.27}.
\begin{lemma}\label{lem7.1}
	Let $\delta_r=|\rho_{+}-\rho_{-}|+|u_{+}-u_{-}|+|\theta_{+}-\theta_{-}|$ be the wave strength of the approximate rarefaction wave
	$(\bar{\rho},\bar{u},\bar{\theta},\bar{\phi})(t,x)$ constructed in \eqref{1.27}, then $(\bar{\rho},\bar{u},\bar{\theta},\bar{\phi})(t,x)$ is smooth in $t$ and $x$, and the following  holds true:
	\\
	\mbox{(i)}~~$\bar{u}_{2}=\bar{u}_{3}=0$,\quad $\bar{u}_{1x}>0$ and $\rho_{-}<\bar{\rho}(t,x)<\rho_{+}$,\quad
	$u_{1-}<\bar{u}_{1}(t,x)<u_{1+}$,
	and
	$$
	\bar{\theta}_{x}=\frac{1}{2\pi e}e^{S_{*}}\bar{\rho}^{-\frac{1}{3}}\bar{\rho}_{x}
	=\frac{1}{2\pi e}e^{S_{*}}\bar{\rho}^{-\frac{1}{3}}\frac{1}{\sqrt{\frac{5}{3}\frac{1}{2\pi e}
			\bar{\rho}^{-\frac{4}{3}}e^{S_{*}}
			+\bar{\rho}^{-1}\big(\frac{d}{d\rho}\rho^{-1}_{\mathrm{e}}(\bar{\rho})\big)}}
	\bar{u}_{1x},$$
	\\
	for any $x\in\mathbb{R},~t\geq 0$.
	\\
	\mbox{(ii)}~~The following estimates hold for all $t> 0$, $\delta>0$ and $p\in[1,+\infty]$,
	\begin{eqnarray*}
		&&\|\partial_{x}(\bar{\rho},\bar{u}_{1},\bar{\theta},\bar{\phi})(t,x)\|_{L^{p}(\mathbb{R}_{x})}\leq C(\delta+t)^{-1+\frac{1}{p}},
		\notag\\
		&&\|\partial_{x}^{j}(\bar{\rho},\bar{u}_{1},\bar{\theta},\bar{\phi})(t,x)\|_{L^{p}(\mathbb{R}_{x})}\leq C\delta^{-j+1+\frac{1}{p}}(\delta+t)^{-1},\quad j\geq 2.
	\end{eqnarray*}
	\mbox{(iii)}~~For $t>0$ and $\delta\in(0,\delta_{0})$ with $\delta_{0}\in(0,1)$, it holds that
	$$
	\|(\bar{\rho},\bar{u},\bar{\theta},\bar{\phi})(t,x)-(\rho^{R},u^{R},\theta^{R},\phi^{R})(\frac{x}{t})\|_{L^{\infty}(\mathbb{R}_{x})}\leq C\delta t^{-1}\{\ln(1+t)+|\ln\delta|\}.
	$$
\end{lemma}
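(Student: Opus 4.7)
My plan is to reduce the entire lemma to classical properties of the smooth Burgers approximation $\overline{\omega}_\delta(t,x)$ solving \eqref{1.25}, together with the pointwise algebraic relations in \eqref{1.27}. The starting observation is that the initial datum $\overline{\omega}(0,x)=\tfrac12(\omega_++\omega_-)+\tfrac12(\omega_+-\omega_-)\tanh(x/\delta)$ is $C^\infty$ and strictly increasing with range $(\omega_-,\omega_+)$, so the method of characteristics along the lines $X(t;x_0)=x_0+t\,\overline{\omega}(0,x_0)$ yields a unique global smooth solution $\overline{\omega}_\delta(t,\cdot)$ that is strictly increasing in $x$ and remains trapped in $(\omega_-,\omega_+)$ for all $t\ge 0$. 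The crucial assumption $\omega_-<\omega_+$ is guaranteed by $(\rho_+,u_{1+},\theta_+)\in R_3(\rho_-,u_{1-},\theta_-)$, which puts us in the rarefaction (non-shock-forming) regime.

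For part (i), $\bar u_2=\bar u_3=0$ is built into the construction \eqref{1.27}. Along the curve $R_3$, the map $\rho\mapsto\lambda_3(\rho,u_1(\rho),S_*)$ is strictly increasing, since $\partial_\rho(P+P^\phi)>0$ by \eqref{1.21} and $\partial_\rho P>0$. Hence $\overline{\omega}_\delta\in(\omega_-,\omega_+)$ transfers monotonically to $\bar\rho\in(\rho_-,\rho_+)$ and then, via the integral in the second line of \eqref{1.27}, to $\bar u_1\in(u_{1-},u_{1+})$; strict positivity of $\overline{\omega}_{\delta,x}$ together with invertibility of this map yields $\bar u_{1x}>0$. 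Differentiating $\bar\theta=\tfrac{3}{2}\tfrac{1}{2\pi e}e^{S_*}\bar\rho^{2/3}$ gives $\bar\theta_x=\tfrac{1}{2\pi e}e^{S_*}\bar\rho^{-1/3}\bar\rho_x$, and rewriting $\bar\rho_x$ via the $(\bar u_1,\bar\rho)$ relation from the second line of \eqref{1.27} produces the displayed identity.

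For part (ii), the $L^p$ bounds are the classical Burgers-rarefaction estimates of \cite{Matsumura-Nishihara,Xin} applied to $\overline{\omega}_{\delta,x}$ and its higher $x$-derivatives: the characteristic flow expands lengths by a factor $\sim(\delta+t)/\delta$ in the sonic region, giving $\|\overline{\omega}_{\delta,x}(t)\|_{L^p}\lesssim(\delta+t)^{-1+1/p}$ and $\|\partial_x^j\overline{\omega}_\delta(t)\|_{L^p}\lesssim\delta^{-j+1+1/p}(\delta+t)^{-1}$ for $j\ge 2$. The chain/Fa\`a di Bruno rule transfers these to $\bar\rho,\bar u_1,\bar\theta$ using the smoothness of the $R_3$-parametrisation, and to $\bar\phi=\rho_{\mathrm{e}}^{-1}(\bar\rho)$ using $(\mathcal{A}_2)$, which ensures $(\rho_{\mathrm{e}}^{-1})'=1/\rho_{\mathrm{e}}'>0$ is smooth and bounded on the compact range $[\rho_-,\rho_+]\subset(\rho_m,\rho_M)$.

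For part (iii) one compares $\overline{\omega}_\delta(t,x)$ with the Lipschitz self-similar Burgers profile $\omega^R(x/t)$: away from the two sonic lines the characteristics of \eqref{1.25} are straight and $\overline{\omega}_\delta$ matches $\omega^R(x/t)$ up to a tail controlled by the exponential decay of $\tanh$, while inside the two sonic boundary layers of width $O(\delta)$ one uses Lipschitz continuity of $\omega^R$; the total error is $O(\delta t^{-1}(\ln(1+t)+|\ln\delta|))$, a classical estimate proved in \cite{Matsumura-Nishihara,Xin}. Composing with the smooth maps from \eqref{1.24}, \eqref{1.27} yields the stated $L^\infty$ bound, where as above the bound on $\bar\phi-\phi^R$ follows from the bound on $\bar\rho-\rho^R$ and the Lipschitz regularity of $\rho_{\mathrm{e}}^{-1}$ on $[\rho_-,\rho_+]$. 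The genuinely routine ingredient is the characteristic analysis for \eqref{1.25}, fully carried out in \cite{Matsumura-Nishihara,Xin}; the only potential obstacle here is handling $\bar\phi$, but this is overcome simply by invoking $(\mathcal{A}_1)$--$(\mathcal{A}_2)$ to treat $\rho_{\mathrm{e}}^{-1}$ as a smooth change of variables on a fixed compact set independent of $\delta$ and $t$.
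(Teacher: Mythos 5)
Your proposal is correct and follows essentially the same route as the paper, which simply defers the proof to \cite{Duan-Liu-2015,Huang-Li-Wang,Xin}; those references carry out exactly the Burgers-characteristic analysis, the monotone transfer through the $R_3$-parametrisation, and the treatment of $\bar{\phi}=\rho_{\mathrm{e}}^{-1}(\bar{\rho})$ via the assumptions $(\mathcal{A}_1)$--$(\mathcal{A}_2)$ that you outline. The only small imprecision is in part (i): strict monotonicity of $\rho\mapsto\lambda_{3}(\rho,u_{1}(\rho),S_{*})$ along $R_3$ uses $u_1'(\rho)=\sqrt{\partial_\rho P+\partial_\rho P^{\phi}}/\rho>0$ together with the convexity $\partial_\rho^{2}(P+P^{\phi})\geq 0$ from \eqref{1.21} (not merely positivity of $\partial_\rho(P+P^{\phi})$), but both facts are available and the conclusion stands.
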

\begin{proof}
	The proof of Lemma \ref{lem7.1} can be found in \cite{Duan-Liu-2015,Huang-Li-Wang,Xin}.
\end{proof}

Since the transformation $y=\varepsilon^{-a}x$ and $\tau=\varepsilon^{-a}t$ is
considered through the proof, we give the following lemma, which is equivalent to Lemma \ref{lem7.1} (ii).
\begin{lemma}\label{lem7.2}
	For any $\tau> 0$, $\delta>0$ and $p\in[1,+\infty]$, the following estimates hold 
	\begin{eqnarray*}
		&&\|\partial_{y}(\bar{\rho},\bar{u}_{1},\bar{\theta},\bar{\phi})(\tau,y)\|_{L^{p}(\mathbb{R}_{y})}\leq C
		\varepsilon^{a(1-\frac{1}{p})}(\delta+\varepsilon^{a}\tau)^{-1+\frac{1}{p}},
		\notag\\
		&&\|\partial_{y}^{j}(\bar{\rho},\bar{u}_{1},\bar{\theta},\bar{\phi})(\tau,y)\|_{L^{p}(\mathbb{R}_{y})}\leq C\varepsilon^{a(j-\frac{1}{p})}\delta^{-j+1+\frac{1}{p}}(\delta+\varepsilon^{a}\tau)^{-1},\quad j\geq 2.
	\end{eqnarray*}
\end{lemma}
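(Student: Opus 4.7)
The plan is to obtain Lemma \ref{lem7.2} as a direct consequence of Lemma \ref{lem7.1}(ii) via the change of variables $y=\varepsilon^{-a}x$, $\tau=\varepsilon^{-a}t$. Since the approximate rarefaction profile $(\bar{\rho},\bar{u}_{1},\bar{\theta},\bar{\phi})$ in Lemma \ref{lem7.1} is defined on the original $(t,x)$ scale while the scaled profile in the reformulated system \eqref{prefeq} lives on the $(\tau,y)$ scale, all the estimates for the latter follow from a one-line chain-rule/Jacobian computation from the former, with no new analytic input required.

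First I would record the scaling identities $\partial_{y}=\varepsilon^{a}\partial_{x}$, and, for any smooth function $g$, the $L^p_y$--$L^p_x$ relation
\begin{equation*}
\|\partial_{y}^{j}g(\tau,y)\|_{L^{p}(\mathbb{R}_{y})}^{p}=\int_{\mathbb{R}}\varepsilon^{ajp}|\partial_{x}^{j}g(t,x)|^{p}\,\varepsilon^{-a}\,dx=\varepsilon^{ajp-a}\|\partial_{x}^{j}g(t,x)\|_{L^{p}(\mathbb{R}_{x})}^{p},
\end{equation*}
so that $\|\partial_{y}^{j}g(\tau,y)\|_{L^{p}(\mathbb{R}_{y})}=\varepsilon^{a(j-\frac{1}{p})}\|\partial_{x}^{j}g(t,x)\|_{L^{p}(\mathbb{R}_{x})}$. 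The $L^{\infty}$ case corresponds to $p=\infty$ and reads $\|\partial_{y}^{j}g\|_{L^{\infty}_{y}}=\varepsilon^{aj}\|\partial_{x}^{j}g\|_{L^{\infty}_{x}}$, which is consistent with the general formula by setting $\frac{1}{p}=0$.

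Next I would plug the bounds of Lemma \ref{lem7.1}(ii) into the identity above, together with the corresponding transformation of the time argument $t=\varepsilon^{a}\tau$, which converts factors $(\delta+t)^{-1+\frac{1}{p}}$ and $(\delta+t)^{-1}$ into $(\delta+\varepsilon^{a}\tau)^{-1+\frac{1}{p}}$ and $(\delta+\varepsilon^{a}\tau)^{-1}$ respectively. For $j=1$, this yields
\begin{equation*}
\|\partial_{y}(\bar{\rho},\bar{u}_{1},\bar{\theta},\bar{\phi})(\tau,y)\|_{L^{p}(\mathbb{R}_{y})}\leq C\varepsilon^{a(1-\frac{1}{p})}(\delta+\varepsilon^{a}\tau)^{-1+\frac{1}{p}},
\end{equation*}
and for $j\geq 2$, the $\delta^{-j+1+\frac{1}{p}}$ factor is untouched by the rescaling, so one gets the claimed bound $C\varepsilon^{a(j-\frac{1}{p})}\delta^{-j+1+\frac{1}{p}}(\delta+\varepsilon^{a}\tau)^{-1}$.

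There is essentially no obstacle here; the only thing to be careful about is tracking the exponents of $\varepsilon$ coming from the combination of the $j$ derivatives (each contributing $\varepsilon^{a}$) with the Jacobian of the spatial variable change (contributing $\varepsilon^{-a/p}$), and making sure the time-variable change is applied consistently in the factor $(\delta+t)$. Since Lemma \ref{lem7.1} already asserts smoothness and the explicit $(\delta,t)$-dependence of the original profile, the proof is complete as soon as these two bookkeeping steps are carried out, so the statement can be regarded as a straightforward reformulation of Lemma \ref{lem7.1}(ii) under the scaling \eqref{3.1}.
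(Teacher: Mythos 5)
Your proposal is correct and is exactly what the paper does: the paper states Lemma \ref{lem7.2} as "equivalent to Lemma \ref{lem7.1} (ii)" under the scaling \eqref{3.1}, which is precisely your chain-rule/Jacobian computation $\|\partial_{y}^{j}g\|_{L^{p}_{y}}=\varepsilon^{a(j-\frac{1}{p})}\|\partial_{x}^{j}g\|_{L^{p}_{x}}$ combined with $t=\varepsilon^{a}\tau$. The exponent bookkeeping in your argument is accurate, so nothing further is needed.
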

It should be pointed out that  the temporal derivatives of $(\bar{\rho},\bar{u}_{1},\bar{\theta},\bar{\phi})$ in Lemma
\ref{lem7.1} (ii) and Lemma \ref{lem7.2} obviously hold in terms of the quasineutral Euler system \eqref{1.28} and the elementary inequalities.

\subsection{Properties of $\mathcal{L}$ and $\Gamma$}
The following two lemmas are about the weighted  estimates on the linearized Landau  operator $\mathcal{L}$ and the nonlinear collision terms $\Gamma(g_1,g_2)$ defined in \eqref{3.8}. They can be proved by a straightforward modification of the arguments used in \cite[Lemmas 9]{Strain-Guo-2008}
and \cite[Lemmas 2.2-2.3]{Wang}, and we thus omit their proofs for brevity.
\begin{lemma}\label{lem7.5}
	Let $w=w(\alpha,\beta)$ in \eqref{3.12} with $0<q(\tau)\ll 1$.
	For any $\eta>0$ small enough, there exists $C_\eta>0$ such that
	\begin{multline}
		\label{7.5}
		-\langle\partial^\alpha_\beta\mathcal{L}g,w^2(\alpha,\beta)\partial^\alpha_\beta g\rangle
		\geq |w(\alpha,\beta)\partial^\alpha_\beta g|_\sigma^2\\-\eta\sum_{|\beta_1|=|\beta|}|w(\alpha,\beta_1)\partial^\alpha_{\beta_1} g|_\sigma^2
		-C_\eta\sum_{|\beta_1|<|\beta|}|w(\alpha,\beta_1)\partial^\alpha_{\beta_1} g|_\sigma^2.
	\end{multline}
	Let $|\beta| = 0$,  then there exists $c_{2}>0$ such that
	\begin{equation}
		\label{7.6}
		-\langle\partial^\alpha\mathcal{L}g,w^2(\alpha,0)\partial^\alpha g\rangle\geq c_{2}|w(\alpha,0)\partial^\alpha g|_\sigma^2-C_\eta|\chi_{\eta}(v)\partial^\alpha g|_2^2,
	\end{equation}
	where $\chi_\eta(v)$ is a general cutoff function depending on $\eta$.
\end{lemma}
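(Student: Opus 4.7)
My plan is to adapt the weighted coercivity estimates of \cite{Strain-Guo-2008} and \cite{Wang} to the slightly different weight $w(\alpha,\beta)(\tau,v)=\langle v\rangle^{2(l-|\alpha|-|\beta|)}e^{q(\tau)\langle v\rangle}$, whose essential novelty relative to the cited works is the substitution of $e^{q(\tau)\langle v\rangle^2}$ by $e^{q(\tau)\langle v\rangle}$ and the insertion of the time dependence through $q(\tau)$ only. Since $q(\tau)\in(0,q_1)$ with $q_1$ small and $q(\tau)$ is independent of $v$, all the pointwise-in-$v$ bounds used in the cited proofs remain valid; in particular the standard equivalence \eqref{3.16} continues to hold with constants depending on $q_1$ alone. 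So the task reduces to showing that applying $\partial^\alpha_\beta$ to $\mathcal{L}g$, testing against $w^2\partial^\alpha_\beta g$, and integrating by parts produces a principal coercive piece plus absorbable commutators.

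First I would expand $\partial^\alpha_\beta\mathcal{L}g = \partial^\alpha_\beta\bigl(\Gamma(\sqrt{\mu},g)+\Gamma(g,\sqrt{\mu})\bigr)$ via Leibniz, distributing the $\beta$-derivatives between $\sqrt{\mu}$ and $g$. The term where all $\beta$-derivatives fall on $g$ produces, after integration by parts in $v$ against $w^2\partial^\alpha_\beta g$, the principal quadratic form associated with the matrix $\sigma^{ij}$; this gives exactly the term $|w(\alpha,\beta)\partial^\alpha_\beta g|_\sigma^2$ by the definition of $|\cdot|_{\sigma,w}$ in \eqref{3.16}, since the $v$-derivative of $w$ only contributes Gaussian-integrable polynomial factors that combine with the $\sigma$ weights to reproduce the same dissipation norm up to lower order. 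Every other term in the Leibniz expansion carries strictly fewer $\beta$-derivatives on $g$, or at most equal $|\beta|$ but with some derivative shifted onto the smooth profile $\sqrt\mu$; by Cauchy–Schwarz in the $\sigma$-weighted inner product and the fact that derivatives of $\sqrt\mu$ produce super-exponential decay in $v$ that dominates the slower $e^{q(\tau)\langle v\rangle}$ growth, these are bounded by $\eta\sum_{|\beta_1|=|\beta|}|w(\alpha,\beta_1)\partial^\alpha_{\beta_1}g|_{\sigma}^2+C_\eta\sum_{|\beta_1|<|\beta|}|w(\alpha,\beta_1)\partial^\alpha_{\beta_1}g|_{\sigma}^2$, which is exactly the structure claimed in \eqref{7.5}. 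The main obstacle here is bookkeeping: ensuring that the commutator between $\partial^\alpha_\beta$ and $\mathcal{L}$ only produces terms of the advertised type, and that the time-dependent exponential $e^{q(\tau)\langle v\rangle}$ does not spoil the absorbability (it does not, because the exchange between $\partial_v$ and $e^{q\langle v\rangle}$ yields a factor of $q\,v/\langle v\rangle$, which is bounded and $q_1$ is small).

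For the case $|\beta|=0$ one cannot simply read coercivity from the quadratic form because $\mathcal{L}$ has a nontrivial null space on $L^2(\mathbb{R}^3_v)$. The plan is to write $g=P_\ker g+(I-P_\ker)g$ where $P_\ker$ is the $L^2_v$ projection onto $\ker\mathcal{L}=\mathrm{span}\{\sqrt\mu,v\sqrt\mu,|v|^2\sqrt\mu\}$. On $(I-P_\ker)g$ one has the spectral gap \eqref{3.9}, and its weighted version follows from the same Leibniz argument as above applied to $|\alpha|$ spatial derivatives only (no $\beta$ derivatives to split), yielding the full coercive bound $c_2|w(\alpha,0)\partial^\alpha g|_\sigma^2$ for the purely microscopic part. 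The residue associated with $P_\ker \partial^\alpha g$ projects onto a five-dimensional subspace of exponentially decaying velocity functions, so it is pointwise controlled by a Gaussian times a polynomial and can be estimated by $\int\chi_\eta(v)^2|\partial^\alpha g|^2\,dv$ for a suitable smooth compactly supported cutoff $\chi_\eta$ chosen so that the Gaussian-weighted $L^2$ norm on its support dominates the contribution; this gives the $-C_\eta|\chi_\eta \partial^\alpha g|_2^2$ correction. I expect the main technical obstacle to be tracking the constants in the commutator estimates when both the $\langle v\rangle^{2(l-|\alpha|-|\beta|)}$ polynomial and the $e^{q(\tau)\langle v\rangle}$ exponential interact with $\partial_v$ and the Burnett-type kernel of $\mathcal{L}$, but since $q(\tau)\leq q_1\ll 1$ uniformly in $\tau$ and $l$ is fixed, the constants are uniform in $\tau$ and all the estimates of \cite{Strain-Guo-2008,Wang} carry over verbatim.
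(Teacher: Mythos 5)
For \eqref{7.5} your plan coincides with what the paper actually does: the paper gives no self-contained proof of Lemma \ref{lem7.5} but appeals to a straightforward modification of \cite[Lemma 9]{Strain-Guo-2008} and \cite[Lemmas 2.2--2.3]{Wang}, and your Leibniz-plus-integration-by-parts argument, together with the observation that derivatives of the weight only cost bounded factors (the exponential part contributes $q(\tau)v_1/\langle v\rangle$ with $0<q(\tau)<q_1\ll1$, the polynomial part a factor of order $\langle v\rangle^{-1}$), is exactly that adaptation; the replacement of $e^{q\langle v\rangle^{2}}$ by $e^{q(\tau)\langle v\rangle}$ and the $\tau$-dependence through $q(\tau)$ alone are indeed harmless for these pointwise-in-$v$ estimates and for the equivalence \eqref{3.16}.

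The $|\beta|=0$ case is where your proposal has a genuine gap. The decomposition $\partial^\alpha g=P_{\ker}\partial^\alpha g+(\mathbf{I}-P_{\ker})\partial^\alpha g$ is not how \eqref{7.6} is obtained in the cited references, and as sketched it does not close. First, the weighted coercivity you claim for the microscopic part, $-\langle\mathcal{L}h,w^2h\rangle\geq c_2|wh|_\sigma^2$ with no correction, does not ``follow from the same Leibniz argument'': that computation, run with $|\beta|=0$, inevitably leaves error terms coming from the compact (non-dissipative) part of $\mathcal{L}$ and from the weight commutators, concentrated in a bounded velocity region; these terms are exactly what the correction $-C_\eta|\chi_\eta\partial^\alpha g|_2^2$ in \eqref{7.6} is there to absorb, and they appear whether or not the input is microscopic. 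The unweighted bound \eqref{3.9} cannot be upgraded to a weighted one by projection alone, nor can it be added to the left-hand side of \eqref{7.6}, which is a fixed quadratic form. Second, you never estimate the cross term $-\langle\mathcal{L}(\mathbf{I}-P_{\ker})\partial^\alpha g,\,w^2P_{\ker}\partial^\alpha g\rangle$, which is nonzero because $w^2P_{\ker}\partial^\alpha g\notin\ker\mathcal{L}$. The repair is to drop the projection altogether: since \eqref{7.6} already allows a compactly supported correction, one proves it for arbitrary $\partial^\alpha g$ (using only that $\partial^\alpha$ commutes with $\mathcal{L}$) exactly as in \cite[Lemma 9]{Strain-Guo-2008} and \cite{Wang,Guo-JAMS}, by keeping the coercive weighted $\sigma$-quadratic form and bounding all remaining contributions by $\eta|w(\alpha,0)\partial^\alpha g|_\sigma^2+C_\eta|\chi_\eta\partial^\alpha g|_2^2$ after splitting into large and small velocities; the kernel of $\mathcal{L}$ and \eqref{3.9} only enter later in the energy scheme, when the cutoff term is absorbed by the unweighted dissipation $\|\partial^\alpha f\|_\sigma^2$.
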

\begin{lemma}\label{lem7.6}
	Let $w=w(\alpha,\beta)$ in \eqref{3.12} with $0<q(\tau)\ll 1$. For any $\epsilon>0$ small enough, one has
	\begin{multline}
		\label{7.8}
		\langle\partial^\alpha_\beta \Gamma(g_1,g_2), w^2(\alpha,\beta)   g_3\rangle\\
		\leq
		C\sum_{\alpha_1\leq\alpha}\sum_{\bar{\beta}\leq\beta_1\leq\beta}|\mu^\epsilon\partial^{\alpha_1}_{\bar{\beta}}g_1|_2|w(\alpha,\beta)  \partial^{\alpha-\alpha_1}_{\beta-\beta_1}g_2|_{\sigma}|w(\alpha,\beta)    g_3|_{\sigma}.
	\end{multline}	
	Moreover, the following estimate holds
	\begin{equation}
		\label{7.7}
		\langle\partial^\alpha \Gamma(g_1,g_2), g_3\rangle\leq C\sum_{\alpha_1\leq\alpha}|\mu^\epsilon\partial^{\alpha_1}g_1|_2
		| \partial^{\alpha-\alpha_1}g_2|_\sigma|  g_3|_\sigma.
	\end{equation}
\end{lemma}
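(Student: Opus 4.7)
\textbf{Proof proposal for Lemma \ref{lem7.6}.} The plan is to follow the strategy of \cite[Lemma 9]{Strain-Guo-2008} and \cite[Lemmas 2.2--2.3]{Wang}, with the weight $w(\alpha,\beta)(\tau,v)=\langle v\rangle^{2(l-|\alpha|-|\beta|)}e^{q(\tau)\langle v\rangle}$ in place of the standard polynomial/exponential weights. Since $0<q(\tau)\ll 1$ uniformly in $\tau$, the factor $e^{q(\tau)\langle v\rangle}$ interacts harmlessly with any $\mu^{\epsilon}$ for $0<\epsilon\ll 1$, namely $e^{q(\tau)\langle v\rangle}\mu^{2\epsilon}(v)\leq C\mu^{\epsilon}(v)$ on $\mathbb{R}^{3}_{v}$, so the weight can be carried through the same pointwise bounds on the Landau kernel that underlie those references. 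I would prove \eqref{7.8} first; the unweighted estimate \eqref{7.7} then drops out as a simplified special case.

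The key initial step is to rewrite $\Gamma(g_{1},g_{2})=\mu^{-1/2}Q(\sqrt{\mu}g_{1},\sqrt{\mu}g_{2})$ in its conservation form
$$
\Gamma(g_{1},g_{2}) = \partial_{v_{i}}\bigl[\{\Phi^{ij}\ast[\sqrt{\mu}g_{1}]\}\partial_{v_{j}}g_{2}\bigr] - \bigl\{\Phi^{ij}\ast\bigl[\tfrac{v_{*i}}{2}\sqrt{\mu}g_{1}\bigr]\bigr\}\partial_{v_{j}}g_{2} + R(g_{1},g_{2}),
$$
where $R$ collects lower-order terms coming from the product rule on $\sqrt{\mu}$ and from transferring any $v$-derivative of $\Phi(v-v_{*})$ onto $v_{*}$ via $\partial_{v_{i}}\Phi=-\partial_{v_{*i}}\Phi$ followed by integration by parts in $v_{*}$. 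Applying $\partial^{\alpha}_{\beta}$ by Leibniz produces a finite sum indexed by splittings $\alpha=\alpha_{1}+(\alpha-\alpha_{1})$ and by splittings of $\beta$ among the factors; every velocity derivative that would formally land on the kernel $\Phi(v-v_{*})$ is again traded for a $v_{*}$-derivative on $\sqrt{\mu}g_{1}$ inside the convolution. Each resulting piece has the schematic form
$$
\bigl\{\Phi^{ij}\ast\bigl[p(v_{*})\sqrt{\mu}(v_{*})\partial^{\alpha_{1}}_{\bar{\beta}}g_{1}(v_{*})\bigr]\bigr\}\,\partial_{v_{j}}\partial^{\alpha-\alpha_{1}}_{\beta-\beta_{1}}g_{2}(v),
$$
for some polynomial factor $p$ and some multi-index $\bar{\beta}\leq\beta_{1}$ encoding the velocity derivatives that had to be moved through the kernel.

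To close, I pair with $w^{2}(\alpha,\beta)g_{3}$, integrate by parts once in $v$ on each conservative term to distribute the divergence onto $w^{2}$, $g_{3}$, and the remaining kernel factor, and use the Coulomb-kernel bound
$$
|\Phi^{ij}\ast[p\sqrt{\mu}h]|(v)\leq C\langle v\rangle^{-1}|\mu^{\epsilon}h|_{2}
$$
to isolate the factor $|\mu^{\epsilon}\partial^{\alpha_{1}}_{\bar{\beta}}g_{1}|_{2}$. What remains is an integrand of quadratic type in $(\partial^{\alpha-\alpha_{1}}_{\beta-\beta_{1}}g_{2},g_{3})$ weighted by $w^{2}(\alpha,\beta)$ and contracted against $\sigma^{ij}$ or its $v_{i}/2$-counterpart; by the characterization \eqref{3.16} this is precisely the product $|w(\alpha,\beta)\partial^{\alpha-\alpha_{1}}_{\beta-\beta_{1}}g_{2}|_{\sigma}\,|w(\alpha,\beta)g_{3}|_{\sigma}$ up to Cauchy--Schwarz. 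Weight derivatives produced by the integration by parts on $\langle v\rangle^{2(l-|\alpha|-|\beta|)}$ are of strictly lower polynomial order in $v$ and absorbed by the same $\sigma$-norms, while $\partial_{v_{i}}e^{q(\tau)\langle v\rangle}=q(\tau)\tfrac{v_{i}}{\langle v\rangle}e^{q(\tau)\langle v\rangle}$ is controlled by a bounded multiple of the weight itself, hence also harmless. This yields \eqref{7.8}. For \eqref{7.7}, the $x$-independence of the Landau kernel gives $\partial^{\alpha}\Gamma(g_{1},g_{2})=\sum_{\alpha_{1}\leq\alpha}C^{\alpha}_{\alpha_{1}}\Gamma(\partial^{\alpha_{1}}g_{1},\partial^{\alpha-\alpha_{1}}g_{2})$, and one invokes the standard unweighted trilinear bound $|\langle\Gamma(h,g),g_{3}\rangle|\leq C|\mu^{\epsilon}h|_{2}|g|_{\sigma}|g_{3}|_{\sigma}$ from \cite{Strain-Guo-2008}.

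The principal technical obstacle is the bookkeeping of the Leibniz expansion combined with the chain-rule transfer of $v$-derivatives onto $v_{*}$-derivatives inside the convolution: the lower bound $\bar{\beta}\leq\beta_{1}$ in \eqref{7.8} encodes precisely those velocity derivatives that must be moved across the kernel before a pointwise $\mu^{\epsilon}$ decay can be extracted. Once the decomposition is organized correctly, the new time-dependent weight $e^{q(\tau)\langle v\rangle}$ plays an essentially passive role thanks to $q(\tau)\ll 1$, and no ingredient beyond the techniques of \cite{Strain-Guo-2008,Wang} is needed.
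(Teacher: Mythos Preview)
Your proposal is correct and follows exactly the approach the paper indicates: the paper omits the proof entirely, stating only that it is ``a straightforward modification of the arguments used in \cite[Lemma 9]{Strain-Guo-2008} and \cite[Lemmas 2.2--2.3]{Wang}.'' Your sketch fleshes out precisely that modification, correctly identifying that the only new feature---the exponential factor $e^{q(\tau)\langle v\rangle}$ with $q(\tau)\ll 1$---is absorbed by any $\mu^{\epsilon}$ and therefore does not alter the kernel estimates or the integration-by-parts structure from those references.
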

\subsection{Estimates on linear collision terms}
In what follows we derive the estimates of the linear collision terms appearing in \eqref{3.5} and \eqref{3.7}.
\begin{lemma}\label{lem7.7}
	Let $|\alpha|+|\beta|\leq 2$ with $|\beta|\geq1$ and $w=w(\alpha,\beta)$ defined in \eqref{3.12}.
	Suppose that  \eqref{3.21}, \eqref{3.22} and \eqref{3.23} hold. For any $\eta>0$, there exists $C_\eta>0$ such that
	\begin{eqnarray}
		\label{7.9}
		&&\varepsilon^{a-1}(|(\partial^\alpha_\beta \Gamma(\frac{M-\mu}{\sqrt{\mu}},f), e^{\phi}w^2(\alpha,\beta)  h)|
		+|(\partial^\alpha_\beta \Gamma(f,\frac{M-\mu}{\sqrt{\mu}}),e^{\phi}w^2(\alpha,\beta)  h)|)
		\notag\\
		&&\hspace{0.5cm}\leq C\eta\varepsilon^{a-1}\|w(\alpha,\beta)h\|_{\sigma}^2
		+C_{\eta}(\eta_0+k^{\frac{1}{12}}\varepsilon^{\frac{3}{5}-\frac{2}{5}a})\mathcal{D}_{2,l,q}(\tau).
	\end{eqnarray}
If $|\beta|=0$ and $|\alpha|\leq 1$, for any $\eta>0$, one has
	\begin{eqnarray}
		\label{7.10}
		&&\varepsilon^{a-1}(|(\partial^\alpha \Gamma(\frac{M-\mu}{\sqrt{\mu}},f),e^{\phi}w^2(\alpha,0)h)|
		+|(\partial^\alpha \Gamma(f,\frac{M-\mu}{\sqrt{\mu}}),e^{\phi}w^2(\alpha,0)h)|)
		\notag\\
		&&\hspace{0.5cm}\leq C\eta\varepsilon^{a-1}\|w(\alpha,0)h\|^{2}_{\sigma}
		+C_{\eta}(\eta_0+k^{\frac{1}{12}}\varepsilon^{\frac{3}{5}-\frac{2}{5}a})\mathcal{D}_{2,l,q}(\tau).
	\end{eqnarray}
\end{lemma}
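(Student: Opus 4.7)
The plan is to apply the weighted trilinear estimate \eqref{7.8} of Lemma \ref{lem7.6} pointwise in $y$, then integrate over $y$ with H\"older and Young's inequalities, exploiting the smallness of $(M-\mu)/\sqrt{\mu}$ together with \eqref{3.22}, \eqref{3.24} and the decay properties from Lemma \ref{lem7.2}. The multiplicative factor $e^{\phi}$ is uniformly bounded via the 1D embedding $H^{1}\hookrightarrow L^{\infty}$ and the a priori assumption, hence gets absorbed into the constants.

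The key preparatory observation is that, by Taylor expansion of $M=M_{[\rho,u,\theta]}$ around the constant state $(1,0,\frac{3}{2})$ and the chain rule, $\partial^{\alpha_1}_{\bar\beta}[(M-\mu)/\sqrt{\mu}]$ is a finite sum of polynomial-times-Gaussian functions of $v$ multiplied by polynomials in $\partial^{\alpha_2}(\rho-1, u, \theta-\frac{3}{2})$ for $|\alpha_2|\leq|\alpha_1|$. Thus, for any small $\epsilon>0$,
$$
|\mu^\epsilon \partial^{\alpha_1}_{\bar\beta}\tfrac{M-\mu}{\sqrt{\mu}}|_2(y) \leq C\sum_{|\alpha_2|\leq|\alpha_1|}|\partial^{\alpha_2}(\rho-1, u, \theta-\tfrac{3}{2})(y)|,
$$
and I would split $(\rho-1, u, \theta-\tfrac{3}{2}) = (\bar\rho-1, \bar u, \bar\theta-\tfrac{3}{2}) + (\widetilde\rho, \widetilde u, \widetilde\theta)$ in order to treat profile and perturbation contributions separately.

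Three cases then arise after feeding this into \eqref{7.8}. When $\alpha_1=0$, the uniform bound $|\mu^\epsilon(M-\mu)/\sqrt\mu|_2 \leq C(\eta_0+k^{1/12}\varepsilon^{3/5-2a/5})$ from \eqref{3.24} is pulled out, and Young's inequality returns $\eta\varepsilon^{a-1}\|wh\|_\sigma^2$ plus $C_\eta(\eta_0+k^{1/12}\varepsilon^{3/5-2a/5})$ times a term in $\mathcal{D}_{2,l,q}(\tau)$. When $|\alpha_1|\geq 1$ and the profile is picked up, Lemma \ref{lem7.2} combined with \eqref{3.21} and \eqref{3.2a} gives $\|\partial^{\alpha_2}(\bar\rho, \bar u, \bar\theta)\|_{L^\infty_y}\leq Ck^{1/12}\varepsilon^{3/5-2a/5}$ (for $|\alpha_2|=2$ the bound is $C\varepsilon^{2a}\delta^{-1}(\delta+\varepsilon^a\tau)^{-1}$, still controlled by this quantity via \eqref{3.21}); this factor is extracted and the remaining $|w\partial f|_\sigma|wh|_\sigma$ splits as above. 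When $|\alpha_1|\geq 1$ and the perturbation is picked up, one of $\partial^{\alpha_2}(\widetilde\rho,\widetilde u, \widetilde\theta)$ or the corresponding $f$-derivative is placed in $L^\infty_y$ via $H^1\hookrightarrow L^\infty$ and \eqref{3.22}, which yields smallness $O(k^{1/12}\varepsilon^{3/5-2a/5})$, while the other factor sits in $L^2_{y}L^2_{v}$ and feeds into $\mathcal{D}_{2,l,q}(\tau)$.

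The main obstacle I expect is the top-order case $|\alpha|=|\alpha_1|=2$, $|\beta|\geq 1$ in the perturbation scenario, where a direct $L^\infty_y$ bound on $\partial^2(\widetilde\rho,\widetilde u,\widetilde\theta)$ would require a third-order macroscopic derivative not carried in \eqref{3.17}. The plan is to keep $\partial^2(\widetilde\rho,\widetilde u,\widetilde\theta)$ in $L^2_y$ and pair it against the now zeroth-order $f$-factor $\partial_{\beta-\beta_1}f$, placed in $L^\infty_y L^2_v$ via Sobolev embedding and \eqref{3.22}. The closure should hinge precisely on the fact that the parameter range $a\geq 2/3$ in \eqref{3.2a} is tuned so that $(\eta_0+k^{1/12}\varepsilon^{3/5-2a/5})^2$ times the $\varepsilon^{a-1}$-weighted $L^2_y$-norm $\|\partial^2(\widetilde\rho,\widetilde u,\widetilde\theta)\|^2$ can be dominated by $(\eta_0+k^{1/12}\varepsilon^{3/5-2a/5})$ times the $\varepsilon^{1-a}$-weighted version appearing in $\mathcal{D}_{2,l,q}(\tau)$. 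The companion estimate \eqref{7.10} for $|\beta|=0$, $|\alpha|\leq 1$ follows by the same pattern, the only simplification being that the unweighted inequality \eqref{7.7} is available when no velocity derivative is present.
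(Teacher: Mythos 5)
Your strategy is the same as the paper's: apply the weighted trilinear estimate \eqref{7.8} together with $e^{\phi}\approx 1$ from the a priori bound, use the smallness $|\mu^{\epsilon}\partial^{\alpha_1}_{\bar\beta}\frac{M-\mu}{\sqrt{\mu}}|_{2}\leq C(\eta_0+k^{\frac{1}{12}}\varepsilon^{\frac{3}{5}-\frac{2}{5}a})$ coming from \eqref{3.24} (the paper's \eqref{7.12}), and when a macroscopic derivative $\partial^{\alpha_1}$ lands on $\frac{M-\mu}{\sqrt{\mu}}$, control $\|(\rho_y,u_y,\theta_y)\|^2$ by $Ck^{\frac{1}{12}}\varepsilon^{\frac{3}{5}-\frac{2}{5}a}$ via Lemma \ref{lem7.2}, \eqref{3.21}, \eqref{3.22}, placing the remaining $f$-factor in $L^{\infty}_{y}$ by the 1D embedding (the paper's \eqref{7.13}--\eqref{7.14}). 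For the cases that actually occur, your argument closes and coincides with the paper's.

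Two corrections. First, your ``main obstacle'' paragraph addresses a case that the hypotheses exclude: since $|\alpha|+|\beta|\leq 2$ and $|\beta|\geq 1$ in \eqref{7.9}, one necessarily has $|\alpha|\leq 1$, so $|\alpha_1|\leq 1$ and the configuration $|\alpha|=|\alpha_1|=2$, $|\beta|\geq 1$ never arises (the paper states this explicitly right after introducing $I^{\alpha,\alpha_1}_{\beta,\beta_1}(M,f)$); likewise \eqref{7.10} only concerns $|\alpha|\leq 1$, so no third-order macroscopic derivative and no tuning of $a$ is needed anywhere in this lemma --- that difficulty belongs to the separate second-order estimate of Lemma \ref{lem4.8}, whose statement is weighted differently in $\varepsilon$. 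Second, for \eqref{7.10} the inner product still carries the weight $w^{2}(\alpha,0)=\langle v\rangle^{4(l-|\alpha|)}e^{2q(\tau)\langle v\rangle}$, which is unbounded in $v$, so the unweighted inequality \eqref{7.7} does not suffice by itself; you should instead invoke \eqref{7.8} with $\beta=0$ (exactly as for \eqref{7.9}), which is what the paper does when it says the proof of \eqref{7.10} follows the same lines.
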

\begin{proof}
Using \eqref{3.2a} and the smallness of $k>0$, $\eta_{0}>0$ and $\varepsilon$, we get
\begin{equation}
\label{4.12A}\eta_0+k^{\frac{1}{12}}\varepsilon^{\frac{3}{5}-\frac{2}{5}a}<1.
\end{equation}		
This together with \eqref{3.22}, \eqref{2.2} and $\bar{\phi}=\rho^{-1}_{\mathrm{e}}(\bar{\rho})$  yields that
	\begin{equation}
		\label{4.45a}
		\sup_{t\geq 0,x\in \R}|\phi|\leq \sup_{t\geq 0,x\in \R}|\widetilde{\phi}|+\sup_{t\geq 0,x\in \R}|\bar{\phi}|\leq C,\quad \mbox{and}\quad
		e^{\phi}\approx 1.
	\end{equation}
From \eqref{7.8} and \eqref{4.45a}, it is straightforward to get 
	\begin{eqnarray}
		\label{7.11}
		&&\varepsilon^{a-1}|(\partial^\alpha_\beta \Gamma(\frac{M-\mu}{\sqrt{\mu}},f), e^{\phi}w^2(\alpha,\beta)h)|
		\notag\\
		&&\leq C\varepsilon^{a-1}\sum_{\alpha_1\leq\alpha}\sum_{\bar{\beta}\leq\beta_1\leq\beta}\int_{\mathbb R}|\mu^\epsilon\partial^{\alpha_1}_{\bar{\beta}}(\frac{M-\mu}{\sqrt{\mu}})|_2| w(\alpha,\beta) \partial^{\alpha-\alpha_1}_{\beta-\beta_1}f|_{\sigma}|  w(\alpha,\beta)h|_{\sigma}\,dy
		\notag\\
		&&\leq \eta\varepsilon^{a-1}\|w(\alpha,\beta)h\|_{\sigma}^2+
		C_\eta\sum_{\alpha_1\leq\alpha}\sum_{\bar{\beta}\leq\beta_1\leq\beta}I^{\alpha,\alpha_{1}}_{\beta,\beta_{1}}(M,f).	
	\end{eqnarray}
In this expression we have used
	\begin{equation*}
		I^{\alpha,\alpha_{1}}_{\beta,\beta_{1}}(M,f)=
		\varepsilon^{a-1}\int_{\mathbb R}|\mu^\epsilon\partial^{\alpha_1}_{\bar{\beta}}(\frac{M-\mu}{\sqrt{\mu}})|^2_2| w(\alpha,\beta) \partial^{\alpha-\alpha_1}_{\beta-\beta_1}f|^2_{\sigma}\,dy.
	\end{equation*}
	Thanks to \eqref{3.23}, it holds that $q(\tau)<q_1$ in $w(\alpha,\beta)$ defined by \eqref{3.12}. Then, for any $\beta'\geq0$  and any $m>0$, one has
	\begin{eqnarray*}
		&&| \langle v\rangle^{m} w(\alpha,\beta)\partial _{\beta'}(\frac{M-\mu}{\sqrt{\mu}})|_{\sigma}^2+| \langle v\rangle^{m}
		w(\alpha,\beta)\partial _{\beta'}(\frac{M-\mu}{\sqrt{\mu}})|_{2}^2
		\notag\\
		&&\quad\leq C_m\sum_{|\beta'|\leq|\beta''|\leq|\beta'|+1}\int_{{\mathbb R}^3}\mu^{-3q_1}
		|\partial _{\beta''}(\frac{M-\mu}{\sqrt{\mu}})|^2\,dv.
	\end{eqnarray*}
By separating the velocity into two parts $|v|\geq R$ and $|v|\leq R$ with a large constant $R>0$, then using
\eqref{3.24} and choosing a small constant $q_{1}>0$, we deduce that
	$$
	\int_{|v|\geq R}\mu^{-3q_1}|\partial _{\beta''}(\frac{M-\mu}{\sqrt{\mu}})|^2\,dv\leq C(\eta_0+k^{\frac{1}{12}}\varepsilon^{\frac{3}{5}-\frac{2}{5}a})^{2},
	$$
	and
	$$
	\int_{|v|\leq R}\mu^{-3q_1}|\partial _{\beta''}(\frac{M-\mu}{\sqrt{\mu}})|^2\,dv\leq C(|\rho-1|+|u|+|\theta-\frac{3}{2}|)^2\leq C
	(\eta_0+k^{\frac{1}{12}}\varepsilon^{\frac{3}{5}-\frac{2}{5}a})^{2}.
	$$
So, from the above bound, for any $\beta'\geq0$ and $m>0$, we obtain
\begin{equation}
		\label{7.12}
		| \langle v\rangle^{m}w(\alpha,\beta)\partial _{\beta'}(\frac{M-\mu}{\sqrt{\mu}})|_{\sigma}+| \langle v\rangle^{m}w(\alpha,\beta)
		\partial _{\beta'}(\frac{M-\mu}{\sqrt{\mu}})|_{2}\leq C(\eta_0+k^{\frac{1}{12}}\varepsilon^{\frac{3}{5}-\frac{2}{5}a}).
	\end{equation}
Similarly, the following estimate holds for any $m>0$ and sufficiently small $\epsilon_1>0$, 
	\begin{equation}
		\label{7.25aa}
		|\langle v\rangle^{m}w(\alpha,\beta)\mu^{-\frac{1}{2}}M^{1-\epsilon_1}|_{2}+|\langle v\rangle^{m}w(\alpha,\beta)\mu^{-\frac{1}{2}}M^{1-\epsilon_1}|_{\sigma}\leq C.
	\end{equation}

We next turn to compute carefully the right hand terms of \eqref{7.11}. Since  we only consider $|\alpha|+|\beta|\leq 2$ and $|\beta|\geq1$, one has $|\alpha|\leq1$ in $I^{\alpha,\alpha_{1}}_{\beta,\beta_{1}}(M,f)$.
	If $|\alpha_{1}|=0$, then $w(\alpha,\beta)\leq w(\alpha-\alpha_{1},\beta-\beta_1)$, and we use this together with \eqref{7.12}, \eqref{3.18} and \eqref{4.12A} to obtain
	\begin{eqnarray}
		\label{7.13}
		I^{\alpha,\alpha_{1}}_{\beta,\beta_{1}}(M,f)&&\leq C(\eta_0+k^{\frac{1}{12}}\varepsilon^{\frac{3}{5}-\frac{2}{5}a})^2\varepsilon^{a-1}\|w(\alpha,\beta)\partial^{\alpha-\alpha_{1}}_{\beta-\beta_{1}}f\|^2_{\sigma}
		\notag\\
		&&\leq C(\eta_0+k^{\frac{1}{12}}\varepsilon^{\frac{3}{5}-\frac{2}{5}a})\mathcal{D}_{2,l,q}(\tau).
	\end{eqnarray}
If $|\alpha_{1}|=|\alpha|=1$,  then $|\alpha-\alpha_1|+|\beta-\beta_{1}|\leq 1$ and $w(\alpha,\beta)\leq w(\alpha-\alpha_1+\alpha_2,\beta-\beta_1)$ for $|\alpha_2|\leq1$.
So, from these facts, the 1D embedding inequality,  \eqref{7.25aa} and \eqref{3.18}, we get
\begin{eqnarray}
	\label{7.14}
	I^{\alpha,\alpha_{1}}_{\beta,\beta_{1}}(M,f)&&\leq C\varepsilon^{a-1}\|\partial^{\alpha_{1}}(\rho,u,\theta)\|^{2}
	\||w(\alpha,\beta)\partial^{\alpha-\alpha_1}_{\beta-\beta_{1}}f|^2_{\sigma}\|_{L_y^{\infty}}
	\notag\\
	&&\leq 
	Ck^{\frac{1}{12}}\varepsilon^{\frac{3}{5}-\frac{2}{5}a}\varepsilon^{a-1}
	\|\partial^{\alpha-\alpha_1}_{\beta-\beta_{1}}f\|_{\sigma,w}
	\|\partial^{\alpha-\alpha_1}_{\beta-\beta_{1}}f_{y}\|_{\sigma,w}
	\notag\\
	&&\leq Ck^{\frac{1}{12}}\varepsilon^{\frac{3}{5}-\frac{2}{5}a}\mathcal{D}_{2,l,q}(\tau),
\end{eqnarray}	
where in the second inequality we have used	Lemma \ref{lem7.2}, \eqref{3.22} and $\delta=\frac{1}{k}\varepsilon^{\frac{3}{5}-\frac{2}{5}a}$ in \eqref{3.21} such that
	\begin{eqnarray}
		\label{4.13A}
		\|(\rho_y,u_y,\theta_y)\|^{2}&&\leq 2\|(\bar{\rho}_y,\bar{u}_y,\bar{\theta}_y)\|^{2}
		+2\|\partial^{\alpha_{1}}(\widetilde{\rho}_y,\widetilde{u}_y,\widetilde{\theta}_y)\|^{2}
		\notag\\
		&&\leq C\varepsilon^{a}\delta^{-1}+Ck^{\frac{1}{6}}\varepsilon^{\frac{6}{5}-\frac{4}{5}a}
		\notag\\
		&&\leq  Ck\varepsilon^{a}\varepsilon^{-\frac{3}{5}+\frac{2}{5}a}+Ck^{\frac{1}{12}}\varepsilon^{\frac{3}{5}-\frac{2}{5}a}
		\leq Ck^{\frac{1}{12}}\varepsilon^{\frac{3}{5}-\frac{2}{5}a}.
	\end{eqnarray}
Consequently, plugging \eqref{7.14} and \eqref{7.13} into \eqref{7.11} gives
	\begin{multline}
		\label{7.16}
		\varepsilon^{a-1}|(\partial^\alpha_\beta \Gamma(\frac{M-\mu}{\sqrt{\mu}},f), e^{\phi}w^2(\alpha,\beta)h)|\\
		\leq \eta\varepsilon^{a-1}\|w(\alpha,\beta)h\|^{2}_{\sigma}
		+C_{\eta}(\eta_0+k^{\frac{1}{12}}\varepsilon^{\frac{3}{5}-\frac{2}{5}a})\mathcal{D}_{2,l,q}(\tau).
	\end{multline}
	For the second term on the left hand side of \eqref{7.9}, thanks to \eqref{7.8} and  \eqref{4.45a}, it follows that
	\begin{eqnarray}
		\label{4.14B}
		&&\varepsilon^{a-1}|(\partial^\alpha_\beta \Gamma(f,\frac{M-\mu}{\sqrt{\mu}}), e^{\phi}w^2(\alpha,\beta)h)|
		\notag\\
		&&\leq C\varepsilon^{a-1}\sum_{\alpha_1\leq\alpha}\sum_{\bar{\beta}\leq\beta_1\leq\beta}\int_{\mathbb R}|\mu^\epsilon\partial^{\alpha_1}_{\bar{\beta}}f|_2| w(\alpha,\beta) \partial^{\alpha-\alpha_1}_{\beta-\beta_1}(\frac{M-\mu}{\sqrt{\mu}})|_{\sigma}|  w(\alpha,\beta)h|_{\sigma}\,dy
		\notag\\
		&&\leq \eta\varepsilon^{a-1}\|w(\alpha,\beta)h\|_{\sigma}^2+
		C_\eta\sum_{\alpha_1\leq\alpha}\sum_{\bar{\beta}\leq\beta_1\leq\beta}I^{\alpha,\alpha_{1}}_{\beta,\beta_{1}}(f,M).	
	\end{eqnarray}
In this expression we have used
	\begin{equation*}
		I^{\alpha,\alpha_{1}}_{\beta,\beta_{1}}(f,M)=
		\varepsilon^{a-1}\int_{\mathbb R}|\mu^\epsilon\partial^{\alpha_1}_{\bar{\beta}}f|^2_2| w(\alpha,\beta) \partial^{\alpha-\alpha_1}_{\beta-\beta_1}(\frac{M-\mu}{\sqrt{\mu}})|^2_{\sigma}\,dy.
	\end{equation*}
	Performing the similar calculations as \eqref{7.13} and \eqref{7.14}, one gets
	\begin{equation*}
		I^{\alpha,\alpha_{1}}_{\beta,\beta_{1}}(f,M)\leq C(\eta_{0}+k^{\frac{1}{12}}\varepsilon^{\frac{3}{5}-\frac{2}{5}a})\mathcal{D}_{2,l,q}(\tau).
	\end{equation*}
	Hence, we plug this into \eqref{4.14B} to obtain
	\begin{multline*}
		\varepsilon^{a-1}|(\partial^\alpha_\beta \Gamma(f,\frac{M-\mu}{\sqrt{\mu}}), e^{\phi}w^2(\alpha,\beta)h)|\\
		\leq C\eta\varepsilon^{a-1}\|w(\alpha,\beta)h\|^{2}_{\sigma}
		+C_{\eta}(\eta_{0}+k^{\frac{1}{12}}\varepsilon^{\frac{3}{5}-\frac{2}{5}a})\mathcal{D}_{2,l,q}(\tau).
	\end{multline*}
	This, combined with \eqref{7.16}, immediately gives   the desired estimate \eqref{7.9}.
	The proof of \eqref{7.10} follows along the same lines as the proof of \eqref{7.9}, and we omit its proof for
	brevity. Thus, Lemma \ref{lem7.7} is proved.
\end{proof}

\subsection{Estimates on nonlinear collision terms}
We are now in a position to compute the nonlinear collision terms
appearing in \eqref{3.5} and \eqref{3.7}.
Note that $\overline{G}$ is contained in the nonlinear collision terms and it can be represented 
precisely by using the Burnett functions as in \eqref{7.1} and \eqref{7.2}. For this, we first give
the following lemma about the fast velocity decay of the Burnett functions and its proof can be found in \cite[Lemma 6.1]{Duan-Yu1}.

\begin{lemma}
Let $L_{\widehat{M}}$ be defined by \eqref{1.8} for any Maxwellian $\widehat{M}=M_{[\widehat{\rho},\widehat{u},\widehat{\theta}]}(v)$  and its null space be denoted by $\ker{L_{\widehat{M}}}$. Suppose that $U(v)$ is any polynomial of $\frac{v-\hat{u}}{\sqrt{R}\hat{\theta}}$ such that
$U(v)\widehat{M}\in(\ker{L_{\widehat{M}}})^{\perp}$. Then, for any $\epsilon_2\in(0,1)$ and any multi-index $\beta$, there exists a constant $C_{\beta}>0$ such that
	$$
	|\partial_{\beta}L^{-1}_{\widehat{M}}(U(v)\widehat{M})|\leq C_{\beta}(\widehat{\rho},\widehat{u},\widehat{\theta})\widehat{M}^{1-\epsilon_2}.
	$$
	Moreover, under the condition \eqref{3.24}, there exists $C_{\beta}>0$ such that
	\begin{equation}
		\label{7.4}
		|\partial_{\beta}A_{j}(\frac{v-u}{\sqrt{R\theta}})|+|\partial_{\beta}B_{ij}(\frac{v-u}{\sqrt{R\theta}})|
		\leq C_{\beta}M^{1-\epsilon_2},
	\end{equation}
	where $A_{j}(\cdot)$ and $B_{ij}(\cdot)$ are defined in \eqref{7.2}.
\end{lemma}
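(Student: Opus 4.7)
The plan is to prove a weighted pointwise Gaussian decay estimate for $L^{-1}_{\widehat M}(U(v)\widehat M)$ and its velocity derivatives, and then specialize to the Burnett functions. First I would normalize the Maxwellian: introduce the affine change of variable $v = \widehat u + \sqrt{R\widehat\theta}\, w$ so that $\widehat M(v)$ becomes a constant multiple of the standard Maxwellian $\mu(w)$, and $L_{\widehat M}$ becomes a scaled version of $L_\mu$. The Jacobian and scaling factors are smooth functions of $(\widehat\rho,\widehat u,\widehat\theta)$ bounded under the closeness assumption, which accounts for the dependence of $C_\beta$ on these parameters. So it suffices to prove $|\partial_\beta L^{-1}_\mu(U(w)\mu)|\le C_\beta\mu^{1-\epsilon_2}$ for $U(w)\mu\in(\ker L_\mu)^\perp$.

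Next I would symmetrize the equation. Writing $g=L^{-1}_\mu(U\mu)$ and $h=g/\sqrt\mu$, the identity $\mathcal{L}h=\mu^{-1/2}L_\mu(\sqrt\mu h)$ from \eqref{3.8} reduces the problem to $\mathcal{L}h = U(w)\sqrt\mu$ with $h\in(\ker\mathcal{L})^\perp$. The source has Schwartz decay, and $\mathcal{L}$ has the well-known splitting into a weighted Laplace-type operator in $v$ plus lower-order terms (cf.\ the representation behind \eqref{3.15}--\eqref{3.16}). For the base case $|\beta|=0$, I would combine the $L^2_\sigma$ coercivity \eqref{3.9} with a weighted $L^\infty$ bootstrap: since the elliptic part of $\mathcal{L}$ is diffusive in $v$ and the remaining zero-th order pieces decay in $v$ through Gaussian convolutions, one can close a Moser-type iteration or De~Giorgi scheme in the Gaussian weight $\mu^{-1+\epsilon_2}$, showing that $|h|\le C\mu^{1/2-\epsilon_2}$, i.e.\ $|g|\le C\mu^{1-\epsilon_2}$.

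For the derivative estimates I would argue by induction on $|\beta|$. Differentiating $L_\mu g = U\mu$ gives $L_\mu(\partial_\beta g) = \partial_\beta(U\mu) - [\partial_\beta, L_\mu]g$. The commutator expands into a finite sum of terms of the form $\Gamma$-type convolutions involving $\partial_{\beta'}g$ with $|\beta'|<|\beta|$, together with derivatives of the Landau kernel $\Phi$. Each of these retains Gaussian decay because $\Phi_{ij}$ convolved with $\partial^{\beta''}\mu$ decays like $\mu^{1-\epsilon_2}$ (after absorbing the singular $|z|^{-1}$ part into an integrable loss of weight). Combining the inductive hypothesis with the base-case argument applied to $\partial_\beta g$ yields the full derivative bound. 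Finally, $A_j$ and $B_{ij}$ defined in \eqref{7.2} are particular instances with $U(v)=\hat A_j$ or $\hat B_{ij}$, so \eqref{7.4} follows immediately from the general result together with the bounds \eqref{3.24} on $(\rho,u,\theta)$ that keep the constants $C_\beta(\rho,u,\theta)$ uniformly bounded.

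The main obstacle I anticipate is the base-case $L^\infty$ weighted estimate, because the Landau operator for Coulomb interactions lacks a spectral gap and the weight function $\mu^{-1+\epsilon_2}$ interacts subtly with the anisotropic collision kernel $\sigma^{ij}$ given in \eqref{3.15}; one has to carefully track the loss of $\epsilon_2$ in the Gaussian weight to ensure the iteration closes. The derivative step is then bookkeeping once the base case is established, thanks to the smooth polynomial nature of $[\partial_\beta,L_\mu]$.
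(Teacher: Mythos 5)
First, a point of comparison: the paper does not prove this lemma at all — it is quoted with a pointer to Lemma 6.1 of \cite{Duan-Yu1} — so your proposal has to stand on its own merits. As a strategy sketch (normalize to $\mu$, symmetrize via \eqref{3.8}, prove a Gaussian-weighted pointwise bound for $h=L^{-1}$ of the source, then specialize to $\hat A_j,\hat B_{ij}$ and track the $(\widehat\rho,\widehat u,\widehat\theta)$-dependence through \eqref{3.24}) it is a reasonable outline, but the two steps you defer are exactly where the lemma lives, and as written they contain genuine gaps. In the base case, a De Giorgi or Moser iteration only yields local $L^\infty$ control; it does not by itself produce decay $|h|\le C\mu^{1/2-\epsilon_2}$ at large $|v|$. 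To get the Gaussian rate you need either a comparison/barrier argument or a Gaussian-weighted energy estimate, and the whole point is to verify that in the symmetrized operator the zeroth-order term $\tfrac14\sigma^{ij}v_iv_j\sim\langle v\rangle$ dominates the error terms created when the weight $\mu^{-(\frac12-\epsilon_2)}$ is commuted through the degenerate diffusion $\sigma^{ij}\partial_{ij}$, together with a pointwise Gaussian bound on the nonlocal (compact) part of $\mathcal{L}$ in terms of $\|h\|_{L^2}$. None of this is carried out, and merely invoking the coercivity \eqref{3.9} does not substitute for it, since \eqref{3.9} carries no pointwise or Gaussian-weight information.

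The derivative step is not "bookkeeping" either, for two concrete reasons. First, your stated mechanism is false: $\Phi_{ij}\ast\partial^{\beta''}\mu$ does \emph{not} decay like $\mu^{1-\epsilon_2}$; it decays only polynomially (indeed $\sigma^{ij}$ in \eqref{3.15} behaves like $\langle v\rangle^{-1}$ up to the projection structure), so any Gaussian decay of the commutator terms must come from the factor involving $g$ and its derivatives, not from the convolved kernel. Second, the induction on $|\beta|$ does not close as organized: since $Q$ is second order in $v$, the commutator $[\partial_\beta,L_\mu]g=\sum_{0<\beta_1\le\beta}C^{\beta_1}_{\beta}\{Q(\partial_{\beta_1}\mu,\partial_{\beta-\beta_1}g)+Q(\partial_{\beta-\beta_1}g,\partial_{\beta_1}\mu)\}$ contains $v$-derivatives of $g$ of order up to $|\beta|+1$, which the inductive hypothesis (orders $<|\beta|$) does not control. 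To repair this one typically either uses weighted interior elliptic (Schauder-type) estimates on unit balls in $v$, whose polynomially growing constants can be absorbed into the $\epsilon_2$-loss once the zeroth-order Gaussian bound is known, or exploits the rotational invariance of $L_{\widehat M}$ to reduce $A_j$ and $B_{ij}$ in \eqref{7.2} to scalar radial profiles and estimate those directly. Until the base-case weighted bound and one of these derivative mechanisms are supplied, the proof is incomplete.
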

Then we consider the nonlinear collision terms.
\begin{lemma}
	\label{lem7.8}
	Let $|\alpha|+|\beta|\leq 2$ with $|\beta|\geq1$ and $w=w(\alpha,\beta)$ defined in \eqref{3.12}.
	Suppose that  \eqref{3.21}, \eqref{3.22} and \eqref{3.23} hold. For any $\eta>0$, there exists $C_\eta>0$ such that
	\begin{multline}
		\label{7.18}
		\varepsilon^{a-1}|(\partial^\alpha_\beta\Gamma(\frac{G}{\sqrt{\mu}},\frac{G}{\sqrt{\mu}}),e^{\phi} w^2(\alpha,\beta) h)|
		\leq  C\eta\varepsilon^{a-1}\|w(\alpha,\beta)h\|^{2}_{\sigma}\\
		+C_{\eta}k^{\frac{1}{12}}\varepsilon^{\frac{3}{5}-\frac{2}{5}a}\mathcal{D}_{2,l,q}(\tau)
		+C_{\eta}\varepsilon^{\frac{7}{5}+\frac{1}{15}a}(\delta+\varepsilon^{a}\tau)^{-\frac{4}{3}}.
	\end{multline}
If $|\beta|=0$ and $|\alpha|\leq 1$, then for any $\eta>0$, it holds that
	\begin{multline}
		\label{7.19}
		\varepsilon^{a-1}|(\partial^\alpha \Gamma(\frac{G}{\sqrt{\mu}},\frac{G}{\sqrt{\mu}}), e^{\phi}w^2(\alpha,0) h)|
	\leq  C\eta\varepsilon^{a-1}\|w(\alpha,0)h\|^{2}_{\sigma}\\
		+C_{\eta}k^{\frac{1}{12}}\varepsilon^{\frac{3}{5}-\frac{2}{5}a}\mathcal{D}_{2,l,q}(\tau)
		+C_{\eta}\varepsilon^{\frac{7}{5}+\frac{1}{15}a}(\delta+\varepsilon^{a}\tau)^{-\frac{4}{3}}.
	\end{multline}
\end{lemma}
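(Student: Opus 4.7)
The overall strategy is to decompose $G=\overline{G}+\sqrt{\mu}f$ from \eqref{3.3}, which expands the bilinear form $\Gamma$ into four pieces: $\Gamma(\overline{G}/\sqrt\mu,\overline{G}/\sqrt\mu)$, $\Gamma(\overline{G}/\sqrt\mu,f)$, $\Gamma(f,\overline{G}/\sqrt\mu)$, and $\Gamma(f,f)$. For each piece I would apply the weighted trilinear estimate \eqref{7.8} from Lemma \ref{lem7.6}, then use Young's inequality to split off the desired factor $\eta\varepsilon^{a-1}\|w(\alpha,\beta)h\|_\sigma^2$. It remains to control each resulting product of weighted Sobolev-type norms by either $C_\eta k^{1/12}\varepsilon^{3/5-2a/5}\mathcal{D}_{2,l,q}(\tau)$ or by the pure rarefaction decay term $C_\eta\varepsilon^{7/5+a/15}(\delta+\varepsilon^a\tau)^{-4/3}$.

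The pieces that involve the perturbation $f$ can be handled in the spirit of Lemma \ref{lem7.7}. For $\Gamma(f,f)$, after summing over the splits $\alpha_1\leq\alpha$, $\bar\beta\leq\beta_1\leq\beta$, I would place the un-differentiated or lower-differentiated factor into $L^\infty_y L^2_v$ via the one-dimensional Sobolev embedding, charging one $y$-derivative to $\mathcal{E}_{2,l,q}$, while the other factor contributes $\|w\partial^{\alpha-\alpha_1}_{\beta-\beta_1}f\|_\sigma^2$, which is bounded in turn by $\varepsilon^{1-a}\mathcal{D}_{2,l,q}$. The a priori bound \eqref{3.22} then yields $\sqrt{\mathcal{E}_{2,l,q}}\leq k^{1/12}\varepsilon^{3/5-2a/5}$, producing the second right-hand-side term. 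The cross terms $\Gamma(\overline{G}/\sqrt\mu,f)$ and $\Gamma(f,\overline{G}/\sqrt\mu)$ are treated in the same way after using the explicit representation \eqref{3.4} of $\overline{G}$ together with the Burnett-function decay \eqref{7.4}, which reduces $|\mu^\epsilon\partial^{\alpha_1}_{\bar\beta}(\overline{G}/\sqrt\mu)|_2$ to $\varepsilon^{1-a}$ times pointwise derivatives of $(\bar u,\bar\theta)$, with the macroscopic coefficients bounded by \eqref{3.24}; depending on whether the $\overline{G}$ factor or the $f$ factor is placed into $L^\infty_y$, the resulting contribution joins either the second or the third right-hand-side term.

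The most delicate piece is the pure rarefaction contribution $\Gamma(\overline{G}/\sqrt\mu,\overline{G}/\sqrt\mu)$: here there is no $f$-factor to absorb, so the decay must come entirely from the rarefaction profile. By \eqref{7.8} together with \eqref{7.4}, the $v$-integration is bounded at the pointwise-in-$y$ level by $\varepsilon^{2(1-a)}$ times a product of derivatives of $(\bar u,\bar\theta)$ of orders up to $|\alpha|+1$, multiplied by $|w(\alpha,\beta)h|_\sigma$. I would then apply Hölder in $y$ with conjugate exponents $(4,4/3)$, estimating the square of the rarefaction-wave derivatives in $L^{4/3}_y$ using Lemma \ref{lem7.2}; this produces the decay factor $(\delta+\varepsilon^a\tau)^{-4/3}$, while the residual power of $\delta$ is absorbed using $\delta=k^{-1}\varepsilon^{3/5-2a/5}$ from \eqref{3.21}, so that the various $\varepsilon$-powers consolidate into the coefficient $\varepsilon^{7/5+a/15}$. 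A final Young's inequality then separates the $\eta$-term and leaves exactly the third right-hand-side bound. The main obstacle is the bookkeeping: the Hölder exponent and the derivative orders must be calibrated so that the $\varepsilon$- and $\delta$-powers combine to precisely $\varepsilon^{7/5+a/15}(\delta+\varepsilon^a\tau)^{-4/3}$, which after time integration reproduces the target a priori order $\varepsilon^{6/5-4a/5}$ and thereby closes the bootstrap. The proof of \eqref{7.19} is identical, except that the $\beta$-derivative splittings are absent and $w(\alpha,0)$ replaces $w(\alpha,\beta)$ throughout.
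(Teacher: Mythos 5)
Your proposal follows essentially the same route as the paper: decompose $G=\overline{G}+\sqrt{\mu}f$ into the four pieces, apply the weighted trilinear estimate \eqref{7.8}, treat the $f$-containing pieces with the 1D embedding and the a priori bound \eqref{3.22} (giving the $k^{\frac{1}{12}}\varepsilon^{\frac{3}{5}-\frac{2}{5}a}\mathcal{D}_{2,l,q}$ term), and control the pure $\overline{G}$ piece solely by the Burnett representation, Lemma \ref{lem7.2} and the choice $\delta=k^{-1}\varepsilon^{\frac{3}{5}-\frac{2}{5}a}$. One small calibration point: the H\"older pairing $(4,4/3)$ as literally stated would place $|w(\alpha,\beta)h|_{\sigma}$ in $L^4_y$, which the dissipation does not control; the paper instead pairs against $|w h|_\sigma$ in $L^2_y$ (Cauchy--Schwarz/Young, producing the $\eta\varepsilon^{a-1}\|wh\|^2_\sigma$ term), bounds the remaining rarefaction product by $L^\infty_y\times L^2_y$ norms to get $\varepsilon^{3}(\delta+\varepsilon^a\tau)^{-3}$, and then absorbs the surplus decay via $(\delta+\varepsilon^a\tau)^{-\frac{5}{3}}\leq\delta^{-\frac{5}{3}}$ to reach $\varepsilon^{\frac{7}{5}+\frac{1}{15}a}(\delta+\varepsilon^{a}\tau)^{-\frac{4}{3}}$ — with that adjustment your argument closes exactly as in the paper.
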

\begin{proof}
	Let $|\alpha|+|\beta|\leq 2$ and $|\beta|\geq1$, then we deduce from $G=\overline{G}+\sqrt{\mu}f$ that
	\begin{equation}
		\label{7.20}
		\partial^\alpha_\beta\Gamma(\frac{G}{\sqrt{\mu}},\frac{G}{\sqrt{\mu}})=
		\partial^\alpha_\beta\Gamma(f,f)
		+\partial^\alpha_\beta\Gamma(\frac{\overline{G}}{\sqrt{\mu}},f)+\partial^\alpha_\beta\Gamma(f,\frac{\overline{G}}{\sqrt{\mu}})
		+\partial^\alpha_\beta\Gamma(\frac{\overline{G}}{\sqrt{\mu}},\frac{\overline{G}}{\sqrt{\mu}}).
	\end{equation}
	We take the inner product of \eqref{7.20} with $\varepsilon^{a-1}e^{\phi} w^2(\alpha,\beta) h$
	and then compute each term.  Using \eqref{7.8} and \eqref{4.45a}, one obtains that
	\begin{eqnarray*}
		&&\varepsilon^{a-1}|(\partial^\alpha_\beta \Gamma(f,f),e^{\phi}w^2(\alpha,\beta)h)|
		\notag\\
		&&\leq C\varepsilon^{a-1}\sum_{\alpha_1\leq\alpha}\sum_{\bar{\beta}\leq\beta_1\leq\beta}\int_{\mathbb R}|\mu^\epsilon\partial^{\alpha_1}_{\bar{\beta}}f|_2|w(\alpha,\beta) \partial^{\alpha-\alpha_1}_{\beta-\beta_1}f|_{\sigma}| w(\alpha,\beta) h|_{\sigma}\,dy
		\notag\\
		&&\leq \eta\varepsilon^{a-1}\|w(\alpha,\beta)h\|^{2}_{\sigma}
		+C_{\eta}\sum_{\alpha_1\leq\alpha}\sum_{\bar{\beta}\leq\beta_1\leq\beta}I^{\alpha,\alpha_{1}}_{\beta,\beta_{1}}(f,f),
	\end{eqnarray*}	
where we have denoted
	$$
	I^{\alpha,\alpha_{1}}_{\beta,\beta_{1}}(f,f)=\varepsilon^{a-1}\int_{\mathbb R}|\mu^\epsilon\partial^{\alpha_1}_{\bar{\beta}}f|^2_2|w(\alpha,\beta) \partial^{\alpha-\alpha_1}_{\beta-\beta_1}f|^2_{\sigma}\,dy.
	$$	
	Since we consider $|\alpha|+|\beta|\leq 2$ with $|\beta|\geq1$, then $|\alpha|\leq1$.	
	If $|\alpha-\alpha_1|+|\beta-\beta_1|=0$,  it follows from \eqref{3.22}, \eqref{3.18} and \eqref{4.12A}	that
	\begin{eqnarray}
		\label{4.21A}
		I^{\alpha,\alpha_{1}}_{\beta,\beta_{1}}(f,f)
		&&\leq \varepsilon^{a-1}\|\mu^\epsilon\partial^{\alpha_1}_{\bar{\beta}}f\|^2_2\||w(\alpha,\beta) \partial^{\alpha-\alpha_1}_{\beta-\beta_1}f|^2_{\sigma}\|_{L_y^{\infty}}
		\notag\\
		&&\leq Ck^{\frac{1}{6}}\varepsilon^{\frac{6}{5}-\frac{4}{5}a}\varepsilon^{a-1}\|w(\alpha,\beta) \partial^{\alpha-\alpha_1}_{\beta-\beta_1}f\|_{\sigma}\|w(\alpha,\beta) \partial^{\alpha-\alpha_1}_{\beta-\beta_1}f_y\|_{\sigma}
		\notag\\
		&&\leq Ck^{\frac{1}{12}}\varepsilon^{\frac{3}{5}-\frac{2}{5}a}\mathcal{D}_{2,l,q}(\tau).
	\end{eqnarray}	
	Here we have used the fact that $w(\alpha,\beta)\leq w(\alpha_2,0)$ when $|\alpha_2|\leq 1$ and $|\beta|\geq 1$.
	
	If $|\alpha-\alpha_1|+|\beta-\beta_1|=1$, then $|\alpha_1|+|\beta_1|\leq1$. If $|\alpha_1|+|\beta_1|=0$, one has $|\alpha|+|\beta|=1$ and
	$w(\alpha,\beta)=w(\alpha-\alpha_{1},\beta-\beta_{1})$, and hence a simple computation derives
	\begin{eqnarray}
		\label{4.22A}
		I^{\alpha,\alpha_{1}}_{\beta,\beta_{1}}(f,f)
		&&\leq \varepsilon^{a-1}\||\mu^\epsilon\partial^{\alpha_1}_{\bar{\beta}}f|^2_2\|_{L_y^{\infty}}\|w(\alpha,\beta) \partial^{\alpha-\alpha_1}_{\beta-\beta_1}f\|^2_{\sigma}
		\notag\\
		&&\leq Ck^{\frac{1}{12}}\varepsilon^{\frac{3}{5}-\frac{2}{5}a}\mathcal{D}_{2,l,q}(\tau).
	\end{eqnarray}	
	If $|\alpha_1|+|\beta_1|=1$, then $|\alpha|+|\beta|=2$ and
	$w(\alpha,\beta)\leq w(\alpha-\alpha_{1}+\alpha_2,\beta-\beta_{1})$ for $|\alpha_2|\leq1$.
	In this case, if $|\alpha-\alpha_1|=0$, we use the similar arguments as \eqref{4.21A} to get the same bound.
	If $|\alpha-\alpha_1|=1$, we use the similar arguments as \eqref{4.22A} to get the same bound.
	
	If $|\alpha-\alpha_1|+|\beta-\beta_1|=2$, then $|\alpha_1|+|\beta_1|=0$ and $w(\alpha,\beta)=w(\alpha-\alpha_{1},\beta-\beta_{1})$,
	we use the similar arguments as \eqref{4.22A} to get the same bound.
	
As a consequence, the above bounds give
	\begin{equation}
		\label{7.30}
		\varepsilon^{a-1}|(\partial^\alpha_\beta \Gamma(f,f),e^{\phi}w^2(\alpha,\beta)h)|
		\leq\eta\varepsilon^{a-1}\|w(\alpha,\beta)h\|^{2}_{\sigma}
		+C_{\eta}k^{\frac{1}{12}}\varepsilon^{\frac{3}{5}-\frac{2}{5}a}\mathcal{D}_{2,l,q}(\tau).
	\end{equation}		
	For the second term of \eqref{7.20}, we have from \eqref{7.8} and \eqref{4.45a} that
	\begin{multline*}
		\varepsilon^{a-1}|(\partial^\alpha_\beta \Gamma(\frac{\overline{G}}{\sqrt{\mu}},f),e^{\phi} w^2(\alpha,\beta)h)|\\
		\leq C\eta\varepsilon^{a-1}\|w(\alpha,\beta)h\|^{2}_{\sigma}
		+C_{\eta}\sum_{\alpha_1\leq\alpha}\sum_{\bar{\beta}\leq\beta_1\leq\beta}I^{\alpha,\alpha_{1}}_{\beta,\beta_{1}}(\overline{G},f).
	\end{multline*}
	Here the term $I^{\alpha,\alpha_{1}}_{\beta,\beta_{1}}(\overline{G},f)$ is defined by
	$$
	I^{\alpha,\alpha_{1}}_{\beta,\beta_{1}}(\overline{G},f)=\varepsilon^{a-1}\int_{\mathbb R}|\mu^\epsilon\partial^{\alpha_1}_{\bar{\beta}}(\frac{\overline{G}}{\sqrt{\mu}})|^2_2| w(\alpha,\beta) \partial^{\alpha-\alpha_1}_{\beta-\beta_1}f|^2_{\sigma}\,dy.
	$$		
	By \eqref{7.1} and \eqref{7.2}, we can rewrite \eqref{3.4} as
	\begin{equation*}
		\overline{G}=\varepsilon^{1-a}\{\frac{\sqrt{R}\bar{\theta}_{y}}{\sqrt{\theta}}A_{1}(\frac{v-u}{\sqrt{R\theta}})
		+\bar{u}_{1y}B_{11}(\frac{v-u}{\sqrt{R\theta}})\},
	\end{equation*}
	which implies that for $\beta_{1}=(1,0,0)$,
	\begin{equation}
		\label{7.22}
		\partial_{\beta_{1}}\overline{G}=\varepsilon^{1-a}\{\frac{\sqrt{R}\bar{\theta}_{y}}{\sqrt{\theta}}
		\partial_{v_{1}}A_{1}(\frac{v-u}{\sqrt{R\theta}})(\frac{1}{\sqrt{R\theta}})
		+\bar{u}_{1y}\partial_{v_{1}}B_{11}(\frac{v-u}{\sqrt{R\theta}})\frac{1}{\sqrt{R\theta}}\}.
	\end{equation}
	Then, it is straightforward to get
	\begin{eqnarray}
		\label{7.23}
		\overline{G}_y&&=\varepsilon^{1-a}\Big\{\frac{\sqrt{R}\bar{\theta}_{yy}}{\sqrt{\theta}}A_{1}(\frac{v-u}{\sqrt{R\theta}})
		-\frac{\sqrt{R}\bar{\theta}_{y}\theta_{y}}{2\sqrt{\theta^{3}}}A_{1}(\frac{v-u}{\sqrt{R\theta}})
		\notag\\
		&&\quad-\frac{\sqrt{R}\bar{\theta}_{y}}{\sqrt{\theta}}
		\nabla_{v}A_{1}(\frac{v-u}{\sqrt{R\theta}})\cdot\frac{u_{y}}{\sqrt{R\theta}}
		-\frac{\sqrt{R}\bar{\theta}_{y}\theta_{y}}{\sqrt{\theta}}
		\nabla_{v}A_{1}(\frac{v-u}{\sqrt{R\theta}})\cdot\frac{v-u}{2\sqrt{R\theta^{3}}}
		\notag\\
		&&\quad+\bar{u}_{1yy}B_{11}(\frac{v-u}{\sqrt{R\theta}})
		-\frac{\bar{u}_{1y}u_{y}}{\sqrt{R\theta}}\cdot\nabla_{v}B_{11}(\frac{v-u}{\sqrt{R\theta}})\notag\\
		&&\quad-\frac{\bar{u}_{1y}\theta_{y}(v-u)}{2\sqrt{R\theta^{3}}}\cdot\nabla_{v}B_{11}(\frac{v-u}{\sqrt{R\theta}})
		\Big\}.
	\end{eqnarray}
	And $\overline{G}_{\tau}$ has the similar expression as \eqref{7.23}. Thus, for any $|\bar{\beta}|\geq0$ and $m>0$, by 
	similar expansion as \eqref{7.22} and \eqref{7.23}, we derive from \eqref{7.4} and \eqref{7.25aa} that
	\begin{equation}
		\label{7.24}
		|\langle v\rangle^{m} \partial _{\bar{\beta}}(\frac{\overline{G}}{\sqrt{\mu}})|_{\sigma,w}
		\leq C\varepsilon^{1-a}|(\bar{u}_{1y},\bar{\theta}_{y})|,
	\end{equation}
	and for $|\bar{\alpha}|=1$,
	\begin{equation}
		\label{7.25}
		| \langle v\rangle^{m} \partial^{\bar{\alpha}}_{\bar{\beta}}(\frac{\overline{G}}{\sqrt{\mu}})|_{\sigma,w}
		\leq C\varepsilon^{1-a}\{|\partial^{\bar{\alpha}}(\bar{u}_{1y},\bar{\theta}_{y})|
		+|(\bar{u}_{1y},\bar{\theta}_{y})||\partial^{\bar{\alpha}}(u,\theta)|\}.
	\end{equation}
	If $|\alpha_{1}|=0$, we use \eqref{7.24}, Lemma \ref{lem7.2} and $\delta=k^{-1}\varepsilon^{\frac{3}{5}-\frac{2}{5}a}$ in \eqref{3.21} to obtain
	\begin{multline}
		\label{4.27A}
		\||\mu^\epsilon\partial^{\alpha_1}_{\bar{\beta}}(\frac{\overline{G}}{\sqrt{\mu}})|^2_2\|_{L_y^{\infty}}
		\leq C\varepsilon^{2-2a}\|(\bar{u}_{1y},\bar{\theta}_{y})\|^2_{L_y^{\infty}}
		\\	
		\leq C\varepsilon^{2-2a}\varepsilon^{2a}(\delta+\varepsilon^{a}\tau)^{-2}
		\leq C\varepsilon^{\frac{8}{5}+\frac{4}{15}a}(\delta+\varepsilon^{a}\tau)^{-\frac{4}{3}},
	\end{multline}
	which yields immediately that
	\begin{eqnarray*}
		I^{\alpha,\alpha_{1}}_{\beta,\beta_{1}}(\overline{G},f)&&\leq \varepsilon^{a-1}\||\mu^\epsilon\partial^{\alpha_1}_{\bar{\beta}}(\frac{\overline{G}}{\sqrt{\mu}})|^2_2\|_{L_y^{\infty}}\| w(\alpha,\beta) \partial^{\alpha-\alpha_1}_{\beta-\beta_1}f\|^2_{\sigma}
		\notag\\
		&&\leq C\varepsilon^{\frac{8}{5}+\frac{4}{15}a}(\delta+\varepsilon^{a}\tau)^{-\frac{4}{3}}\mathcal{D}_{2,l,q}(\tau)
		\notag\\
		&&\leq Ck^{\frac{1}{12}}\varepsilon^{\frac{3}{5}-\frac{2}{5}a}\mathcal{D}_{2,l,q}(\tau).
	\end{eqnarray*}	
	If $|\alpha_{1}|=1$, then $|\alpha|=1$ and $w(\alpha,\beta)\leq w(\alpha-\alpha_{1}+\alpha_{2},\beta)$ for $|\alpha_{2}|\leq1$, and hence it follows that
	\begin{multline*}
		I^{\alpha,\alpha_{1}}_{\beta,\beta_{1}}(\overline{G},f)\leq \varepsilon^{a-1}\|\mu^\epsilon\partial^{\alpha_1}_{\bar{\beta}}(\frac{\overline{G}}{\sqrt{\mu}})\|^2_2
		\|| w(\alpha,\beta) \partial^{\alpha-\alpha_1}_{\beta-\beta_1}f|^2_{\sigma}\|_{L_y^{\infty}}\\
		\leq Ck^{\frac{1}{12}}\varepsilon^{\frac{3}{5}-\frac{2}{5}a}\mathcal{D}_{2,l,q}(\tau).
	\end{multline*}
Here we have used \eqref{7.25}, Lemma \ref{lem7.2}, \eqref{3.21} and \eqref{4.13A} such that	
	\begin{eqnarray}
		\label{4.28A}
		\|\mu^\epsilon\partial^{\alpha_1}_{\bar{\beta}}(\frac{\overline{G}}{\sqrt{\mu}})\|^2_2	
		&&\leq C\varepsilon^{2-2a}\{\|\partial^{\alpha_1}(\bar{u}_{1y},\bar{\theta}_{y})\|^2
		+\|(\bar{u}_{1y},\bar{\theta}_{y})\|^2_{L_y^{\infty}}\|\partial^{\alpha_1}(u,\theta)\|^2\}
		\notag\\	
		&&\leq C\varepsilon^{2-2a}\{\varepsilon^{3a}\delta^{-1}(\delta+\varepsilon^{a}\tau)^{-2}
		+\varepsilon^{2a}(\delta+\varepsilon^{a}\tau)^{-2}k^{\frac{1}{12}}\varepsilon^{\frac{3}{5}-\frac{2}{5}a}\}
		\notag\\
		&&\leq C\varepsilon^{\frac{8}{5}+\frac{4}{15}a}(\delta+\varepsilon^{a}\tau)^{-\frac{4}{3}}.
	\end{eqnarray}
Consequently, with these facts, we have
	\begin{multline}
		\label{7.29}
		\varepsilon^{a-1}|(\partial^\alpha_\beta \Gamma(\frac{\overline{G}}{\sqrt{\mu}},f),e^{\phi} w^2(\alpha,\beta)h)|\\
		\leq C\eta\varepsilon^{a-1}\|w(\alpha,\beta)h\|^{2}_{\sigma}
		+C_{\eta}k^{\frac{1}{12}}\varepsilon^{\frac{3}{5}-\frac{2}{5}a}\mathcal{D}_{2,l,q}(\tau).
	\end{multline}
	Performing the similar calculations as \eqref{7.29}, it holds that
	\begin{multline}
		\label{4.30A}
		\varepsilon^{a-1}|(\partial^\alpha_\beta \Gamma(f,\frac{\overline{G}}{\sqrt{\mu}}),e^{\phi} w^2(\alpha,\beta)h)|\\
		\leq C\eta\varepsilon^{a-1}\|w(\alpha,\beta)h\|^{2}_{\sigma}
		+C_{\eta}k^{\frac{1}{12}}\varepsilon^{\frac{3}{5}-\frac{2}{5}a}\mathcal{D}_{2,l,q}(\tau).
	\end{multline}
	
	Lastly we consider the last term in \eqref{7.20}. As before, we have from \eqref{7.8} and \eqref{4.45a} that
	\begin{multline*}
		\varepsilon^{a-1}|(\partial^\alpha_\beta \Gamma(\frac{\overline{G}}{\sqrt{\mu}},\frac{\overline{G}}{\sqrt{\mu}}),e^{\phi} w^2(\alpha,\beta)h)|\\
		\leq C\eta\varepsilon^{a-1}\|w(\alpha,\beta)h\|^{2}_{\sigma}
		+C_{\eta}\sum_{\alpha_1\leq\alpha}\sum_{\bar{\beta}\leq\beta_1\leq\beta}I^{\alpha,\alpha_{1}}_{\beta,\beta_{1}}(\overline{G},\overline{G}).
	\end{multline*}
In this expression we have used
	$$
	I^{\alpha,\alpha_{1}}_{\beta,\beta_{1}}(\overline{G},\overline{G})=\varepsilon^{a-1}\int_{\mathbb R}|\mu^\epsilon\partial^{\alpha_1}_{\bar{\beta}}(\frac{\overline{G}}{\sqrt{\mu}})|^2_2| w(\alpha,\beta) \partial^{\alpha-\alpha_1}_{\beta-\beta_1}(\frac{\overline{G}}{\sqrt{\mu}})|^2_{\sigma}\,dy.
	$$	
If  $|\alpha|=0$, we deduce from \eqref{7.24}, \eqref{7.25}, Lemma \ref{lem7.2} and \eqref{3.21} that
	\begin{eqnarray*}
		I^{\alpha,\alpha_{1}}_{\beta,\beta_{1}}(\overline{G},\overline{G})\leq
		&&\varepsilon^{a-1}\||\mu^\epsilon\partial^{\alpha_1}_{\bar{\beta}}(\frac{\overline{G}}{\sqrt{\mu}})|^2_2\|_{L_y^{\infty}}\| w(\alpha,\beta) \partial^{\alpha-\alpha_1}_{\beta-\beta_1}(\frac{\overline{G}}{\sqrt{\mu}})\|^2_{\sigma}
		\notag\\
		\leq&& C\varepsilon^{a-1}\varepsilon^{2-2a}\varepsilon^{2a}(\delta+\varepsilon^{a}\tau)^{-2}
		\varepsilon^{2-2a}\varepsilon^{a}(\delta+\varepsilon^{a}\tau)^{-1}
		\notag\\
		=&&C\varepsilon^{3}(\delta+\varepsilon^{a}\tau)^{-3}\leq C\varepsilon^{\frac{7}{5}+\frac{1}{15}a}(\delta+\varepsilon^{a}\tau)^{-\frac{4}{3}}.
	\end{eqnarray*}
If  $|\alpha|=1$, then $|\alpha_1|=0$ or $|\alpha_1|=1$. When $|\alpha_1|=0$, by \eqref{4.27A} and \eqref{4.28A}, we conclude that
	\begin{eqnarray*}
		I^{\alpha,\alpha_{1}}_{\beta,\beta_{1}}(\overline{G},\overline{G})\leq
		&&\varepsilon^{a-1}\||\mu^\epsilon\partial^{\alpha_1}_{\bar{\beta}}(\frac{\overline{G}}{\sqrt{\mu}})|^2_2\|_{L_y^{\infty}}\| w(\alpha,\beta) \partial^{\alpha-\alpha_1}_{\beta-\beta_1}(\frac{\overline{G}}{\sqrt{\mu}})\|^2_{\sigma}
		\notag\\
		\leq&&C\varepsilon^{\frac{7}{5}+\frac{1}{15}a}(\delta+\varepsilon^{a}\tau)^{-\frac{4}{3}}.
	\end{eqnarray*}
	If $|\alpha_1|=1$, this case has the same bound as the above. It follows that
	\begin{multline}
		\label{7.27}
		\varepsilon^{a-1}|(\partial^\alpha_\beta \Gamma(\frac{\overline{G}}{\sqrt{\mu}},\frac{\overline{G}}{\sqrt{\mu}}),e^{\phi} w^2(\alpha,\beta)h)|\\
		\leq C\eta\varepsilon^{a-1}\|w(\alpha,\beta)h\|_{\sigma}^{2}
		+C_{\eta}\varepsilon^{\frac{7}{5}+\frac{1}{15}a}(\delta+\varepsilon^{a}\tau)^{-\frac{4}{3}}.
	\end{multline}

In sumarry,  combining \eqref{7.27}, \eqref{4.30A}, \eqref{7.29} and \eqref{7.30}, the estimate \eqref{7.18} follows.
Exactly as in the analogous estimate in \eqref{7.18}, we can prove that  \eqref{7.7} holds and we omit the details for brevity.
	This completes the proof of Lemma \ref{lem7.8}.
\end{proof}
\begin{lemma}
	\label{lem4.8}
	Let $|\alpha|= 2$ and $w=w(\alpha,0)$ defined in \eqref{3.12}.
	Suppose that  \eqref{3.21}, \eqref{3.22} and \eqref{3.23} hold. For any $\eta>0$, there exists $C_\eta>0$ such that
	\begin{multline}
		\label{4.34A}
		|(\partial^\alpha \Gamma(\frac{G}{\sqrt{\mu}},\frac{G}{\sqrt{\mu}}), e^{\phi}w^2(\alpha,0) h)|
	\leq  C\eta\|w(\alpha,0)h\|^{2}_{\sigma}\\
		+C_{\eta}k^{\frac{1}{12}}\varepsilon^{\frac{3}{5}-\frac{2}{5}a}\mathcal{D}_{2,l,q}(\tau)
		+C_{\eta}\varepsilon^{\frac{7}{5}+\frac{1}{15}a}(\delta+\varepsilon^{a}\tau)^{-\frac{4}{3}}.
	\end{multline}
\end{lemma}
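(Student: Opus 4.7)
The proof mirrors that of Lemma~\ref{lem7.8}, adapted to the higher-order case $|\alpha|=2$, $|\beta|=0$. Splitting $G=\overline{G}+\sqrt{\mu}f$ yields a decomposition of $\partial^\alpha\Gamma(G/\sqrt{\mu},G/\sqrt{\mu})$ into four terms as in \eqref{7.20}: $\partial^\alpha\Gamma(f,f)$, $\partial^\alpha\Gamma(\overline{G}/\sqrt{\mu},f)$, $\partial^\alpha\Gamma(f,\overline{G}/\sqrt{\mu})$, and $\partial^\alpha\Gamma(\overline{G}/\sqrt{\mu},\overline{G}/\sqrt{\mu})$. To each inner product against $e^\phi w^2(\alpha,0)h$ I would apply \eqref{7.8} with $\beta=0$ followed by Cauchy's inequality with small parameter $\eta$, producing the main term $C\eta\|w(\alpha,0)h\|_\sigma^2$ and reducing the task to estimating
\begin{equation*}
I^{\alpha,\alpha_1}(g_1,g_2):=\int_{\mathbb{R}}|\mu^\epsilon\partial^{\alpha_1}g_1|_2^2\,|w(\alpha,0)\partial^{\alpha-\alpha_1}g_2|_\sigma^2\,dy
\end{equation*}
for each combination of $g_1,g_2\in\{f,\overline{G}/\sqrt{\mu}\}$ and each $|\alpha_1|\in\{0,1,2\}$, typically by placing one factor in $L^\infty_y$ via the 1D embedding $\|F\|_{L^\infty_y}^2\leq 2\|F\|\|F_y\|$.

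For the purely perturbative piece $I^{\alpha,\alpha_1}(f,f)$, the crucial bookkeeping is that \eqref{3.22} combined with the structure of $\mathcal{E}_{2,l,q}$ gives $\|\partial^{\alpha_1}f\|\leq k^{1/12}\varepsilon^{3/5-2a/5}$ for $|\alpha_1|\leq 1$ and $\|\partial^{\alpha_1}f\|\leq k^{1/12}\varepsilon^{3a/5-2/5}$ for $|\alpha_1|=2$, where the extra factor $\varepsilon^{a-1}$ arises from the $\varepsilon^{2-2a}$-weighted second-order term in $\mathcal{E}_{2,l,q}$; likewise the $\sigma$-norms satisfy $\|\partial^j f\|^2_{\sigma,w}\leq\varepsilon^{1-a}\mathcal{D}_{2,l,q}$ for $j\leq 1$ and $\|\partial^{\alpha'}f\|^2_{\sigma,w}\leq\varepsilon^{a-1}\mathcal{D}_{2,l,q}$ for $|\alpha'|=2$. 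For each $|\alpha_1|\in\{0,1,2\}$, combining these produces a bound of the form $Ck^{1/6}\varepsilon^{(1+a)/5}\mathcal{D}_{2,l,q}$ or $Ck^{1/6}\varepsilon^{(6-4a)/5}\mathcal{D}_{2,l,q}$, and the condition $a\geq 2/3$ from \eqref{3.2a} ensures both exponents dominate $3/5-2a/5$, yielding the target $Ck^{1/12}\varepsilon^{3/5-2a/5}\mathcal{D}_{2,l,q}$ after absorbing a $k^{1/12}$ factor.

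For the cross terms $I^{\alpha,\alpha_1}(\overline{G},f)$ and $I^{\alpha,\alpha_1}(f,\overline{G})$, I would combine the Burnett-function representations \eqref{7.22}-\eqref{7.23} with their second-order analogue (involving up to third-order $y$-derivatives of $(\bar{u},\bar{\theta})$ and second-order derivatives of $(\rho,u,\theta)$), the velocity-decay bounds \eqref{7.24}-\eqref{7.25}, and the profile estimates of Lemma~\ref{lem7.2}; the substitution $\delta=k^{-1}\varepsilon^{3/5-2a/5}$ from \eqref{3.21} then converts the resulting factors into the desired $Ck^{1/12}\varepsilon^{3/5-2a/5}\mathcal{D}_{2,l,q}$ contribution, exactly as in \eqref{7.29} and \eqref{4.30A}. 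For the pure profile term $I^{\alpha,\alpha_1}(\overline{G},\overline{G})$, applying the same bounds to both factors and trading part of the time singularity via $(\delta+\varepsilon^a\tau)^{-p}\leq\delta^{-(p-4/3)}(\delta+\varepsilon^a\tau)^{-4/3}$ delivers the residual $C_\eta\varepsilon^{7/5+a/15}(\delta+\varepsilon^a\tau)^{-4/3}$. The main obstacle will be the endpoint case $|\alpha_1|=0$ (and its twin $|\alpha_1|=2$) of $I^{\alpha,\alpha_1}(f,f)$: the exponent inequality $(1+a)/5\geq 3/5-2a/5$ is tight exactly at $a=2/3$, so closing the estimate genuinely relies on both the $\varepsilon^{2-2a}$-weighted inclusion of second-order derivatives in $\mathcal{E}_{2,l,q}$ and the threshold $a\geq 2/3$ of \eqref{3.2a}, in line with Remark~\ref{rmk.ea}.
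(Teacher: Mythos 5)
Your proposal is correct and follows essentially the same route as the paper: the decomposition \eqref{4.35A}, the trilinear estimate of Lemma \ref{lem7.6} (using \eqref{7.8} with $\beta=0$ is equivalent to the paper's use of \eqref{7.7}), the 1D embedding with the a priori bounds \eqref{3.22} including the $\varepsilon^{2-2a}$-weighted second-order terms, the Burnett-function bounds \eqref{7.24}--\eqref{7.25} with Lemma \ref{lem7.2} and $\delta=k^{-1}\varepsilon^{3/5-2a/5}$ for the terms involving $\overline{G}$, and the same exponent bookkeeping whose tightness at $a=2/3$ you correctly identify. No gaps.
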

\begin{proof}
For $|\alpha|= 2$, similar to \eqref{7.20},  it is easy to see that
	\begin{equation}
		\label{4.35A}
		\partial^\alpha\Gamma(\frac{G}{\sqrt{\mu}},\frac{G}{\sqrt{\mu}})=
		\partial^\alpha\Gamma(f,f)
		+\partial^\alpha\Gamma(\frac{\overline{G}}{\sqrt{\mu}},f)+\partial^\alpha\Gamma(f,\frac{\overline{G}}{\sqrt{\mu}})
		+\partial^\alpha\Gamma(\frac{\overline{G}}{\sqrt{\mu}},\frac{\overline{G}}{\sqrt{\mu}}).
	\end{equation}
Next we focus on the estimate on the inner product of \eqref{4.35A} with $e^{\phi} w^2(\alpha,0) h$.  In view of \eqref{7.7} and \eqref{4.45a}, one gets
	\begin{equation*}
		|(\partial^\alpha \Gamma(f,f),e^{\phi}w^2(\alpha,0)h)|
		\leq \eta\|w(\alpha,0)h\|^{2}_{\sigma}
		+C_{\eta}\sum_{\alpha_1\leq\alpha}I_\alpha^{\alpha_{1}}(f,f),
	\end{equation*}	
where $I^{\alpha_{1}}_{\alpha}(f,f)$ is given by
$$
I_\alpha^{\alpha_{1}}(f,f)=\int_{\mathbb R}|\mu^\epsilon\partial^{\alpha_1}f|^2_2|w(\alpha,0) \partial^{\alpha-\alpha_1}f|^2_{\sigma}\,dy.
$$	
	If $|\alpha_1|=|\alpha|=2$, then  $w(\alpha,0)\leq w(\alpha-\alpha_1+\alpha_2,0)$ for $|\alpha_2|\leq 1$, and hence it follows
	 from these facts, \eqref{3.22} and \eqref{4.12A}	that
	\begin{eqnarray*}
		I_\alpha^{\alpha_{1}}(f,f)
		&&\leq \|\mu^\epsilon\partial^{\alpha_1}f\|^2_2\||w(\alpha,0)\partial^{\alpha-\alpha_1}f|^2_{\sigma}\|_{L_y^{\infty}}
		\notag\\
		&&\leq Ck^{\frac{1}{6}}\varepsilon^{\frac{6}{5}-\frac{4}{5}a}\varepsilon^{2a-2}\|w(\alpha,0)\partial^{\alpha-\alpha_1}f\|_{\sigma}\|w(\alpha,0)\partial^{\alpha-\alpha_1}f_y\|_{\sigma}
		\notag\\
		&&\leq Ck^{\frac{1}{12}}\varepsilon^{\frac{3}{5}-\frac{2}{5}a}\mathcal{D}_{2,l,q}(\tau).
	\end{eqnarray*}	
Similarly, the case $|\alpha_1|\leq1$ can be estimated as
	\begin{equation*}
		I_\alpha^{\alpha_{1}}(f,f)
		\leq \||\mu^\epsilon\partial^{\alpha_1}f|^2_2\|_{L_y^{\infty}}\|w(\alpha,0) \partial^{\alpha-\alpha_1}f\|^2_{\sigma}
		\leq Ck^{\frac{1}{12}}\varepsilon^{\frac{3}{5}-\frac{2}{5}a}\mathcal{D}_{2,l,q}(\tau).
	\end{equation*}	
Consequently, with these two estimates, we have 
	\begin{equation}
		\label{4.36A}
		|(\partial^\alpha \Gamma(f,f),e^{\phi}w^2(\alpha,0)h)|
		\leq\eta\|w(\alpha,0)h\|^{2}_{\sigma}
		+C_{\eta}k^{\frac{1}{12}}\varepsilon^{\frac{3}{5}-\frac{2}{5}a}\mathcal{D}_{2,l,q}(\tau).
	\end{equation}	
	
For the second term of \eqref{4.35A}, we use \eqref{7.7} and \eqref{4.45a} again to get
	\begin{equation*}
		|(\partial^\alpha \Gamma(\frac{\overline{G}}{\sqrt{\mu}},f),e^{\phi} w^2(\alpha,0)h)|
		\leq C\eta\|w(\alpha,0)h\|^{2}_{\sigma}
		+C_{\eta}\sum_{\alpha_1\leq\alpha}I^{\alpha_{1}}_{\alpha}(\overline{G},f),
	\end{equation*}
	where 
	$$
	I^{\alpha_{1}}_{\alpha}(\overline{G},f)=\int_{\mathbb R}|\mu^\epsilon\partial^{\alpha_1}(\frac{\overline{G}}{\sqrt{\mu}})|^2_2| w(\alpha,0) \partial^{\alpha-\alpha_1}f|^2_{\sigma}\,dy.
	$$		
If $|\alpha_{1}|=2$, from \eqref{7.23}, Lemma \ref{lem7.2} and $\delta=k^{-1}\varepsilon^{\frac{3}{5}-\frac{2}{5}a}$ in \eqref{3.21}, we  claim that	
	\begin{equation*}
		\|\mu^\epsilon\partial^{\alpha_1}(\frac{\overline{G}}{\sqrt{\mu}})\|^2_2\leq
		Ck^{\frac{1}{12}}\varepsilon^{\frac{3}{5}-\frac{2}{5}a},
	\end{equation*}	
which together with $w(\alpha,0)\leq w(\alpha-\alpha_{1}+\alpha_{2},0)$ for $|\alpha_{2}|\leq1$ yields that
\begin{equation*}
	I^{\alpha_{1}}_{\alpha}(\overline{G},f)\leq \|\mu^\epsilon\partial^{\alpha_1}(\frac{\overline{G}}{\sqrt{\mu}})\|^2_2
	\|| w(\alpha,0) \partial^{\alpha-\alpha_1}f|^2_{\sigma}\|_{L_y^{\infty}}
	\leq Ck^{\frac{1}{12}}\varepsilon^{\frac{3}{5}-\frac{2}{5}a}\mathcal{D}_{2,l,q}(\tau).
\end{equation*}	
	If $|\alpha_{1}|\leq1$, it is straightforward to check that
	\begin{equation*}
		I^{\alpha_{1}}_{\alpha}(\overline{G},f)\leq \||\mu^\epsilon\partial^{\alpha_1}(\frac{\overline{G}}{\sqrt{\mu}})|^2_2\|_{L_y^{\infty}}\| w(\alpha,0) \partial^{\alpha-\alpha_1}f\|^2_{\sigma}
		\leq Ck^{\frac{1}{12}}\varepsilon^{\frac{3}{5}-\frac{2}{5}a}\mathcal{D}_{2,l,q}(\tau).
	\end{equation*}	
	With the help of the above estimates, one gets
	\begin{equation}
		\label{4.38A}
		|(\partial^\alpha\Gamma(\frac{\overline{G}}{\sqrt{\mu}},f),e^{\phi} w^2(\alpha,0)h)|\leq C\eta\|w(\alpha,0)h\|^{2}_{\sigma}
		+C_{\eta}k^{\frac{1}{12}}\varepsilon^{\frac{3}{5}-\frac{2}{5}a}\mathcal{D}_{2,l,q}(\tau).
	\end{equation}
	The term $(\partial^\alpha\Gamma(f,\frac{\overline{G}}{\sqrt{\mu}}),e^{\phi} w^2(\alpha,0)h)$
	shares the same bound as \eqref{4.38A}. 
	
Lastly, we use the similar argument as \eqref{7.27} to get
\begin{equation}
		\label{4.39A}
		|(\partial^\alpha \Gamma(\frac{\overline{G}}{\sqrt{\mu}},\frac{\overline{G}}{\sqrt{\mu}}),e^{\phi} w^2(\alpha,0)h)|
		\leq C\eta\|w(\alpha,0)h\|^{2}_{\sigma}+C_{\eta}\varepsilon^{\frac{7}{5}+\frac{1}{15}a}(\delta+\varepsilon^{a}\tau)^{-\frac{4}{3}}.
	\end{equation}
	Therefore, the estimate \eqref{4.34A} follows from \eqref{4.39A}, \eqref{4.38A} and \eqref{4.36A}.
	This ends the proof of Lemma \ref{lem4.8}.
\end{proof}

\section{Zeroth order energy estimates}\label{sec.5}
In this section we consider the zeroth order energy estimates for both the fluid and non-fluid parts.

\subsection{Zeroth order estimates on fluid part}\label{sec4.1}
As pointed out in \cite{Liu-Yang-Yu,Liu-Yang-Yu-Zhao}, the zero-order energy estimates for the fluid-type system \eqref{3.10}
and \eqref{4.31} cannot be directly derived by the usual $L^2$ energy method. For this, we make use of the entropy and entropy flux motivated in \cite{Liu-Yang-Yu}
to derive the zero-order estimates on the fluid part $(\widetilde{\rho},\widetilde{u},\widetilde{\theta},\widetilde{\phi})$.
The main result is given in Lemma \ref{lem.5.1A} below. In order to make the proof of Lemma \ref{lem.5.1A} not too long, we postpone the proof of some estimates to Lemma \ref{lem.5.2a}, Lemma \ref{lem.5.2A} and Lemma \ref{lem.5.3A}.

\begin{lemma}\label{lem.5.1A}
Assume that all the conditions in Theorem \ref{thm2.1} hold.	
Let $(F(\tau),\phi(\tau))$ be a smooth solution to the Cauchy problem  on the
 VPL system \eqref{3.5} and \eqref{3.7b} for $0\leq \tau\leq T$ with $T>0$.
Let \eqref{3.22} be satisfied, then the following energy estimate holds
	\begin{eqnarray}
		\label{4.39}
		&&\frac{d}{d\tau}\Big\{\int_{\mathbb{R}}\eta(\tau,y)\, dy
		+\frac{3}{4}\big(\widetilde{\phi}^{2},\rho'_{\mathrm{e}}(\bar{\phi})\big)
		+\frac{1}{2}\big(\widetilde{\phi}^{3},\rho''_{\mathrm{e}}(\bar{\phi})\big)
		+\frac{3}{4}\varepsilon^{2b-2a}\|\widetilde{\phi}_y\|^{2}
		\notag\\
		&&\hspace{1cm}-N_1(\tau)+\kappa_{3}\varepsilon^{1-a}(\widetilde{u}_{1},\widetilde{\rho}_y)\Big\}
		+c_{3}\|\sqrt{\bar{u}_{1y}}(\widetilde{\rho},\widetilde{u}_{1},\widetilde{\theta})\|^{2}
		\notag\\
		&&\hspace{1cm}+c\varepsilon^{1-a}\big\{\|(\widetilde{\rho}_y,\widetilde{u}_y,\widetilde{\theta}_y,\widetilde{\phi}_y)\|^{2}
		+\varepsilon^{2b-2a}\|\widetilde{\phi}_{yy}\|^{2}\big\}
		\notag\\
		&&\leq  C\varepsilon^{1-a}\|f_y\|^{2}_{\sigma}
		+Ck^{\frac{1}{12}}\varepsilon^{\frac{3}{5}-\frac{2}{5}a}\mathcal{D}_{2,l,q}(\tau)
		+C\varepsilon^{\frac{7}{5}+\frac{1}{15}a}(\delta+\varepsilon^{a}\tau)^{-\frac{4}{3}},
	\end{eqnarray}
where $\kappa_3>0$ is a given constant, and $N_1(\tau)$ is denoted by
	\begin{multline}
		\label{5.23b}
		N_1(\tau)=\varepsilon^{1-a}\int_{\mathbb{R}}\int_{\mathbb{R}^{3}}\big\{\frac{3}{2}(\frac{\widetilde{\theta}}{\theta})_{y}
		(R\theta)^{\frac{3}{2}}A_{1}(\frac{v-u}{\sqrt{R\theta}})\frac{\sqrt{\mu}}{M}f \big\}\,dv\,dy
		\\
		+\varepsilon^{1-a}\sum^{3}_{i=1}\int_{\mathbb{R}}\int_{\mathbb{R}^{3}} \big\{(\frac{3}{2}\widetilde{u}_{iy}-\frac{3}{2}\frac{\widetilde{\theta}}{\theta}u_{iy})
		R\theta B_{1i}(\frac{v-u}{\sqrt{R\theta}})\frac{\sqrt{\mu}}{M}f\big\}\, dv\,dy.
	\end{multline}
\end{lemma}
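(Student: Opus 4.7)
The plan is to adapt the macroscopic entropy--entropy flux method of Liu--Yang--Yu to the Vlasov--Poisson--Landau setting, with the new ingredient being a careful third-order expansion of $\rho_{\mathrm{e}}(\phi)$ against the Poisson equation to absorb the nontrivial potential profile.

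\textbf{Step 1: Relative entropy functional.} I would first introduce a macroscopic relative entropy density $\eta(\tau,y)$, built from the convex thermodynamic entropy of the ion fluid around the rarefaction profile $(\bar\rho,\bar u,\bar\theta)$, for instance of the form
\begin{equation*}
\eta=\tfrac{1}{2}\rho|\widetilde u|^{2}+\rho\bar\theta\,\Psi(\tfrac{\rho}{\bar\rho})+\tfrac{3}{2}\rho\,\bar\theta\,\Phi(\tfrac{\theta}{\bar\theta}),
\end{equation*}
where $\Psi,\Phi$ are the standard strictly convex functions vanishing to second order at $1$. A direct (but lengthy) computation using the fluid--type system \eqref{3.10} (or equivalently \eqref{4.31} for derivative--free terms) together with the quasineutral Euler system \eqref{1.28} for $(\bar\rho,\bar u,\bar\theta,\bar\phi)$ yields, after integration by parts,
\begin{equation*}
\frac{d}{d\tau}\int_{\mathbb R}\eta\,dy+c_{3}\|\sqrt{\bar u_{1y}}\,(\widetilde\rho,\widetilde u_{1},\widetilde\theta)\|^{2}+\varepsilon^{1-a}c\|(\widetilde u_{y},\widetilde\theta_{y})\|^{2}=I_{\text{Poisson}}+I_{\text{micro}}+R,
\end{equation*}
where the good quadratic term comes from the rarefaction monotonicity $\bar u_{1y}>0$ (Lemma \ref{lem7.1}), the viscous/heat--conduction dissipation comes from the $\varepsilon^{1-a}(\mu(\theta)u_{y})_{y}$ and $\varepsilon^{1-a}(\kappa(\theta)\theta_{y})_{y}$ terms in \eqref{3.10}, and the remainder $R$ gathers profile errors whose worst temporal decay is controlled by Lemma \ref{lem7.2} and yields the $\varepsilon^{\frac{7}{5}+\frac{1}{15}a}(\delta+\varepsilon^{a}\tau)^{-4/3}$ tail on the right.

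\textbf{Step 2: Handling the electric potential.} The main novelty compared with the pure Landau analysis of \cite{Duan-Yang-Yu} is $I_{\text{Poisson}}=-(\rho\widetilde u_{1},\widetilde\phi_{y})-(\rho\widetilde u_{1},\bar\phi_{y})+\ldots$, which cannot be dominated through the slow decay of $\bar\phi$ alone. Here I would multiply the momentum equation structure by $\widetilde\phi$ and invoke the Poisson equation $-\varepsilon^{2b-2a}\widetilde\phi_{yy}=\widetilde\rho+\rho_{\mathrm{e}}(\bar\phi)-\rho_{\mathrm{e}}(\phi)$, expanding
\begin{equation*}
\rho_{\mathrm{e}}(\phi)-\rho_{\mathrm{e}}(\bar\phi)=\rho'_{\mathrm{e}}(\bar\phi)\,\widetilde\phi+\tfrac{1}{2}\rho''_{\mathrm{e}}(\bar\phi)\,\widetilde\phi^{2}+O(\widetilde\phi^{3}).
\end{equation*}
This substitution, followed by integration by parts and use of the mass conservation $\widetilde\rho_{\tau}+(\rho\widetilde u_{1})_{y}+\ldots=0$, produces the two time derivatives $\tfrac{d}{d\tau}\{\tfrac{3}{4}(\widetilde\phi^{2},\rho'_{\mathrm{e}}(\bar\phi))+\tfrac{1}{2}(\widetilde\phi^{3},\rho''_{\mathrm{e}}(\bar\phi))\}$ and, crucially, the dissipation $\tfrac{3}{4}\tfrac{d}{d\tau}\varepsilon^{2b-2a}\|\widetilde\phi_{y}\|^{2}$ plus $c\varepsilon^{1-a}\varepsilon^{2b-2a}\|\widetilde\phi_{yy}\|^{2}$, together with $c\varepsilon^{1-a}\|\widetilde\phi_{y}\|^{2}$ from the assumption $\rho'_{\mathrm{e}}(\bar\phi)>0$ in $(\mathcal A_{2})$.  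Assumption $(\mathcal A_{3})$ (which gives \eqref{1.21}) is what guarantees that the cubic correction has the right sign or at worst is cubically small and can be absorbed.

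\textbf{Step 3: Recovering $\|\widetilde\rho_{y}\|^{2}$ and the microscopic correction.} Since the fluid system has no direct dissipation in $\widetilde\rho_{y}$, I would add the Kawashima--type cross term $\tfrac{d}{d\tau}\{\kappa_{3}\varepsilon^{1-a}(\widetilde u_{1},\widetilde\rho_{y})\}$: substituting $\widetilde u_{1\tau}$ from the momentum equation in \eqref{3.10} and $\widetilde\rho_{y}$ from differentiating mass conservation yields a positive multiple of $\varepsilon^{1-a}\|\widetilde\rho_{y}\|^{2}$ (from the pressure gradient $\tfrac{2\bar\theta}{3\bar\rho}\widetilde\rho_{y}$) up to terms absorbable by the already--established dissipation. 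For the microscopic term $I_{\text{micro}}$, which enters through the fluxes $\int v_{1}^{2}L_{M}^{-1}\Theta\,dv$ etc., I expand $L_{M}^{-1}\Theta$ using \eqref{7.1}--\eqref{7.2} and the decomposition $G=\overline G+\sqrt\mu f$ with $\overline G$ from \eqref{3.4}; integration by parts moves all $y$--derivatives off $L_{M}^{-1}\Theta$, producing exactly the Burnett--function correction $N_{1}(\tau)$ in \eqref{5.23b} (modulo moving $\tfrac{d}{d\tau}$ outside), while the residual is either of type $\varepsilon^{1-a}\|f_{y}\|_{\sigma}^{2}$ or controlled by Lemmas \ref{lem7.7}--\ref{lem7.8} applied to the $Q(G,G)$, $\phi_{y}\partial_{v_{1}}G$ and $G_{\tau}$ pieces of $\Theta$ in \eqref{3.11}, giving $Ck^{1/12}\varepsilon^{3/5-2a/5}\mathcal D_{2,l,q}(\tau)$ plus the rarefaction--wave tail.

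\textbf{Main obstacle.} The subtle point is the interplay between the slow time decay of $\bar\phi_{y}$ (only $\varepsilon^{a}(\delta+\varepsilon^{a}\tau)^{-1}$ in $L^{2}$, not integrable) and the cross term $(\rho u_{1}\bar\phi_{y},\cdot)$ that appears when one naively tests the momentum equation. It is only after rewriting everything through the Poisson equation and exploiting the structural identity $\bar\rho=\rho_{\mathrm{e}}(\bar\phi)$ that the bad linear--in--$\bar\phi_{y}$ pieces reorganize into perfect time derivatives of $(\widetilde\phi^{2},\rho'_{\mathrm e}(\bar\phi))$ together with the Poisson dissipation $\varepsilon^{2b-2a}\|\widetilde\phi_{yy}\|^{2}$. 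Making this algebraic cancellation explicit, and checking that the cubic remainder $(\widetilde\phi^{3},\rho''_{\mathrm{e}}(\bar\phi))$ is indeed absorbable under $(\mathcal A_{3})$ and the a priori bound \eqref{3.22}, is the core difficulty; once accomplished, collecting all the estimates above gives precisely \eqref{4.39}.
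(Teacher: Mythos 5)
Your plan follows essentially the same route as the paper: the relative entropy–entropy flux pair around $(\bar\rho,\bar u,\bar\theta)$ with the good quadratic term from $\bar u_{1y}>0$, the Burnett-function rewriting of the $L_M^{-1}\Theta$ fluxes producing $N_1(\tau)$, the third-order Taylor expansion of $\rho_{\mathrm{e}}$ around $\bar\phi$ against the Poisson equation, and the Kawashima-type cross term $\kappa_3\varepsilon^{1-a}(\widetilde u_1,\widetilde\rho_y)$. All the essential ingredients are present and the argument would close.

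Two pieces of bookkeeping are misplaced, though, and taken literally your Step 2/Step 3 split would be circular. First, the zeroth-order manipulation of $-\tfrac32\int\rho\widetilde u_1\widetilde\phi_y\,dy$ (integration by parts, mass conservation, Poisson equation, Taylor expansion) yields \emph{only} the exact time derivatives $\tfrac34\varepsilon^{2b-2a}\tfrac{d}{d\tau}\|\widetilde\phi_y\|^2$, $\tfrac34\tfrac{d}{d\tau}(\widetilde\phi^2,\rho'_{\mathrm{e}}(\bar\phi))$, $\tfrac12\tfrac{d}{d\tau}(\widetilde\phi^3,\rho''_{\mathrm{e}}(\bar\phi))$ plus absorbable errors; it does \emph{not} produce the dissipation $c\varepsilon^{1-a}\|\widetilde\phi_y\|^2$ or $c\varepsilon^{1-a}\varepsilon^{2b-2a}\|\widetilde\phi_{yy}\|^2$ you claim there. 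Those come from the cross-term stage: when you test the momentum equation of \eqref{4.31} with $\varepsilon^{1-a}\widetilde\rho_y$, the term $-\varepsilon^{1-a}(\widetilde\phi_y,\widetilde\rho_y)$ is not ``absorbable by already-established dissipation''; instead one differentiates the Poisson equation in $y$, substitutes for $\widetilde\rho_y$, and uses $\rho'_{\mathrm{e}}(\bar\phi)\geq c$ to turn it into $-c\varepsilon^{1-a}\|\widetilde\phi_y\|^2-\varepsilon^{1-a}\varepsilon^{2b-2a}\|\widetilde\phi_{yy}\|^2$ plus controllable remainders (this is the paper's \eqref{5.37A}). Second, assumption $(\mathcal A_3)$ is not about the cubic remainder: the remainder $J_1\sim\widetilde\phi^3$ is simply absorbed by smallness, while $(\mathcal A_3)$ (together with $\bar\rho=\rho_{\mathrm{e}}(\bar\phi)$ and $\bar u_{1y}>0$) is what forces the sign of the \emph{quadratic} term $\frac34\big(\widetilde\phi^2\bar u_{1y},\frac{\rho''_{\mathrm{e}}(\bar\phi)\rho_{\mathrm{e}}(\bar\phi)-[\rho'_{\mathrm{e}}(\bar\phi)]^2}{\rho'_{\mathrm{e}}(\bar\phi)}\big)\leq 0$ arising from the cancellation between the expansion of $[\rho_{\mathrm{e}}(\bar\phi)-\rho_{\mathrm{e}}(\phi)]_\tau$ and the transport term $-\tfrac32(\widetilde\phi,[\widetilde\rho\bar u_1]_y)$; without this cancellation that $\bar u_{1y}$-weighted quadratic piece would not be controllable. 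With these two corrections your outline reproduces the paper's proof.
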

\begin{proof}
	The proof is divided by three steps as follows.
	
	\medskip
	\noindent{{\bf Step 1.}}
	Inspired by \cite{Liu-Yang-Yu}, we define the macroscopic entropy $S$ by
	$$
	-\frac{3}{2}\rho S:=\int_{\mathbb{R}^{3}}M\ln M\,dv.
	$$
	Furthermore, a direct calculation shows that
	\begin{equation}
		\label{4.1}
		S=-\frac{2}{3}\ln\rho+\ln(\frac{4\pi}{3}\theta)+1,\quad
		P=\frac{2}{3}\rho\theta=\frac{1}{2\pi e}\rho^{\frac{5}{3}}\exp (S).
	\end{equation}
	Multiplying the first equation of \eqref{3.5} by $\ln M$ and integrating over $v$, then making a direct calculation, it holds that
	\begin{equation*}
		(-\frac{3}{2}\rho S)_{\tau}+(-\frac{3}{2}\rho u_{1} S)_{y}
		+\int_{\mathbb{R}^{3}} v_{1}G_{y}\ln M\,dv=0.
	\end{equation*}
	Here we have used \eqref{1.7} and \eqref{1.5} such that
	\begin{equation*}
		\int_{\mathbb{R}^{3}}\phi_{y}\partial_{v_{1}}F \ln M\,dv=\frac{\phi_{y}}{R\theta}\int_{\mathbb{R}^{3}}F(v_1-u_1)\,dv
		=\frac{\phi_{y}}{R\theta}(\rho u_1-u_1\rho)=0.
	\end{equation*}
	We denote $X$ and $Y$ as
$$
X:=(X_{0},X_{1},X_{2},X_{3},X_{4})^{t}=(\rho,\rho u_{1},\rho u_{2},\rho u_{3},\rho(\theta+\frac{|u|^{2}}{2}))^{t},
$$
and
	\begin{multline*}
		Y:=(Y_{0},Y_{1},Y_{2},Y_{3},Y_{4})^{t}\\
		=(\rho u_{1},\rho u^{2}_{1}+P,\rho u_{1}u_{2},\rho u_{1} u_{3},\rho u_{1}(\theta+\frac{|u|^{2}}{2})+Pu_{1})^{t},
	\end{multline*}
	where $(\cdot,\cdot,\cdot)^{t}$ is the transpose of the vector $(\cdot,\cdot,\cdot)$.
With these facts and \eqref{3.1} in hand, we can rewrite the conservation laws \eqref{1.18} as
	\begin{equation*}
		X_{\tau}+Y_{y}=\begin{pmatrix}
			0
			\\
			\frac{4}{3}\varepsilon^{1-a}(\mu(\theta)u_{1y})_{y}-(\int_{\mathbb{R}^{3}} v^{2}_{1}L^{-1}_{M}\Theta\, dv)_{y}-\rho\phi_{y}
			\\
			\varepsilon^{1-a}(\mu(\theta)u_{2y})_{y}-(\int_{\mathbb{R}^{3}} v_{1}v_{2}L^{-1}_{M}\Theta \,dv)_{y}
			\\
			\varepsilon^{1-a}(\mu(\theta)u_{3y})_{y}-(\int_{\mathbb{R}^{3}} v_{1}v_{3}L^{-1}_{M}\Theta \,dv)_{y}
			\\
			\varepsilon^{1-a}(\kappa(\theta)\theta_{y})_{y}+D-(\int_{\mathbb{R}^{3}}\frac{ v_{1}}{2}|v|^{2}L^{-1}_{M}\Theta\, dv)_{y}-\rho u_{1}\phi_{y}
		\end{pmatrix}.
	\end{equation*}
	Here the term $D$ is defined by
	$$
	D=\varepsilon^{1-a}\{\frac{4}{3}(\mu(\theta)u_{1}u_{1y})_{y}
	+(\mu(\theta)u_{2}u_{2y})_{y}+(\mu(\theta)u_{3}u_{3y})_{y}\}.
	$$
	Define a relative entropy-entropy flux pair $(\eta,q)(\tau,y)$ around the Maxwellian $\overline{M}=M_{[\bar{\rho},\bar{u},\bar{\theta}]}$ as
	\begin{equation}
		\label{4.3}
		\left\{
		\begin{array}{rl}
			&\eta(\tau,y):=\bar{\theta}\{-\frac{3}{2}\rho S+\frac{3}{2}\bar{\rho}\bar{S}+\frac{3}{2}\nabla_{X}(\rho S)|_{X=\bar{X}}\cdot(X-\bar{X})\},
			\\
			&q(\tau,y):=\bar{\theta}\{-\frac{3}{2}\rho u_{1}S+\frac{3}{2}\bar{\rho}\bar{u}_{1}\bar{S}
			+\frac{3}{2}\nabla_{X}(\rho S)|_{X=\bar{X}}\cdot(Y-\bar{Y})\},
		\end{array} \right.
	\end{equation}
	with $\bar{S}=-\frac{2}{3}\ln\bar{\rho}+\ln(\frac{4\pi}{3}\bar{\theta})+1$
	and $\bar{X}=(\bar{\rho},\bar{\rho}\bar{ u}_{1},\bar{\rho}\bar{ u}_{2},\bar{\rho}\bar{ u}_{3},\bar{\rho}(\bar{\theta}
	+\frac{1}{2}|\bar{u}|^{2}))^{t}$.
	Using \eqref{4.1} and \eqref{4.3}, it is straightforward to check that
	\begin{equation*}
		(\rho S)_{X_{0}}=S+\frac{|u|^{2}}{2\theta}-\frac{5}{3}, \quad
		(\rho S)_{X_{i}}=-\frac{u_{i}}{\theta}, \quad \mbox{i=1,2,3}, \quad (\rho S)_{X_{4}}=\frac{1}{\theta},
	\end{equation*}
	and
	\begin{equation}
		\label{4.4}
		\left\{
		\begin{array}{rl}
			&\eta(\tau,y)=\frac{3}{2}\{\rho\theta-\bar{\theta}\rho S+\rho[(\bar{S}-\frac{5}{3})\bar{\theta}
			+\frac{|u-\bar{u}|^{2}}{2}]+\frac{2}{3}\bar{\rho}\bar{\theta}\}
			\\
			&\hspace{1cm}=\rho\bar{\theta}\Psi(\frac{\bar{\rho}}{\rho})+\frac{3}{2}\rho\bar{\theta}\Psi(\frac{\theta}{\bar{\theta}})
			+\frac{3}{4}\rho|u-\bar{u}|^{2},
			\\
			&q(\tau,y)=u_{1}\eta(\tau,y)+(u_{1}-\bar{u}_{1})(\rho\theta-\bar{\rho}\bar{\theta}),
		\end{array} \right.
	\end{equation}
	where $\Psi(s):=s-\ln s-1$ is a strictly convex function around $s=1$. Using \eqref{4.4}, \eqref{3.2}, \eqref{3.24} and the properties of convex function $\Psi(s)$, we know that there exists a constant $C>1$ such that
	\begin{equation}
		\label{4.5}
		C^{-1}|(\widetilde{\rho},\widetilde{u},\widetilde{\theta})|^{2}\leq \eta(\tau,y)\leq C|(\widetilde{\rho},\widetilde{u},\widetilde{\theta})|^{2}.
	\end{equation}
	With these, from a direct but tedious computation, we have the following identity 
	\begin{eqnarray}
		\label{4.6}
		&&\eta_{\tau}(\tau,y)+q_{y}(\tau,y)-\nabla_{[\bar{\rho},\bar{u},\bar{S}]}\eta(\tau,y)\cdot(\bar{\rho},\bar{u},\bar{S})_{\tau}
		-\nabla_{[\bar{\rho},\bar{u},\bar{S}]}q(\tau,y)\cdot(\bar{\rho},\bar{u},\bar{S})_{y}
		\notag\\
		&&+\varepsilon^{1-a}\big\{\frac{2\bar{\theta}}{\theta}\mu(\theta)\widetilde{u}^{2}_{1y}
		+\frac{3\bar{\theta}}{2\theta}\sum_{i=2}^{3}\mu(\theta)\widetilde{u}^{2}_{iy}
		+\frac{3\bar{\theta}}{2\theta^{2}}\kappa(\theta)\widetilde{\theta}^{2}_{y}\big\}
		+\frac{3}{2}\rho\widetilde{u}_{1}\phi_{y}+(\cdot\cdot\cdot)_{y}
		\notag\\
		=&&\varepsilon^{1-a}\big\{\frac{3\kappa(\theta)}{2\theta^{2}}
		(\bar{\theta}_{y}+\widetilde{\theta}_{y})\bar{\theta}_{y}\widetilde{\theta}
		-\frac{3\bar{\theta}}{2\theta^{2}}\kappa(\theta)\widetilde{\theta}_{y}\bar{\theta}_{y}\big\}\notag\\
		&&+\varepsilon^{1-a}\big\{\frac{2\mu(\theta)}{\theta}(\bar{u}_{1y}+\widetilde{u}_{1y})
		\bar{u}_{1y}\widetilde{\theta}-\frac{2\bar{\theta}}{\theta}\mu(\theta)\widetilde{u}_{1y}\bar{u}_{1y}\big\}
		\notag\\
		&&+\big\{\frac{3}{2}(\frac{\widetilde{\theta}}{\theta})_{y}
		\int_{\mathbb{R}^{3}} (\frac{1}{2}v_{1}|v|^{2}-u\cdot vv_{1})L^{-1}_{M}\Theta\, dv\big\}\notag\\
		&&+\big\{(\frac{3}{2}\widetilde{u}_{y}-\frac{3}{2}\frac{\widetilde{\theta}}{\theta}u_{y})\cdot
		\int_{\mathbb{R}^{3}} v_{1}vL^{-1}_{M}\Theta\, dv\big\}
		\notag\\
		:=&&I^1_1+I^2_1+I^3_1+I^4_1.
	\end{eqnarray}
	Here and in the sequel, the notation $(\cdot\cdot\cdot)_{y}$ represents the term in the conservative form so that
	it vanishes after integration.
	
	To estimate the identity \eqref{4.6}, we first claim that there exists $c_{3}>0$ such that
	\begin{eqnarray}
		\label{4.7}
		&&-\nabla_{[\bar{\rho},\bar{u},\bar{S}]}\eta(\tau,y)\cdot(\bar{\rho},\bar{u},\bar{S})_{\tau}
		-\nabla_{[\bar{\rho},\bar{u},\bar{S}]}q(\tau,y)\cdot(\bar{\rho},\bar{u},\bar{S})_{y}
		\notag\\
		&&\geq c_{3}\bar{u}_{1y}(\widetilde{\rho}^{2}+\widetilde{u}_{1}^{2}+\widetilde{\theta}^{2})-\frac{3}{2}\rho\widetilde{u}_{1}\bar{\phi}_{y},
	\end{eqnarray}
	whose proof is given in the appendix Section \ref{sec.9}.

	Since both $\mu(\theta)$ and $\kappa(\theta)$ are smooth functions of $\theta$, there exists a constant $C>1$ such that $\mu(\theta),\kappa(\theta)\in[C^{-1},C]$. Plugging \eqref{4.7} into \eqref{4.6} and integrating the resulting equation with respect to $y$ over $\mathbb{R}$, we obtain
	\begin{multline}
		\label{4.9}
		\frac{d}{d\tau}\int_{\mathbb{R}}\eta(\tau,y) \,dy
		+c_{3}\|\sqrt{\bar{u}_{1y}}(\widetilde{\rho},\widetilde{u}_{1},\widetilde{\theta})\|^{2}
		+c\varepsilon^{1-a}\|(\widetilde{u}_{y},\widetilde{\theta}_{y})\|^{2}\\
		\leq \sum_{i=1}^{4}\int_{\mathbb{R}} I^{i}_{1} \,dy-\frac{3}{2}\int_{\mathbb{R}}\rho\widetilde{u}_{1}\widetilde{\phi}_{y} \,dy.
	\end{multline}
Next we focus on the estimates on the right hand side of \eqref{4.9}.
	Recall the definition $I^1_1$ in \eqref{4.6}. We deduce from the integration by parts  that
	\begin{eqnarray*}
		\int_{\mathbb{R}}I^{1}_{1} \,dy
		&&=\varepsilon^{1-a}\int_{\mathbb{R}}\big\{\frac{3\kappa(\theta)}{2\theta^{2}}
		(\bar{\theta}_{y}+\widetilde{\theta}_{y})\bar{\theta}_{y}\widetilde{\theta}
		-\frac{3\bar{\theta}}{2\theta^{2}}\kappa(\theta)\widetilde{\theta}_{y}\bar{\theta}_{y}\big\}\,dy
		\notag\\
		&&\leq C\varepsilon^{1-a}\int_{\mathbb{R}}|
		\frac{3\kappa(\theta)}{2\theta^{2}}(\widetilde{\theta}_{y}+\bar{\theta}_{y})
		\bar{\theta}_{y}\widetilde{\theta}+(\frac{3\bar{\theta}\kappa(\theta)}{2\theta^{2}})_{y}\widetilde{\theta}\bar{\theta}_{y}+
		\frac{3\bar{\theta}\kappa(\theta)}{2\theta^{2}}\widetilde{\theta}\bar{\theta}_{yy}|\,dy
		\notag\\
		&&\leq C\varepsilon^{1-a}\int_{\mathbb{R}}|\widetilde{\theta}|\{|\bar{\theta}_{y}||\widetilde{\theta}_{y}|
		+|\bar{\theta}_{y}|^{2}+|\bar{\theta}_{y}||\theta_{y}|+|\bar{\theta}_{yy}|\}\,dy
		\notag\\
		&&\leq C\varepsilon^{1-a}\|\widetilde{\theta}\|_{L_{y}^{\infty}}\{\|\bar{\theta}_{yy}\|_{L^{1}}+\|\bar{\theta}_{y}\|^{2}
		+\|\widetilde{\theta}_{y}\|^2\}.
	\end{eqnarray*}
	This together with  the imbedding inequality,  Lemma \ref{lem7.2}, \eqref{3.22} and \eqref{3.18} further leads to
	\begin{align}
		\label{4.14A}
		&\int_{\mathbb{R}}I^{1}_{1} \,dy\notag\\
		&\leq \varepsilon^{1-a}\big\{\eta\|\widetilde{\theta}_{y}\|^{2}
		+C_{\eta}\|\widetilde{\theta}\|^{\frac{2}{3}}\|\bar{\theta}_{yy}\|^{\frac{4}{3}}_{L^{1}}
		+C_{\eta}\|\widetilde{\theta}\|^{\frac{2}{3}}\|\bar{\theta}_{y}\|^{\frac{8}{3}}
		+ C\|\widetilde{\theta}\|^{\frac{1}{2}}\|\widetilde{\theta}_{y}\|^{\frac{1}{2}}\|\widetilde{\theta}_{y}\|^2\big\}
		\notag\\
		&\leq \eta\varepsilon^{1-a}\|\widetilde{\theta}_{y}\|^{2}
		+C_{\eta}\varepsilon^{1-a}(k^{\frac{1}{6}}\varepsilon^{\frac{6}{5}-\frac{4}{5}a})^{\frac{1}{3}}\varepsilon^{\frac{4}{3}a}
		(\delta+\varepsilon^{a}\tau)^{-\frac{4}{3}}+C\sqrt{\mathcal{E}_{2,l,q}(\tau)}\varepsilon^{1-a}\|\widetilde{\theta}_{y}\|^2
		\notag\\
		&\leq \eta\varepsilon^{1-a}\|\widetilde{\theta}_{y}\|^{2}
		+C_{\eta}\varepsilon^{\frac{7}{5}+\frac{1}{15}a}(\delta+\varepsilon^{a}\tau)^{-\frac{4}{3}}+
		Ck^{\frac{1}{12}}\varepsilon^{\frac{3}{5}-\frac{2}{5}a}\mathcal{D}_{2,l,q}(\tau).
	\end{align}
	The estimate for $I^{2}_{1}$ can be handed in the same way as \eqref{4.14A}, namely
	\begin{eqnarray*}
		\int_{\mathbb{R}}I^{2}_{1} \,dy
		\leq\eta\varepsilon^{1-a}\|(\widetilde{u}_{1y},\widetilde{\theta}_{y})\|^{2}
		+C_{\eta}\varepsilon^{\frac{7}{5}+\frac{1}{15}a}(\delta+\varepsilon^{a}\tau)^{-\frac{4}{3}}
		+Ck^{\frac{1}{12}}\varepsilon^{\frac{3}{5}-\frac{2}{5}a}\mathcal{D}_{2,l,q}(\tau).
	\end{eqnarray*}

	The estimation of $I^{3}_{1}$ and $I^{4}_{1}$ are subtle since we have to deal with the inverse operator $L^{-1}_{M}$.
	First of all, we use the self-adjoint property of $L^{-1}_{M}$, \eqref{7.1} and \eqref{7.2} to rewrite
	\begin{eqnarray}
		\label{4.11}
		\int_{\mathbb{R}^{3}} (\frac{1}{2}v_{1}|v|^{2}-u\cdot vv_{1})L^{-1}_{M}\Theta\, dv &=&
		\int_{\mathbb{R}^{3}} L^{-1}_{M}\{P_{1}(\frac{1}{2}v_{1}|v|^{2}-u\cdot vv_{1})M\}\frac{\Theta}{M} \,dv
		\notag\\
		&=&\int_{\mathbb{R}^{3}} L^{-1}_{M}\{(R\theta)^{\frac{3}{2}}\hat{A}_{1}(\frac{v-u}{\sqrt{R\theta}})M\}\frac{\Theta}{M} \,dv\notag\\
		&=&(R\theta)^{\frac{3}{2}}\int_{\mathbb{R}^{3}}A_{1}(\frac{v-u}{\sqrt{R\theta}})\frac{\Theta}{M}\, dv,
	\end{eqnarray}
	and
	\begin{eqnarray}
		\label{4.12}
		\int_{\mathbb{R}^{3}}v_{1}v_{i}L^{-1}_{M}\Theta \,dv &=&
		\int_{\mathbb{R}^{3}} L^{-1}_{M}\{P_{1}( v_{1}v_{i}M)\}\frac{\Theta}{M} \,dv
		\notag\\
		&=&\int_{\mathbb{R}^{3}} L^{-1}_{M}\{R\theta\hat{B}_{1i}(\frac{v-u}{\sqrt{R\theta}})M\}\frac{\Theta}{M}\, dv\notag\\
		&=&R\theta\int_{\mathbb{R}^{3}}B_{1i}(\frac{v-u}{\sqrt{R\theta}})\frac{\Theta}{M}\, dv.
	\end{eqnarray}
Due to \eqref{4.11} and the expression of $I^{3}_{1}$, it is easy to get
	\begin{eqnarray}
		\label{4.13}
		\int_{\mathbb{R}}I^{3}_{1} \,dy&&=\int_{\mathbb{R}}\big\{\frac{3}{2}(\frac{\widetilde{\theta}}{\theta})_{y}
		\int_{\mathbb{R}^{3}} (\frac{1}{2}v_{1}|v|^{2}-u\cdot vv_{1})L^{-1}_{M}\Theta\, dv\big\}\,dy
		\notag\\
		&&=\int_{\mathbb{R}}\int_{\mathbb{R}^{3}}\big\{\frac{3}{2}(\frac{\widetilde{\theta}}{\theta})_{y}
		(R\theta)^{\frac{3}{2}}A_{1}(\frac{v-u}{\sqrt{R\theta}})\frac{\Theta}{M}\big\} \, dv\,dy.
	\end{eqnarray}
	To bound \eqref{4.13}, we need to use \eqref{7.4}, \eqref{2.3} and \eqref{3.24} to obtain the desired estimate that 
	for any multi-index $\beta$ and $m\geq 0$,
	\begin{equation}
		\label{4.14}
		\int_{\mathbb{R}^{3}}\frac{|\langle v\rangle^{m}\sqrt{\mu}\partial_{\beta}A_{1}(\frac{v-u}{\sqrt{R\theta}})|^{2}}{M^{2}}\,dv
		+\int_{\mathbb{R}^{3}}\frac{|\langle v\rangle^{m}\sqrt{\mu}\partial_{\beta}B_{1i}(\frac{v-u}{\sqrt{R\theta}})|^{2}}{M^{2}}\,dv\leq C.
	\end{equation}
	Recall $\Theta$ in \eqref{3.11} given by
	\begin{equation*}
		\Theta=\varepsilon^{1-a}G_{\tau}+\varepsilon^{1-a}P_{1}(v_{1}G_{y})-\varepsilon^{1-a}\phi_{y}\partial_{v_{1}}G-Q(G,G).
	\end{equation*}
In view of Lemma \ref{lem7.2}, \eqref{3.22} and $\delta=\frac{1}{k}\varepsilon^{\frac{3}{5}-\frac{2}{5}a}$ in \eqref{3.21}, the following estimates hold
	\begin{equation}
		\label{4.16}
		\|\widetilde{\theta}\|^{2}\|\bar{\theta}_{y}\|^{2}_{L_{y}^{\infty}}
		\leq Ck^{\frac{1}{6}}\varepsilon^{\frac{6}{5}-\frac{4}{5}a}\varepsilon^{2a}(\delta+\varepsilon^{a}\tau)^{-2}
		\leq C\varepsilon^{\frac{7}{5}+\frac{1}{15}a}(\delta+\varepsilon^{a}\tau)^{-\frac{4}{3}},
	\end{equation}
	and
	\begin{equation}
		\label{5.15A}
		\|(\widetilde{\rho},\widetilde{u},\widetilde{\theta})\|_{L_{y}^{\infty}}\leq Ck^{\frac{1}{12}}\varepsilon^{\frac{3}{5}-\frac{2}{5}a}\leq C.
	\end{equation}
Consequently, with \eqref{5.15A} and \eqref{4.16} in hand, we deduce that
	\begin{align}
		\label{5.17b}
		&\varepsilon^{1-a}\int_{\mathbb{R}}\int_{\mathbb{R}^{3}}\big\{\frac{3}{2}(\frac{\widetilde{\theta}}{\theta})_{y}
		(R\theta)^{\frac{3}{2}}A_{1}(\frac{v-u}{\sqrt{R\theta}})\frac{\overline{G}_{\tau}}{M}\big\} \, dv\,dy	
		\notag\\
		&\leq \eta\varepsilon^{1-a}(\|\widetilde{\theta}_{y}\|^2+\|\widetilde{\theta}\theta_{y}\|^2)+
		C_\eta\varepsilon^{1-a}\|\frac{\overline{G}_{\tau}}{\sqrt{\mu}}\|^2
		\notag\\
		&\leq C\eta\varepsilon^{1-a}\|\widetilde{\theta}_{y}\|^{2}+C_\eta k^{\frac{1}{12}}\varepsilon^{\frac{3}{5}-\frac{2}{5}a}\mathcal{D}_{2,l,q}(\tau)
		+C_{\eta}\varepsilon^{\frac{7}{5}+\frac{1}{15}a}(\delta+\varepsilon^{a}\tau)^{-\frac{4}{3}}.
	\end{align}
	Here we have used \eqref{7.25} and  \eqref{5.26b} such that
	\begin{eqnarray}
		\label{L}
		\varepsilon^{1-a}\|\frac{\overline{G}_{\tau}}{\sqrt{\mu}}\|^2
		&&\leq C\varepsilon^{3-3a}\{\|(\bar{u}_{1y\tau},\bar{\theta}_{y\tau})\|^2
		+\|(\bar{u}_{1y},\bar{\theta}_{y})\|_{L_y^\infty}^2	\|(u_\tau,\theta_\tau)\|^2\}
		\notag\\
		&&\leq C\varepsilon^{3-3a}\{\varepsilon^{3a}\delta^{-1}(\delta+\varepsilon^{a}\tau)^{-2}
		+\varepsilon^{2a}(\delta+\varepsilon^{a}\tau)^{-2}\|(\widetilde{u}_\tau,\widetilde{\theta}_\tau)\|^2\}
		\notag\\
		&&\leq 
		Ck^{\frac{1}{12}}\varepsilon^{\frac{3}{5}-\frac{2}{5}a}\mathcal{D}_{2,l,q}(\tau)
		+C\varepsilon^{\frac{7}{5}+\frac{1}{15}a}(\delta+\varepsilon^{a}\tau)^{-\frac{4}{3}}.
	\end{eqnarray}
A simple computation gives
	\begin{eqnarray}
		\label{5.18b}
		&&\varepsilon^{1-a}\int_{\mathbb{R}}\int_{\mathbb{R}^{3}}\big\{\frac{3}{2}(\frac{\widetilde{\theta}}{\theta})_{y}
		(R\theta)^{\frac{3}{2}}A_{1}(\frac{v-u}{\sqrt{R\theta}})\frac{\sqrt{\mu}}{M}f_{\tau}\big\}\, dv\,dy
		\notag\\
		&&=\varepsilon^{1-a}\frac{d}{d\tau}\int_{\mathbb{R}}\int_{\mathbb{R}^{3}}\big\{\frac{3}{2}(\frac{\widetilde{\theta}}{\theta})_{y}
		(R\theta)^{\frac{3}{2}}A_{1}(\frac{v-u}{\sqrt{R\theta}})\frac{\sqrt{\mu}}{M}f \big\}\,dv\,dy
		\notag\\
		&&\hspace{0.5cm}-\varepsilon^{1-a}\int_{\mathbb{R}}\int_{\mathbb{R}^{3}}\frac{3}{2}(\frac{\widetilde{\theta}}{\theta})_{y\tau}
		\big\{(R\theta)^{\frac{3}{2}}A_{1}(\frac{v-u}{\sqrt{R\theta}})\frac{\sqrt{\mu}}{M} f\big\} \,dv\,dy
		\notag\\
		&&\hspace{0.5cm}-\varepsilon^{1-a}\int_{\mathbb{R}}\int_{\mathbb{R}^{3}}\frac{3}{2}(\frac{\widetilde{\theta}}{\theta})_{y}
		\big\{(R\theta)^{\frac{3}{2}}A_{1}(\frac{v-u}{\sqrt{R\theta}})\frac{\sqrt{\mu}}{M} \big\}_\tau f\,dv\,dy.
	\end{eqnarray}
After integration, the second term on the right hand side of \eqref{5.18b} can be bounded by 
	\begin{eqnarray}\label{ad.520}
		&&\varepsilon^{1-a}\int_{\mathbb{R}}\int_{\mathbb{R}^{3}}\frac{3}{2}(\frac{\widetilde{\theta}}{\theta})_{\tau}
		\big\{(R\theta)^{\frac{3}{2}}A_{1}(\frac{v-u}{\sqrt{R\theta}})\frac{\sqrt{\mu}}{M} f\big\}_y \,dv\,dy
		\notag\\ 
		&&\leq \eta\varepsilon^{1-a}\|(\frac{\widetilde{\theta}}{\theta})_{\tau}\|^2+C_{\eta}\varepsilon^{1-a}\|f_{y}\|_{\sigma}^2
		+C_{\eta}\varepsilon^{1-a}\|(\rho,u,\theta)_y\|_{L_y^\infty}^2\|f\|_{\sigma}^2
		\notag\\
		&&\leq C\eta\varepsilon^{1-a}\|(\widetilde{\rho}_{y},\widetilde{u}_{y},\widetilde{\theta}_{y},\widetilde{\phi}_{y})\|^{2}+C_{\eta}\varepsilon^{1-a}\|f_{y}\|_{\sigma}^{2}
		+C_{\eta}k^{\frac{1}{12}}\varepsilon^{\frac{3}{5}-\frac{2}{5}a}\mathcal{D}_{2,l,q}(\tau)\notag\\
		&&\quad+C_{\eta}\varepsilon^{\frac{7}{5}+\frac{1}{15}a}(\delta+\varepsilon^{a}\tau)^{-\frac{4}{3}},
	\end{eqnarray}
where in the last inequality we have used \eqref{5.26b}, \eqref{5.15A}, \eqref{4.16} and the fact that
\begin{eqnarray*}
\|(\rho_y,u_y,\theta_y,\phi_y)\|_{L_y^{\infty}}
\leq Ck^{\frac{1}{12}}\varepsilon^{\frac{1}{10}a+\frac{1}{10}},
\end{eqnarray*}
due to Lemma \ref{lem7.2}, \eqref{3.22} and \eqref{3.21}. Note that \eqref{5.26b} is used to treat the time derivative estimate and we would separately treat it in Lemma \ref{lem.5.2a} later on.  The last term of \eqref{5.18b} has the same bound as in \eqref{ad.520}. It follows from 
these, \eqref{5.17b}, \eqref{5.18b} and the fact $G=\overline{G}+\sqrt{\mu}f$ that
	\begin{eqnarray*}
		&&\varepsilon^{1-a}\int_{\mathbb{R}}\int_{\mathbb{R}^{3}}\big\{\frac{3}{2}(\frac{\widetilde{\theta}}{\theta})_{y}
		(R\theta)^{\frac{3}{2}}A_{1}(\frac{v-u}{\sqrt{R\theta}})\frac{G_{\tau}}{M}\big\}\, dv\,dy
		\notag\\
		&&\leq\varepsilon^{1-a}\frac{d}{d\tau}\int_{\mathbb{R}}\int_{\mathbb{R}^{3}}\big\{\frac{3}{2}(\frac{\widetilde{\theta}}{\theta})_{y}
		(R\theta)^{\frac{3}{2}}A_{1}(\frac{v-u}{\sqrt{R\theta}})\frac{\sqrt{\mu}}{M}f \big\}\,dv\,dy\notag\\
		&&\quad+ C\eta\varepsilon^{1-a}\|(\widetilde{\rho}_{y},\widetilde{u}_{y},\widetilde{\theta}_{y},\widetilde{\phi}_{y})\|^{2}
		+C_{\eta}\varepsilon^{1-a}\|f_{y}\|_{\sigma}^{2}
		+C_{\eta}k^{\frac{1}{12}}\varepsilon^{\frac{3}{5}-\frac{2}{5}a}\mathcal{D}_{2,l,q}(\tau)\\
		&&\quad+C_{\eta}\varepsilon^{\frac{7}{5}+\frac{1}{15}a}(\delta+\varepsilon^{a}\tau)^{-\frac{4}{3}}.
	\end{eqnarray*}

	On the other hand, it is direct to verify that
	\begin{eqnarray*}
		&&\varepsilon^{1-a}\int_{\mathbb{R}}\int_{\mathbb{R}^{3}}|\big\{\frac{3}{2}(\frac{\widetilde{\theta}}{\theta})_{y}
		(R\theta)^{\frac{3}{2}}A_{1}(\frac{v-u}{\sqrt{R\theta}})
		\frac{P_{1}(v_{1}G_{y})-\phi_{y}\partial_{v_{1}}G}{M}\big\}|\, dv\,dy
		\notag\\
		&&\leq \eta\varepsilon^{1-a}(\|\widetilde{\theta}_{y}\|^2+\|\widetilde{\theta}\theta_{y}\|^2)\\
		&&\quad+
		C_\eta\varepsilon^{1-a}(\|\langle v\rangle^{-2}\frac{P_{1}(v_{1}G_{y})}{\sqrt{\mu}}\|^2
		+\|\phi_{y}\|_{L_y^{\infty}}^{2}\|\langle v\rangle^{-2}\frac{\partial_{v_{1}}G}{\sqrt{\mu}}\|^2)
		\notag\\
		&&\leq C\eta\varepsilon^{1-a}\|\widetilde{\theta}_{y}\|^{2}+C_{\eta}\varepsilon^{1-a}\|f_{y}\|^{2}_{\sigma}
		+C_{\eta}k^{\frac{1}{12}}\varepsilon^{\frac{3}{5}-\frac{2}{5}a}\mathcal{D}_{2,l,q}(\tau)\\
		&&\quad+C_{\eta}\varepsilon^{\frac{7}{5}+\frac{1}{15}a}(\delta+\varepsilon^{a}\tau)^{-\frac{4}{3}}.
	\end{eqnarray*}
	Using \eqref{3.8} and \eqref{7.7}, we have from a direct calculation that
	\begin{eqnarray*}
		\int_{\mathbb{R}}&&\big\{\frac{3}{2}(\frac{\widetilde{\theta}}{\theta})_{y}
		(R\theta)^{\frac{3}{2}}\int_{\mathbb{R}^{3}}A_{1}(\frac{v-u}{\sqrt{R\theta}})\frac{Q(G,G)}{M} dv\big\}\,dy
		\notag\\
		=&&(\frac{3}{2}(\frac{\widetilde{\theta}}{\theta})_{y}
		(R\theta)^{\frac{3}{2}}\frac{\sqrt{\mu}
			A_{1}(\frac{v-u}{\sqrt{R\theta}})}{M},\Gamma(\frac{G}{\sqrt{\mu}},\frac{G}{\sqrt{\mu}}))
		\notag\\
		\leq&& C\eta\varepsilon^{1-a}\|\widetilde{\theta}_{y}\|^{2}
		+C_{\eta}k^{\frac{1}{12}}\varepsilon^{\frac{3}{5}-\frac{2}{5}a}\mathcal{D}_{2,l,q}(\tau)
		+C_{\eta}\varepsilon^{\frac{7}{5}+\frac{1}{15}a}(\delta+\varepsilon^{a}\tau)^{-\frac{4}{3}}.
	\end{eqnarray*}
Consequently, plugging the above estimates into \eqref{4.13} gives
	\begin{eqnarray}
		\label{4.20}
		\int_{\mathbb{R}} I^{3}_{1}\,dy
		\leq&& \varepsilon^{1-a}\frac{d}{d\tau}\int_{\mathbb{R}}\int_{\mathbb{R}^{3}}\big\{\frac{3}{2}(\frac{\widetilde{\theta}}{\theta})_{y}
		(R\theta)^{\frac{3}{2}}A_{1}(\frac{v-u}{\sqrt{R\theta}})\frac{\sqrt{\mu}}{M}f \big\}\,dv\,dy\notag\\
		&&+ C\eta\varepsilon^{1-a}\|(\widetilde{\rho}_{y},\widetilde{u}_{y},\widetilde{\theta}_{y},\widetilde{\phi}_{y})\|^{2}
		+C_{\eta}\varepsilon^{1-a}\|f_{y}\|_{\sigma}^{2}\notag\\
		&&
		+C_{\eta}k^{\frac{1}{12}}\varepsilon^{\frac{3}{5}-\frac{2}{5}a}\mathcal{D}_{2,l,q}(\tau)
		+C_{\eta}\varepsilon^{\frac{7}{5}+\frac{1}{15}a}(\delta+\varepsilon^{a}\tau)^{-\frac{4}{3}}.
	\end{eqnarray}
	The estimate for $I^{4}_{1}$ can be done similarly as \eqref{4.20}. In fact, it follows that
	\begin{eqnarray*}
		\int_{\mathbb{R}} I^{4}_{1}\,dy=&&\sum^{3}_{i=1}\int_{\mathbb{R}}\int_{\mathbb{R}^{3}} \big\{(\frac{3}{2}\widetilde{u}_{iy}-\frac{3}{2}\frac{\widetilde{\theta}}{\theta}u_{iy})
		R\theta B_{1i}(\frac{v-u}{\sqrt{R\theta}})\frac{\Theta}{M}\big\} \, dv\,dy
		\notag\\
		\leq&& \varepsilon^{1-a}\frac{d}{d\tau}\sum^{3}_{i=1}\int_{\mathbb{R}}\int_{\mathbb{R}^{3}} \big\{(\frac{3}{2}\widetilde{u}_{iy}-\frac{3}{2}\frac{\widetilde{\theta}}{\theta}u_{iy})
		R\theta B_{1i}(\frac{v-u}{\sqrt{R\theta}})\frac{\sqrt{\mu}}{M}f\big\}\, dv\,dy\\
		&&+ C\eta\varepsilon^{1-a}\|(\widetilde{\rho}_{y},\widetilde{u}_{y},\widetilde{\theta}_{y},\widetilde{\phi}_{y})\|^{2}
		\notag\\
		&&+C_{\eta}\varepsilon^{1-a}\|f_{y}\|_{\sigma}^{2}
		+C_{\eta}k^{\frac{1}{12}}\varepsilon^{\frac{3}{5}-\frac{2}{5}a}\mathcal{D}_{2,l,q}(\tau)
		+C_{\eta}\varepsilon^{\frac{7}{5}+\frac{1}{15}a}(\delta+\varepsilon^{a}\tau)^{-\frac{4}{3}}.
	\end{eqnarray*}
	Collecting the above estimates from $I^{1}_{1}$ to $I^{4}_{1}$, we thereby obtain
	\begin{eqnarray}
		\label{4.21}
		\sum_{i=1}^{4}\int_{\mathbb{R}} I^{i}_{1} \,dy
		\leq&& \frac{d}{d\tau}N_1(\tau)+C\eta\varepsilon^{1-a}\|(\widetilde{\rho}_{y},\widetilde{u}_{y},\widetilde{\theta}_{y},\widetilde{\phi}_{y})\|^{2}
		\notag\\
		&&+C_{\eta}\varepsilon^{1-a}\|f_{y}\|_{\sigma}^{2}
		+C_{\eta}k^{\frac{1}{12}}\varepsilon^{\frac{3}{5}-\frac{2}{5}a}\mathcal{D}_{2,l,q}(\tau)\notag\\
		&&\quad+C_{\eta}\varepsilon^{\frac{7}{5}+\frac{1}{15}a}(\delta+\varepsilon^{a}\tau)^{-\frac{4}{3}},
	\end{eqnarray}
where $N_1(\tau)$ is defined in \eqref{5.23b}.
Moreover, we have the estimate  \eqref{5.27A} whose proof will be postponed to Lemma \ref{lem.5.2A} later. Then, by substituting \eqref{4.21} and \eqref{5.27A} into \eqref{4.9}, we see that there exists  $\kappa_1>0$ such that the following estimate holds
	\begin{align}
		\label{4.30}
		\frac{d}{d\tau}&\big\{\int_{\mathbb{R}}\eta(\tau,y)\,dy
		+\frac{3}{4}\big(\widetilde{\phi}^{2},\rho'_{\mathrm{e}}(\bar{\phi})\big)
		+\frac{1}{2}\big(\widetilde{\phi}^{3},\rho''_{\mathrm{e}}(\bar{\phi})\big)
		+\frac{3}{4}\varepsilon^{2b-2a}\|\widetilde{\phi}_{y}\|^{2}-N_1(\tau)\big\}
		\notag\\
		&\hspace{0.5cm}+c_{3}\|\sqrt{\bar{u}_{1y}}(\widetilde{\rho},\widetilde{u}_{1},\widetilde{\theta})\|^{2}
		+\kappa_1\varepsilon^{1-a}\|(\widetilde{u}_{y},\widetilde{\theta}_{y})\|^{2}
		\notag\\
		\leq& C(\eta+k)\varepsilon^{1-a}\|\widetilde{\phi}_{y}\|^2
		+C\eta\varepsilon^{1-a}\|(\widetilde{\rho}_{y},\widetilde{u}_{y},\widetilde{\theta}_{y})\|^{2}
		+C_{\eta}\varepsilon^{1-a}\|f_{y}\|_{\sigma}^{2}
		\notag\\
		&\hspace{0.5cm}+C_{\eta}k^{\frac{1}{12}}\varepsilon^{\frac{3}{5}-\frac{2}{5}a}\mathcal{D}_{2,l,q}(\tau)
		+C_{\eta}\varepsilon^{\frac{7}{5}+\frac{1}{15}a}(\delta+\varepsilon^{a}\tau)^{-\frac{4}{3}},
	\end{align}
for any small $\eta>0$. This completes the first step in our proof of \eqref{4.39}.

	\medskip
	\noindent{{\bf Step 2.}}
	It should be noted that there are no dissipation terms for $\widetilde{\rho}_{y}$ and $\widetilde{\phi}_{y}$
	in \eqref{4.30}.
	For this, we turn to the Euler-type equations \eqref{4.31}. We first take the inner product of the second equation of  
	\eqref{4.31} with $\varepsilon^{1-a}\widetilde{\rho}_{y}$  to get
	\begin{eqnarray}
		\label{4.32}
		&&\varepsilon^{1-a}(\frac{2\bar{\theta}}{3\bar{\rho}}\widetilde{\rho}_{y},\widetilde{\rho}_{y})
		+\varepsilon^{1-a}(\widetilde{u}_{1}u_{1y}+\bar{u}_{1}\widetilde{u}_{1y}
		+\frac{2}{3}\widetilde{\theta}_{y}+\frac{2}{3}\rho_{y}\frac{\bar{\rho}\widetilde{\theta}-\widetilde{\rho}\bar{\theta}}{\rho\bar{\rho}},\widetilde{\rho}_{y})
		\notag\\
		&&=-\varepsilon^{1-a}(\widetilde{u}_{1\tau},\widetilde{\rho}_{y})
		-\varepsilon^{1-a}(\widetilde{\phi}_{y},\widetilde{\rho}_{y})
		-\varepsilon^{1-a}(\frac{1}{\rho}\int_{\mathbb{R}^3} v^{2}_{1}G_{y}\,dv,\widetilde{\rho}_{y}).
	\end{eqnarray}
With Lemma \ref{lem7.2}, \eqref{3.21}, \eqref{3.22} and \eqref{3.18} in hand, we get
	\begin{eqnarray}
		\label{4.34a}
		\varepsilon^{1-a}|(\widetilde{u}_{1}u_{1y},\widetilde{\rho}_{y})|
		&&\leq\varepsilon^{1-a}\big\{\|\widetilde{u}_{1}\|_{L_{y}^{\infty}}\|\widetilde{u}_{1y}\|\|\widetilde{\rho}_{y}\|
		+\|\bar{u}_{1y}\|_{L_{y}^{\infty}}\|\widetilde{u}_{1}\|\|\widetilde{\rho}_{y}\|\big\}
		\notag\\
		&&\leq C\varepsilon^{1-a}k^{\frac{1}{12}}\varepsilon^{\frac{3}{5}-\frac{2}{5}a}\big\{\|\widetilde{u}_{1y}\|^{2}+\|\widetilde{\rho}_{y}\|^{2}
		+\|\bar{u}_{1y}\|^{2}_{L_{y}^{\infty}} \big\}
		\notag\\
		&&\leq Ck^{\frac{1}{12}}\varepsilon^{\frac{3}{5}-\frac{2}{5}a}\mathcal{D}_{2,l,q}(\tau)
		+Ck^{\frac{1}{12}}\varepsilon^{\frac{3}{5}-\frac{2}{5}a}\varepsilon^{1-a}\varepsilon^{2a}
		(\delta+\varepsilon^{a}\tau)^{-2}
		\notag\\
		&&\leq Ck^{\frac{1}{12}}\varepsilon^{\frac{3}{5}-\frac{2}{5}a}\mathcal{D}_{2,l,q}(\tau)
		+C\varepsilon^{\frac{7}{5}+\frac{1}{15}a}(\delta+\varepsilon^{a}\tau)^{-\frac{4}{3}}.
	\end{eqnarray}
Applying the Cauchy-Schwarz inequality and a similar calculation as \eqref{4.34a}, it holds that
	\begin{eqnarray*}
		&&\varepsilon^{1-a}(|(\bar{u}_{1}\widetilde{u}_{1y}+\frac{2}{3}\widetilde{\theta}_{y}
		+\frac{2}{3}\rho_{y}\frac{\bar{\rho}\widetilde{\theta}-\widetilde{\rho}\bar{\theta}}
		{\rho\bar{\rho}},\widetilde{\rho}_{y})|+|(\frac{1}{\rho}\int_{\mathbb{R}^3} v^{2}_{1}G_{y}\,dv,\widetilde{\rho}_{y})|)
		\notag\\
		&&\leq \eta\varepsilon^{1-a}\|\widetilde{\rho}_{y}\|^{2}
		+C_{\eta}\varepsilon^{1-a}\|(\widetilde{u}_{1y},\widetilde{\theta}_{y})\|^{2}+C_{\eta}\varepsilon^{1-a}\|f_{y}\|_{\sigma}^{2}
		\notag\\
		&&\hspace{0.5cm}+Ck^{\frac{1}{12}}\varepsilon^{\frac{3}{5}-\frac{2}{5}a}\mathcal{D}_{2,l,q}(\tau)
		+C\varepsilon^{\frac{7}{5}+\frac{1}{15}a}(\delta+\varepsilon^{a}\tau)^{-\frac{4}{3}},
	\end{eqnarray*}
while for the first term on the right hand side of \eqref{4.32}, we deduce that
	\begin{eqnarray*}
		-\varepsilon^{1-a}(\widetilde{u}_{1\tau},\widetilde{\rho}_{y})=&&
		-\varepsilon^{1-a}(\widetilde{u}_{1},\widetilde{\rho}_{y})_{\tau}
		-\varepsilon^{1-a}(\widetilde{u}_{1y},\widetilde{\rho}_{\tau})
		\notag\\
		=&&-\varepsilon^{1-a}(\widetilde{u}_{1},\widetilde{\rho}_{y})_{\tau}
		+\varepsilon^{1-a}(\widetilde{u}_{1y},(\rho\widetilde{u}_{1})_{y}+\bar{u}_{1}\widetilde{\rho}_{y}+\bar{u}_{1y}\widetilde{\rho})
		\notag\\
		\leq&& -\varepsilon^{1-a}(\widetilde{u}_{1},\widetilde{\rho}_{y})_{\tau}+
		\eta\varepsilon^{1-a}\|\widetilde{\rho}_{y}\|^{2}+C_{\eta}\varepsilon^{1-a}\|\widetilde{u}_{1y}\|^{2}
		\notag\\
		&&+Ck^{\frac{1}{12}}\varepsilon^{\frac{3}{5}-\frac{2}{5}a}\mathcal{D}_{2,l,q}(\tau)
		+C\varepsilon^{\frac{7}{5}+\frac{1}{15}a}(\delta+\varepsilon^{a}\tau)^{-\frac{4}{3}}.
	\end{eqnarray*}
The estimate for the second term on the right hand side of \eqref{4.32}	can be directly obtained from \eqref{5.37A} whose proof will be postponed to Lemma \ref{lem.5.3A} later.

Therefore, collecting all the above estimates and taking $\eta>0$ small enough, there exists $\kappa_2>0$ such that
	\begin{eqnarray}
		\label{4.36}
		&&\varepsilon^{1-a}(\widetilde{u}_{1},\widetilde{\rho}_{y})_{\tau}
		+\kappa_2\varepsilon^{1-a}(\|\widetilde{\rho}_{y}\|^{2}+\|\widetilde{\phi}_{y}\|^{2})
		+\kappa_2\varepsilon^{1-a}\varepsilon^{2b-2a}\|\widetilde{\phi}_{yy}\|^{2}
		\notag\\
		&&\leq C\varepsilon^{1-a}(\|(\widetilde{u}_{1y},\widetilde{\theta}_{y})\|^{2}+\|f_{y}\|_{\sigma}^{2})
		+Ck^{\frac{1}{12}}\varepsilon^{\frac{3}{5}-\frac{2}{5}a}\mathcal{D}_{2,l,q}(\tau)\notag\\
		&&\quad+C\varepsilon^{\frac{7}{5}+\frac{1}{15}a}(\delta+\varepsilon^{a}\tau)^{-\frac{4}{3}}.
	\end{eqnarray}
	
	\medskip
	\noindent{{\bf Step 3.}}
	In summary, by taking the summation of \eqref{4.30} and \eqref{4.36}$\times\kappa_{3}$ with
	$C\kappa_3<\frac{1}{4}\kappa_1$, then choosing $\eta>0$ and $k>0$ small enough such that $C(k+\eta)<\frac{1}{4}\kappa_{2}\kappa_{3}$
	and $C\eta<\frac{1}{4}\kappa_{1}$, we obtain
	the desired estimate \eqref{4.39}. This completes the proof of Lemma \ref{lem.5.1A}.
\end{proof}

Since the a priori assumption \eqref{3.22} does not contain any temporal derivatives, we need to derive 
the following estimates, which have been used
in the proof of Lemma \ref{lem.5.1A} above.
\begin{lemma}\label{lem.5.2a}
For $|\alpha|\leq 1$, it holds that
	\begin{multline}
		\label{5.26b}
		\|\partial^{\alpha}(\widetilde{\rho}_{\tau},\widetilde{u}_{\tau},\widetilde{\theta}_{\tau})\|^{2}
		\leq C\|\partial^{\alpha}(\widetilde{\rho}_y,\widetilde{u}_y,\widetilde{\theta}_y,\widetilde{\phi}_y)\|^{2}+C\|\partial^{\alpha}f_y\|_{\sigma}^{2}\\
		+Ck^{\frac{1}{12}}\varepsilon^{\frac{3}{5}a-\frac{2}{5}}\mathcal{D}_{2,l,q}(\tau)
		+C\varepsilon^{\frac{7}{5}+\frac{1}{15}a}(\delta+\varepsilon^{a}\tau)^{-\frac{4}{3}},
	\end{multline}
	and
	\begin{multline}
		\label{4.56}
		\|\partial^{\alpha}\widetilde{\phi}_{\tau}\|^{2}+\varepsilon^{2b-2a}\|\partial^{\alpha}\widetilde{\phi}_{y\tau}\|^{2}\\
		\leq C\|\partial^{\alpha}(\widetilde{\rho}_{y},\widetilde{u}_{y})\|^2+Ck^{\frac{1}{6}}\varepsilon^{\frac{6}{5}-\frac{4}{5}a}\|(\widetilde{\rho}_{y},\widetilde{u}_{y})\|^2
		+C\varepsilon^{\frac{7}{5}+\frac{1}{15}a}(\delta+\varepsilon^{a}\tau)^{-\frac{4}{3}}.
	\end{multline}
\end{lemma}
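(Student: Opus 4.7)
The plan is to read off the time derivatives directly from the fluid equations \eqref{4.31} and the Poisson equation in \eqref{3.10}, and then $L^2$-estimate the resulting purely spatial expressions. Since each equation in \eqref{4.31} algebraically solves for one of $\widetilde{\rho}_\tau,\widetilde{u}_{i\tau},\widetilde{\theta}_\tau$, and the Poisson equation supplies a coercive elliptic identity for $\widetilde{\phi}$, no diagonalization or subtle integration by parts is needed beyond what is already set up in Lemma \ref{lem.5.1A}.

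For \eqref{5.26b} with $|\alpha|=0$, I solve for
\[
\widetilde{\rho}_\tau = -(\rho\widetilde{u}_1)_y - \bar{u}_1\widetilde{\rho}_y - \bar{u}_{1y}\widetilde{\rho},
\]
and analogously for $\widetilde{u}_{i\tau}$ and $\widetilde{\theta}_\tau$ from \eqref{4.31}. Taking $L^2$ norms, each term falls into four categories: (i) linear pieces such as $\bar{u}_1\widetilde{\rho}_y$, $\widetilde{\theta}_y$ and $\widetilde{\phi}_y$, bounded by $C\|(\widetilde{\rho}_y,\widetilde{u}_y,\widetilde{\theta}_y,\widetilde{\phi}_y)\|^2$ once $\|(\bar{u}_1,\bar{\theta})\|_{L_y^\infty}\leq C$ is used; (ii) profile-times-perturbation terms such as $\bar{u}_{1y}\widetilde{\rho}$, absorbed into $C\varepsilon^{\frac{7}{5}+\frac{1}{15}a}(\delta+\varepsilon^a\tau)^{-\frac{4}{3}}$ via Lemma \ref{lem7.2}, \eqref{3.22} and the choice $\delta=k^{-1}\varepsilon^{\frac{3}{5}-\frac{2}{5}a}$ from \eqref{3.21}; (iii) perturbation-squared nonlinearities like $\widetilde{u}_1\widetilde{u}_{1y}$, handled by the 1D embedding $\|\cdot\|_{L_y^\infty}\leq\sqrt{2}\|\cdot\|^{1/2}\|\cdot_y\|^{1/2}$ and \eqref{3.22} to extract a factor $k^{\frac{1}{12}}\varepsilon^{\frac{3}{5}-\frac{2}{5}a}$, yielding $Ck^{\frac{1}{12}}\varepsilon^{\frac{3}{5}-\frac{2}{5}a}\mathcal{D}_{2,l,q}(\tau)$; and (iv) the microscopic integrals $\int v_1 v_j G_y\,dv$, split via $G=\overline{G}+\sqrt{\mu}f$, with the $\overline{G}_y$ contribution controlled via \eqref{7.23} and \eqref{4.28A} and the $\sqrt{\mu}f_y$ contribution giving $C\|f_y\|_\sigma^2$. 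The case $|\alpha|=1$ follows by applying $\partial_y$ throughout; the extra commutators in which $\partial_y$ hits the coefficients are treated by the same four mechanisms.

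For \eqref{4.56}, I time-differentiate the Poisson equation in \eqref{3.10} and use $\bar{\rho}=\rho_e(\bar{\phi})$ to derive
\[
-\varepsilon^{2b-2a}\widetilde{\phi}_{yy\tau} + \rho_e'(\phi)\widetilde{\phi}_\tau = \widetilde{\rho}_\tau + \bigl(\rho_e'(\bar{\phi})-\rho_e'(\phi)\bigr)\bar{\phi}_\tau + \varepsilon^{2b-2a}\bar{\phi}_{yy\tau}.
\]
Testing against $\widetilde{\phi}_\tau$ and using $\rho_e'(\phi)\geq c>0$ from $(\mathcal{A}_2)$ produces $\varepsilon^{2b-2a}\|\widetilde{\phi}_{y\tau}\|^2 + c\|\widetilde{\phi}_\tau\|^2 \leq C|(\widetilde{\rho}_\tau,\widetilde{\phi}_\tau)| + \text{small}$. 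To avoid importing $\|f_y\|_\sigma^2$ into the right-hand side, I do not invoke \eqref{5.26b} but instead substitute $\widetilde{\rho}_\tau$ directly from the continuity equation and apply Cauchy--Schwarz to get $|(\widetilde{\rho}_\tau,\widetilde{\phi}_\tau)|\leq \eta\|\widetilde{\phi}_\tau\|^2 + C_\eta\|(\widetilde{\rho}_y,\widetilde{u}_y)\|^2$ up to lower-order background contributions. The term $(\rho_e'(\bar{\phi})-\rho_e'(\phi))\bar{\phi}_\tau$ is quadratic in $\widetilde{\phi}$, while $\varepsilon^{2b-2a}\bar{\phi}_{yy\tau}$ is controlled by Lemma \ref{lem7.2} and \eqref{3.21}; the case $|\alpha|=1$ is identical after applying $\partial_y$ to the elliptic identity and re-testing against $\partial_y\widetilde{\phi}_\tau$.

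The most delicate bookkeeping will be verifying that every product in which $\partial_y$ or $\partial_\tau$ lands on the background profile fits inside the budget $\varepsilon^{\frac{7}{5}+\frac{1}{15}a}(\delta+\varepsilon^a\tau)^{-\frac{4}{3}}$ after being multiplied by the $L_y^\infty$ smallness of $(\widetilde{\rho},\widetilde{u},\widetilde{\theta})$ from \eqref{5.15A}. This is where the scaling relation $\delta=k^{-1}\varepsilon^{\frac{3}{5}-\frac{2}{5}a}$ and the constraint \eqref{3.2a} on $a$ both play a decisive role, and the arithmetic parallels that already performed in \eqref{4.16}, \eqref{4.27A}, \eqref{4.28A} and \eqref{L} during the proof of Lemma \ref{lem.5.1A}, so the technical novelty here is organizational rather than residing in any new inequality.
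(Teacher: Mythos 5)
Your proposal is correct and essentially reproduces the paper's own argument: \eqref{5.26b} is obtained by reading the time derivatives off the fluid system \eqref{4.31} and estimating the purely spatial expressions (the paper's device of testing the equations, or their $y$-derivatives, against $\partial^{\alpha}\widetilde{\rho}_{\tau},\partial^{\alpha}\widetilde{u}_{\tau},\partial^{\alpha}\widetilde{\theta}_{\tau}$ and absorbing is the same estimate), and \eqref{4.56} by time-differentiating the Poisson equation, testing with $\partial^{\alpha}\widetilde{\phi}_{\tau}$, using the positive lower bound on $\rho'_{\mathrm{e}}$ and substituting the continuity equation for $\widetilde{\rho}_{\tau}$, exactly as in \eqref{5.33b}--\eqref{4.38}. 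Two cosmetic remarks: the term $(\rho'_{\mathrm{e}}(\bar{\phi})-\rho'_{\mathrm{e}}(\phi))\bar{\phi}_{\tau}$ is linear in $\widetilde{\phi}$ times the decaying profile rather than quadratic (the bound still closes by an estimate of the type \eqref{4.16}), and the prefactor $\varepsilon^{\frac{3}{5}-\frac{2}{5}a}$ you claim on the $\mathcal{D}_{2,l,q}$ term is sharper than needed and not attained by all $|\alpha|=1$ commutator terms, which only deliver the stated $\varepsilon^{\frac{3}{5}a-\frac{2}{5}}$ — harmless, since that is all the lemma asserts.
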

\begin{proof}
	Differentiating the second equation of \eqref{4.31} with respect to $y$	and taking the inner product
	of the resulting equation with $\widetilde{u}_{1\tau y}$, we have
	\begin{eqnarray}
		\label{5.27b}	
		&&\|\widetilde{u}_{1y\tau}\|^2\notag\\
		&&=-([\widetilde{u}_{1}u_{1y}]_y+[\bar{u}_{1}\widetilde{u}_{1y}]_y+\frac{2}{3}\widetilde{\theta}_{yy}
		+\frac{2}{3}[\rho_{y}\frac{\bar{\rho}\widetilde{\theta}-\widetilde{\rho}\bar{\theta}}
		{\rho\bar{\rho}}]_y+[\frac{2\bar{\theta}}{3\bar{\rho}}\widetilde{\rho}_{y}]_y+\widetilde{\phi}_{yy},\widetilde{u}_{1y\tau})
		\notag\\
		&&\quad-(\frac{1}{\rho}\int_{\mathbb{R}^3}  v^{2}_{1}G_{yy}\,dv,\widetilde{u}_{1y\tau})-([\frac{1}{\rho}]_y\int_{\mathbb{R}^3}  v^{2}_{1}G_{y}\,dv,\widetilde{u}_{1y\tau}).
	\end{eqnarray}	
By the Cauchy-Schwarz inequality, Lemma \ref{lem7.2}, \eqref{3.21}, \eqref{3.22}  and \eqref{3.18}, we get
	\begin{eqnarray*}
		&&|([\widetilde{u}_{1}u_{1y}]_y,\widetilde{u}_{1y\tau})|\\
		&&\leq |(\widetilde{u}_{1}u_{1yy},\widetilde{u}_{1y\tau})|+
		|(\widetilde{u}_{1y}u_{1y},\widetilde{u}_{1y\tau})|
		\notag\\
		&&\leq \eta\|\widetilde{u}_{1y\tau}\|^2+C_\eta(\|\widetilde{u}_{1}\|^2_{L^\infty}\|\widetilde{u}_{1yy}\|^2+
		\|\widetilde{u}_{1}\|^2\|\bar{u}_{1yy}\|^2_{L^\infty})+C_\eta
		\|\widetilde{u}_{1y}\|^2\|u_{1y}\|^2_{L^\infty}
		\notag\\
		&&\leq \eta\|\widetilde{u}_{1y\tau}\|^2+C_\eta k^{\frac{1}{12}}\varepsilon^{\frac{3}{5}a-\frac{2}{5}}\mathcal{D}_{2,l,q}(\tau)
		+C_{\eta}\varepsilon^{\frac{7}{5}+\frac{1}{15}a}(\delta+\varepsilon^{a}\tau)^{-\frac{4}{3}},
	\end{eqnarray*}
	and
	\begin{eqnarray*}
		|(\frac{1}{\rho}\int_{\mathbb{R}^3}  v^{2}_{1}G_{yy}\,dv,\widetilde{u}_{1y\tau})|
		&&\leq\eta\|\widetilde{u}_{1y\tau}\|^2
		+C_{\eta}\|\langle v\rangle^{-\frac{1}{2}}f_{yy}\|^{2}+C_{\eta}\|\frac{\overline{G}_{yy}}{\sqrt{\mu}}\|^2
		\notag\\
		&&\leq\eta\|\widetilde{u}_{1y\tau}\|^2
		+C_{\eta}\|f_{yy}\|_\sigma^{2}+C_\eta k^{\frac{1}{12}}\varepsilon^{\frac{3}{5}a-\frac{2}{5}}\mathcal{D}_{2,l,q}(\tau)\\
		&&\quad+C_{\eta}\varepsilon^{\frac{7}{5}+\frac{1}{15}a}(\delta+\varepsilon^{a}\tau)^{-\frac{4}{3}}.
	\end{eqnarray*}	
	The other terms in \eqref{5.27b} can be treated similarly. We thus arrive at	
	\begin{eqnarray*}	
		\|\widetilde{u}_{1y\tau}\|^2\leq&&C\eta\|\widetilde{u}_{1y\tau}\|^2
		+C_{\eta}\|(\widetilde{\rho}_{yy},\widetilde{u}_{yy},\widetilde{\theta}_{yy},\widetilde{\phi}_{yy})\|^{2}
		+C_{\eta}\|f_{yy}\|_\sigma^{2}
		\notag\\
		&&+C_\eta k^{\frac{1}{12}}\varepsilon^{\frac{3}{5}a-\frac{2}{5}}\mathcal{D}_{2,l,q}(\tau)
		+C_{\eta}\varepsilon^{\frac{7}{5}+\frac{1}{15}a}(\delta+\varepsilon^{a}\tau)^{-\frac{4}{3}}.
	\end{eqnarray*}	
	Similar estimates also hold for $\widetilde{\rho}_{y\tau}$, $\widetilde{u}_{2y\tau}$, $\widetilde{u}_{3y\tau}$ and $\widetilde{\theta}_{y\tau}$. So we can choose
	$\eta>0$ small enough such that
	\begin{multline}
		\label{5.28b}
		\|(\widetilde{\rho}_{y\tau},\widetilde{u}_{y\tau},\widetilde{\theta}_{y\tau})\|^{2}
		\leq C\|(\widetilde{\rho}_{yy},\widetilde{u}_{yy},\widetilde{\theta}_{yy},\widetilde{\phi}_{yy})\|^{2}+C\|f_{yy}\|_{\sigma}^{2}
		\\
		+Ck^{\frac{1}{12}}\varepsilon^{\frac{3}{5}a-\frac{2}{5}}\mathcal{D}_{2,l,q}(\tau)
		+C\varepsilon^{\frac{7}{5}+\frac{1}{15}a}(\delta+\varepsilon^{a}\tau)^{-\frac{4}{3}}.
	\end{multline}	
	Multiplying \eqref{4.31} by $\widetilde{\rho}_{\tau}$, $\widetilde{u}_{1\tau}$, $\widetilde{u}_{i\tau}$ with $i=2,3$ and
	$\widetilde{\theta}_{\tau}$ respectively, then integrating the resulting equations with respect to $y$ over $\mathbb{R}$ and adding them together,
	we can prove \eqref{5.26b} holds for $|\alpha|=0$. This together with  \eqref{5.28b} gives \eqref{5.26b}.
	
	We are going to prove \eqref{4.56}. By the last equation of \eqref{3.10}, one has
	\begin{equation}
		\label{5.33b}
		-\varepsilon^{2b-2a}(\widetilde{\phi}_{yy\tau},\widetilde{\phi}_{\tau})
		=\varepsilon^{2b-2a}(\bar{\phi}_{yy\tau},\widetilde{\phi}_{\tau})+
		(\widetilde{\rho}_{\tau},\widetilde{\phi}_{\tau})
		+([\rho_{\mathrm{e}}(\bar{\phi})-\rho_{\mathrm{e}}(\phi)]_{\tau},\widetilde{\phi}_{\tau}).
	\end{equation}
	In view of Lemma \ref{lem7.2}, \eqref{3.21} and \eqref{3.22}, we obtain
	\begin{eqnarray}
		\label{4.36a}
		\varepsilon^{2b-2a}|(\bar{\phi}_{yy\tau},\widetilde{\phi}_{\tau})|
		&&\leq \eta\|\widetilde{\phi}_{\tau}\|^{2}
		+C_{\eta}\varepsilon^{4b-4a}\|\bar{\phi}_{yy\tau}\|^{2}
		\notag\\
		&&\leq \eta\|\widetilde{\phi}_{\tau}\|^{2}
		+C_{\eta}\varepsilon^{4b-4a}\varepsilon^{5a}\delta^{-3}(\delta+\varepsilon^{a}\tau)^{-2}
		\notag\\
		&&\leq  \eta\|\widetilde{\phi}_{\tau}\|^{2}
		+C_{\eta}\varepsilon^{4b+a}(\frac{1}{k}\varepsilon^{\frac{3}{5}-\frac{2}{5}a})^{-\frac{11}{3}}(\delta+\varepsilon^{a}\tau)^{-\frac{4}{3}}
		\notag\\
		&&\leq  \eta\|\widetilde{\phi}_{\tau}\|^{2}
		+C_{\eta}\varepsilon^{\frac{7}{5}+\frac{1}{15}a}(\delta+\varepsilon^{a}\tau)^{-\frac{4}{3}},
	\end{eqnarray}
where in the last inequality we have used 
	\begin{equation*}
		\varepsilon^{4b+a}(\varepsilon^{\frac{3}{5}-\frac{2}{5}a})^{-\frac{11}{3}}\leq\varepsilon^{\frac{7}{5}+\frac{1}{15}a}.
	\end{equation*}
On the other hand, we get form the first equation of \eqref{3.10} together with \eqref{4.16} and \eqref{5.15A} that
	\begin{eqnarray}
		\label{5.35b}
		|(\widetilde{\rho}_{\tau},\widetilde{\phi}_{\tau})|\leq&& \eta\|\widetilde{\phi}_{\tau}\|^{2}+C_\eta\|(\rho\widetilde{u}_{1})_{y}+\bar{u}_{1}\widetilde{\rho}_{y}+\bar{u}_{1y}\widetilde{\rho}\|^2
		\notag\\
		\leq&&\eta\|\widetilde{\phi}_{\tau}\|^{2}+C_\eta\|(\widetilde{\rho}_{y},\widetilde{u}_{y})\|^2
		+C_{\eta}\varepsilon^{\frac{7}{5}+\frac{1}{15}a}(\delta+\varepsilon^{a}\tau)^{-\frac{4}{3}},
	\end{eqnarray}
while for the last term of \eqref{5.33b}, we deduce from the Taylor's formula that
	\begin{eqnarray*}
		&&([\rho_{\mathrm{e}}(\bar{\phi})-\rho_{\mathrm{e}}(\phi)]_{\tau},\widetilde{\phi}_{\tau})\\
		=&&-(\rho'_{\mathrm{e}}(\bar{\phi})\widetilde{\phi}_{\tau},\widetilde{\phi}_{\tau})
		-([\rho'_{\mathrm{e}}(\phi)-\rho'_{\mathrm{e}}(\bar{\phi})]\phi_{\tau},\widetilde{\phi}_{\tau})
		\notag\\
		\leq&& -(\rho'_{\mathrm{e}}(\bar{\phi})\widetilde{\phi}_{\tau},\widetilde{\phi}_{\tau})+C(\eta+k^{\frac{1}{12}}\varepsilon^{\frac{3}{5}-\frac{2}{5}a})\|\widetilde{\phi}_{\tau}\|^{2}
		+C_\eta\varepsilon^{\frac{7}{5}+\frac{1}{15}a}(\delta+\varepsilon^{a}\tau)^{-\frac{4}{3}}.
	\end{eqnarray*}
	Owing to these and the fact $\rho'_{\mathrm{e}}(\bar{\phi})>c$, we can choose $\eta>0$ small enough such that
	\begin{equation}
		\label{4.38}
		\|\widetilde{\phi}_{\tau}\|^{2}+\varepsilon^{2b-2a}\|\widetilde{\phi}_{y\tau}\|^{2}
		\leq C\|(\widetilde{\rho}_{y},\widetilde{u}_{y})\|^2
		+C\varepsilon^{\frac{7}{5}+\frac{1}{15}a}(\delta+\varepsilon^{a}\tau)^{-\frac{4}{3}},
	\end{equation}
which gives the estimate \eqref{4.56} for the case $|\alpha|=0$.
	
	Similar to \eqref{5.33b}, it is clear to see that
	\begin{equation*}
		-\varepsilon^{2b-2a}(\widetilde{\phi}_{yyy\tau},\widetilde{\phi}_{y\tau})
		=\varepsilon^{2b-2a}(\bar{\phi}_{yyy\tau},\widetilde{\phi}_{y\tau})+
		(\widetilde{\rho}_{y\tau},\widetilde{\phi}_{y\tau})
		+([\rho_{\mathrm{e}}(\bar{\phi})-\rho_{\mathrm{e}}(\phi)]_{y\tau},\widetilde{\phi}_{y\tau}).
	\end{equation*}
Following the similar calculations as \eqref{4.36a} and \eqref{5.35b} respectively, we have
	$$
	\varepsilon^{2b-2a}|(\bar{\phi}_{yyy\tau},\widetilde{\phi}_{y\tau})|
	\leq \eta\|\widetilde{\phi}_{y\tau}\|^{2}
	+C_{\eta}\varepsilon^{\frac{7}{5}+\frac{1}{15}a}(\delta+\varepsilon^{a}\tau)^{-\frac{4}{3}},
	$$
	and
	\begin{multline}
	|(\widetilde{\rho}_{y\tau},\widetilde{\phi}_{y\tau})|
	\leq \eta\|\widetilde{\phi}_{y\tau}\|^{2}+C_\eta\|(\widetilde{\rho}_{yy},\widetilde{u}_{yy})\|^2
	+C_\eta k^{\frac{1}{6}}\varepsilon^{\frac{6}{5}-\frac{4}{5}a}\|(\widetilde{\rho}_{y},\widetilde{u}_{y})\|^2\\
	+C_{\eta}\varepsilon^{\frac{7}{5}+\frac{1}{15}a}(\delta+\varepsilon^{a}\tau)^{-\frac{4}{3}}.
	\end{multline}
By the Taylor's formula and \eqref{4.38}, we claim that
	\begin{eqnarray*}
		&&([\rho_{\mathrm{e}}(\bar{\phi})-\rho_{\mathrm{e}}(\phi)]_{y\tau},\widetilde{\phi}_{y\tau})\\
		=&&-(\rho'_{\mathrm{e}}(\bar{\phi})\widetilde{\phi}_{y\tau},\widetilde{\phi}_{y\tau})
		-([\rho'_{\mathrm{e}}(\bar{\phi})]_y\widetilde{\phi}_{\tau},\widetilde{\phi}_{y\tau})
		\notag\\
		&&-([\rho'_{\mathrm{e}}(\phi)-\rho'_{\mathrm{e}}(\bar{\phi})]\phi_{y\tau},\widetilde{\phi}_{y\tau})
		-([\rho'_{\mathrm{e}}(\phi)-\rho'_{\mathrm{e}}(\bar{\phi})]_y\phi_{\tau},\widetilde{\phi}_{y\tau})
		\notag\\
		\leq&& -(\rho'_{\mathrm{e}}(\bar{\phi})\widetilde{\phi}_{y\tau},\widetilde{\phi}_{y\tau})+C(\eta+k^{\frac{1}{12}}\varepsilon^{\frac{3}{5}-\frac{2}{5}a})\|\widetilde{\phi}_{y\tau}\|^{2}
		\notag\\
		&&+C_\eta k^{\frac{1}{6}}\varepsilon^{\frac{6}{5}-\frac{4}{5}a}\|\widetilde{\phi}_{\tau}\|^2+C_\eta\varepsilon^{\frac{7}{5}+\frac{1}{15}a}(\delta+\varepsilon^{a}\tau)^{-\frac{4}{3}}.
	\end{eqnarray*}
At the end, from  the above bounds and the fact $\rho'_{\mathrm{e}}(\bar{\phi})>c$, we can choose $\eta>0$ small enough such that
	\begin{multline*}
		\|\widetilde{\phi}_{y\tau}\|^{2}+\varepsilon^{2b-2a}\|\widetilde{\phi}_{yy\tau}\|^{2}\\
		\leq C\|(\widetilde{\rho}_{yy},\widetilde{u}_{yy})\|^2+Ck^{\frac{1}{6}}\varepsilon^{\frac{6}{5}-\frac{4}{5}a}\|(\widetilde{\rho}_{y},\widetilde{u}_{y})\|^2
		+C\varepsilon^{\frac{7}{5}+\frac{1}{15}a}(\delta+\varepsilon^{a}\tau)^{-\frac{4}{3}},
	\end{multline*}
which gives the estimate \eqref{4.56} for the case $|\alpha|=1$. Combining this and \eqref{4.38} together gives \eqref{4.56}, 
we thus complete the proof of Lemma \ref{lem.5.2a}.
\end{proof}
For deducing \eqref{4.30} in the proof of Lemma \ref{lem.5.1A} above, we have used the following estimate.
\begin{lemma}\label{lem.5.2A}
It holds that
	\begin{align}
		\label{5.27A}
		-\frac{3}{2}\int_{\mathbb{R}}\rho\widetilde{u}_{1}\widetilde{\phi}_{y} \, dy
		&\leq-\frac{3}{4}\varepsilon^{2b-2a}\frac{d}{d\tau}\|\widetilde{\phi}_{y}\|^{2}
		-\frac{3}{4}\frac{d}{d\tau}\big(\widetilde{\phi}^{2},\rho'_{\mathrm{e}}(\bar{\phi})\big)
		-\frac{1}{2}\frac{d}{d\tau}\big(\widetilde{\phi}^{3},\rho''_{\mathrm{e}}(\bar{\phi})\big)
		\notag\\
		&\quad+C(\eta+k)\varepsilon^{1-a}\|\widetilde{\phi}_{y}\|^2+C_{\eta}k^{\frac{1}{12}}\varepsilon^{\frac{3}{5}-\frac{2}{5}a}\mathcal{D}_{2,l,q}(\tau)\notag\\
		&\quad+C_{\eta}\varepsilon^{\frac{7}{5}+\frac{1}{15}a}(\delta+\varepsilon^{a}\tau)^{-\frac{4}{3}}.
	\end{align}
	
\end{lemma}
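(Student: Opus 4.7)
The plan is to use the Poisson equation to extract the missing dissipation for $\widetilde\phi$ out of the cross term $-\tfrac32\int_\R\rho\widetilde u_1\widetilde\phi_y\,dy$. From the quasineutral relation $\bar\rho=\rho_\mathrm{e}(\bar\phi)$ and the Poisson equation in \eqref{3.10}, together with Taylor's formula to third order around $\bar\phi$, I would first write
\[
\widetilde{\rho}=-\varepsilon^{2b-2a}\widetilde{\phi}_{yy}-\varepsilon^{2b-2a}\bar{\phi}_{yy}+\rho'_\mathrm{e}(\bar\phi)\widetilde\phi+\tfrac12\rho''_\mathrm{e}(\bar\phi)\widetilde\phi^2+O(|\widetilde\phi|^3).
\]
Then, using the continuity equation (first line of \eqref{3.10}) in the form $(\rho\widetilde u_1)_y=-\widetilde\rho_\tau-(\bar u_1\widetilde\rho)_y$ and one integration by parts in $y$, I rewrite
\[
-\tfrac{3}{2}\int_\R\rho\widetilde u_1\widetilde\phi_y\,dy=-\tfrac{3}{2}\int_\R\widetilde\rho_\tau\widetilde\phi\,dy+\tfrac{3}{2}\int_\R\bar u_1\widetilde\rho\,\widetilde\phi_y\,dy.
\]

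Substituting the Poisson representation of $\widetilde\rho$ into $-\tfrac32(\widetilde\rho_\tau,\widetilde\phi)$ and differentiating block by block in $\tau$ produces exactly the three $\frac{d}{d\tau}$ quantities on the right-hand side of \eqref{5.27A}: the $-\varepsilon^{2b-2a}\widetilde\phi_{yy}$ block yields $-\tfrac34\varepsilon^{2b-2a}\frac{d}{d\tau}\|\widetilde\phi_y\|^2$ after one more integration by parts in $y$; the linear block $\rho'_\mathrm{e}(\bar\phi)\widetilde\phi$ yields $-\tfrac34\frac{d}{d\tau}(\widetilde\phi^2,\rho'_\mathrm{e}(\bar\phi))$ up to a commutator proportional to $(\widetilde\phi^2,\rho''_\mathrm{e}(\bar\phi)\bar\phi_\tau)$; and the quadratic block $\tfrac12\rho''_\mathrm{e}(\bar\phi)\widetilde\phi^2$ yields $-\tfrac12\frac{d}{d\tau}(\widetilde\phi^3,\rho''_\mathrm{e}(\bar\phi))$ up to a commutator proportional to $(\widetilde\phi^3,\rho'''_\mathrm{e}(\bar\phi)\bar\phi_\tau)$, via the identity $\widetilde\phi^2\widetilde\phi_\tau=\tfrac13\partial_\tau\widetilde\phi^3$.

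It remains to bound (i) the commutators carrying $\bar\phi_\tau$, (ii) the profile term $\tfrac32\varepsilon^{2b-2a}\int\bar\phi_{yy\tau}\widetilde\phi\,dy$, (iii) the cubic Taylor remainder paired with $\widetilde\phi$, and (iv) the lower-order integral $\tfrac32\int\bar u_1\widetilde\rho\widetilde\phi_y\,dy$. For (iv) I would substitute the Poisson representation once more to express $\widetilde\rho$ in terms of $\widetilde\phi$, $\widetilde\phi_{yy}$ and $\bar\phi_{yy}$, so that after Cauchy--Schwarz and one integration by parts on $\widetilde\phi_{yy}\widetilde\phi_y$ using $\|\bar u_{1y}\|_{L^\infty}\lesssim\varepsilon^a(\delta+\varepsilon^a\tau)^{-1}$ from Lemma \ref{lem7.2}, each piece collapses into $C(\eta+k)\varepsilon^{1-a}\|\widetilde\phi_y\|^2+C_\eta\varepsilon^{7/5+a/15}(\delta+\varepsilon^a\tau)^{-4/3}$. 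For (i)--(iii) I would combine H\"older, the one-dimensional Sobolev embedding $H^1\hookrightarrow L^\infty$, Lemma \ref{lem7.2} for profile derivatives, and Lemma \ref{lem.5.2a} to trade any remaining $\widetilde\phi_\tau$ factor for spatial derivatives and $f_y$; the choice $\delta=k^{-1}\varepsilon^{3/5-2a/5}$ from \eqref{3.21} together with the a priori assumption \eqref{3.22} and the bound \eqref{4.12A} then deliver the target error structure.

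The main obstacle is controlling the commutators and the $\bar u_1\widetilde\rho\widetilde\phi_y$ integral, which carry at best an algebraic-in-time decay $\varepsilon^a(\delta+\varepsilon^a\tau)^{-1}$ from the rarefaction profile that is not itself integrable in $\tau$. Careful bookkeeping is needed to check that, after inserting $\delta=k^{-1}\varepsilon^{3/5-2a/5}$ and using the $\varepsilon^{2b-2a}$ prefactor from the Poisson equation, the resulting powers of $\varepsilon$ match the target rate $\varepsilon^{7/5+a/15}$ with weight $(\delta+\varepsilon^a\tau)^{-4/3}$ while respecting the range constraint \eqref{3.2a}; in particular, this is the step in the zeroth-order energy where the restriction $b\geq 3/5$ is genuinely used.
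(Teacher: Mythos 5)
Your reduction coincides with the paper's up to the point where the error terms appear: the continuity equation plus one integration by parts, the Poisson equation for $\widetilde\rho_\tau$, and the third-order Taylor expansion of $\rho_{\mathrm{e}}$ produce exactly the three $\frac{d}{d\tau}$ blocks, and your treatment of the profile term $\varepsilon^{2b-2a}(\widetilde\phi,\bar\phi_{yy\tau})$ and of the cubic commutators and remainders is what the paper does (cf.\ \eqref{4.23}, \eqref{4.25}, \eqref{4.27}). The gap lies in your items (i) and (iv): you propose to bound the commutator from the linear block, $J_2=\frac34(\widetilde\phi^2,\rho''_{\mathrm{e}}(\bar\phi)\frac{d\bar\phi}{d\bar\rho}[\bar\rho\bar u_1]_y)$, and the quadratic parts of $\frac32(\widetilde\phi,[\rho_{\mathrm{e}}(\bar\phi)-\rho_{\mathrm{e}}(\phi)]_y\bar u_1)$ and $\frac32(\widetilde\phi,[\rho_{\mathrm{e}}(\bar\phi)-\rho_{\mathrm{e}}(\phi)]\bar u_{1y})$ separately, by H\"older, the 1D embedding and Lemma \ref{lem7.2}. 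This cannot work: these terms are only quadratic in $\widetilde\phi$ (no derivative falls on $\widetilde\phi$), their profile weights $\bar u_{1y}$, $\bar\rho_y\bar u_1$, $\bar\phi_\tau$ decay only like $\varepsilon^a(\delta+\varepsilon^a\tau)^{-1}$, which is not integrable in $\tau$ and cannot be upgraded to the admissible rate $\varepsilon^{\frac75+\frac{a}{15}}(\delta+\varepsilon^a\tau)^{-\frac43}$ (interpolating $\|\widetilde\phi\|_{L^\infty}\le C\|\widetilde\phi\|^{1/2}\|\widetilde\phi_y\|^{1/2}$ and Young's inequality still leaves a factor $(\delta+\varepsilon^a\tau)^{2/3}$ that grows in time), while the only zeroth-order control of $\widetilde\phi$ allowed on the right of \eqref{5.27A} is $\varepsilon^{1-a}\|\widetilde\phi_y\|^2$; neither $\|\widetilde\phi\|^2$ nor $\|\sqrt{\bar u_{1y}}\widetilde\phi\|^2$ is available in $\mathcal{D}_{2,l,q}$ to absorb them.

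The paper's proof hinges on not estimating these pieces individually: $J_2$ is carried along and combined with the quadratic pieces $J_3$, $J_4$ produced by $I_3=-\frac32(\widetilde\phi,[\widetilde\rho\bar u_1]_y)$ after substituting the Poisson equation once more. The $\bar\rho_y\bar u_1$ contributions cancel exactly, and the $\bar u_{1y}$ contributions combine into $J_5=\frac34\big(\widetilde\phi^2\bar u_{1y},\frac{\rho''_{\mathrm{e}}(\bar\phi)\rho_{\mathrm{e}}(\bar\phi)-[\rho'_{\mathrm{e}}(\bar\phi)]^2}{\rho'_{\mathrm{e}}(\bar\phi)}\big)\le0$, which is sign-definite precisely because of the structural assumption $(\mathcal{A}_3)$, the quasineutral relation $\bar\rho=\rho_{\mathrm{e}}(\bar\phi)$, and the rarefaction monotonicity $\bar u_{1y}>0$ (see \eqref{4.34b}); only the cubic leftovers $J_6$ are then estimated by the interpolation arguments you describe. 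Since your proposal never invokes this cancellation, the sign of $\bar u_{1y}$, or assumption $(\mathcal{A}_3)$, the borderline quadratic terms remain uncontrolled and the argument as outlined does not close.
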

\begin{proof}
	Recall the last term of \eqref{4.9}. By integration by parts, from the first and last equations of \eqref{3.10}, we get
	\begin{align}
		\label{4.22}
		-\frac{3}{2}\int_{\mathbb{R}}\rho\widetilde{u}_{1}\widetilde{\phi}_{y}\,dy
		&=\frac{3}{2}\int_{\mathbb{R}}(\rho\widetilde{u}_{1})_{y}\widetilde{\phi}\,dy
		=-\frac{3}{2}\int_{\mathbb{R}}[\widetilde{\rho}_{\tau}+(\bar{u}_{1}\widetilde{\rho})_{y}]
		\widetilde{\phi}\,dy
		\notag\\
		&=\frac{3}{2}\varepsilon^{2b-2a}\big(\widetilde{\phi},\widetilde{\phi}_{yy\tau}\big)
		+\frac{3}{2}\varepsilon^{2b-2a}\big(\widetilde{\phi},\bar{\phi}_{yy\tau}\big)+I_{2}+I_{3}.
	\end{align}
	Here $I_{2}$ and $I_{3}$ are given as
	$$
	I_{2}=\frac{3}{2}(\widetilde{\phi},[\rho_{\mathrm{e}}(\bar{\phi})-\rho_{\mathrm{e}}(\phi)]_{\tau}), \quad
	I_{3}=-\frac{3}{2}(\widetilde{\phi},[\widetilde{\rho}\bar{u}_{1}]_{y}).
	$$
For the first term in \eqref{4.22}, we have from an integration by parts that
	\begin{equation}
		\label{4.25aa}
		\frac{3}{2}\varepsilon^{2b-2a}(\widetilde{\phi},\widetilde{\phi}_{yy\tau})
		=-\frac{3}{4}\varepsilon^{2b-2a}\frac{d}{d\tau}\|\widetilde{\phi}_{y}\|^{2}.
	\end{equation}
For the second term in \eqref{4.22}, according to  Lemma \ref{lem7.2}, \eqref{3.21} and \eqref{3.22}, we deduce that
	\begin{eqnarray}
		\label{4.23}
		&&\varepsilon^{2b-2a}|(\widetilde{\phi},\bar{\phi}_{yy\tau})|\notag\\
		&&\leq C\varepsilon^{2b-2a}\|\widetilde{\phi}\|_{L_{y}^{\infty}}\|\bar{\phi}_{yy\tau}\|_{L^{1}}
		\notag\\
		&&\leq C\varepsilon^{2b-2a}
		\|\widetilde{\phi}\|^{\frac{1}{2}}\|\widetilde{\phi}_{y}\|^{\frac{1}{2}}\varepsilon^{2a}\delta^{-1}(\delta+\varepsilon^{a}\tau)^{-1}
		\notag\\
		&&\leq \eta(\varepsilon^{\frac{1-a}{4}}\|\widetilde{\phi}_{y}\|^{\frac{1}{2}})^{4}
		+C_{\eta}\big\{\varepsilon^{\frac{a-1}{4}}\varepsilon^{2b-2a}\|\widetilde{\phi}\|^{\frac{1}{2}}
		\varepsilon^{2a}\delta^{-1}(\delta+\varepsilon^{a}\tau)^{-1}\big\}^{\frac{4}{3}}
		\notag\\
		&&\leq \eta\varepsilon^{1-a}\|\widetilde{\phi}_{y}\|^{2}
		+C_{\eta}\big\{\varepsilon^{\frac{a-1}{4}+2b}(k^{\frac{1}{6}}\varepsilon^{\frac{6}{5}-\frac{4}{5}a})^{\frac{1}{4}}
		k\varepsilon^{-\frac{3}{5}+\frac{2}{5}a}\big\}^{\frac{4}{3}}(\delta+\varepsilon^{a}\tau)^{-\frac{4}{3}}
		\notag\\
		&&\leq \eta\varepsilon^{1-a}\|\widetilde{\phi}_{y}\|^{2}+C_{\eta}\varepsilon^{\frac{7}{5}+\frac{1}{15}a}(\delta+\varepsilon^{a}\tau)^{-\frac{4}{3}}.
	\end{eqnarray}
	Here we have used the inequality
	\begin{equation*}
		\big\{\varepsilon^{\frac{a-1}{4}+2b}(k^{\frac{1}{6}}\varepsilon^{\frac{6}{5}-\frac{4}{5}a})^{\frac{1}{4}}
		k\varepsilon^{-\frac{3}{5}+\frac{2}{5}a}\big\}^{\frac{4}{3}}
		=\big\{k^{\frac{25}{24}}\varepsilon^{\frac{9}{20}a-\frac{11}{20}+2b}\big\}^{\frac{4}{3}}\leq k^{\frac{25}{18}}\varepsilon^{\frac{7}{5}+\frac{1}{15}a},
	\end{equation*}
	by requiring that
	\begin{equation}
		\label{4.25a}
		\varepsilon^{b}\leq \varepsilon^{\frac{4}{5}-\frac{1}{5}a}\quad \Leftrightarrow\quad  \quad a\geq4-5b.
	\end{equation}

	To compute the term $I_{2}$ in \eqref{4.22},
	we first have from the Taylor's formula with an integral remainder that
	\begin{equation}
		\label{4.24}
		\rho_{\mathrm{e}}(\bar{\phi})-\rho_{\mathrm{e}}(\phi)=-\rho'_{\mathrm{e}}(\bar{\phi})\widetilde{\phi}-\frac{1}{2}\rho''_{\mathrm{e}}(\bar{\phi})\widetilde{\phi}^{2}
		\underbrace{-\int^{\phi}_{\bar{\phi}}\frac{(\varrho-\phi)^{2}}{2}\rho'''_{e}(\varrho)\,d\varrho}_{J_{1}}.
	\end{equation}
	Here we expand $\rho_{\mathrm{e}}(\phi)$ around the asymptotic profile up to the third order so that we can observe some new
	cancellations and obtain the higher order nonlinear terms. In fact, from \eqref{4.24}, we can write
	$$
	I_{2}=-\frac{3}{2}(\widetilde{\phi},[\rho'_{\mathrm{e}}(\bar{\phi})\widetilde{\phi}]_{\tau})
	-\frac{3}{4}(\widetilde{\phi},[\rho''_{\mathrm{e}}(\bar{\phi})\widetilde{\phi}^{2}]_{\tau})
	+\frac{3}{2}(\widetilde{\phi},J_{1\tau}).
	$$
	By a direct calculation and using $\bar{\rho}_{\tau}=-[\bar{\rho}\bar{u}_{1}]_{y}$, one has the following identities
	\begin{eqnarray*}
		-\frac{3}{2}(\widetilde{\phi},[\rho'_{\mathrm{e}}(\bar{\phi})\widetilde{\phi}]_{\tau})=&&
		-\frac{3}{4}\frac{d}{d\tau}(\widetilde{\phi}^{2},\rho'_{\mathrm{e}}(\bar{\phi}))
		-\frac{3}{4}(\widetilde{\phi}^{2},[\rho'_{\mathrm{e}}(\bar{\phi})]_{\tau})
		\notag\\
		=&&-\frac{3}{4}\frac{d}{d\tau}(\widetilde{\phi}^{2},\rho'_{\mathrm{e}}(\bar{\phi}))+\underbrace{\frac{3}{4}(\widetilde{\phi}^{2},
			\rho''_{\mathrm{e}}(\bar{\phi})\frac{d\bar{\phi}}{d\bar{\rho}}[\bar{\rho}\bar{u}_{1}]_{y})}_{J_{2}}.
	\end{eqnarray*}
	Here it should be noted that $J_{2}$ cannot be directly controlled for the time being,
	and its estimate should be postponed to the subsequent estimates on $I_{3}$ by an exact cancellation with other terms.
Observe that
	\begin{multline*}
		-\frac{3}{4}(\widetilde{\phi},[\rho''_{\mathrm{e}}(\bar{\phi})\widetilde{\phi}^{2}]_{\tau})
		=-\frac{1}{2}\frac{d}{d\tau}(\widetilde{\phi}^{3},\rho''_{\mathrm{e}}(\bar{\phi}))+\frac{1}{4}(\widetilde{\phi}^{3},
		\rho'''_{e}(\bar{\phi})\frac{d\bar{\phi}}{d\bar{\rho}}[\bar{\rho}\bar{u}_{1}]_{y})
		\\
		\leq-\frac{1}{2}\frac{d}{d\tau}(\widetilde{\phi}^{3},\rho''_{\mathrm{e}}(\bar{\phi}))+ \eta\varepsilon^{1-a}\|\widetilde{\phi}_{y}\|^{2}
		+C_{\eta}\varepsilon^{\frac{7}{5}+\frac{1}{15}a}(\delta+\varepsilon^{a}\tau)^{-\frac{4}{3}},
	\end{multline*}
	where Lemma \ref{lem7.2}, \eqref{3.21} and \eqref{3.22} have been used such that
	\begin{eqnarray}
		\label{4.25}
		|(\widetilde{\phi}^{3},
		\rho'''_{e}(\bar{\phi})\frac{d\bar{\phi}}{d\bar{\rho}}[\bar{\rho}\bar{u}_{1}]_{y})|
		&&\leq C\|(\bar{\rho}_{y},\bar{u}_{1y})\|_{L_{y}^{\infty}}
		\|\widetilde{\phi}\|_{L_{y}^{\infty}}\|\widetilde{\phi}\|^{2}
		\notag\\
		&&\leq \eta(\varepsilon^{\frac{1-a}{4}}\|\widetilde{\phi}_{y}\|^{\frac{1}{2}})^{4}
		+C_{\eta}\big\{\varepsilon^{\frac{a-1}{4}}\varepsilon^{a}(\delta+\varepsilon^{a}\tau)^{-1}\|\widetilde{\phi}\|^{\frac{5}{2}}\big\}^{\frac{4}{3}}
		\notag\\
		&&\leq \eta\varepsilon^{1-a}\|\widetilde{\phi}_{y}\|^{2}
		+C_{\eta}\varepsilon^{\frac{7}{5}+\frac{1}{15}a}(\delta+\varepsilon^{a}\tau)^{-\frac{4}{3}}.
	\end{eqnarray}
To estimate $\frac{3}{2}\big(\widetilde{\phi},J_{1\tau}\big)$, we first observe that
	\begin{equation*}
		J_{1}\sim\widetilde{\phi}^{3},\quad J_{1\tau}=\phi_{\tau}\int^{\phi}_{\bar{\phi}}(\varrho-\phi)\rho'''_{e}(\varrho)\,d\varrho
		+\frac{1}{2}\widetilde{\phi}^{2}\bar{\phi}_{\tau}\rho'''_{e}(\bar{\phi})\sim\phi_{\tau}\widetilde{\phi}^{2}
		+\bar{\phi}_{\tau}\widetilde{\phi}^{2}.
	\end{equation*}
	Using this, \eqref{4.38} and a similar calculation as \eqref{4.25}, we have
	\begin{eqnarray}
		\label{4.27}
		|(\widetilde{\phi},J_{1\tau})|&&\leq C\|\widetilde{\phi}\|_{L_{y}^{\infty}}^{2}\|\widetilde{\phi}\|\|\widetilde{\phi}_{\tau}\|
		+C\|\widetilde{\phi}\|_{L_{y}^{\infty}}\|\bar{\phi}_{\tau}\|_{L_{y}^{\infty}}\|\widetilde{\phi}\|^{2}
		\notag\\
		&&\leq C\eta\varepsilon^{1-a}\|\widetilde{\phi}_{y}\|^{2}+C_{\eta}k^{\frac{1}{12}}\varepsilon^{\frac{3}{5}-\frac{2}{5}a}\mathcal{D}_{2,l,q}(\tau)\notag\\
		&&\quad+C_{\eta}\varepsilon^{\frac{7}{5}+\frac{1}{15}a}(\delta+\varepsilon^{a}\tau)^{-\frac{4}{3}}.
	\end{eqnarray}
	Therefore, we can conclude from the above estimates that
	\begin{eqnarray*}
		I_{2}\leq &&-\frac{3}{4}\frac{d}{d\tau}\big(\widetilde{\phi}^{2},\rho'_{\mathrm{e}}(\bar{\phi})\big)
		-\frac{1}{2}\frac{d}{d\tau}\big(\widetilde{\phi}^{3},\rho''_{\mathrm{e}}(\bar{\phi})\big)+J_{2}
		+C\eta\varepsilon^{1-a}\|\widetilde{\phi}_{y}\|^{2}
		\nonumber\\
		&&+C_{\eta}k^{\frac{1}{12}}\varepsilon^{\frac{3}{5}-\frac{2}{5}a}\mathcal{D}_{2,l,q}(\tau)
		+C_{\eta}\varepsilon^{\frac{7}{5}+\frac{1}{15}a}(\delta+\varepsilon^{a}\tau)^{-\frac{4}{3}}.
	\end{eqnarray*}

	As for the term $I_{3}$ in \eqref{4.22}, we first use the last equation of \eqref{3.10} to obtain
	\begin{eqnarray*}
		I_{3}&&=-\frac{3}{2}\big(\widetilde{\phi},[\widetilde{\rho}\bar{u}_{1}]_{y}\big)=
		-\frac{3}{2}(\widetilde{\phi},\widetilde{\rho}_{y}\bar{u}_{1})
		-\frac{3}{2}(\widetilde{\phi},\widetilde{\rho}\bar{u}_{1y})
		\notag\\
		&&=\frac{3}{2}\Big(\widetilde{\phi},\big[\varepsilon^{2b-2a}(\partial^{3}_{y}\widetilde{\phi}
		+\partial^{3}_{y}\bar{\phi})+(\rho_{\mathrm{e}}(\bar{\phi})-\rho_{\mathrm{e}}(\phi))_{y}\big ]\bar{u}_{1}\Big)\notag\\
		&&\quad+\frac{3}{2}\Big(\widetilde{\phi},\big[\varepsilon^{2b-2a}(\partial^{2}_{y}\widetilde{\phi}+\partial^{2}_{y}\bar{\phi})
		+\rho_{\mathrm{e}}(\bar{\phi})-\rho_{\mathrm{e}}(\phi)\big]
		\bar{u}_{1y}\Big)
		\notag\\
		&&=\frac{3}{4}\varepsilon^{2b-2a}(\bar{u}_{1y},\widetilde{\phi}_{y}^{2})-\frac{3}{2}\varepsilon^{2b-2a}
		(\widetilde{\phi}_{y},\partial^{2}_{y}\bar{\phi}\bar{u}_{1})
		+\underbrace{\frac{3}{2}\big(\widetilde{\phi},[\rho_{\mathrm{e}}(\bar{\phi})-\rho_{\mathrm{e}}(\phi)]_{y}\bar{u}_{1}\big)}_{J_{3}}\notag\\
		&&\quad+\underbrace{\frac{3}{2}\big(\widetilde{\phi},[\rho_{\mathrm{e}}(\bar{\phi})-\rho_{\mathrm{e}}(\phi)]\bar{u}_{1y}\big)}_{J_{4}},
	\end{eqnarray*}
	where in the last identity we have used the following identities
	$$
	\frac{3}{2}\varepsilon^{2b-2a}\big[(\widetilde{\phi},\widetilde{\phi}_{yyy}\bar{u}_{1})
	+(\widetilde{\phi},\widetilde{\phi}_{yy}\bar{u}_{1y})\big]
	=\frac{3}{4}\varepsilon^{2b-2a}(\bar{u}_{1y},\widetilde{\phi}_{y}^{2})
	$$
	and
	$$
	\frac{3}{2}\varepsilon^{2b-2a}\big[(\widetilde{\phi},\bar{\phi}_{yyy}\bar{u}_{1})
	+(\widetilde{\phi},\bar{\phi}_{yy}\bar{u}_{1y})\big]
	=-\frac{3}{2}\varepsilon^{2b-2a}(\widetilde{\phi}_{y},\bar{\phi}_{yy}\bar{u}_{1}).
	$$
	By using Lemma \ref{lem7.2}, \eqref{3.21} and $a\geq4-5b$ in \eqref{4.25a}, one gets
	\begin{eqnarray*}
		\frac{3}{4}\varepsilon^{2b-2a}|(\bar{u}_{1y},\widetilde{\phi}_{y}^{2})|
		&&\leq C\varepsilon^{2b-2a}\|\bar{u}_{1y}\|_{L_{y}^{\infty}}\|\widetilde{\phi}_{y}\|^{2}
		\notag\\
		&&\leq Ck\varepsilon^{2b-\frac{8}{5}+\frac{2}{5}a}\varepsilon^{1-a}\|\widetilde{\phi}_{y}\|^{2}
		\leq Ck\varepsilon^{1-a}\|\widetilde{\phi}_{y}\|^{2}.
	\end{eqnarray*}
	The second term in $I_{3}$ can  be handed in the same way as in \eqref{4.23}, which is bounded by
	\begin{eqnarray*}
		\frac{3}{2}\varepsilon^{2b-2a}|(\widetilde{\phi}_{y},\bar{\phi}_{yy}\bar{u}_{1})|
		&&=\frac{3}{2}\varepsilon^{2b-2a}|(\widetilde{\phi},[\bar{\phi}_{yy}\bar{u}_{1}]_{y})|
		\\
		&&\leq \eta\varepsilon^{1-a}\|\widetilde{\phi}_{y}\|^{2}
		+C_{\eta}\varepsilon^{\frac{7}{5}+\frac{1}{15}a}(\delta+\varepsilon^{a}\tau)^{-\frac{4}{3}}.
	\end{eqnarray*}
	
	In order to estimate $J_{3}$ and $J_{4}$ in $I_{3}$, we first use \eqref{4.24} to expand them as
	\begin{eqnarray*}
		J_{3}=&&-\frac{3}{2}(\widetilde{\phi},[\rho'_{\mathrm{e}}(\bar{\phi})\widetilde{\phi}]_{y}\bar{u}_{1})
		-\frac{3}{4}(\widetilde{\phi},[\rho''_{\mathrm{e}}(\bar{\phi})\widetilde{\phi}^{2}]_{y}\bar{u}_{1})
		+\frac{3}{2}(\widetilde{\phi},J_{1y}\bar{u}_{1})
		\\
		=&&-\frac{3}{4}(\widetilde{\phi}^{2},\rho''_{\mathrm{e}}(\bar{\phi})\frac{d\bar{\phi}}{d\bar{\rho}}\bar{\rho}_{y}\bar{u}_{1})
		+\frac{3}{4}(\widetilde{\phi}^{2},\rho'_{\mathrm{e}}(\bar{\phi})\bar{u}_{1y})
		\\
		&&-\frac{1}{4}(\widetilde{\phi}^{3},\rho'''_{e}(\bar{\phi})\frac{d\bar{\phi}}{d\bar{\rho}}\bar{\rho}_{y}\bar{u}_{1})
		+\frac{1}{2}(\widetilde{\phi}^{3},\rho''_{\mathrm{e}}(\bar{\phi})\bar{u}_{1y})
		+\frac{3}{2}(\widetilde{\phi},J_{1y}\bar{u}_{1}),
	\end{eqnarray*}
	and
	\begin{equation*}
		J_{4}=-\frac{3}{2}(\widetilde{\phi}^{2},\rho'_{\mathrm{e}}(\bar{\phi})\bar{u}_{1y})
		-\frac{3}{4}(\widetilde{\phi}^{3},\rho''_{\mathrm{e}}(\bar{\phi})\bar{u}_{1y})
		+\frac{3}{2}(\widetilde{\phi},J_{1}\bar{u}_{1y}).
	\end{equation*}
	Summing up the above estimates, we obtain
	\begin{eqnarray*}
		&&J_{2}+J_{3}+J_{4}\\
		&&=\underbrace{\frac{3}{4}(\widetilde{\phi}^{2},\rho''_{\mathrm{e}}(\bar{\phi})\frac{d\bar{\phi}}{d\bar{\rho}}\bar{\rho}\bar{u}_{1y})
			-\frac{3}{4}(\widetilde{\phi}^{2},\rho'_{\mathrm{e}}(\bar{\phi})\bar{u}_{1y})}_{J_{5}}
		\notag\\
		&&\quad+\underbrace{\frac{3}{2}(\widetilde{\phi},J_{1y}\bar{u}_{1}\big)+\frac{3}{2}\big(\widetilde{\phi},J_{1}\bar{u}_{1y})
			-\frac{1}{4}(\widetilde{\phi}^{3},\rho'''_{\mathrm{e}}(\bar{\phi})\frac{d\bar{\phi}}{d\bar{\rho}}\bar{\rho}_{y}\bar{u}_{1})
			-\frac{1}{4}(\widetilde{\phi}^{3},\rho''_{\mathrm{e}}(\bar{\phi})\bar{u}_{1y})}_{J_{6}}.
	\end{eqnarray*}
	In view of the assumptions $(\mathcal{A}_{2})$, $(\mathcal{A}_{3})$, $\bar{\rho}=\rho_{\mathrm{e}}(\bar{\phi})$ and $\bar{u}_{1y}>0$, we have
	\begin{equation}
		\label{4.34b}
		J_{5}=\frac{3}{4}(\widetilde{\phi}^{2}\bar{u}_{1y},\frac{\rho''_{\mathrm{e}}(\bar{\phi})\rho_{\mathrm{e}}(\bar{\phi})
			-[\rho'_{\mathrm{e}}(\bar{\phi})]^{2}}{\rho'_{\mathrm{e}}(\bar{\phi})})\leq 0.
	\end{equation}
Note that the terms in $J_{6}$ can be handed similarly as \eqref{4.27} and \eqref{4.25}. It then follows that
	$$
	J_{6}\leq C\eta\varepsilon^{1-a}\|\widetilde{\phi}_{y}\|^{2}
	+C_{\eta}k^{\frac{1}{12}}\varepsilon^{\frac{3}{5}-\frac{2}{5}a}\mathcal{D}_{2,l,q}(\tau)
	+C_{\eta}\varepsilon^{\frac{7}{5}+\frac{1}{15}a}(\delta+\varepsilon^{a}\tau)^{-\frac{4}{3}}.
	$$
	Combining the estimates $I_{2}$ and $I_{3}$ gives rise to
	\begin{eqnarray*}
		I_{2}+I_{3}\leq&&
		-\frac{3}{4}\frac{d}{d\tau}\big(\widetilde{\phi}^{2},\rho'_{\mathrm{e}}(\bar{\phi})\big)
		-\frac{1}{2}\frac{d}{d\tau}\big(\widetilde{\phi}^{3},\rho''_{\mathrm{e}}(\bar{\phi})\big)
		+C(\eta+k)\varepsilon^{1-a}\|\widetilde{\phi}_{y}\|^2
		\notag\\
		&&+C_{\eta}k^{\frac{1}{12}}\varepsilon^{\frac{3}{5}-\frac{2}{5}a}\mathcal{D}_{2,l,q}(\tau)
		+C_{\eta}\varepsilon^{\frac{7}{5}+\frac{1}{15}a}(\delta+\varepsilon^{a}\tau)^{-\frac{4}{3}}.
	\end{eqnarray*}

Consequently, plugging this, \eqref{4.25aa} and \eqref{4.23} into \eqref{4.22},  the desired estimate \eqref{5.27A} follows.
This hence completes the proof of Lemma \ref{lem.5.2A}.
\end{proof}

For deducing \eqref{4.36} in the proof of Lemma \ref{lem.5.1A} above, the following estimate has been used.

\begin{lemma}\label{lem.5.3A}
It holds that
	\begin{eqnarray}
		\label{5.37A}
		-\varepsilon^{1-a}(\widetilde{\rho}_{y},\widetilde{\phi}_{y})
		\leq&& -\varepsilon^{1-a}\varepsilon^{2b-2a}\|\widetilde{\phi}_{yy}\|^{2}-c\varepsilon^{1-a}\|\widetilde{\phi}_{y}\|^{2}
		\notag\\
		&&+Ck^{\frac{1}{12}}\varepsilon^{\frac{3}{5}-\frac{2}{5}a}\mathcal{D}_{2,l,q}(\tau)
		+C\varepsilon^{\frac{7}{5}+\frac{1}{15}a}(\delta+\varepsilon^{a}\tau)^{-\frac{4}{3}}.
	\end{eqnarray}	
\end{lemma}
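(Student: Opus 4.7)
The plan is to use the Poisson equation, which is the last equation in \eqref{3.10},
\begin{equation*}
-\varepsilon^{2b-2a}\phi_{yy}=\widetilde{\rho}+\rho_{\mathrm{e}}(\bar{\phi})-\rho_{\mathrm{e}}(\phi),
\end{equation*}
to convert $\widetilde{\rho}_y$ into an expression involving only $\phi_{yyy}$ and the Taylor remainder of $\rho_{\mathrm{e}}(\bar{\phi})-\rho_{\mathrm{e}}(\phi)$. Solving for $\widetilde{\rho}_y$ and taking the inner product with $-\varepsilon^{1-a}\widetilde{\phi}_y$, I would split the result into two pieces:
\begin{equation*}
-\varepsilon^{1-a}(\widetilde{\rho}_{y},\widetilde{\phi}_{y})=\varepsilon^{1-a}\varepsilon^{2b-2a}(\phi_{yyy},\widetilde{\phi}_{y})+\varepsilon^{1-a}\bigl([\rho_{\mathrm{e}}(\bar{\phi})-\rho_{\mathrm{e}}(\phi)]_{y},\widetilde{\phi}_{y}\bigr).
\end{equation*}

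First, I would integrate by parts in the first term and decompose $\phi_{yy}=\widetilde{\phi}_{yy}+\bar{\phi}_{yy}$, producing the leading dissipation $-\varepsilon^{1-a}\varepsilon^{2b-2a}\|\widetilde{\phi}_{yy}\|^{2}$ together with a cross term $-\varepsilon^{1-a}\varepsilon^{2b-2a}(\bar{\phi}_{yy},\widetilde{\phi}_{yy})$. By Young's inequality this cross term is absorbed up to a small multiple of the dissipation plus a source of the form $\varepsilon^{1-a}\varepsilon^{2b-2a}\|\bar{\phi}_{yy}\|^{2}$; using Lemma \ref{lem7.2} and the choice $\delta=k^{-1}\varepsilon^{\frac{3}{5}-\frac{2}{5}a}$ in \eqref{3.21}, together with the constraint $a\geq 4-5b$ coming from \eqref{4.25a}, this source is absorbed into $C\varepsilon^{\frac{7}{5}+\frac{1}{15}a}(\delta+\varepsilon^{a}\tau)^{-\frac{4}{3}}$ exactly as in the estimate \eqref{4.23}.

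Second, for the Taylor piece, I would expand $[\rho_{\mathrm{e}}(\bar{\phi})-\rho_{\mathrm{e}}(\phi)]_{y}$ using \eqref{4.24}, isolating the principal linear contribution
\begin{equation*}
-\rho'_{\mathrm{e}}(\bar{\phi})\widetilde{\phi}_{y}-\rho''_{\mathrm{e}}(\bar{\phi})\bar{\phi}_{y}\widetilde{\phi}
\end{equation*}
and a quadratic-in-$\widetilde\phi$ remainder. Pairing the first term with $\widetilde{\phi}_{y}$ and using the assumption ($\mathcal{A}_{2}$) which gives $\rho'_{\mathrm{e}}(\bar\phi)>c>0$ (recall \eqref{4.45a} ensures $\bar\phi$ remains in the admissible range) produces the desired coercive term $-c\varepsilon^{1-a}\|\widetilde{\phi}_{y}\|^{2}$. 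The cross term $-\varepsilon^{1-a}(\rho''_{\mathrm{e}}(\bar{\phi})\bar{\phi}_{y}\widetilde{\phi},\widetilde{\phi}_{y})$ is handled by Cauchy-Schwarz, absorbing $\|\widetilde{\phi}_{y}\|^{2}$ with a small coefficient and controlling $\|\bar{\phi}_{y}\|_{L^{\infty}}^{2}\|\widetilde{\phi}\|^{2}$ via Lemma \ref{lem7.2}, the embedding $\|\widetilde{\phi}\|_{L_{y}^{\infty}}\lesssim \|\widetilde{\phi}\|^{\frac12}\|\widetilde{\phi}_y\|^{\frac12}$ applied as in \eqref{4.23} and \eqref{4.25}, and the a priori bound \eqref{3.22}. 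The quadratic remainder, being cubic in $\widetilde{\phi}$, is bounded in the same spirit as \eqref{4.25} and \eqref{4.27}.

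The main obstacle I anticipate is the $\bar{\phi}_{yy}$ cross term coming from the Poisson-rewrite step: it carries the factor $\varepsilon^{1-a}\varepsilon^{2b-2a}$ multiplied by a slowly-decaying rarefaction quantity, and closing it requires precisely the condition $a\geq 4-5b$ in \eqref{4.25a} so that $\varepsilon^{b}\leq \varepsilon^{\frac{4}{5}-\frac{1}{5}a}$, which is exactly why the admissible set $\mathcal{S}$ in \eqref{def.ab} takes the form it does. Once this sharp bookkeeping is executed, summing the two pieces and choosing $\eta$ and $k$ small relative to $c$ yields the claimed inequality \eqref{5.37A}.
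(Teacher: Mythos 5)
Your proposal is correct and follows essentially the same route as the paper: differentiate the Poisson equation in \eqref{3.10}, pair with $\widetilde{\phi}_{y}$, extract the exact dissipation from the $\widetilde{\phi}$-part, control the rarefaction-profile part via Young's inequality, Lemma \ref{lem7.2} and the constraint $a\geq 4-5b$ from \eqref{4.25a}, and obtain the coercive $-c\varepsilon^{1-a}\|\widetilde{\phi}_{y}\|^{2}$ from the Taylor expansion \eqref{4.34} together with $\rho'_{\mathrm{e}}(\bar{\phi})>c$, treating the quadratic remainder $J_{7y}$ as in \eqref{4.25}--\eqref{4.27}. The only immaterial deviation is that you absorb the cross term $(\bar{\phi}_{yy},\widetilde{\phi}_{yy})$ into $\eta\|\widetilde{\phi}_{yy}\|^{2}$, which leaves the dissipation coefficient $1-\eta$ rather than exactly $1$ as stated; pairing $\bar{\phi}_{yyy}$ with $\widetilde{\phi}_{y}$ directly and absorbing into $\eta\|\widetilde{\phi}_{y}\|^{2}$, as the paper does, gives the stated constant verbatim.
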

\begin{proof}
Differentiating the last equation of \eqref{3.10} with respect to $y$ and taking the inner product of the resulting equations with $\widetilde{\phi}_{y}$, we have
	\begin{equation*}
		-(\widetilde{\rho}_{y},\widetilde{\phi}_{y})=\varepsilon^{2b-2a}\big[(\widetilde{\phi}_{yyy},\widetilde{\phi}_{y})
		+(\bar{\phi}_{yyy},\widetilde{\phi}_{y})\big]
		+([\rho_{\mathrm{e}}(\bar{\phi})-\rho_{\mathrm{e}}(\phi)]_{y},\widetilde{\phi}_{y}).
	\end{equation*}
	Performing the similar calculations as \eqref{4.36a}, we obtain
	\begin{equation*}
		\varepsilon^{1-a}\varepsilon^{2b-2a}|(\bar{\phi}_{yyy},\widetilde{\phi}_{y})|
		\leq \eta\varepsilon^{1-a}\|\widetilde{\phi}_{y}\|^{2}
		+C_{\eta}\varepsilon^{\frac{7}{5}+\frac{1}{15}a}(\delta+\varepsilon^{a}\tau)^{-\frac{4}{3}}.
	\end{equation*}
	Using the Taylor's formula with an integral remainder, we obtain
	\begin{equation}
		\label{4.34}
		\rho_{\mathrm{e}}(\bar{\phi})-\rho_{\mathrm{e}}(\phi)=-\rho'_{\mathrm{e}}(\bar{\phi})\widetilde{\phi}
		\underbrace{-\int^{\phi}_{\bar{\phi}}(\varrho-\phi)\rho''_{\mathrm{e}}(\varrho)\,d\varrho}_{J_{7}},
	\end{equation}
which yields immediately that
	\begin{equation}
		\label{4.35}
		J_{7}\sim\widetilde{\phi}^{2},
		\quad J_{7y}=\phi_{y}\int^{\phi}_{\bar{\phi}}\rho''_{\mathrm{e}}(\varrho)\,d\varrho
		-\bar{\phi}_{y}\widetilde{\phi}\rho''_{\mathrm{e}}(\bar{\phi})\sim \phi_{y}\widetilde{\phi}
		-\bar{\phi}_{y}\widetilde{\phi}.
	\end{equation}
In view of \eqref{4.35}, Lemma \ref{lem7.2}, \eqref{3.21} and \eqref{3.22}, we get
	\begin{eqnarray*}
		&&\varepsilon^{1-a}(|(\rho''_{\mathrm{e}}(\bar{\phi})\bar{\phi}_{y}\widetilde{\phi},\widetilde{\phi}_{y})|+|(J_{7y},\widetilde{\phi}_{y})|)
		\notag\\
		&&\leq C\varepsilon^{1-a}(\|\bar{\phi}_{y}\|_{L_{y}^{\infty}}\|\widetilde{\phi}\|\|\widetilde{\phi}_{y}\|
		+\|\widetilde{\phi}\|_{L_{y}^{\infty}}\|\widetilde{\phi}_{y}\|^{2})
		\notag\\
		&&\leq Ck^{\frac{1}{12}}\varepsilon^{\frac{3}{5}-\frac{2}{5}a}\mathcal{D}_{2,l,q}(\tau)
		+C\varepsilon^{\frac{7}{5}+\frac{1}{15}a}(\delta+\varepsilon^{a}\tau)^{-\frac{4}{3}}.
	\end{eqnarray*}
Similarly, the following estimate holds
	\begin{eqnarray*}
		&&\varepsilon^{1-a}\big([\rho_{\mathrm{e}}(\bar{\phi})-\rho_{\mathrm{e}}(\phi)]_{y},\widetilde{\phi}_{y}\big)
		\notag\\
		&&=-\varepsilon^{1-a}\big[(\rho'_{\mathrm{e}}(\bar{\phi})\widetilde{\phi}_{y},\widetilde{\phi}_{y})
		+(\rho''_{\mathrm{e}}(\bar{\phi})\bar{\phi}_{y}\widetilde{\phi},\widetilde{\phi}_{y})-(J_{7y},\widetilde{\phi}_{y})\big]
		\notag\\
		&&\leq-\varepsilon^{1-a}(\rho'_{\mathrm{e}}(\bar{\phi})\widetilde{\phi}_{y},\widetilde{\phi}_{y})
		+Ck^{\frac{1}{12}}\varepsilon^{\frac{3}{5}-\frac{2}{5}a}\mathcal{D}_{2,l,q}(\tau)
		+C\varepsilon^{\frac{7}{5}+\frac{1}{15}a}(\delta+\varepsilon^{a}\tau)^{-\frac{4}{3}}.
	\end{eqnarray*}
Therefore,	collecting all the above estimates and the fact that $\rho'_{\mathrm{e}}(\bar{\phi})>c$, we obtain \eqref{5.37A} 
by taking $\eta>0$ small enough. This hence completes the proof of Lemma \ref{lem.5.3A}.
\end{proof}

\subsection{Zeroth order estimates on non-fluid part}
In what follows we are devoted to completing the zeroth order energy estimates for the microscopic component $f$
by using the properties of the linearized operator.
\begin{lemma}\label{lem.5.4A}
	Under the conditions listed in Lemma \ref{lem.5.1A}, it holds that
	\begin{eqnarray}
		\label{4.42}
		&&\frac{1}{2}\frac{d}{d\tau}\|e^{\frac{\phi}{2}}f\|^{2}+c\varepsilon^{a-1}\|f\|_{\sigma}^{2}\notag\\
		\leq&& C\varepsilon^{1-a}(\|(\widetilde{u}_{y},\widetilde{\theta}_{y})\|^{2}+\|f_{y}\|_{\sigma}^{2})
		+C(\eta_{0}+k^{\frac{1}{12}}\varepsilon^{\frac{3}{5}-\frac{2}{5}a})\mathcal{D}_{2,l,q}(\tau)
		\notag\\
		&&+C\varepsilon^{\frac{7}{5}+\frac{1}{15}a}(\delta+\varepsilon^{a}\tau)^{-\frac{4}{3}}
		+Cq_3(\tau)\mathcal{H}_{2,l,q}(\tau),
	\end{eqnarray}
	where the function $\mathcal{H}_{2,l,q}(\tau)$ is defined by \eqref{4.60}.
\end{lemma}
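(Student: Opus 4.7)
The plan is to test the equation \eqref{3.7} for $f$ with $e^{\phi}f$ and exploit the cancellation built into the weight $e^{\phi/2}$. Writing \eqref{3.7} schematically as
\begin{equation*}
f_\tau+v_1 f_y+\tfrac{v_1}{2}\phi_y f-\phi_y\partial_{v_1}f-\varepsilon^{a-1}\mathcal{L}f=\mathcal{R},
\end{equation*}
with $\mathcal{R}$ collecting the right-hand side of \eqref{3.7}, the inner product of the transport terms with $e^{\phi}f$ gives
\begin{equation*}
(f_\tau,e^{\phi}f)=\tfrac{1}{2}\tfrac{d}{d\tau}\|e^{\phi/2}f\|^2-\tfrac{1}{2}(\phi_\tau e^{\phi},f^2),
\end{equation*}
while integration by parts in $y$ and in $v_1$ yields
\begin{equation*}
(v_1 f_y,e^{\phi}f)=-\tfrac{1}{2}(v_1\phi_y e^{\phi},f^2),\qquad(\tfrac{v_1}{2}\phi_y f,e^{\phi}f)=\tfrac{1}{2}(v_1\phi_y e^{\phi},f^2),
\end{equation*}
so the two $\phi_y$-contributions cancel exactly, and the nonlocal velocity drift $-\phi_y\partial_{v_1}f$ contributes zero after integration by parts since $e^{\phi}$ is $v$-independent. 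This is precisely the purpose of the weight $e^{\phi/2}$.

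Next I would apply Lemma \ref{lem7.5} (with $|\alpha|=|\beta|=0$ and weight set equal to $1$, using $e^{\phi}\approx 1$ from \eqref{4.45a}) together with the fact that $f$ is purely microscopic, to obtain
\begin{equation*}
-\varepsilon^{a-1}(\mathcal{L}f,e^{\phi}f)\geq c\,\varepsilon^{a-1}\|f\|_{\sigma}^{2}.
\end{equation*}
For the nonlinear collision terms I invoke Lemmas \ref{lem7.7} and \ref{lem7.8} (in the $|\alpha|=|\beta|=0$ case) to bound the contributions of $\Gamma(\frac{M-\mu}{\sqrt{\mu}},f)$, $\Gamma(f,\frac{M-\mu}{\sqrt{\mu}})$ and $\Gamma(\frac{G}{\sqrt{\mu}},\frac{G}{\sqrt{\mu}})$ by $\eta\varepsilon^{a-1}\|f\|_\sigma^2+C_\eta(\eta_0+k^{1/12}\varepsilon^{3/5-2a/5})\mathcal{D}_{2,l,q}(\tau)+C_\eta\varepsilon^{7/5+a/15}(\delta+\varepsilon^a\tau)^{-4/3}$. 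The linear macroscopic source $\frac{1}{\sqrt{\mu}}P_0(v_1\sqrt{\mu}f_y)$ and the inhomogeneous term $-\frac{1}{\sqrt{\mu}}P_1\{v_1 M(\cdots\widetilde{\theta}_y+\cdots\widetilde{u}_y)\}$ are estimated after Cauchy--Schwarz by $C\varepsilon^{1-a}\|f_y\|_\sigma^2$ and $C\varepsilon^{1-a}\|(\widetilde{u}_y,\widetilde{\theta}_y)\|^2$ plus absorbable terms, while the three $\overline{G}$ corrections $\frac{\phi_y\partial_{v_1}\overline{G}}{\sqrt{\mu}}$, $-\frac{P_1(v_1\overline{G}_y)}{\sqrt{\mu}}$, $-\frac{\overline{G}_\tau}{\sqrt{\mu}}$ are handled exactly as in the computation leading to \eqref{L} using \eqref{7.24}, \eqref{7.25} and Lemma \ref{lem7.2}, producing $C\varepsilon^{7/5+a/15}(\delta+\varepsilon^a\tau)^{-4/3}+Ck^{1/12}\varepsilon^{3/5-2a/5}\mathcal{D}_{2,l,q}(\tau)$.

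The main obstacle is the leftover term $-\tfrac{1}{2}(\phi_\tau e^{\phi},f^2)$, which cannot be controlled by the normal quadratic dissipation $\varepsilon^{a-1}\|f\|_\sigma^2$ alone because no time-decay of $\phi$ is available. This is exactly the situation motivating \eqref{2.8}--\eqref{2.9}: I would split $f^2=\langle v\rangle^{-1}\cdot\langle v\rangle f^2$ and apply Cauchy--Schwarz as in \eqref{2.9} to obtain
\begin{equation*}
|(\phi_\tau e^{\phi},f^2)|\leq\eta\varepsilon^{a-1}\|\langle v\rangle^{-1/2}f\|_2^2+C_\eta\varepsilon^{1-a}\|\phi_\tau\|_{L^\infty_y}^2\|\langle v\rangle^{1/2}f\|_2^2,
\end{equation*}
where the first term is absorbed by $\varepsilon^{a-1}\|f\|_\sigma^2$ via \eqref{3.16}, and the second term is, by definition \eqref{3.14} of $q_3(\tau)$ and \eqref{4.60} of $\mathcal{H}_{2,l,q}$, bounded by $Cq_3(\tau)\mathcal{H}_{2,l,q}(\tau)$; this is precisely the channel through which the quartic dissipation rate enters the zeroth-order microscopic estimate. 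Choosing $\eta$ small and collecting all the estimates yields \eqref{4.42}.
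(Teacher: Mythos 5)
Your proposal is correct and follows essentially the same route as the paper's proof: the exact cancellation of the $\phi_y$ transport terms through the weight $e^{\phi/2}$, the coercivity of $\mathcal{L}$ on the purely microscopic $f$ with $e^{\phi}\approx 1$, the estimates \eqref{7.10}, \eqref{7.19}, \eqref{L} for the collision and $\overline{G}$ terms, and, crucially, the treatment of the leftover $-\tfrac12(\phi_\tau e^{\phi},f^2)$ exactly as in \eqref{4.41}, absorbing one piece into $\varepsilon^{a-1}\|f\|_\sigma^2$ via \eqref{3.16} and bounding the other by $Cq_3(\tau)\mathcal{H}_{2,l,q}(\tau)$. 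No gaps; this matches the paper's argument for \eqref{4.42}.
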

\begin{proof}
	By taking the inner product of equation \eqref{3.7} with $e^{\phi} f$ over $\mathbb{R}\times{\mathbb R}^3$, one gets
	\begin{eqnarray}
		\label{4.40}
		&&\frac{1}{2}\frac{d}{d\tau}(f,e^{\phi} f)-\frac{1}{2}(f,e^{\phi}\phi_{\tau}f)+(v_{1}f_{y}+\frac{v_{1}}{2}\phi_{y} f,e^{\phi} f)
		-\varepsilon^{a-1}(\mathcal{L}f,e^{\phi} f)
		\notag\\
		&&=\varepsilon^{a-1}(\Gamma(f,\frac{M-\mu}{\sqrt{\mu}})+
		\Gamma(\frac{M-\mu}{\sqrt{\mu}},f),e^{\phi} f)+\varepsilon^{a-1}(\Gamma(\frac{G}{\sqrt{\mu}},\frac{G}{\sqrt{\mu}}),e^{\phi} f)
		\notag\\
		&&\quad+(\frac{P_{0}(v_{1}\sqrt{\mu}f_{y})}{\sqrt{\mu}},e^{\phi} f)-(\frac{1}{\sqrt{\mu}}P_{1}\{v_{1}M(\frac{|v-u|^{2}
			\widetilde{\theta}_{y}}{2R\theta^{2}}+\frac{(v-u)\cdot\widetilde{u}_{y}}{R\theta})\},e^{\phi} f)
		\notag\\
		&&\quad \quad\quad+(\frac{\phi_{y}\partial_{v_{1}}\overline{G}}{\sqrt{\mu}},e^{\phi} f)-(\frac{P_{1}(v_{1}\overline{G}_{y})}{\sqrt{\mu}},e^{\phi} f)-(\frac{\overline{G}_{\tau}}{\sqrt{\mu}},e^{\phi} f).
	\end{eqnarray}
	Here we have used the fact that
	$$
	-(\phi_{y}\partial_{v_{1}}f,e^{\phi} f)=-\frac{1}{2}(\phi_{y}e^{\phi},\partial_{v_{1}}[f^2])=0.
	$$
	
	We estimate each term in \eqref{4.40}. The second term in \eqref{4.40} is the main difficulty. We can use \eqref{4.45a},
	\eqref{3.16}, \eqref{3.14} and \eqref{4.60} to bound
	\begin{eqnarray}
		\label{4.41}
		|(f,e^{\phi}\phi_{\tau}f)|
		&&\leq \eta\varepsilon^{a-1}\|\langle v\rangle^{-\frac{1}{2}}f\|^2
		+C_{\eta}\varepsilon^{1-a}\|\phi_\tau\|_{L^\infty_y}^2\|\langle v\rangle^{\frac{1}{2}}f\|^2
		\notag\\
		&&\leq C\eta\varepsilon^{a-1}\|f\|_{\sigma}^{2}+C_{\eta}q_3(\tau)\|\langle v\rangle^{\frac{1}{2}}f\|^{2}
		\notag\\
		&&\leq C\eta\varepsilon^{a-1}\|f\|_{\sigma}^{2}+C_{\eta}q_3(\tau)\mathcal{H}_{2,l,q}(\tau).
	\end{eqnarray}
	For the third term on the left hand side of \eqref{4.40}, we observe that
	\begin{equation*}
		(v_{1}f_{y}+\frac{v_{1}}{2}\phi_{y} f,e^{\phi} f)=(v_{1}[e^{\frac{\phi}{2}}f]_{y},e^{\frac{\phi}{2}} f)=0,
	\end{equation*}
while for the last term on the left hand side of \eqref{4.40}, combining \eqref{3.9} and \eqref{4.45a} gives
	$$
	-\varepsilon^{a-1}(\mathcal{L} f,e^{\phi}f)\geq c\varepsilon^{a-1}\| f\|^2_{\sigma}.
	$$
	Next, we go to bound the right hand side of  \eqref{4.40}. It follows from \eqref{7.10} and \eqref{7.19}  that
	\begin{eqnarray*}
		&&\varepsilon^{a-1}|(\Gamma(f,\frac{M-\mu}{\sqrt{\mu}})+
		\Gamma(\frac{M-\mu}{\sqrt{\mu}},f),e^{\phi} f)|+\varepsilon^{a-1}|(\Gamma(\frac{G}{\sqrt{\mu}},\frac{G}{\sqrt{\mu}}),e^{\phi} f)|
		\notag\\
		&&\leq C\eta\varepsilon^{a-1}\|f\|^{2}_{\sigma}
		+C_{\eta}(\eta_{0}+k^{\frac{1}{12}}\varepsilon^{\frac{3}{5}-\frac{2}{5}a})\mathcal{D}_{2,l,q}(\tau)
		+C_{\eta}\varepsilon^{\frac{7}{5}+\frac{1}{15}a}(\delta+\varepsilon^{a}\tau)^{-\frac{4}{3}}.
	\end{eqnarray*}
	By employing \eqref{7.25aa}, \eqref{1.10}, \eqref{4.45a} and \eqref{3.16}, one gets
	\begin{eqnarray*}
		&&|(\frac{P_{0}(v_{1}\sqrt{\mu}f_{y})}{\sqrt{\mu}},e^{\phi} f)|\\
		&&
		=|\sum_{i=0}^{4}(\mu^{-\frac{1}{2}}\langle v_{1}\sqrt{\mu}f_{y},\frac{\chi_{i}}{M}\rangle\chi_{i},e^{\phi}f)|
		\notag\\
		&& \leq \eta\varepsilon^{a-1}\|\langle v\rangle^{-\frac{1}{2}}f\|^2+C_{\eta}\varepsilon^{1-a}\sum_{i=0}^{4}\|\langle v\rangle^{\frac{1}{2}}\mu^{-\frac{1}{2}}\langle v_{1}\sqrt{\mu}f_{y},\frac{\chi_{i}}{M}\rangle\chi_{i}\|^2
		\notag\\
		&&\leq C\eta\varepsilon^{a-1}\|f\|_{\sigma}^{2}+C_{\eta}\varepsilon^{1-a}\|f_{y}\|_{\sigma}^{2},
	\end{eqnarray*}
	and
	\begin{multline*}
		|(\frac{1}{\sqrt{\mu}}P_{1}\{v_{1}M(\frac{|v-u|^{2}
			\widetilde{\theta}_{y}}{2R\theta^{2}}+\frac{(v-u)\cdot\widetilde{u}_{y}}{R\theta})\},e^{\phi}f)|\\
		\leq C\eta\varepsilon^{a-1}\|f\|_{\sigma}^{2}+C_{\eta}\varepsilon^{1-a}\|(\widetilde{u}_{y},\widetilde{\theta}_{y})\|^{2}.
	\end{multline*}
In view of \eqref{4.45a}, \eqref{3.16} and \eqref{L}, it is straightforward to get
	\begin{eqnarray*}
		|(\frac{\overline{G}_{\tau}}{\sqrt{\mu}},e^{\phi}f)|
		&&\leq 	\eta\varepsilon^{a-1}\|\langle v\rangle^{-\frac{1}{2}}f\|^2
		+C_{\eta}\varepsilon^{1-a}\|\langle v\rangle^{\frac{1}{2}}\frac{\overline{G}_{\tau}}{\sqrt{\mu}}\|^2
		\notag\\
		&&\leq  C\eta\varepsilon^{a-1}\|f\|_{\sigma}^{2}
		+C_{\eta}k^{\frac{1}{12}}\varepsilon^{\frac{3}{5}-\frac{2}{5}a}\mathcal{D}_{2,l,q}(\tau)
		+C_\eta\varepsilon^{\frac{7}{5}+\frac{1}{15}a}(\delta+\varepsilon^{a}\tau)^{-\frac{4}{3}}.
	\end{eqnarray*}
	Similarly, 
	\begin{eqnarray*}
		&&|(\frac{P_{1}(v_{1}\overline{G}_{y})}{\sqrt{\mu}},e^{\phi}f)|+
		|(\frac{\phi_{y}\partial_{v_{1}}\overline{G}}{\sqrt{\mu}},e^{\phi} f)|
		\notag\\
		&&\leq  C\eta\varepsilon^{a-1}\|f\|_{\sigma}^{2}
		+C_{\eta}k^{\frac{1}{12}}\varepsilon^{\frac{3}{5}-\frac{2}{5}a}\mathcal{D}_{2,l,q}(\tau)
		+C_\eta\varepsilon^{\frac{7}{5}+\frac{1}{15}a}(\delta+\varepsilon^{a}\tau)^{-\frac{4}{3}}.
	\end{eqnarray*}
Consequently, substituting the above  estimates into \eqref{4.40} and taking a small $\eta>0$, the estimate \eqref{4.42} follows.
	We thus finish the proof of Lemma \ref{lem.5.4A}.
\end{proof}

Finally, combining Lemma \ref{lem.5.4A} together with Lemma \ref{lem.5.1A}, we immediately have the following
estimates on the zeroth order energy norm for both the fluid and non-fluid parts.
\begin{lemma}\label{lem4.2}
	It holds that
	\begin{eqnarray}
		\label{4.44}
		&&\{\|(\widetilde{\rho},\widetilde{u},\widetilde{\theta})(\tau)\|^{2}+\|f(\tau)\|^{2}\}
		+\{\|\widetilde{\phi}(\tau)\|^{2}+\varepsilon^{2b-2a}\|\widetilde{\phi}_{y}(\tau)\|^{2}\}
		\notag\\
		&&\hspace{0.5cm}+\int^{\tau}_{0}\|\sqrt{\bar{u}_{1y}}(\widetilde{\rho},\widetilde{u}_{1},\widetilde{\theta})(s)\|^{2}\,ds
		+\varepsilon^{a-1}\int^{\tau}_{0}\|f(s)\|_{\sigma}^{2}\,ds
		\notag\\
		&&\hspace{1cm}+\varepsilon^{1-a}\int^{\tau}_{0}\big\{\|(\widetilde{\rho}_y,\widetilde{u}_y,\widetilde{\theta}_y)(s)\|^{2}
		+\|\widetilde{\phi}_{y}(s)\|^{2}
		+\varepsilon^{2b-2a}\|\widetilde{\phi}_{yy}(s)\|^{2}\big\}\,ds
		\notag\\
		&&\leq C\varepsilon^{2(1-a)}\|(\widetilde{\rho}_{y},\widetilde{u}_y,\widetilde{\theta}_y)(\tau)\|^{2}
		+Ck^{\frac{1}{3}}\varepsilon^{\frac{6}{5}-\frac{4}{5}a}
		+C\varepsilon^{1-a}\int^{\tau}_{0}\|f_y(s)\|^{2}_{\sigma}\,ds
		\notag\\
		&&\quad+C(\eta_{0}+k^{\frac{1}{12}}\varepsilon^{\frac{3}{5}-\frac{2}{5}a})\int^{\tau}_{0}\mathcal{D}_{2,l,q}(s)\,ds\notag\\
		&&\quad+C\int^{\tau}_{0}q_{3}(s)
		\mathcal{H}_{2,l,q}(s)\,ds.
	\end{eqnarray}
\end{lemma}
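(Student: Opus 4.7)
The plan is to produce \eqref{4.44} as a weighted combination of the macroscopic estimate \eqref{4.39} from Lemma \ref{lem.5.1A} and the microscopic estimate \eqref{4.42} from Lemma \ref{lem.5.4A}, followed by time integration. First I would form $A_0\cdot\eqref{4.39}+\eqref{4.42}$ with $A_0>0$ chosen large enough that the left-hand dissipation $cA_0\varepsilon^{1-a}\|(\widetilde{u}_y,\widetilde{\theta}_y)\|^2$ absorbs the unsigned term $C\varepsilon^{1-a}\|(\widetilde{u}_y,\widetilde{\theta}_y)\|^2$ appearing on the right of \eqref{4.42}. Integrating from $0$ to $\tau$ will then produce an inequality whose left-hand dissipation (in $f$, $\widetilde{\rho}_y,\widetilde{u}_y,\widetilde{\theta}_y,\widetilde{\phi}_y,\widetilde{\phi}_{yy}$, and $\sqrt{\bar u_{1y}}(\widetilde{\rho},\widetilde{u}_1,\widetilde{\theta})$) matches the left of \eqref{4.44}.

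Next I would process the boundary quantities produced by the time antiderivative. The positive pieces are routine: $\int_\mathbb{R}\eta(\tau,y)\,dy\sim\|(\widetilde{\rho},\widetilde{u},\widetilde{\theta})\|^2$ by \eqref{4.5}; $\tfrac{3}{4}(\widetilde{\phi}^2,\rho'_\mathrm{e}(\bar{\phi}))\sim\|\widetilde{\phi}\|^2$ since $\rho'_\mathrm{e}(\bar{\phi})$ is bounded above and away from zero under $(\mathcal{A}_2)$; $\tfrac{3}{4}\varepsilon^{2b-2a}\|\widetilde{\phi}_y\|^2$ is already of target form; and $\tfrac{1}{2}\|e^{\phi/2}f\|^2\approx\|f\|^2$ via \eqref{4.45a}. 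The cubic remainder $\tfrac{1}{2}(\widetilde{\phi}^3,\rho''_\mathrm{e}(\bar{\phi}))$ is bounded by $\|\widetilde{\phi}\|_{L_y^\infty}\|\widetilde{\phi}\|^2$, a small perturbation of $\|\widetilde{\phi}\|^2$ via the 1D Sobolev embedding together with \eqref{3.22}. The two indefinite boundary quantities $-N_1(\tau)$ and $\kappa_3\varepsilon^{1-a}(\widetilde{u}_1,\widetilde{\rho}_y)$ must be split off by Cauchy-Schwarz. Using the Burnett velocity decay \eqref{7.4} and the uniform bound \eqref{4.14} to control the velocity integrals in \eqref{5.23b}, I would write
$$|N_1(\tau)|\leq \eta\|f(\tau)\|^2+C_\eta\varepsilon^{2(1-a)}\|(\widetilde{u}_y,\widetilde{\theta}_y)(\tau)\|^2,$$
$$|\kappa_3\varepsilon^{1-a}(\widetilde{u}_1,\widetilde{\rho}_y)|\leq \tfrac{\kappa_3}{2}\|\widetilde{u}_1(\tau)\|^2+\tfrac{\kappa_3}{2}\varepsilon^{2(1-a)}\|\widetilde{\rho}_y(\tau)\|^2.$$
With $\eta$ and $\kappa_3$ small, the $\eta\|f\|^2$ and $\tfrac{\kappa_3}{2}\|\widetilde{u}_1\|^2$ pieces are absorbed into the positive boundary terms on the left, while the $\varepsilon^{2(1-a)}$-weighted pieces yield exactly the allowance $C\varepsilon^{2(1-a)}\|(\widetilde{\rho}_y,\widetilde{u}_y,\widetilde{\theta}_y)(\tau)\|^2$ on the right of \eqref{4.44}.

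Finally, for the source terms: the initial data hypothesis \eqref{3.20} gives $\mathcal{E}_{2,l,q}(0)\leq k^{1/3}\varepsilon^{2/3}$, and since the constraint $a\geq\tfrac{2}{3}$ in \eqref{3.2a} forces $\tfrac{6}{5}-\tfrac{4}{5}a\leq\tfrac{2}{3}$, this is dominated by $Ck^{1/3}\varepsilon^{6/5-4a/5}$. The slow-decay source on the right of \eqref{4.39} and \eqref{4.42} integrates as
$$\int_0^\tau\varepsilon^{\frac{7}{5}+\frac{a}{15}}(\delta+\varepsilon^a s)^{-\frac{4}{3}}\,ds\leq C\varepsilon^{\frac{7}{5}-\frac{14a}{15}}\delta^{-\frac{1}{3}}=Ck^{\frac{1}{3}}\varepsilon^{\frac{6}{5}-\frac{4}{5}a}$$
after using $\delta=k^{-1}\varepsilon^{3/5-2a/5}$ from \eqref{3.21}. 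The remaining error terms involving $\mathcal{D}_{2,l,q}$ and $q_3\mathcal{H}_{2,l,q}$ appear directly on the right of \eqref{4.44}. The main obstacle is the clean absorption of $N_1(\tau)$: it carries no sign, encodes the fluid--non-fluid coupling arising from $L_M^{-1}\Theta$, and its cost $C\varepsilon^{2(1-a)}\|(\widetilde{\rho}_y,\widetilde{u}_y,\widetilde{\theta}_y)(\tau)\|^2$ is not closable at the zeroth-order level. This excess is precisely what the higher-order estimates of Sections \ref{sec.6}--\ref{sec.7} must reabsorb, exploiting the design of $\mathcal{E}_{2,l,q}$ in \eqref{3.17} whose highest-order derivative pieces carry exactly the same $\varepsilon^{2(1-a)}$ weight.
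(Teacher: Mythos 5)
Your proposal follows essentially the same route as the paper's proof: add a large multiple of \eqref{4.39} to \eqref{4.42}, integrate in $\tau$, use $\eta(\tau,y)\approx|(\widetilde{\rho},\widetilde{u},\widetilde{\theta})|^{2}$ from \eqref{4.5}, split the indefinite boundary terms $N_1(\tau)$ and $\kappa_{3}\varepsilon^{1-a}(\widetilde{u}_{1},\widetilde{\rho}_{y})$ by Cauchy--Schwarz into absorbable pieces plus the $\varepsilon^{2(1-a)}$-weighted allowance, and bound the initial data via \eqref{3.20} and the time-integrated slow-decay source by $Ck^{\frac{1}{3}}\varepsilon^{\frac{6}{5}-\frac{4}{5}a}$ using \eqref{3.21}. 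The one imprecision is your bound on $N_1$: since \eqref{5.23b} contains the full derivatives $\theta_{y}$ and $u_{1y}$ (hence $\bar{\theta}_{y}$, $\bar{u}_{1y}$, not only $\widetilde{\theta}_{y}$, $\widetilde{u}_{1y}$), the correct estimate is $N_1(\tau)\leq \eta\|f\|^{2}+C_{\eta}\varepsilon^{2-2a}\|(\widetilde{u}_{y},\widetilde{\theta}_{y})\|^{2}+C_{\eta}k^{\frac{1}{3}}\varepsilon^{\frac{6}{5}-\frac{4}{5}a}$ as in the paper, the extra constant coming from the cross terms $\widetilde{\theta}\bar{\theta}_{y}$, $\widetilde{\theta}\bar{u}_{1y}$ controlled via \eqref{3.22} and Lemma \ref{lem7.2}; this is harmless, since the missing piece is exactly of the size already allowed on the right of \eqref{4.44}.
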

\begin{proof}
	By taking suitably large constant $C_2>0$ and then adding $\eqref{4.39}\times C_2$ to \eqref{4.42}, we get  
	\begin{eqnarray}
		\label{4.43}
		&&\frac{d}{d\tau}\Big\{C_2\big\{\int_{\mathbb{R}}\eta(\tau,y) \,dy
		+\frac{3}{4}\big(\widetilde{\phi}^{2},\rho'_{\mathrm{e}}(\bar{\phi})\big)
		+\frac{1}{2}\big(\widetilde{\phi}^{3},\rho''_{\mathrm{e}}(\bar{\phi})\big)
		+\frac{3}{4}\varepsilon^{2b-2a}\|\widetilde{\phi}_{y}\|^{2}
		\notag\\
		&&\hspace{0.5cm}-N_1(\tau)
		+\kappa_{3}\varepsilon^{1-a}(\widetilde{u}_{1},\widetilde{\rho}_{y})\big\}
		+\frac{1}{2}(f,e^{\phi} f)\Big\}+c\|\sqrt{\bar{u}_{1y}}(\widetilde{\rho},\widetilde{u}_{1},\widetilde{\theta})\|^{2}
		\notag\\
		&&\hspace{0.5cm}+c\varepsilon^{1-a}\big\{\|(\widetilde{\rho}_y,\widetilde{u}_y,\widetilde{\theta}_y)\|^{2}+\|\widetilde{\phi}_{y}\|^{2}
		+\varepsilon^{2b-2a}\|\widetilde{\phi}_{yy}\|^{2}\big\}+c\varepsilon^{a-1}\|f\|_{\sigma}^{2}
		\notag\\
		&&\leq  C\varepsilon^{1-a}\|f_y\|^{2}_{\sigma}
		+C(\eta_{0}+k^{\frac{1}{12}}\varepsilon^{\frac{3}{5}-\frac{2}{5}a})\mathcal{D}_{2,l,q}(\tau)
		+C\varepsilon^{\frac{7}{5}+\frac{1}{15}a}(\delta+\varepsilon^{a}\tau)^{-\frac{4}{3}}\notag\\
		&&\quad+Cq_3(\tau)\mathcal{H}_{2,l,q}(\tau).
	\end{eqnarray}
	Integrating \eqref{4.43} with respect to $\tau$ and using $\eta(\tau,y)\approx|(\widetilde{\rho},\widetilde{u},\widetilde{\theta})|^{2}$ in \eqref{4.5}
	and the fact that
	\begin{equation*}
		N_1(\tau)\leq \eta\|f\|^{2}+C_\eta\varepsilon^{2-2a}\|(\widetilde{u}_y,\widetilde{\theta}_y)\|^{2}
		+C_\eta k^{\frac{1}{3}}\varepsilon^{\frac{6}{5}-\frac{4}{5}a},
	\end{equation*}
	in terms of \eqref{5.23b}, as well as
	\begin{equation}
		\label{5.50A}
		\mathcal{E}_{2,l,q}(\tau)\mid_{\tau=0}\,\leq\, k^{\frac{1}{3}}\varepsilon^{\frac{2}{3}}
		\leq Ck^{\frac{1}{3}}\varepsilon^{\frac{6}{5}-\frac{4}{5}a},
	\end{equation}
	we can prove \eqref{4.44} holds by choosing $\eta>0$ small enough. Hence, Lemma \ref{lem4.2} is proved.
\end{proof}

\section{High order energy estimates}\label{sec.6}
In this section we consider the high order energy estimates
for both the fluid and non-fluid parts.

\subsection{First order derivative estimates on fluid part}
As in Section \ref{sec4.1}, the proof on the first order derivative estimates
of the fluid part $(\widetilde{\rho},\widetilde{u},\widetilde{\theta},\widetilde{\phi})$ is based on the fluid-type systems \eqref{3.10} and \eqref{4.31}.
\begin{lemma}\label{lem6.1}
	Under the conditions listed in Lemma \ref{lem.5.1A}, it holds that
	\begin{align}
		\label{4.57}
		&\frac{d}{d\tau}\Big(\int_{\mathbb{R}}\big(\frac{2\bar{\theta}}{3\bar{\rho}\rho}|\widetilde{\rho}_y|^{2}
		+|\widetilde{u}_y|^{2}+\frac{1}{\theta}|\widetilde{\theta}_y|^{2}\big)\,dy
		+\varepsilon^{2b-2a}\|\frac{1}{\sqrt{\rho}}\widetilde{\phi}_{yy}\|^{2}\notag\\
		&\qquad\qquad\qquad\qquad\qquad\qquad+\big(\frac{1}{\rho}\widetilde{\phi}_{y},\rho'_{\mathrm{e}}(\bar{\phi})\widetilde{\phi}_{y}\big)
		+\kappa_6\varepsilon^{1-a}(\widetilde{u}_{1y},\widetilde{\rho}_{yy})\Big\}
		\notag\\
		&\quad-\frac{d}{d\tau}N_2(\tau)+c\varepsilon^{1-a}\big\{\|(\widetilde{\rho}_{yy},\widetilde{u}_{yy},\widetilde{\theta}_{yy})\|^{2}
		+ (\|\widetilde{\phi}_{yy}\|^{2}+\varepsilon^{2b-2a}\|\widetilde{\phi}_{yyy}\|^{2})\big\}
		\notag\\
		&\leq C\varepsilon^{1-a}\|f_{yy}\|^{2}_{\sigma}
		+C(k^{\frac{1}{12}}\varepsilon^{\frac{3}{5}-\frac{2}{5}a}+k^{\frac{1}{12}}\varepsilon^{\frac{3}{5}a-\frac{2}{5}})\mathcal{D}_{2,l,q}(\tau)\notag\\
		&\quad+C\varepsilon^{\frac{7}{5}+\frac{1}{15}a}(\delta+\varepsilon^{a}\tau)^{-\frac{4}{3}},
	\end{align}
where $\kappa_6>0$ is a given constant, and $N_2(\tau)$ is denoted by   
	\begin{eqnarray}
		\label{6.12b}
		N_2(\tau)=&&\varepsilon^{1-a}\sum^{3}_{i=1}\int_{\mathbb{R}}\int_{\mathbb{R}^{3}}\frac{1}{\rho}\widetilde{u}_{iyy}[R\theta B_{1i}(\frac{v-u}{\sqrt{R\theta}})
		\frac{\sqrt{\mu}}{M}f]_{y}\, dv\,dy
		\notag\\
		&&+\varepsilon^{1-a}\sum^{3}_{i=1}\int_{\mathbb{R}}\int_{\mathbb{R}^{3}}(\frac{1}{\theta}\widetilde{\theta}_{y})_{y}\frac{1}{\rho}u_{iy}R\theta B_{1i}(\frac{v-u}{\sqrt{R\theta}})\frac{\sqrt{\mu}}{M}f\, dv\,dy
		\notag\\
		&&+\varepsilon^{1-a}\int_{\mathbb{R}}\int_{\mathbb{R}^{3}}(\frac{1}{\theta}\widetilde{\theta}_{y})_{y}\frac{1}{\rho}[(R\theta)^{\frac{3}{2}}A_{1}(\frac{v-u}{\sqrt{R\theta}})\frac{\sqrt{\mu}}{M}f]_y\, dv\,dy.
	\end{eqnarray} 
\end{lemma}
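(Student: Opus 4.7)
\textbf{Proof proposal for Lemma \ref{lem6.1}.} My plan is to mimic the three-step architecture of Lemma \ref{lem.5.1A} one derivative higher, working from the differentiated Navier--Stokes--Poisson system \eqref{3.10} and its Euler counterpart \eqref{4.31}. Applying $\partial_y$ to the first four equations of \eqref{3.10} and testing the resulting equations for $\partial_y\widetilde\rho$, $\partial_y\widetilde u$, $\partial_y\widetilde\theta$ against the natural weights $\tfrac{2\bar\theta}{3\bar\rho\rho}\widetilde\rho_y$, $\widetilde u_y$, $\tfrac{1}{\theta}\widetilde\theta_y$ respectively (which is the $\partial_y$-analogue of the relative entropy--flux pair \eqref{4.3}), one obtains a time-derivative of the quadratic form in the first line of \eqref{4.57}, a positive dissipation $c\varepsilon^{1-a}\|(\widetilde u_{yy},\widetilde\theta_{yy})\|^2$ from the parabolic terms $\varepsilon^{1-a}(\mu(\theta)u_{iy})_y$ and $\varepsilon^{1-a}(\kappa(\theta)\theta_y)_y$, and remainders. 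For the Poisson part I will test $\partial_y$ of the last equation of \eqref{3.10} against $\tfrac{1}{\rho}\widetilde\phi_{yy}$ (integrated by parts), producing the time-derivative of $\varepsilon^{2b-2a}\|\tfrac{1}{\sqrt\rho}\widetilde\phi_{yy}\|^2+(\tfrac{1}{\rho}\widetilde\phi_y,\rho'_{\mathrm e}(\bar\phi)\widetilde\phi_y)$ together with the dissipation $c\varepsilon^{1-a}\varepsilon^{2b-2a}\|\widetilde\phi_{yyy}\|^2$ after expanding $\rho_{\mathrm e}(\phi)-\rho_{\mathrm e}(\bar\phi)$ to third order as in \eqref{4.24}, exactly so that the structural cancellation \eqref{4.34b} (using assumption $(\mathcal A_3)$) persists one derivative up.

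The microscopic source terms $\partial_y(\tfrac{1}{\rho}\int v_1 v\, L_M^{-1}\Theta\,dv)_y$ are the subtle part; I will convert them via the self-adjointness of $L_M^{-1}$ and the Burnett identities \eqref{4.11}--\eqref{4.12}, then integrate by parts in $\tau$ on the contribution coming from $\varepsilon^{1-a}G_\tau$ inside $\Theta$. This is what produces the correction $N_2(\tau)$ defined in \eqref{6.12b}; the residual $\partial_\tau$-commutator terms will be estimated by Lemma \ref{lem.5.2a} (so that $\|(\widetilde\rho_{y\tau},\widetilde u_{y\tau},\widetilde\theta_{y\tau})\|$ is controlled by $\|(\widetilde\rho_{yy},\widetilde u_{yy},\widetilde\theta_{yy},\widetilde\phi_{yy})\|$ plus $\|f_{yy}\|_\sigma$ up to acceptable lower order terms), while the contributions of $P_1(v_1 G_y)$, $\phi_y\partial_{v_1}G$ and $Q(G,G)$ inside $\Theta$ are controlled by the linear-nonlinear collisional lemmas (Lemmas \ref{lem7.7}--\ref{lem7.8}) applied at one higher derivative, where each resulting weighted norm of $\partial^{\alpha}f$ with $|\alpha|=2$ is absorbed by $\varepsilon^{a-1}\|f_{yy}\|_\sigma^2$ in $\mathcal D_{2,l,q}$.

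To recover the missing dissipations on $\widetilde\rho_{yy}$ and $\widetilde\phi_{yy}$ (which are not provided by the parabolic-type test above), I will mimic the Step~2 trick of Lemma \ref{lem.5.1A}: differentiate the momentum equation of \eqref{4.31} once in $y$ and pair with $\varepsilon^{1-a}\widetilde\rho_{yy}$, producing after the time integration by parts the correction $\kappa_6\varepsilon^{1-a}(\widetilde u_{1y},\widetilde\rho_{yy})$ listed in \eqref{4.57} together with the good terms $c\varepsilon^{1-a}\|\widetilde\rho_{yy}\|^2$ and, through the Poisson equation, $c\varepsilon^{1-a}\|\widetilde\phi_{yy}\|^2$ after a Taylor expansion as in \eqref{4.34}. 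The two blocks are then combined with a suitably small constant $\kappa_6>0$ so the error $\kappa_6\varepsilon^{1-a}\|\widetilde u_{1yy}\|^2$ is absorbed by the parabolic dissipation already obtained; the appearance of the $\rho'_{\mathrm e}(\bar\phi)$ factor in the quadratic form comes from this second step exactly as in Step~3 of Lemma \ref{lem.5.1A}.

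The main obstacle, as emphasized in Remark \ref{rmk.ea}, will be controlling the cubic nonlinear commutator terms that arise when $\partial_y$ hits either the coefficients $\tfrac{1}{\rho}$, $\tfrac{1}{\theta}$, $\bar\theta/\bar\rho$ or the products $\widetilde u_1 u_{1y}$, $\rho_y(\bar\rho\widetilde\theta-\widetilde\rho\bar\theta)/(\rho\bar\rho)$: the worst are of the form $\|(\widetilde\rho_y,\widetilde u_y,\widetilde\theta_y)\|_{L^\infty}\|(\widetilde\rho_{yy},\widetilde u_{yy},\widetilde\theta_{yy})\|\|(\widetilde\rho_{yy},\widetilde u_{yy},\widetilde\theta_{yy})\|$ and mixed $\bar\theta_{yy}\widetilde\theta_y$ terms. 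The former are handled by the $L^\infty\hookrightarrow H^1$ embedding together with the a priori bound \eqref{3.22}, which gives a factor $k^{1/12}\varepsilon^{3/5-2a/5}$ or $k^{1/12}\varepsilon^{3a/5-2/5}$ (the second factor reflecting exactly the threshold explained in Remark \ref{rmk.ea} and requiring $a\geq \tfrac{2}{3}$ to close), while the latter are treated by the interpolation-type inequality used in \eqref{4.14A} combined with Lemma \ref{lem7.2} to produce the slow-decay source $\varepsilon^{7/5+a/15}(\delta+\varepsilon^a\tau)^{-4/3}$. Putting all these estimates together, choosing $\eta>0$ and then $\kappa_6>0$ small enough, absorbing the good terms into dissipations and absorbing the bad terms either into $\mathcal D_{2,l,q}$ with smallness or into the stated slow-decay source, yields \eqref{4.57}.
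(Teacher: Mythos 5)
Your proposal follows essentially the same route as the paper's proof: the same weighted first-derivative energy identity obtained by testing the $y$-differentiated system \eqref{3.10} against $\tfrac{2\bar\theta}{3\bar\rho\rho}\widetilde\rho_y$, $\widetilde u_y$, $\tfrac{1}{\theta}\widetilde\theta_y$, the Burnett-function conversion \eqref{4.11}--\eqref{4.12} of the $L_M^{-1}\Theta$ terms with a $\tau$-integration by parts producing $N_2(\tau)$ and Lemma \ref{lem.5.2a} for the residual time derivatives, and the Step-2 pairing of the differentiated momentum equation of \eqref{4.31} with $\varepsilon^{1-a}\widetilde\rho_{yy}$ combined with the Poisson equation to recover the $\widetilde\rho_{yy}$, $\widetilde\phi_{yy}$, $\varepsilon^{2b-2a}\widetilde\phi_{yyy}$ dissipations and the cross term $\kappa_6\varepsilon^{1-a}(\widetilde u_{1y},\widetilde\rho_{yy})$. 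The only inessential deviation is that at this derivative level the paper uses only the second-order Taylor expansion \eqref{4.34} of $\rho_{\mathrm e}$ (the third-order expansion \eqref{4.24} and the sign cancellation \eqref{4.34b} are needed specifically for the zeroth-order estimate, and the time derivative of the electric quadratic form arises from the coupling term $(\widetilde\phi_{yy},\widetilde u_{1y})$ via the continuity and Poisson equations rather than from testing the differentiated Poisson equation directly), but this does not affect the validity of your argument.
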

\begin{proof}	
	The proof is divided by three steps as follows.
	
	\medskip
	\noindent{{\bf Step 1.}}	
Differentiating \eqref{3.10} with respect to $y$, one has
	\begin{equation}
		\label{4.45}
		\left\{
		\begin{array}{rl}
			&\widetilde{\rho}_{y\tau}+\rho \widetilde{u}_{1yy}=Q_{1}
			\\
			&\widetilde{u}_{1y \tau}+\frac{2}{3}\widetilde{\theta}_{yy}+\frac{2\bar{\theta}}{3\bar{\rho}}\widetilde{\rho}_{yy}
			+\widetilde{\phi}_{yy}\\
			&\qquad=\varepsilon^{1-a}[\frac{4}{3\rho}\mu(\theta)\widetilde{u}_{1yy}]_{y}+Q_{2}-[\frac{1}{\rho}(\int_{\mathbb{R}^{3}} v^{2}_{1}L^{-1}_{M}\Theta \,dv)_{y}]_{y},
			\\
			&\widetilde{u}_{iy \tau}=\varepsilon^{1-a}[\frac{1}{\rho}\mu(\theta)\widetilde{u}_{iyy}]_{y}+Q_{i+2}-[\frac{1}{\rho}(\int_{\mathbb{R}^3} v_{1}v_{i}L^{-1}_{M}\Theta\, dv)_{y}]_{y}, ~~i=2,3,
			\\
			&\widetilde{\theta}_{y \tau}+\frac{2}{3}\theta
			\widetilde{u}_{1yy}
			=\varepsilon^{1-a}[\frac{1}{\rho}\kappa(\theta)\widetilde{\theta}_{yy}]_{y}+Q_{5}-[\frac{1}{\rho}
			(\int_{\mathbb{R}^3}v_{1}\frac{|v|^{2}}{2}L^{-1}_{M}\Theta\, dv)_{y}]_{y}\\
&\hspace{4cm}			+[\frac{1}{\rho}u\cdot(\int_{\mathbb{R}^3} v_{1}v L^{-1}_{M}\Theta \,dv)_{y}]_{y},
		\end{array} \right.
	\end{equation}
	where we have denoted that
	\begin{equation*}
		\left\{
		\begin{array}{rl}
			&Q_{1}=-\rho_{y}\widetilde{u}_{1y}-[\rho_{y}\widetilde{u}_{1}]_{y}-[\bar{u}_{1}\widetilde{\rho}_{y}]_{y}
			-[\bar{u}_{1y}\widetilde{\rho}]_{y},
			\\
			&Q_{2}=-(\frac{2\bar{\theta}}{3\bar{\rho}})_{y}\widetilde{\rho}_{y}-[u_{1}\widetilde{u}_{1y}]_{y}
			-[\widetilde{u}_{1}\bar{u}_{1y}]_{y}-[\frac{2}{3}\rho_{y}(\frac{\theta}{\rho}-\frac{\bar{\theta}}{\bar{\rho}})]_{y}
			\\
			&\hspace{1cm}+\varepsilon^{1-a}[\frac{4}{3\rho}(\mu(\theta))_{y}u_{1y}]_{y}+\varepsilon^{1-a}[\frac{4}{3\rho}\mu(\theta)\bar{u}_{1yy}]_{y},
			\\
			&Q_{1+i}=-[\widetilde{u}_{1}\widetilde{u}_{iy}]_{y}-[\bar{u}_{1}\widetilde{u}_{iy}]_{y}+
			\varepsilon^{1-a}[\frac{1}{\rho}(\mu(\theta))_{y}\widetilde{u}_{iy}]_{y}, \quad i=2,3,
			\\
			&Q_{5}=-\frac{2}{3}\theta_{y}\widetilde{u}_{1y}
			-[\frac{2}{3}\widetilde{\theta}\bar{u}_{1y}]_{y}-[u_{1}\widetilde{\theta}_{y}]_{y}
			-[\bar{\theta}_{y}\widetilde{u}_{1}]_{y}+\varepsilon^{1-a}[\frac{1}{\rho}\kappa(\theta)\bar{\theta}_{yy}]_{y}
			\\
			&\hspace{1cm}+\varepsilon^{1-a}[\frac{1}{\rho}(\kappa(\theta))_{y}\theta_{y}]_{y}
			+\varepsilon^{1-a}[\frac{4}{3\rho}\mu(\theta)u^{2}_{1y}]_{y}+\varepsilon^{1-a}[\frac{1}{\rho}\mu(\theta)(\widetilde{u}^{2}_{2y}+\widetilde{u}^{2}_{3y})]_{y}.
		\end{array} \right.
	\end{equation*}
	
	Multiplying \eqref{4.45}$_{1}$ by $\frac{2\bar{\theta}}{3\bar{\rho}\rho}\widetilde{\rho}_{y}$, \eqref{4.45}$_{2}$ by $\widetilde{u}_{1y}$,  \eqref{4.45}$_{3}$ by $\widetilde{u}_{iy}$ (i=2,3) and \eqref{4.45}$_{4}$ by $\frac{1}{\theta}\widetilde{\theta}_{y}$, then adding the resulting equations together,
	we can arrive at
	\begin{eqnarray}
		\label{4.47}
		&&\frac{1}{2}(\frac{2\bar{\theta}}{3\bar{\rho}\rho}\widetilde{\rho}^{2}_{y})_{\tau}
		+\frac{1}{2}(\widetilde{u}^{2}_{y})_{\tau}
		+\frac{1}{2}(\frac{1}{\theta}\widetilde{\theta}^{2}_{y})_{\tau}+\varepsilon^{1-a}\frac{4}{3\rho}\mu(\theta)\widetilde{u}^{2}_{1yy}
		\notag\\
		&&\hspace{0.5cm}+\varepsilon^{1-a}\sum^{3}_{i=2}\frac{1}{\rho}\mu(\theta)\widetilde{u}^{2}_{iyy}
		+\varepsilon^{1-a}\frac{1}{\rho\theta}\kappa(\theta)\widetilde{\theta}^{2}_{yy}+\widetilde{\phi}_{yy}\widetilde{u}_{1y}
		+(\cdot\cdot\cdot)_{y}
		\notag\\
		&&=\frac{1}{2}(\frac{2\bar{\theta}}{3\bar{\rho}\rho})_{\tau}\widetilde{\rho}^{2}_{y}
		+\frac{1}{2}(\frac{1}{\theta})_{\tau}\widetilde{\theta}^{2}_{y}
		+(\frac{2\bar{\theta}}{3\bar{\rho}})_{y}\widetilde{\rho}_{y}\widetilde{u}_{1y}
		-\varepsilon^{1-a}\frac{1}{\rho}\kappa(\theta)\widetilde{\theta}_{yy}(\frac{1}{\theta})_{y}\widetilde{\theta}_{y}
		\notag\\
		&&\hspace{0.5cm}+Q_{1}\frac{2\bar{\theta}}{3\bar{\rho}\rho}\widetilde{\rho}_{y}
		+Q_{2}\widetilde{u}_{1y}+Q_{3}\widetilde{u}_{2y}+Q_{4}\widetilde{u}_{3y}+Q_{5}\frac{1}{\theta}\widetilde{\theta}_{y}+H,
	\end{eqnarray}
	where the term $H$ is defined by
	\begin{eqnarray}
		\label{4.48}
		H=&\frac{1}{\rho}(\int_{\mathbb{R}^3} v_{1}vL^{-1}_{M}\Theta\,dv)_{y}\cdot\widetilde{u}_{yy}
		+\frac{1}{\rho}u_{y}\cdot\int_{\mathbb{R}^3} v_{1}v L^{-1}_{M}\Theta\,dv(\frac{1}{\theta}\widetilde{\theta}_{y})_{y}
		\notag\\
		&+\frac{1}{\rho}(\int_{\mathbb{R}^3}(v_{1}\frac{|v|^{2}}{2}-v_{1}v\cdot u)L^{-1}_{M}\Theta\,dv)_{y}(\frac{1}{\theta}\widetilde{\theta}_{y})_{y}.
	\end{eqnarray}
	We shall  integrate \eqref{4.47} with respect to $y$ over $\mathbb{R}$ and then
	estimate each term. For the first on the right hand side of \eqref{4.47}, we get from the embedding inequality,
	Lemma \ref{lem7.2}, $\delta=\frac{1}{k}\varepsilon^{\frac{3}{5}-\frac{2}{5}a}$ in \eqref{3.21}, \eqref{3.22} and \eqref{3.18} that
	\begin{align}
		\label{4.54a}
		&|(\frac{\widetilde{\rho}_{y}^{2}}{2},(\frac{2\bar{\theta}}{3\bar{\rho}\rho})_{\tau})|\notag\\
		&\leq C\|(\bar{\rho}_{\tau},\bar{\theta}_{\tau})\|_{L_{y}^{\infty}}\|\widetilde{\rho}_{y}\|^{2}
		+C\|\widetilde{\rho}_{y}\|_{L_{y}^{\infty}}\|\widetilde{\rho}_{\tau}\|\|\widetilde{\rho}_{y}\|
		\notag\\
		&\leq C\varepsilon^{a}\delta^{-1}\|\widetilde{\rho}_{y}\|^{2}
		+ Ck^{\frac{1}{12}}\varepsilon^{\frac{3}{5}-\frac{2}{5}a}(\|\widetilde{\rho}_{\tau}\|^{2}+\|\widetilde{\rho}_{yy}\|^{2}+\|\widetilde{\rho}_{y}\|^{2})
		\notag\\
		&\leq C\varepsilon^{a}(\frac{1}{k}\varepsilon^{\frac{3}{5}-\frac{2}{5}a})^{-1}
		\varepsilon^{a-1}\mathcal{D}_{2,l,q}(\tau)+Ck^{\frac{1}{12}}\varepsilon^{\frac{3}{5}a-\frac{2}{5}}
		\mathcal{D}_{2,l,q}(\tau)\notag\\
		&\quad+C\varepsilon^{\frac{7}{5}+\frac{1}{15}a}(\delta+\varepsilon^{a}\tau)^{-\frac{4}{3}}
		\notag\\
		&=C(k\varepsilon^{\frac{12}{5}a-\frac{8}{5}}+k^{\frac{1}{12}}\varepsilon^{\frac{3}{5}a-\frac{2}{5}})
		\mathcal{D}_{2,l,q}(\tau)+C\varepsilon^{\frac{7}{5}+\frac{1}{15}a}(\delta+\varepsilon^{a}\tau)^{-\frac{4}{3}}
		\notag\\
		&\leq Ck^{\frac{1}{12}}\varepsilon^{\frac{3}{5}a-\frac{2}{5}}\mathcal{D}_{2,l,q}(\tau)+C\varepsilon^{\frac{7}{5}+\frac{1}{15}a}(\delta+\varepsilon^{a}\tau)^{-\frac{4}{3}}.
	\end{align}
	Here we have used \eqref{5.26b} such that
	\begin{eqnarray*}
		Ck^{\frac{1}{12}}\varepsilon^{\frac{3}{5}-\frac{2}{5}a}\|\widetilde{\rho}_{\tau}\|^{2}
		\leq Ck^{\frac{1}{12}}\varepsilon^{\frac{3}{5}a-\frac{2}{5}}\mathcal{D}_{2,l,q}(\tau)
		+C\varepsilon^{\frac{7}{5}+\frac{1}{15}a}(\delta+\varepsilon^{a}\tau)^{-\frac{4}{3}}.
	\end{eqnarray*}
	Similar as for obtaining \eqref{4.54a}, one has
	\begin{equation*}
		|(\frac{1}{2}(\frac{1}{\theta})_{\tau},\widetilde{\theta}^{2}_{y})|+|(\widetilde{u}_{1y},(\frac{2\bar{\theta}}{3\bar{\rho}})_{y}\widetilde{\rho}_{y})|
		\leq Ck^{\frac{1}{12}}\varepsilon^{\frac{3}{5}a-\frac{2}{5}}\mathcal{D}_{2,l,q}(\tau)+C\varepsilon^{\frac{7}{5}+\frac{1}{15}a}(\delta+\varepsilon^{a}\tau)^{-\frac{4}{3}}.
	\end{equation*}
	By \eqref{4.13A} and \eqref{3.18}, it follows that
	\begin{equation}
		\label{4.56a}
		\varepsilon^{1-a}|(\frac{1}{\rho}\kappa(\theta)\widetilde{\theta}_{yy},(\frac{1}{\theta})_{y}\widetilde{\theta}_{y})|
		\leq C\varepsilon^{1-a}\|\theta_{y}\|\|\widetilde{\theta}_{yy}\|\|\widetilde{\theta}_{y}\|_{L^{\infty}}
		\leq Ck^{\frac{1}{12}}\varepsilon^{\frac{3}{5}-\frac{2}{5}a}\mathcal{D}_{2,l,q}(\tau).
	\end{equation}
	To estimate $Q_{1}$ in \eqref{4.47}, from Lemma \ref{lem7.2}, \eqref{3.21}, \eqref{3.22} and \eqref{3.18}, one has
	\begin{eqnarray}
		\label{4.57a}
		|(\bar{\rho}_{yy}\widetilde{u}_{1},\frac{2\bar{\theta}}{3\bar{\rho}\rho}\widetilde{\rho}_{y})|
		&&\leq C\|\widetilde{u}_{1}\|_{L_{y}^{\infty}}(\|\widetilde{\rho}_{y}\|^{2}+\|\bar{\rho}_{yy}\|^{2})
		\notag\\
		&&\leq Ck^{\frac{1}{12}}\varepsilon^{\frac{3}{5}-\frac{2}{5}a}\big\{
		\varepsilon^{a-1}\mathcal{D}_{2,l,q}(\tau)+
		\varepsilon^{3a}\delta^{-1}(\delta+\varepsilon^{a}\tau)^{-2}\big\}
		\notag\\
		&&\leq Ck^{\frac{1}{12}}\varepsilon^{\frac{3}{5}a-\frac{2}{5}}\mathcal{D}_{2,l,q}(\tau)
		+C\varepsilon^{\frac{7}{5}+\frac{1}{15}a}(\delta+\varepsilon^{a}\tau)^{-\frac{4}{3}},
	\end{eqnarray}
which together with the similar calculations as \eqref{4.54a} yields that
	\begin{equation*}
		|(Q_{1},\frac{2\bar{\theta}}{3\bar{\rho}\rho}\widetilde{\rho}_{y})|\leq Ck^{\frac{1}{12}}\varepsilon^{\frac{3}{5}a-\frac{2}{5}}\mathcal{D}_{2,l,q}(\tau)
		+C\varepsilon^{\frac{7}{5}+\frac{1}{15}a}(\delta+\varepsilon^{a}\tau)^{-\frac{4}{3}}.
	\end{equation*}
	Likewise, to control the terms containing $Q_{2}$-$Q_{5}$ in \eqref{4.47}, we start with
	\begin{eqnarray*}
		&&\varepsilon^{1-a}|([\frac{4}{3\rho}\mu(\theta)\bar{u}_{1yy}]_{y},\widetilde{u}_{1y})|=
		\varepsilon^{1-a}|(\frac{4}{3\rho}\mu(\theta)\bar{u}_{1yy},\widetilde{u}_{1yy})|
		\notag\\
		&&\leq C\varepsilon^{1-a}(k^{\frac{1}{12}}\varepsilon^{\frac{3}{5}a-\frac{2}{5}}\|\widetilde{u}_{1yy}\|^{2}
		+k^{-\frac{1}{12}}\varepsilon^{\frac{2}{5}-\frac{3}{5}a}\|\bar{u}_{1yy}\|^{2})
		\notag\\
		&&\leq Ck^{\frac{1}{12}}\varepsilon^{\frac{3}{5}a-\frac{2}{5}}\mathcal{D}_{2,l,q}(\tau)
		+Ck^{-\frac{1}{12}}\varepsilon^{\frac{7}{5}-\frac{8}{5}a}\varepsilon^{3a}\delta^{-1}(\delta+\varepsilon^{a}\tau)^{-2}
		\notag\\
		&&\leq Ck^{\frac{1}{12}}\varepsilon^{\frac{3}{5}a-\frac{2}{5}}\mathcal{D}_{2,l,q}(\tau)
		+C\varepsilon^{\frac{7}{5}+\frac{1}{15}a}(\delta+\varepsilon^{a}\tau)^{-\frac{4}{3}},
	\end{eqnarray*}
	and then this together with the similar calculations as  \eqref{4.57a}, \eqref{4.56a} and \eqref{4.54a} leads us to
	\begin{eqnarray*}
		&&|(Q_{2},\widetilde{u}_{1y})+(Q_{3},\widetilde{u}_{2y})+(Q_{4},\widetilde{u}_{3y})+(Q_{5},\frac{1}{\theta}\widetilde{\theta}_{y})|
		\notag\\
		&&\leq C(k^{\frac{1}{12}}\varepsilon^{\frac{3}{5}-\frac{2}{5}a}+k^{\frac{1}{12}}\varepsilon^{\frac{3}{5}a-\frac{2}{5}})\mathcal{D}_{2,l,q}(\tau)
		+C\varepsilon^{\frac{7}{5}+\frac{1}{15}a}(\delta+\varepsilon^{a}\tau)^{-\frac{4}{3}}.
	\end{eqnarray*}
	
	In order to estimate the term $H$ given in \eqref{4.48}, we only consider the first term in $H$ since other terms in $H$ can be treated similarly.
	We first use \eqref{4.12} to write
	\begin{equation}
		\label{4.50}
		\int_{\mathbb{R}}\frac{1}{\rho}\widetilde{u}_{yy}\cdot(\int_{\mathbb{R}^3} v_{1}vL^{-1}_{M}\Theta\, dv)_{y}\,dy
		=\sum^{3}_{i=1}\int_{\mathbb{R}}\int_{\mathbb{R}^{3}}\frac{1}{\rho}\widetilde{u}_{iyy}\big[R\theta B_{1i}(\frac{v-u}{\sqrt{R\theta}})
		\frac{\Theta}{M}\big]_{y}\,dv\,dy.
	\end{equation}
	Consider \eqref{4.50} associated with $\Theta$ given in \eqref{3.11}. We first take an integration by parts to get
	\begin{eqnarray}
		\label{6.10b}
		&&\varepsilon^{1-a}\sum^{3}_{i=1}\int_{\mathbb{R}}\int_{\mathbb{R}^{3}}\frac{1}{\rho}\widetilde{u}_{iyy}\big[R\theta B_{1i}(\frac{v-u}{\sqrt{R\theta}})
		\frac{\sqrt{\mu}}{M}f_\tau\big]_{y}\,dv\,dy
		\notag\\
		=&&\varepsilon^{1-a}\frac{d}{d\tau}\sum^{3}_{i=1}\int_{\mathbb{R}}\int_{\mathbb{R}^{3}}\frac{1}{\rho}\widetilde{u}_{iyy}\big[R\theta B_{1i}(\frac{v-u}{\sqrt{R\theta}})
		\frac{\sqrt{\mu}}{M}f\big]_{y}\, dv\,dy
		\notag\\
		&&-\varepsilon^{1-a}\sum^{3}_{i=1}\int_{\mathbb{R}}\int_{\mathbb{R}^{3}}\frac{1}{\rho}\widetilde{u}_{iyy\tau}\big[R\theta B_{1i}(\frac{v-u}{\sqrt{R\theta}})
		\frac{\sqrt{\mu}}{M}f\big]_{y} \, dv \,dy
		\notag\\
		&&-\varepsilon^{1-a}\sum^{3}_{i=1}\int_{\mathbb{R}}\int_{\mathbb{R}^{3}}[\frac{1}{\rho}]_\tau\widetilde{u}_{iyy}\big[R\theta B_{1i}(\frac{v-u}{\sqrt{R\theta}})
		\frac{\sqrt{\mu}}{M}f\big]_{y} \, dv \,dy
		\notag\\
		&&-\varepsilon^{1-a}\sum^{3}_{i=1}\int_{\mathbb{R}}\int_{\mathbb{R}^{3}}\frac{1}{\rho}\widetilde{u}_{iyy}\big[\{R\theta B_{1i}(\frac{v-u}{\sqrt{R\theta}})
		\frac{\sqrt{\mu}}{M}\}_\tau f\big]_{y} \, dv \,dy.
	\end{eqnarray}
	By integration by parts, \eqref{5.26b}, \eqref{4.14}, \eqref{3.16}, Lemma \ref{lem7.2}, \eqref{3.21}, \eqref{3.22} and \eqref{3.18}, the second term of \eqref{6.10b} is bounded by
	\begin{eqnarray*}
		&&\varepsilon^{1-a}\sum^{3}_{i=1}\int_{\mathbb{R}}\int_{\mathbb{R}^{3}}\widetilde{u}_{iy\tau}
		\big\{\frac{1}{\rho}[R\theta B_{1i}(\frac{v-u}{\sqrt{R\theta}})
		\frac{\sqrt{\mu}}{M}f]_y\big\}_{y} \, dv \,dy
		\notag\\	
		\leq&&\eta\varepsilon^{1-a}\|\widetilde{u}_{y\tau}\|^2+C_\eta\varepsilon^{1-a}
		\sum^{3}_{i=1}\int_{\mathbb{R}}\int_{\mathbb{R}^{3}}|\big\{\frac{1}{\rho}[R\theta B_{1i}(\frac{v-u}{\sqrt{R\theta}})
		\frac{\sqrt{\mu}}{M}  f]_{y} \big\}_{y}|^2 \, dv\,dy
		\notag\\
		\leq&& \eta\varepsilon^{1-a}\|(\widetilde{\rho}_{yy},\widetilde{u}_{yy},\widetilde{\theta}_{yy},\widetilde{\phi}_{yy})\|^{2}
		+C_{\eta}\varepsilon^{1-a}\|f_{yy}\|^{2}_{\sigma}
		+C_{\eta}k^{\frac{1}{12}}\varepsilon^{\frac{3}{5}-\frac{2}{5}a}\mathcal{D}_{2,l,q}(\tau)\\
		&&+C_{\eta}\varepsilon^{\frac{7}{5}+\frac{1}{15}a}(\delta+\varepsilon^{a}\tau)^{-\frac{4}{3}}.
	\end{eqnarray*}
	The last two term of \eqref{6.10b} have the same bound as the above, it follows that
	\begin{eqnarray*}
		&&\varepsilon^{1-a}\sum^{3}_{i=1}\int_{\mathbb{R}}\int_{\mathbb{R}^{3}}\frac{1}{\rho}\widetilde{u}_{iyy}\big[R\theta B_{1i}(\frac{v-u}{\sqrt{R\theta}})
		\frac{\sqrt{\mu}}{M}f_\tau\big]_{y}\,dv\,dy
		\notag\\
		\leq&&\varepsilon^{1-a}\frac{d}{d\tau}\sum^{3}_{i=1}\int_{\mathbb{R}}\int_{\mathbb{R}^{3}}\frac{1}{\rho}\widetilde{u}_{iyy}\big[R\theta B_{1i}(\frac{v-u}{\sqrt{R\theta}})
		\frac{\sqrt{\mu}}{M}f\big]_{y}\, dv\,dy\\
		&&+C\eta\varepsilon^{1-a}\|(\widetilde{\rho}_{yy},\widetilde{u}_{yy},\widetilde{\theta}_{yy},\widetilde{\phi}_{yy})\|^{2}
		\notag\\
		&&+C_{\eta}\varepsilon^{1-a}\|f_{yy}\|^{2}_{\sigma}
		+C_{\eta}k^{\frac{1}{12}}\varepsilon^{\frac{3}{5}-\frac{2}{5}a}\mathcal{D}_{2,l,q}(\tau)
		+C_{\eta}\varepsilon^{\frac{7}{5}+\frac{1}{15}a}(\delta+\varepsilon^{a}\tau)^{-\frac{4}{3}}.
	\end{eqnarray*}
	Following the similar arguments as \eqref{5.17b} yields that
	\begin{eqnarray*}
		&&\varepsilon^{1-a}\sum^{3}_{i=1}\int_{\mathbb{R}}\int_{\mathbb{R}^{3}}\frac{1}{\rho}\widetilde{u}_{iyy}[R\theta B_{1i}(\frac{v-u}{\sqrt{R\theta}})
		\frac{\overline{G}_{\tau}}{M}]_y \, dv \,dy	
		\notag\\
		&&\leq\eta\varepsilon^{1-a}\|\widetilde{u}_{yy}\|^{2}
		+C_{\eta}k^{\frac{1}{12}}\varepsilon^{\frac{3}{5}-\frac{2}{5}a}\mathcal{D}_{2,l,q}(\tau)
		+C_{\eta}\varepsilon^{\frac{7}{5}+\frac{1}{15}a}(\delta+\varepsilon^{a}\tau)^{-\frac{4}{3}}.
	\end{eqnarray*}
Thanks to $G=\overline{G}+\sqrt{\mu}f$,	it follows from the above two estimates that
	\begin{eqnarray*}
		&&\varepsilon^{1-a}\sum^{3}_{i=1}\int_{\mathbb{R}}\int_{\mathbb{R}^{3}}\frac{1}{\rho}\widetilde{u}_{iyy}[R\theta B_{1i}(\frac{v-u}{\sqrt{R\theta}})
		\frac{G_{\tau}}{M}]_y \, dv \,dy
		\notag\\
		\leq&&\varepsilon^{1-a}\frac{d}{d\tau}\sum^{3}_{i=1}\int_{\mathbb{R}}\int_{\mathbb{R}^{3}}\frac{1}{\rho}\widetilde{u}_{iyy}\big[R\theta B_{1i}(\frac{v-u}{\sqrt{R\theta}})\frac{\sqrt{\mu}}{M}f\big]_{y}\, dv\,dy\\
		&&+C\eta\varepsilon^{1-a}\|(\widetilde{\rho}_{yy},\widetilde{u}_{yy},\widetilde{\theta}_{yy},\widetilde{\phi}_{yy})\|^{2}
		\notag\\
		&&+C_{\eta}\varepsilon^{1-a}\|f_{yy}\|^{2}_{\sigma}
		+C_{\eta}k^{\frac{1}{12}}\varepsilon^{\frac{3}{5}-\frac{2}{5}a}\mathcal{D}_{2,l,q}(\tau)
		+C_{\eta}\varepsilon^{\frac{7}{5}+\frac{1}{15}a}(\delta+\varepsilon^{a}\tau)^{-\frac{4}{3}}.
	\end{eqnarray*}
	In view of \eqref{4.14}, \eqref{7.24}, \eqref{3.16}, Lemma \ref{lem7.2}, \eqref{3.21}, \eqref{3.22} and \eqref{3.18}, one has
	\begin{eqnarray*}
		&&\sum^{3}_{i=1}|\int_{\mathbb{R}}\int_{\mathbb{R}^{3}}\frac{1}{\rho}\widetilde{u}_{iyy}\big[R\theta B_{1i}(\frac{v-u}{\sqrt{R\theta}})
		\frac{\Theta_1}{M}\big]_{y} \,dv\,dy|
		\notag\\
		&&\leq \eta\varepsilon^{1-a}\|\widetilde{u}_{yy}\|^{2}
		+C_{\eta}\varepsilon^{1-a}\|f_{yy}\|^{2}_{\sigma}
		+C_{\eta}k^{\frac{1}{12}}\varepsilon^{\frac{3}{5}-\frac{2}{5}a}\mathcal{D}_{2,l,q}(\tau)\\
		&&\quad+C_{\eta}\varepsilon^{\frac{7}{5}+\frac{1}{15}a}(\delta+\varepsilon^{a}\tau)^{-\frac{4}{3}},
	\end{eqnarray*}
where we have denoted
$$
\Theta_1=\varepsilon^{1-a}P_{1}(v_{1}G_{y})-\varepsilon^{1-a}\phi_{y}\partial_{v_{1}}G-Q(G,G).
$$
Recall $\Theta=\varepsilon^{1-a}G_\tau+\Theta_1$ as in \eqref{3.11}. From the above estimates, we then conclude that
	\begin{eqnarray*}
		&&\int_{\mathbb{R}}\frac{1}{\rho}\widetilde{u}_{yy}\cdot(\int_{\mathbb{R}^3} v_{1}vL^{-1}_{M}\Theta\, dv)_{y}\,dy
		\notag\\
		\leq&&\varepsilon^{1-a}\frac{d}{d\tau}\sum^{3}_{i=1}\int_{\mathbb{R}}\int_{\mathbb{R}^{3}}\frac{1}{\rho}\widetilde{u}_{iyy}[R\theta B_{1i}(\frac{v-u}{\sqrt{R\theta}})
		\frac{\sqrt{\mu}}{M}f]_{y}\, dv\,dy\notag\\
		&&+C\eta\varepsilon^{1-a}\|(\widetilde{\rho}_{yy},\widetilde{u}_{yy},\widetilde{\theta}_{yy},\widetilde{\phi}_{yy})\|^{2}
		+C_{\eta}\varepsilon^{1-a}\|f_{yy}\|^{2}_{\sigma}
		+C_{\eta}k^{\frac{1}{12}}\varepsilon^{\frac{3}{5}-\frac{2}{5}a}\mathcal{D}_{2,l,q}(\tau)\notag\\
		&&
		+C_{\eta}\varepsilon^{\frac{7}{5}+\frac{1}{15}a}(\delta+\varepsilon^{a}\tau)^{-\frac{4}{3}}.	
	\end{eqnarray*}
	Likewise, it holds that
	\begin{eqnarray*}
		&&\int_{\mathbb{R}}(\frac{1}{\theta}\widetilde{\theta}_{y})_{y}\frac{1}{\rho}u_{y}\cdot\int_{\mathbb{R}^3} v_{1}v L^{-1}_{M}\Theta\,dv\,dy\\
&&		+\int_{\mathbb{R}}(\frac{1}{\theta}\widetilde{\theta}_{y})_{y}\frac{1}{\rho}(\int_{\mathbb{R}^3}(v_{1}\frac{|v|^{2}}{2}-v_{1}v\cdot u)L^{-1}_{M}\Theta\,dv)_{y}\,dy
		\notag\\
		=&&\sum^{3}_{i=1}\int_{\mathbb{R}}\int_{\mathbb{R}^{3}}(\frac{1}{\theta}\widetilde{\theta}_{y})_{y}\frac{1}{\rho}u_{iy}R\theta B_{1i}(\frac{v-u}{\sqrt{R\theta}})\frac{\Theta}{M}\, dv\,dy\\
		&&+\int_{\mathbb{R}}\int_{\mathbb{R}^{3}}(\frac{1}{\theta}\widetilde{\theta}_{y})_{y}\frac{1}{\rho}[(R\theta)^{\frac{3}{2}}A_{1}(\frac{v-u}{\sqrt{R\theta}})\frac{\Theta}{M}]_{y}\, dv\,dy
		\notag\\
		\leq&&\varepsilon^{1-a}\frac{d}{d\tau}\sum^{3}_{i=1}\int_{\mathbb{R}}\int_{\mathbb{R}^{3}}(\frac{1}{\theta}\widetilde{\theta}_{y})_{y}\frac{1}{\rho}u_{iy}R\theta B_{1i}(\frac{v-u}{\sqrt{R\theta}})\frac{\sqrt{\mu}}{M}f\, dv\,dy
		\notag\\
		&&+\varepsilon^{1-a}\frac{d}{d\tau}\int_{\mathbb{R}}\int_{\mathbb{R}^{3}}(\frac{1}{\theta}\widetilde{\theta}_{y})_{y}\frac{1}{\rho}[(R\theta)^{\frac{3}{2}}A_{1}(\frac{v-u}{\sqrt{R\theta}})\frac{\sqrt{\mu}}{M}f]_{y}\, dv\,dy\\
		&&+C\eta\varepsilon^{1-a}\|(\widetilde{\rho}_{yy},\widetilde{u}_{yy},\widetilde{\theta}_{yy},\widetilde{\phi}_{yy})\|^{2}
		\notag\\
		&&+C_{\eta}\varepsilon^{1-a}\|f_{yy}\|^{2}_{\sigma}
		+C_{\eta}k^{\frac{1}{12}}\varepsilon^{\frac{3}{5}-\frac{2}{5}a}\mathcal{D}_{2,l,q}(\tau)
		+C_{\eta}\varepsilon^{\frac{7}{5}+\frac{1}{15}a}(\delta+\varepsilon^{a}\tau)^{-\frac{4}{3}}.
	\end{eqnarray*}
	With these, we can claim that
	\begin{eqnarray*}
		\int_{\mathbb{R}}H\,dy
		\leq&&\frac{d}{d\tau}N_2(\tau)+C\eta\varepsilon^{1-a}\|(\widetilde{\rho}_{yy},\widetilde{u}_{yy},\widetilde{\theta}_{yy},\widetilde{\phi}_{yy})\|^{2}
		\notag\\
		&&+C_{\eta}\varepsilon^{1-a}\|f_{yy}\|^{2}_{\sigma}
		+C_{\eta}k^{\frac{1}{12}}\varepsilon^{\frac{3}{5}-\frac{2}{5}a}\mathcal{D}_{2,l,q}(\tau)
		+C_{\eta}\varepsilon^{\frac{7}{5}+\frac{1}{15}a}(\delta+\varepsilon^{a}\tau)^{-\frac{4}{3}}.
	\end{eqnarray*}
	Here we recall that $N_2(\tau)$ is defined in \eqref{6.12b}.
	In summary, integrating \eqref{4.47} with respect to $y$ over $\mathbb{R}$ and collecting the above estimates,
for any small $\eta>0$ and a given constant $\kappa_4>0$, we deduce  that
	\begin{align}
		\label{4.51}
		&\frac{d}{d\tau}\Big\{\int_{\mathbb{R}}\big(\frac{2\bar{\theta}}{3\bar{\rho}\rho}|\widetilde{\rho}_{y}|^{2}
		+|\widetilde{u}_{y}|^{2}+\frac{1}{\theta}|\widetilde{\theta}_{y}|^{2}\big)\,dy
		+\varepsilon^{2b-2a}\|\frac{1}{\sqrt{\rho}}\widetilde{\phi}_{yy}\|^{2}+\big(\frac{1}{\rho}\widetilde{\phi}_{y},\rho'_{\mathrm{e}}(\bar{\phi})\widetilde{\phi}_{y}\big)\Big\}
		\notag\\
		&\hspace{1cm}-\frac{d}{d\tau}N_2(\tau)+\kappa_4\varepsilon^{1-a}\|(\widetilde{u}_{yy},\widetilde{\theta}_{yy})\|^{2} 
		\notag\\
		&\leq C\eta\varepsilon^{1-a}\|(\widetilde{\rho}_{yy},\widetilde{u}_{yy},\widetilde{\theta}_{yy},\widetilde{\phi}_{yy})\|^{2}
		+C_{\eta}\varepsilon^{1-a}\|f_{yy}\|^{2}_{\sigma}\notag\\
		&\quad+C(k^{\frac{1}{12}}\varepsilon^{\frac{3}{5}-\frac{2}{5}a}+k^{\frac{1}{12}}\varepsilon^{\frac{3}{5}a-\frac{2}{5}})\mathcal{D}_{2,l,q}(\tau)
		\notag\\
		&\quad
		+C\varepsilon^{\frac{7}{5}+\frac{1}{15}a}(\delta+\varepsilon^{a}\tau)^{-\frac{4}{3}},
	\end{align}
	where we have used \eqref{4.49} whose proof will be postponed to Lemma \ref{lem.6.2} later. 
	
	\medskip
	\noindent{{\bf Step 2.}}
To get the dissipation term for $\widetilde{\rho}_{yy}$. Differentiating the second equation of \eqref{4.31} with respect to $y$
	and taking the inner product of the resulting equation with $\varepsilon^{1-a}\widetilde{\rho}_{yy}$, we have
	\begin{eqnarray}
		\label{6.14b}
		&&\varepsilon^{1-a}(\widetilde{u}_{1y\tau}+(\widetilde{u}_{1}u_{1y})_{y}+(\bar{u}_{1}\widetilde{u}_{1y})_{y}
		+\frac{2\bar{\theta}}{3\bar{\rho}}\widetilde{\rho}_{yy}
		+(\frac{2\bar{\theta}}{3\bar{\rho}})_{y}\widetilde{\rho}_{y},\widetilde{\rho}_{yy})
		\notag\\
		&&=\varepsilon^{1-a}(-\frac{2}{3}\widetilde{\theta}_{yy}
		-(\frac{2}{3}\rho_{y}\frac{\bar{\rho}\widetilde{\theta}-\widetilde{\rho}\bar{\theta}}
		{\rho\bar{\rho}})_{y}-(\frac{1}{\rho}\int_{\mathbb{R}^{3}} v^{2}_{1}G_{y}\,dv)_{y},\widetilde{\rho}_{yy})\notag\\
		&&\quad-\varepsilon^{1-a}(\widetilde{\phi}_{yy},\widetilde{\rho}_{yy}).
	\end{eqnarray}
Using the analogous	calculations as \eqref{6.14b} and \eqref{4.81a} whose proof will be postponed to Lemma \ref{lem.6.3} later,
it is straightforward to check that
	\begin{eqnarray}
		\label{4.53}
		&&\varepsilon^{1-a}(\widetilde{u}_{1y},\widetilde{\rho}_{yy})_{\tau}+\kappa_{5}\varepsilon^{1-a}(\|\widetilde{\rho}_{yy}\|^{2}
		+\|\widetilde{\phi}_{yy}\|^{2}+\varepsilon^{2b-2a}\|\widetilde{\phi}_{yyy}\|^{2})
		\notag\\
		&&\leq C\varepsilon^{1-a}(\|(\widetilde{u}_{yy},\widetilde{\theta}_{yy})\|^{2}
		+\|f_{yy}\|_{\sigma}^{2})+Ck^{\frac{1}{12}}\varepsilon^{\frac{3}{5}-\frac{2}{5}a}\mathcal{D}_{2,l,q}(\tau)\notag\\
		&&\quad+C\varepsilon^{\frac{7}{5}+\frac{1}{15}a}(\delta+\varepsilon^{a}\tau)^{-\frac{4}{3}},
	\end{eqnarray}
for $k_5>0$. Here the details are omitted for brevity.
	
	\medskip
	\noindent{{\bf Step 3.}}
	In summary, adding \eqref{4.51} to \eqref{4.53}$\times\kappa_{6}$ with
	$C\kappa_6<\frac{1}{4}\kappa_4$, then choosing $\eta>0$ small enough such that $C\eta<\frac{1}{4}\kappa_{5}\kappa_{6}$, we can get \eqref{4.57}.
	This hence completes the proof of Lemma \ref{lem6.1}.
\end{proof}

For deducing \eqref{4.51} in the proof of Lemma \ref{lem6.1} above, the following estimate has been used.
\begin{lemma}\label{lem.6.2}
	It holds that
	\begin{eqnarray}
		\label{4.49}
		(\widetilde{\phi}_{yy},\widetilde{u}_{1y})
		\geq&& \frac{1}{2}\varepsilon^{2b-2a}\frac{d}{d\tau}\|\frac{1}{\sqrt{\rho}}\widetilde{\phi}_{yy}\|^{2}+ \frac{1}{2}\frac{d}{d\tau}(\frac{1}{\rho}\widetilde{\phi}_{y},\rho'_{\mathrm{e}}(\bar{\phi})\widetilde{\phi}_{y})
		\notag\\
		&&-Ck^{\frac{1}{12}}\varepsilon^{\frac{3}{5}a-\frac{2}{5}}\mathcal{D}_{2,l,q}(\tau)
		-C\varepsilon^{\frac{7}{5}+\frac{1}{15}a}(\delta+\varepsilon^{a}\tau)^{-\frac{4}{3}}.
	\end{eqnarray}	
\end{lemma}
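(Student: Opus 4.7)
\medskip
\noindent\textbf{Proposed proof of Lemma \ref{lem.6.2}.}
The plan is to trade $\widetilde{u}_{1y}$ for a time derivative of $\widetilde{\rho}$ via the continuity equation, then convert $\widetilde{\rho}_\tau$ into time derivatives of $\widetilde{\phi}_{yy}$ and $\widetilde{\phi}$ via the Poisson equation, so that the two leading $\frac{d}{d\tau}$-structures on the right hand side of \eqref{4.49} emerge naturally. Concretely, from the first equation of \eqref{3.10} one has
\begin{equation*}
\widetilde{u}_{1y}=-\frac{1}{\rho}\widetilde{\rho}_\tau-\frac{1}{\rho}\bigl(\rho_y\widetilde{u}_1+\bar{u}_1\widetilde{\rho}_y+\bar{u}_{1y}\widetilde{\rho}\bigr),
\end{equation*}
while the last equation of \eqref{3.10} yields
\begin{equation*}
\widetilde{\rho}_\tau=-\varepsilon^{2b-2a}\widetilde{\phi}_{yy\tau}-\varepsilon^{2b-2a}\bar{\phi}_{yy\tau}+\bigl[\rho_{\mathrm{e}}(\phi)-\rho_{\mathrm{e}}(\bar{\phi})\bigr]_\tau.
\end{equation*}
Inserting these into $(\widetilde{\phi}_{yy},\widetilde{u}_{1y})$ produces three principal contributions plus lower-order ones.

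For the first contribution, a direct computation gives
\begin{equation*}
\varepsilon^{2b-2a}\bigl(\widetilde{\phi}_{yy},\tfrac{1}{\rho}\widetilde{\phi}_{yy\tau}\bigr)=\tfrac{1}{2}\varepsilon^{2b-2a}\tfrac{d}{d\tau}\|\tfrac{1}{\sqrt{\rho}}\widetilde{\phi}_{yy}\|^2-\tfrac{1}{2}\varepsilon^{2b-2a}\bigl(\widetilde{\phi}_{yy}^2,(\tfrac{1}{\rho})_\tau\bigr),
\end{equation*}
where the leftover $(\frac{1}{\rho})_\tau$ piece is controlled by $\|\rho_\tau\|_{L^\infty_y}\varepsilon^{2b-2a}\|\widetilde{\phi}_{yy}\|^2$; using \eqref{5.26b} together with the $\varepsilon^{2b-2a}\|\widetilde{\phi}_{yy}\|^2$-piece of $\mathcal{E}_{2,l,q}$ from \eqref{3.17} and the a priori bound \eqref{3.22} absorbs it into $Ck^{\frac{1}{12}}\varepsilon^{\frac{3}{5}a-\frac{2}{5}}\mathcal{D}_{2,l,q}(\tau)$. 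For the second contribution, I Taylor-expand $\rho_{\mathrm{e}}(\phi)-\rho_{\mathrm{e}}(\bar{\phi})=\rho'_{\mathrm{e}}(\bar{\phi})\widetilde{\phi}+\mathcal{O}(\widetilde{\phi}^2)$ as in \eqref{4.34}, differentiate in $\tau$, and integrate by parts in $y$:
\begin{equation*}
-\bigl(\widetilde{\phi}_{yy},\tfrac{\rho'_{\mathrm{e}}(\bar{\phi})}{\rho}\widetilde{\phi}_\tau\bigr)=\bigl(\widetilde{\phi}_y,(\tfrac{\rho'_{\mathrm{e}}(\bar{\phi})}{\rho})_y\widetilde{\phi}_\tau+\tfrac{\rho'_{\mathrm{e}}(\bar{\phi})}{\rho}\widetilde{\phi}_{y\tau}\bigr)=\tfrac{1}{2}\tfrac{d}{d\tau}\bigl(\tfrac{1}{\rho}\widetilde{\phi}_y,\rho'_{\mathrm{e}}(\bar{\phi})\widetilde{\phi}_y\bigr)+\mathrm{L.O.T.},
\end{equation*}
where the lower-order terms involve either $\bar{\phi}_\tau,\bar{\rho}_\tau,\rho''_{\mathrm{e}}(\bar{\phi})\bar{\phi}_\tau\widetilde{\phi}$ (handled by Lemma \ref{lem7.2}) or cubic nonlinearities of $\widetilde{\phi}$ (handled exactly as $J_6$ in Section \ref{sec.5}), and can be absorbed into the two dissipative error terms on the right of \eqref{4.49} via \eqref{4.56} and the embedding inequality.

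The third principal contribution is $\varepsilon^{2b-2a}(\widetilde{\phi}_{yy},\frac{1}{\rho}\bar{\phi}_{yy\tau})$; this is the main obstacle because $\bar{\phi}_{yy\tau}$ only has the slow decay $\|\bar{\phi}_{yy\tau}\|\lesssim\varepsilon^{\frac{5a}{2}}\delta^{-\frac{3}{2}}(\delta+\varepsilon^a\tau)^{-1}$ from Lemma \ref{lem7.2}. I plan to handle it exactly in the spirit of \eqref{4.23} and \eqref{4.36a}: split
\begin{equation*}
\varepsilon^{2b-2a}|(\widetilde{\phi}_{yy},\tfrac{1}{\rho}\bar{\phi}_{yy\tau})|\leq\eta\varepsilon^{1-a}\|\widetilde{\phi}_{yy}\|^2+C_\eta\varepsilon^{a-1+2(2b-2a)}\|\bar{\phi}_{yy\tau}\|_{L^1}^{4/3}\|\widetilde{\phi}_{yy}\|^{1/2}\cdots,
\end{equation*}
and then balance the resulting $\varepsilon$-powers against $\delta=k^{-1}\varepsilon^{\frac{3}{5}-\frac{2}{5}a}$, using precisely the constraint $4-5b\leq a$ from \eqref{3.2a} (cf.\ \eqref{4.25a}) to conclude the bound $C_\eta\varepsilon^{\frac{7}{5}+\frac{a}{15}}(\delta+\varepsilon^a\tau)^{-\frac{4}{3}}$. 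Finally, the remaining pieces $-(\widetilde{\phi}_{yy},\frac{1}{\rho}(\rho_y\widetilde{u}_1+\bar{u}_1\widetilde{\rho}_y+\bar{u}_{1y}\widetilde{\rho}))$ are cubic-type terms that, after integration by parts once in $y$ to move a derivative from $\widetilde{\phi}_{yy}$ onto the smooth coefficients, are routinely absorbed by $\mathcal{D}_{2,l,q}(\tau)$ times a small $\varepsilon$-power, using Lemma \ref{lem7.2}, \eqref{3.21}, \eqref{3.22}, and the embedding $\|h\|_{L^\infty}\lesssim\|h\|^{1/2}\|h_y\|^{1/2}$. Combining all contributions gives the desired lower bound \eqref{4.49}; the delicate point throughout is that every absorbed term must be pushed into the two specific right-hand-side types, which is why the $4-5b\leq a$ constraint appears essential precisely in the $\bar{\phi}_{yy\tau}$-step.
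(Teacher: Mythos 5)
Your overall skeleton is the same as the paper's: express $\widetilde{u}_{1y}$ through the continuity equation, convert $\widetilde{\rho}_\tau$ via the Poisson equation, and extract the two $\frac{d}{d\tau}$-structures from the $\widetilde{\phi}_{yy\tau}$-term and from the linear part of the Taylor expansion of $\rho_{\mathrm e}(\phi)-\rho_{\mathrm e}(\bar\phi)$; the $(\tfrac1\rho)_\tau$-commutator and the slowly decaying $\bar\phi_{yy\tau}$-term are also handled in essentially the paper's way. However, there is a genuine gap in your treatment of the leftover convective term $-(\tfrac{\bar u_1}{\rho}\widetilde{\phi}_{yy},\widetilde{\rho}_y)$. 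This is a purely quadratic term with an $O(1)$ coefficient $\bar u_1$: a direct Cauchy--Schwarz bound gives $C\|\widetilde{\phi}_{yy}\|\,\|\widetilde{\rho}_y\|\leq C\varepsilon^{a-1}\mathcal{D}_{2,l,q}(\tau)$ with \emph{no} small prefactor, which cannot be absorbed into the error $Ck^{\frac{1}{12}}\varepsilon^{\frac{3}{5}a-\frac{2}{5}}\mathcal{D}_{2,l,q}(\tau)$ allowed by \eqref{4.49} (recall the lemma's right-hand side admits no $\eta$-small dissipation allowance). Your proposed fix, one integration by parts "onto the smooth coefficients," does not help: the derivative either lands on $\bar u_1/\rho$ (fine) or on $\widetilde{\rho}_y$, producing $(\widetilde{\phi}_y,\tfrac{\bar u_1}{\rho}\widetilde{\rho}_{yy})$, which is again an $O(1)$-coefficient quadratic term of size $\varepsilon^{a-1}\mathcal{D}_{2,l,q}(\tau)$. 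The missing idea is a \emph{second} use of the Poisson equation, now for $\widetilde{\rho}_y$: substituting $\widetilde{\rho}_y=\varepsilon^{2b-2a}(\widetilde{\phi}_{yyy}+\bar\phi_{yyy})+[\rho_{\mathrm e}(\bar\phi)-\rho_{\mathrm e}(\phi)]_y$ turns the dangerous products into $\widetilde{\phi}_{yy}\widetilde{\phi}_{yyy}$ and $\widetilde{\phi}_{yy}\widetilde{\phi}_y$, which are exact derivatives up to coefficients, so that after integration by parts the derivative falls on $[\bar u_1/\rho]_y$ or $[\rho'_{\mathrm e}(\bar\phi)\bar u_1/\rho]_y$ (small and decaying by Lemma \ref{lem7.2}), while the $\bar\phi_{yyy}$-piece decays; only then does the term fit the two admissible error types.

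Two smaller points. First, the constraint you invoke for the $\bar\phi_{yy\tau}$-step is not the relevant one: the balancing there requires $a\geq\frac{21-10b}{24}$ as in \eqref{4.68b} (automatically implied by $a\geq\frac23$), whereas $4-5b\leq a$ from \eqref{4.25a} is the constraint needed for the zeroth-order term $\varepsilon^{2b-2a}(\widetilde{\phi},\bar\phi_{yy\tau})$ in Lemma \ref{lem.5.2A}, not here. Second, your displayed splitting for that step introduces an $\eta\varepsilon^{1-a}\|\widetilde{\phi}_{yy}\|^2$ allowance, which \eqref{4.49} as stated does not permit; you should instead take the prefactor $k^{\frac{1}{12}}\varepsilon^{\frac{3}{5}a-\frac{2}{5}}$ in front of $\varepsilon^{1-a}\|\widetilde{\phi}_{yy}\|^2$ so the term is genuinely of the form $Ck^{\frac{1}{12}}\varepsilon^{\frac{3}{5}a-\frac{2}{5}}\mathcal{D}_{2,l,q}(\tau)$.
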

\begin{proof}
	In view of the first equation of \eqref{3.10}, it is straightforward to get
	\begin{equation}
		\label{4.60b}
		(\widetilde{\phi}_{yy},\widetilde{u}_{1y})=	-(\frac{1}{\rho}\widetilde{\phi}_{yy},\widetilde{\rho}_{\tau})-(\frac{1}{\rho}\widetilde{\phi}_{yy},
		\rho_{y} \widetilde{u}_{1}+\bar{u}_{1y}\widetilde{\rho})-(\frac{1}{\rho}\widetilde{\phi}_{yy},\bar{u}_{1}\widetilde{\rho}_{y}).
	\end{equation}
	For the first term on the right hand side of \eqref{4.60b}, we use the last equation of \eqref{3.10} to write
	$$
	-(\frac{1}{\rho}\widetilde{\phi}_{yy},\widetilde{\rho}_{\tau})=(\frac{1}{\rho}\widetilde{\phi}_{yy},
	\varepsilon^{2b-2a}(\widetilde{\phi}_{yy\tau}+\bar{\phi}_{yy\tau})
	+[\rho_{\mathrm{e}}(\bar{\phi})-\rho_{\mathrm{e}}(\phi)]_{\tau}).
	$$
According to  the first equation of \eqref{3.10}, Lemma \ref{lem7.2}, \eqref{3.21}, \eqref{3.22} and \eqref{3.18}, one has
	\begin{eqnarray}
		\label{4.60aa}
		\varepsilon^{2b-2a}|([\frac{1}{\rho}]_{\tau}\widetilde{\phi}_{yy},\widetilde{\phi}_{yy})|
		&&\leq C\varepsilon^{2b-2a}(\|\bar{\rho}_{\tau}\|_{L_{y}^{\infty}}\|\widetilde{\phi}_{yy}\|^{2}
		+\|\widetilde{\rho}_{\tau}\|\|\widetilde{\phi}_{yy}\|\|\widetilde{\phi}_{yy}\|_{L_{y}^{\infty}})
		\notag\\
		&&\leq C\varepsilon^{a-1}(\|\bar{\rho}_{\tau}\|_{L_{y}^{\infty}}
		+\|(\rho\widetilde{u}_{1})_{y}+(\bar{u}_{1}\widetilde{\rho})_y\|)\mathcal{D}_{2,l,q}(\tau)
		\notag\\
		&&\leq C(k^{\frac{1}{12}}\varepsilon^{\frac{3}{5}a-\frac{2}{5}}+k\varepsilon^{\frac{12}{5}a-\frac{8}{5}})\mathcal{D}_{2,l,q}(\tau)
		\notag\\
		&&\leq Ck^{\frac{1}{12}}\varepsilon^{\frac{3}{5}a-\frac{2}{5}}\mathcal{D}_{2,l,q}(\tau),
	\end{eqnarray}
which yields immediately that
	\begin{eqnarray}
		\label{4.67-2}
		&&\varepsilon^{2b-2a}(\frac{1}{\rho}\widetilde{\phi}_{yy},\widetilde{\phi}_{yy\tau})\notag\\
		&&=\frac{1}{2}\varepsilon^{2b-2a}(\frac{1}{\rho}\widetilde{\phi}_{yy},\widetilde{\phi}_{yy})_{\tau}
		-\frac{1}{2}\varepsilon^{2b-2a}([\frac{1}{\rho}]_{\tau}\widetilde{\phi}_{yy},\widetilde{\phi}_{yy})
		\notag\\
		&&\geq \frac{1}{2}\varepsilon^{2b-2a}\frac{d}{d\tau}\|\frac{1}{\sqrt{\rho}}\widetilde{\phi}_{yy}\|^{2}
		-Ck^{\frac{1}{12}}\varepsilon^{\frac{3}{5}a-\frac{2}{5}}\mathcal{D}_{2,l,q}(\tau).
	\end{eqnarray}
	On the other hand, it holds that
	\begin{eqnarray}
		\label{4.61a}
		&&\varepsilon^{2b-2a}|(\frac{1}{\rho}\widetilde{\phi}_{yy},\bar{\phi}_{yy\tau})|\notag\\
		&&\leq C\varepsilon^{2b-2a}
		\big\{k^{\frac{1}{12}}\varepsilon^{\frac{3}{5}a-\frac{2}{5}}\varepsilon^{1-a}\|\widetilde{\phi}_{yy}\|^{2}
		+k^{-\frac{1}{12}}\varepsilon^{\frac{2}{5}-\frac{3}{5}a}\varepsilon^{a-1}\|\bar{\phi}_{yy\tau}\|^{2}\big\}
		\notag\\
		&&\leq Ck^{\frac{1}{12}}\varepsilon^{\frac{3}{5}a-\frac{2}{5}}\mathcal{D}_{2,l,q}(\tau)
		+Ck^{-\frac{1}{12}}\varepsilon^{2b+\frac{17}{5}a-\frac{3}{5}}
		(\frac{1}{k}\varepsilon^{\frac{3}{5}-\frac{2}{5}a})^{-\frac{11}{3}}(\delta+\varepsilon^{a}\tau)^{-\frac{4}{3}}
		\notag\\
		&&\leq Ck^{\frac{1}{12}}\varepsilon^{\frac{3}{5}a-\frac{2}{5}}\mathcal{D}_{2,l,q}(\tau)
		+C\varepsilon^{\frac{7}{5}+\frac{1}{15}a}(\delta+\varepsilon^{a}\tau)^{-\frac{4}{3}},
	\end{eqnarray}
	where in the last inequality we have required that
	\begin{equation}
		\label{4.68b}
		\varepsilon^{2b+\frac{17}{5}a-\frac{3}{5}}
		(\varepsilon^{\frac{3}{5}-\frac{2}{5}a})^{-\frac{11}{3}}\leq \varepsilon^{\frac{7}{5}+\frac{1}{15}a}, \quad \mbox{that~~is} \quad a\geq \frac{21-10b}{24}.
	\end{equation}
	In light of \eqref{4.34}, the following identity holds true
	$$
	(\frac{1}{\rho}\widetilde{\phi}_{yy},[\rho_{\mathrm{e}}(\bar{\phi})-\rho_{\mathrm{e}}(\phi)]_{\tau})
	=-(\frac{1}{\rho}\widetilde{\phi}_{yy},\rho'_{\mathrm{e}}(\bar{\phi})\widetilde{\phi}_{\tau})
	-(\frac{1}{\rho}\widetilde{\phi}_{yy},\rho''_{\mathrm{e}}(\bar{\phi})\bar{\phi}_{\tau}\widetilde{\phi})
	+(\frac{1}{\rho}\widetilde{\phi}_{yy},J_{7\tau}).
	$$
	By Lemma \ref{lem7.2}, \eqref{3.21}, \eqref{3.22}, \eqref{3.18}, \eqref{5.26b} and \eqref{4.38}, we deduce that
	\begin{eqnarray*}
		|(\frac{1}{\rho}\widetilde{\phi}_{yy},\rho''_{\mathrm{e}}(\bar{\phi})\bar{\phi}_{\tau}\widetilde{\phi})|
		&&\leq C\|\widetilde{\phi}\|(\|\widetilde{\phi}_{yy}\|^{2}+\|\bar{\phi}_{\tau}\|^{2}_{L_{y}^{\infty}})
		\notag\\
		&&\leq Ck^{\frac{1}{12}}\varepsilon^{\frac{3}{5}a-\frac{2}{5}}\mathcal{D}_{2,l,q}(\tau)
		+C\varepsilon^{\frac{7}{5}+\frac{1}{15}a}(\delta+\varepsilon^{a}\tau)^{-\frac{4}{3}},
	\end{eqnarray*}
	and
	\begin{eqnarray*}
		&&-(\frac{1}{\rho}\widetilde{\phi}_{yy},\rho'_{\mathrm{e}}(\bar{\phi})\widetilde{\phi}_{\tau})\\
		=&&
		\frac{1}{2}\frac{d}{d\tau}(\frac{1}{\rho}\widetilde{\phi}_{y},\rho'_{\mathrm{e}}(\bar{\phi})\widetilde{\phi}_{y})
		-\frac{1}{2}(\widetilde{\phi}_{y},[\frac{1}{\rho}\rho'_{\mathrm{e}}(\bar{\phi})]_{\tau}\widetilde{\phi}_{y})
		+(\widetilde{\phi}_{y},[\frac{1}{\rho}\rho'_{\mathrm{e}}(\bar{\phi})]_{y}\widetilde{\phi}_{\tau})
		\notag\\
		\geq&& \frac{1}{2}\frac{d}{d\tau}(\frac{1}{\rho}\widetilde{\phi}_{y},\rho'_{\mathrm{e}}(\bar{\phi})\widetilde{\phi}_{y})
		-Ck^{\frac{1}{12}}\varepsilon^{\frac{3}{5}a-\frac{2}{5}}\mathcal{D}_{2,l,q}(\tau).
	\end{eqnarray*}
	The similar calculations as the above and \eqref{4.35} leads us to
	\begin{eqnarray*}
		|(\frac{1}{\rho}\widetilde{\phi}_{yy},J_{7\tau})|&&\leq|(\frac{1}{\rho}\widetilde{\phi}_{yy},\phi_{\tau}\widetilde{\phi})|+|(\frac{1}{\rho}\widetilde{\phi}_{yy},\bar{\phi}_{\tau}\widetilde{\phi})|
		\notag\\
		&&\leq Ck^{\frac{1}{12}}\varepsilon^{\frac{3}{5}a-\frac{2}{5}}\mathcal{D}_{2,l,q}(\tau)
		+C\varepsilon^{\frac{7}{5}+\frac{1}{15}a}(\delta+\varepsilon^{a}\tau)^{-\frac{4}{3}}.
	\end{eqnarray*}
	It follows from the above four estimates that
	\begin{multline}
		\label{4.71b}
		(\frac{1}{\rho}\widetilde{\phi}_{yy},[\rho_{\mathrm{e}}(\bar{\phi})-\rho_{\mathrm{e}}(\phi)]_{\tau})\\
		\geq \frac{1}{2}\frac{d}{d\tau}(\frac{1}{\rho}\widetilde{\phi}_{y},\rho'_{\mathrm{e}}(\bar{\phi})\widetilde{\phi}_{y})
		-Ck^{\frac{1}{12}}\varepsilon^{\frac{3}{5}a-\frac{2}{5}}\mathcal{D}_{2,l,q}(\tau)
		-C\varepsilon^{\frac{7}{5}+\frac{1}{15}a}(\delta+\varepsilon^{a}\tau)^{-\frac{4}{3}}.
	\end{multline}
	Combining \eqref{4.67-2}, \eqref{4.61a} and \eqref{4.71b}, one has
	\begin{eqnarray*}
		-(\frac{1}{\rho}\widetilde{\phi}_{yy},\widetilde{\rho}_{\tau})\geq&&
		\frac{1}{2}\varepsilon^{2b-2a}\frac{d}{d\tau}\|\frac{1}{\sqrt{\rho}}\widetilde{\phi}_{yy}\|^{2}+ \frac{1}{2}\frac{d}{d\tau}\big(\frac{1}{\rho}\widetilde{\phi}_{y},\rho'_{\mathrm{e}}(\bar{\phi})\widetilde{\phi}_{y}\big)
		\notag\\
		&&-Ck^{\frac{1}{12}}\varepsilon^{\frac{3}{5}a-\frac{2}{5}}\mathcal{D}_{2,l,q}(\tau)
		-C\varepsilon^{\frac{7}{5}+\frac{1}{15}a}(\delta+\varepsilon^{a}\tau)^{-\frac{4}{3}}.
	\end{eqnarray*}
	The second term on the right hand side of \eqref{4.60b} are bounded by
$$|(\frac{1}{\rho}\widetilde{\phi}_{yy},
		\rho_{y} \widetilde{u}_{1}+\bar{u}_{1y}\widetilde{\rho})|
		\leq Ck^{\frac{1}{12}}\varepsilon^{\frac{3}{5}a-\frac{2}{5}}\mathcal{D}_{2,l,q}(\tau)
		+C\varepsilon^{\frac{7}{5}+\frac{1}{15}a}(\delta+\varepsilon^{a}\tau)^{-\frac{4}{3}}.
		$$
To estimate	the last term of \eqref{4.60b}, we first use the last equation of \eqref{3.10} to write
	\begin{equation*}
		-(\frac{\bar{u}_{1}}{\rho}\widetilde{\phi}_{yy},\widetilde{\rho}_{y})
		=(\frac{\bar{u}_{1}}{\rho}\widetilde{\phi}_{yy},\varepsilon^{2b-2a}[\widetilde{\phi}_{yyy}
		+\bar{\phi}_{yyy}]+[\rho_{\mathrm{e}}(\bar{\phi})-\rho_{\mathrm{e}}(\phi)]_{y}).
	\end{equation*}
	
	Using the integration by parts and the similar calculations  as \eqref{4.60aa} and \eqref{4.61a} respectively, one has
	\begin{equation*}
		\varepsilon^{2b-2a}|(\frac{\bar{u}_{1}}{\rho}\widetilde{\phi}_{yy},\widetilde{\phi}_{yyy})|
		=\frac{1}{2}\varepsilon^{2b-2a}|([\frac{\bar{u}_{1}}{\rho}]_{y}\widetilde{\phi}_{yy},\widetilde{\phi}_{yy})|
		\leq Ck^{\frac{1}{12}}\varepsilon^{\frac{3}{5}a-\frac{2}{5}}\mathcal{D}_{2,l,q}(\tau),
	\end{equation*}
	and
	\begin{equation*}
		\varepsilon^{2b-2a}|(\frac{\bar{u}_{1}}{\rho}\widetilde{\phi}_{yy},\bar{\phi}_{yyy})|
		\leq Ck^{\frac{1}{12}}\varepsilon^{\frac{3}{5}a-\frac{2}{5}}\mathcal{D}_{2,l,q}(\tau)
		+C\varepsilon^{\frac{7}{5}+\frac{1}{15}a}(\delta+\varepsilon^{a}\tau)^{-\frac{4}{3}}.
	\end{equation*}
	In addition, we have by applying \eqref{4.34} that
	\begin{eqnarray*}
		|(\frac{\bar{u}_{1}}{\rho}\widetilde{\phi}_{yy},[\rho_{\mathrm{e}}(\bar{\phi})-\rho_{\mathrm{e}}(\phi)]_{y})|
		&&=|-(\frac{\bar{u}_{1}}{\rho}\widetilde{\phi}_{yy},[\rho'_{\mathrm{e}}(\bar{\phi})\widetilde{\phi}]_{y})
		+(\frac{\bar{u}_{1}}{\rho}\widetilde{\phi}_{yy},J_{7y})|
		\notag\\
		&&\leq Ck^{\frac{1}{12}}\varepsilon^{\frac{3}{5}a-\frac{2}{5}}\mathcal{D}_{2,l,q}(\tau)
		+C\varepsilon^{\frac{7}{5}+\frac{1}{15}a}(\delta+\varepsilon^{a}\tau)^{-\frac{4}{3}}.
	\end{eqnarray*}
	With the help of the above estimates, we conclude that
	\begin{equation*}
		|(\frac{\bar{u}_{1}}{\rho}\widetilde{\phi}_{yy},\widetilde{\rho}_{y})|
		\leq Ck^{\frac{1}{12}}\varepsilon^{\frac{3}{5}a-\frac{2}{5}}\mathcal{D}_{2,l,q}(\tau)
		+C\varepsilon^{\frac{7}{5}+\frac{1}{15}a}(\delta+\varepsilon^{a}\tau)^{-\frac{4}{3}}.
	\end{equation*}
	Hence, substituting all the above estimates into \eqref{4.60b}, the desired estimate \eqref{4.49} follows.
	This completes the proof of Lemma \ref{lem.6.2}.
\end{proof}
For deducing \eqref{4.53} in the proof of Lemma \ref{lem6.1} above, we have used the following estimate.
\begin{lemma}\label{lem.6.3}
	It holds that
	\begin{eqnarray}
		\label{4.81a}
		-\varepsilon^{1-a}(\widetilde{\phi}_{yy},\widetilde{\rho}_{yy})\leq&&
		-\frac{1}{2}\varepsilon^{1-a}\varepsilon^{2b-2a}(\widetilde{\phi}_{yyy},\widetilde{\phi}_{yyy})
		-\varepsilon^{1-a}(\widetilde{\phi}_{yy},\rho'_{\mathrm{e}}(\bar{\phi})\widetilde{\phi}_{yy})
		\notag\\
		&&+Ck^{\frac{1}{12}}\varepsilon^{\frac{3}{5}-\frac{2}{5}a}\mathcal{D}_{2,l,q}(\tau)
		+C\varepsilon^{\frac{7}{5}+\frac{1}{15}a}(\delta+\varepsilon^{a}\tau)^{-\frac{4}{3}}.
	\end{eqnarray}	
\end{lemma}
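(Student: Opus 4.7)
The strategy is to eliminate $\widetilde{\rho}_{yy}$ using the Poisson equation (the last line of \eqref{3.10}) so as to convert the indefinite pairing $-\varepsilon^{1-a}(\widetilde{\phi}_{yy},\widetilde{\rho}_{yy})$ into a coercive dissipation in $\widetilde{\phi}_{yyy}$ and $\widetilde{\phi}_{yy}$, plus error terms that fit into the estimates already established. Differentiating $-\varepsilon^{2b-2a}\phi_{yy}=\widetilde{\rho}+\rho_{\mathrm{e}}(\bar{\phi})-\rho_{\mathrm{e}}(\phi)$ twice in $y$ I get
\begin{equation*}
\widetilde{\rho}_{yy}=-\varepsilon^{2b-2a}\widetilde{\phi}_{yyyy}-\varepsilon^{2b-2a}\bar{\phi}_{yyyy}-[\rho_{\mathrm{e}}(\bar{\phi})-\rho_{\mathrm{e}}(\phi)]_{yy},
\end{equation*}
and I substitute into the left-hand side of \eqref{4.81a}.

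The first resulting term is $\varepsilon^{1-a}\varepsilon^{2b-2a}(\widetilde{\phi}_{yy},\widetilde{\phi}_{yyyy})$. One integration by parts in $y$ turns this into $-\varepsilon^{1-a}\varepsilon^{2b-2a}\|\widetilde{\phi}_{yyy}\|^{2}$, which is the source of the principal dissipation; I will keep half of it and use the other half to absorb the cross term $-\varepsilon^{1-a}\varepsilon^{2b-2a}(\widetilde{\phi}_{yyy},\bar{\phi}_{yyy})$ (obtained after integrating the $\bar{\phi}_{yyyy}$-contribution by parts once) via Cauchy--Schwarz. The leftover piece $\tfrac12\varepsilon^{1-a}\varepsilon^{2b-2a}\|\bar{\phi}_{yyy}\|^{2}$ is then estimated by Lemma \ref{lem7.2} in the same spirit as \eqref{4.61a}: writing $\delta^{-3}(\delta+\varepsilon^{a}\tau)^{-2}\leq \delta^{-11/3}(\delta+\varepsilon^{a}\tau)^{-4/3}$ and using $\delta=k^{-1}\varepsilon^{\frac{3}{5}-\frac{2}{5}a}$ together with the range \eqref{3.2a} (equivalently the constraint \eqref{4.68b}) yields the bound $C\varepsilon^{\frac{7}{5}+\frac{1}{15}a}(\delta+\varepsilon^{a}\tau)^{-\frac{4}{3}}$.

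For the remaining contribution $\varepsilon^{1-a}(\widetilde{\phi}_{yy},[\rho_{\mathrm{e}}(\bar{\phi})-\rho_{\mathrm{e}}(\phi)]_{yy})$ I expand via the Taylor identity \eqref{4.34}, namely $\rho_{\mathrm{e}}(\bar{\phi})-\rho_{\mathrm{e}}(\phi)=-\rho'_{\mathrm{e}}(\bar{\phi})\widetilde{\phi}+J_{7}$ with $J_{7}\sim\widetilde{\phi}^{2}$ as in \eqref{4.35}, and apply the product rule:
\begin{equation*}
[\rho_{\mathrm{e}}(\bar{\phi})-\rho_{\mathrm{e}}(\phi)]_{yy}=-\rho'_{\mathrm{e}}(\bar{\phi})\widetilde{\phi}_{yy}-2[\rho'_{\mathrm{e}}(\bar{\phi})]_{y}\widetilde{\phi}_{y}-[\rho'_{\mathrm{e}}(\bar{\phi})]_{yy}\widetilde{\phi}+J_{7,yy}.
\end{equation*}
The first term produces exactly $-\varepsilon^{1-a}(\widetilde{\phi}_{yy},\rho'_{\mathrm{e}}(\bar{\phi})\widetilde{\phi}_{yy})$ on the right of \eqref{4.81a}. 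The two middle terms couple $\widetilde{\phi}_{yy}$ (in the dissipation $\mathcal{D}_{2,l,q}$) with $\widetilde{\phi}_{y},\widetilde{\phi}$ multiplied by derivatives of $\bar{\phi}$; Cauchy--Schwarz plus Lemma \ref{lem7.2}, together with the a priori bound \eqref{3.22} and the smallness $\|\widetilde{\phi}\|_{L^\infty_y}\lesssim k^{1/12}\varepsilon^{3/5-2a/5}$, absorbs them into $Ck^{1/12}\varepsilon^{3/5-2a/5}\mathcal{D}_{2,l,q}(\tau)+C\varepsilon^{7/5+a/15}(\delta+\varepsilon^{a}\tau)^{-4/3}$ in full analogy with \eqref{4.23}. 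For $J_{7,yy}\sim \widetilde{\phi}\,\widetilde{\phi}_{yy}+|\widetilde{\phi}_{y}|^{2}+\bar{\phi}_{y}\widetilde{\phi}\widetilde{\phi}_{y}+\cdots$, the same combination of the $L^\infty$ bound on $\widetilde{\phi}$ and the dissipative control of $\|\widetilde{\phi}_{yy}\|$ yields the same type of bound.

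The main obstacle is the $\bar{\phi}_{yyyy}$ term, which is where the restrictions on $(a,b)$ really come in: one must track the $\delta$-powers carefully to ensure that the rate $\varepsilon^{7/5+a/15}(\delta+\varepsilon^{a}\tau)^{-4/3}$ is attainable on the full parameter set $\mathcal{S}$ of \eqref{def.ab}, and this is precisely why the condition $a\geq 4-5b$ (or equivalently the inequality in \eqref{4.68b}) has to be imposed. Once these background-profile estimates are in place, the remaining purely perturbative pieces are routine and the lemma follows by summing the contributions and rearranging.
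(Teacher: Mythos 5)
Your argument follows essentially the same route as the paper's proof of Lemma \ref{lem.6.3}: substitute $\widetilde{\rho}_{yy}$ via the (twice-differentiated) Poisson equation, integrate by parts to produce the $\|\widetilde{\phi}_{yyy}\|^{2}$ dissipation and the cross term $(\widetilde{\phi}_{yyy},\bar{\phi}_{yyy})$, handle the latter by Cauchy--Schwarz plus Lemma \ref{lem7.2} with $\delta=k^{-1}\varepsilon^{\frac35-\frac25 a}$, and expand $[\rho_{\mathrm{e}}(\bar{\phi})-\rho_{\mathrm{e}}(\phi)]_{yy}$ by the Taylor identity \eqref{4.34} to extract $-\varepsilon^{1-a}(\widetilde{\phi}_{yy},\rho'_{\mathrm{e}}(\bar{\phi})\widetilde{\phi}_{yy})$ with the remaining pieces absorbed into $Ck^{\frac{1}{12}}\varepsilon^{\frac35-\frac25 a}\mathcal{D}_{2,l,q}+C\varepsilon^{\frac75+\frac{1}{15}a}(\delta+\varepsilon^{a}\tau)^{-\frac43}$, exactly as in \eqref{4.82a}--\eqref{4.70a}. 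The only slight inaccuracy is attributing the bound on the $\bar{\phi}$-profile term to the constraints $a\geq 4-5b$ or \eqref{4.68b}: in the paper those arise from other terms (\eqref{4.23}, \eqref{4.61a}), while the requirement coming from $\varepsilon^{1-a}\varepsilon^{2b-2a}\|\bar{\phi}_{yyy}\|^{2}$ here is weaker and automatic on $\mathcal{S}$, so this does not affect the correctness of your proof.
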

\begin{proof}	
	Using the last equation of \eqref{3.10} and integration by parts, one has
	\begin{multline}
		\label{4.82a}
		-(\widetilde{\phi}_{yy},\widetilde{\rho}_{yy})\\
		=-\varepsilon^{2b-2a}[
		(\widetilde{\phi}_{yyy},\widetilde{\phi}_{yyy})
		+(\widetilde{\phi}_{yyy},\bar{\phi}_{yyy})]
		+(\widetilde{\phi}_{yy},[\rho_{\mathrm{e}}(\bar{\phi})-\rho_{\mathrm{e}}(\phi)]_{yy}).
	\end{multline} 
	Following the similar calculation as \eqref{4.36a}, it holds that
	\begin{eqnarray}
		\label{4.69a}
		&&\varepsilon^{1-a}\varepsilon^{2b-2a}|(\widetilde{\phi}_{yyy},\bar{\phi}_{yyy})|
		\notag\\
		&&\leq \eta\varepsilon^{1-a}\varepsilon^{2b-2a}\|\widetilde{\phi}_{yyy}\|^{2}
		+C_{\eta}\varepsilon^{1-a}\varepsilon^{2b-2a}\|\bar{\phi}_{yyy}\|^{2}
		\notag\\
		&&\leq \eta\varepsilon^{1-a}\varepsilon^{2b-2a}\|\widetilde{\phi}_{yyy}\|^{2}
		+C_{\eta}\varepsilon^{\frac{7}{5}+\frac{1}{15}a}(\delta+\varepsilon^{a}\tau)^{-\frac{4}{3}}.
	\end{eqnarray}
To estimate the last term in \eqref{4.82a},	we first use \eqref{4.34} to write
	\begin{multline} 
		\label{6.26b}
		(\widetilde{\phi}_{yy},[\rho_{\mathrm{e}}(\bar{\phi})-\rho_{\mathrm{e}}(\phi)]_{yy})\\
		=-(\widetilde{\phi}_{yy},\rho'_{\mathrm{e}}(\bar{\phi})\widetilde{\phi}_{yy})
		-(\widetilde{\phi}_{yy},\rho''_{\mathrm{e}}(\bar{\phi})\bar{\phi}_{y}\widetilde{\phi}_{y} +[\rho''_{\mathrm{e}}(\bar{\phi})\bar{\phi}_{y}\widetilde{\phi}]_{y})
		+(\widetilde{\phi}_{yy},J_{7yy}).
	\end{multline}
	By Lemma \ref{lem7.2}, \eqref{3.21} and \eqref{3.22}, we get
	\begin{multline*}
		\varepsilon^{1-a}|(\widetilde{\phi}_{yy},\rho''_{\mathrm{e}}(\bar{\phi})\bar{\phi}_{y}\widetilde{\phi}_{y}+[\rho''_{\mathrm{e}}(\bar{\phi})\bar{\phi}_{y}\widetilde{\phi}]_{y})|\\
		\leq Ck^{\frac{1}{12}}\varepsilon^{\frac{3}{5}-\frac{2}{5}a}\mathcal{D}_{2,l,q}(\tau)
		+C\varepsilon^{\frac{7}{5}+\frac{1}{15}a}(\delta+\varepsilon^{a}\tau)^{-\frac{4}{3}}.
	\end{multline*}
while for the last term of \eqref{6.26b}, it holds that
	\begin{equation*}
		\varepsilon^{1-a}|(\widetilde{\phi}_{yy},J_{7yy})|
		\leq Ck^{\frac{1}{12}}\varepsilon^{\frac{3}{5}-\frac{2}{5}a}\mathcal{D}_{2,l,q}(\tau)
		+C\varepsilon^{\frac{7}{5}+\frac{1}{15}a}(\delta+\varepsilon^{a}\tau)^{-\frac{4}{3}}.
	\end{equation*}
Substituting the above two estimates into \eqref{6.26b},  we deduce that
	\begin{multline}
		\label{4.70a}
		\varepsilon^{1-a}(\widetilde{\phi}_{yy},[\rho_{\mathrm{e}}(\bar{\phi})-\rho_{\mathrm{e}}(\phi)]_{yy})
		\leq -\varepsilon^{1-a}(\widetilde{\phi}_{yy},\rho'_{\mathrm{e}}(\bar{\phi})\widetilde{\phi}_{yy})
		\\
		+Ck^{\frac{1}{12}}\varepsilon^{\frac{3}{5}-\frac{2}{5}a}\mathcal{D}_{2,l,q}(\tau) +C\varepsilon^{\frac{7}{5}+\frac{1}{15}a}(\delta+\varepsilon^{a}\tau)^{-\frac{4}{3}}.
	\end{multline}
Consequently, combining \eqref{4.82a}, \eqref{4.69a}, \eqref{4.70a} and  using the fact that $\rho'_{\mathrm{e}}(\bar{\phi})>c$,
we can obtain \eqref{4.81a} by choosing  $\eta>0$  small enough. This ends up the proof of Lemma \ref{lem.6.3}.
\end{proof}

\subsection{First order derivative estimates on non-fluid part}
Next we turn to derive the first order derivative estimates for the non-fluid part $f$.
As before, the proof is based on the microscopic equation \eqref{3.7}.
\begin{lemma}\label{lem.6.4}
	Under the conditions listed in Lemma \ref{lem.5.1A}, it holds that
	\begin{eqnarray}
		\label{4.59}
		&&\frac{1}{2}\frac{d}{d\tau}\|e^{\frac{\phi}{2}}f_y\|^{2}
		+c\varepsilon^{a-1}\|f_y\|_{\sigma}^{2}\notag\\
		\leq&& C\varepsilon^{1-a}\{\|(\widetilde{u}_{yy},\widetilde{\theta}_{yy})\|^{2}+\|f_{yy}\|_{\sigma}^{2}\}
		+C(\eta_{0}+k^{\frac{1}{12}}\varepsilon^{\frac{3}{5}-\frac{2}{5}a})\mathcal{D}_{2,l,q}(\tau)
		\notag\\
		&&+C\varepsilon^{\frac{7}{5}+\frac{1}{15}a}(\delta+\varepsilon^{a}\tau)^{-\frac{4}{3}}
		+Cq_3(\tau)\mathcal{H}_{2,l,q}(\tau).
	\end{eqnarray}
\end{lemma}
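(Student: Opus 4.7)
The plan is to parallel the argument of Lemma \ref{lem.5.4A} at one higher order of $y$-derivative. First I would apply $\partial_y$ to equation \eqref{3.7}, obtaining
\begin{equation*}
f_{y\tau}+v_{1}f_{yy}+\tfrac{v_{1}}{2}\phi_{y}f_y+\tfrac{v_{1}}{2}\phi_{yy}f-\phi_{y}\partial_{v_{1}}f_y-\phi_{yy}\partial_{v_{1}}f-\varepsilon^{a-1}\mathcal{L}f_y=\partial_y(\text{RHS of \eqref{3.7}}),
\end{equation*}
then pair this with $e^{\phi}f_y$ over $\mathbb{R}\times\mathbb{R}^{3}$ and read off the desired energy identity. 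The time-derivative term yields $\tfrac{1}{2}\tfrac{d}{d\tau}\|e^{\phi/2}f_y\|^{2}-\tfrac{1}{2}(\phi_{\tau}e^{\phi}f_y,f_y)$, while the dissipation comes from $-\varepsilon^{a-1}(\mathcal{L}f_y,e^{\phi}f_y)\geq c\varepsilon^{a-1}\|f_y\|_{\sigma}^{2}$ via \eqref{3.9} and $e^{\phi}\approx 1$ from \eqref{4.45a}.

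The key structural cancellations, which are exactly those that motivated the weight $e^{\phi/2}$, occur in the two pairs $v_{1}f_{yy}+\tfrac{v_{1}}{2}\phi_{y}f_y$ and $\phi_{y}\partial_{v_{1}}f_y$. The first combines into $v_{1}\partial_y[e^{\phi/2}f_y]\cdot e^{\phi/2}f_y$ and vanishes after integration in $y$; the second is a perfect $v_{1}$-derivative of $f_y^{2}$ and vanishes after integration in $v$. For the right-hand side, the collision contributions $\partial_y\{\Gamma(f,(M-\mu)/\sqrt{\mu})+\Gamma((M-\mu)/\sqrt{\mu},f)\}$ and $\partial_y\Gamma(G/\sqrt{\mu},G/\sqrt{\mu})$ fall under Lemmas \ref{lem7.7} and \ref{lem7.8} with $|\alpha|=1$, $|\beta|=0$, yielding the bound $C\eta\varepsilon^{a-1}\|f_y\|_{\sigma}^{2}+C_\eta(\eta_{0}+k^{1/12}\varepsilon^{3/5-2a/5})\mathcal{D}_{2,l,q}(\tau)+C_\eta\varepsilon^{7/5+a/15}(\delta+\varepsilon^{a}\tau)^{-4/3}$. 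The inhomogeneous source $\partial_y$ of item 3 in the RHS of \eqref{3.7} contains $\widetilde{u}_{yy},\widetilde{\theta}_{yy}$ and, together with the $P_0(v_1\sqrt{\mu}f_{yy})/\sqrt{\mu}$ term, contributes the terms $C\varepsilon^{1-a}(\|(\widetilde{u}_{yy},\widetilde{\theta}_{yy})\|^{2}+\|f_{yy}\|_{\sigma}^{2})$ via Cauchy–Schwarz and \eqref{3.16}. The remaining profile terms involving $\partial_y$ of $\overline{G}_\tau,\overline{G}_y,\phi_y\partial_{v_1}\overline{G}$ are treated by \eqref{7.24}–\eqref{7.25}, Lemma \ref{lem7.2} and the argument \eqref{L}, producing the source $C\varepsilon^{7/5+a/15}(\delta+\varepsilon^{a}\tau)^{-4/3}$.

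The main obstacle, and the only genuinely new phenomenon compared to Lemma \ref{lem.5.4A}, is the appearance of the electric-potential terms containing $\phi_{yy}$: namely $(\tfrac{v_{1}}{2}\phi_{yy}f,e^{\phi}f_y)$ and $(\phi_{yy}\partial_{v_{1}}f,e^{\phi}f_y)$. Unlike $\phi_y$ and $\phi_\tau$, the quantity $\phi_{yy}$ is only in $L^{2}_{y}$, so one cannot use $L^{\infty}_{y}$ directly. I would treat these by the splitting of the type \eqref{2.8}–\eqref{2.9}, separating a small fraction $\eta\varepsilon^{a-1}\|\langle v\rangle^{-1/2}f_y\|^{2}$ absorbed into the dissipation, against the remainder $C_\eta\varepsilon^{1-a}\|\phi_{yy}\|^{2}\|\langle v\rangle^{1/2}f\|_{L^{\infty}_{y}L^{2}_{v}}^{2}$. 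Applying the 1D embedding $\|g\|_{L^{\infty}_{y}}\leq\sqrt{2}\|g\|^{1/2}\|g_y\|^{1/2}$ in $y$ and recognising that $\varepsilon^{1-a}\|\phi_{yy}\|^{2}\leq q_3(\tau)$ by \eqref{3.14}, this bound collapses into $Cq_3(\tau)\mathcal{H}_{2,l,q}(\tau)$ by the definition \eqref{4.60} of $\mathcal{H}_{2,l,q}$. The same mechanism disposes of the $\phi_\tau$ term arising from $\partial_\tau(e^{\phi/2})$, mirroring \eqref{4.41}. Collecting all contributions and choosing $\eta$ small completes \eqref{4.59}.
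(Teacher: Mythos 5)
Your proposal is correct and takes essentially the same route as the paper's proof of this lemma: apply $\partial_y$ to \eqref{3.7}, test against $e^{\phi}f_y$, use the $e^{\phi/2}$-cancellations in $y$ and $v_1$ together with the coercivity \eqref{3.9}, bound the collision terms by \eqref{7.10} and \eqref{7.19}, treat the $\overline{G}$-source terms via \eqref{7.24}--\eqref{7.25} and the argument \eqref{L}, and absorb the commutator terms in $\phi_{yy}$ and the $\phi_\tau$-term into $Cq_3(\tau)\mathcal{H}_{2,l,q}(\tau)$. The only immaterial difference is in the splitting of the $\phi_{yy}$-terms: the paper puts the $L^\infty_y$ norm on $\phi_{yy}$ (using the 1D embedding so that $\varepsilon^{1-a}\|\phi_{yy}\|^2_{L^\infty_y}\leq q_3(\tau)$ by \eqref{3.14}) and keeps $\|\langle v\rangle^{\frac12}f\|_w^2\leq\mathcal{H}_{2,l,q}$, whereas you keep $\|\phi_{yy}\|$ in $L^2_y$ and apply the embedding to $f$; both land on the same quartic dissipation bound since the weights in \eqref{4.60} are bounded below by one.
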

\begin{proof}
	Applying $\partial^{\alpha}$ to equation \eqref{3.7} with $|\alpha|=1$ and further taking the inner product of the resulting equation with $e^{\phi}\partial^{\alpha}f$,
	we have
	\begin{align}
		\label{4.58}
		&(\partial^{\alpha}f_{\tau},e^{\phi}\partial^{\alpha}f)
		+(v_{1}\partial^{\alpha}f_{y}+\frac{v_{1}}{2}\phi_{y} \partial^{\alpha}f,e^{\phi}\partial^{\alpha}f)
		+(\frac{v_{1}}{2}\partial^{\alpha}\phi_{y} f,e^{\phi}\partial^{\alpha}f)
		\notag\\
		&-(\phi_{y} \partial_{v_{1}}\partial^{\alpha}f,e^{\phi}\partial^{\alpha}f)
		-(\partial^{\alpha}\phi_{y} \partial_{v_{1}}f,e^{\phi}\partial^{\alpha}f)
		-\varepsilon^{a-1}(\mathcal{L}\partial^{\alpha}f,e^{\phi}\partial^{\alpha}f)
		\notag\\
		=&\varepsilon^{a-1}\big\{(\partial^{\alpha}\Gamma(f,\frac{M-\mu}{\sqrt{\mu}})+
		\partial^{\alpha}\Gamma(\frac{M-\mu}{\sqrt{\mu}},f)+\partial^{\alpha}\Gamma(\frac{G}{\sqrt{\mu}},\frac{G}{\sqrt{\mu}})
		,e^{\phi}\partial^{\alpha}f )\big\}
		\notag\\
		&+(\partial^{\alpha}[\frac{P_{0}(v_{1}\sqrt{\mu}f_{y})}{\sqrt{\mu}}]
		-\partial^{\alpha}[\frac{1}{\sqrt{\mu}}P_{1}\{v_{1}M(\frac{|v-u|^{2}
			\widetilde{\theta}_{y}}{2R\theta^{2}}+\frac{(v-u)\cdot\widetilde{u}_{y}}{R\theta})\}],e^{\phi}\partial^{\alpha}f)
		\notag\\
		&+(\partial^{\alpha}[\frac{\phi_{y}\partial_{v_{1}}\overline{G}}{\sqrt{\mu}}] -\partial^{\alpha}[\frac{P_{1}(v_{1}\overline{G}_{y})}{\sqrt{\mu}}] -\partial^{\alpha}[\frac{\overline{G}_{\tau}}{\sqrt{\mu}}],e^{\phi}\partial^{\alpha}f).
	\end{align}
	Let's compute \eqref{4.58} term by term. Using the similar arguments as \eqref{4.41}, we get
	\begin{eqnarray*}
		(\partial^{\alpha}f_{\tau},e^{\phi}\partial^{\alpha}f)
		&&=\frac{1}{2}(\partial^{\alpha}f,e^{\phi}\partial^{\alpha}f)_{\tau}-\frac{1}{2}(\partial^{\alpha}f,e^{\phi}\phi_{\tau}\partial^{\alpha}f)
		\notag\\
		&&\geq \frac{1}{2}(\partial^{\alpha}f,e^{\phi}\partial^{\alpha}f)_{\tau}-C\eta\varepsilon^{a-1}\|\partial^{\alpha}f\|_{\sigma}^{2}
		-C_{\eta}q_3(\tau)\mathcal{H}_{2,l,q}(\tau).
	\end{eqnarray*}
On the other hand, we observe from an integration by parts that
	\begin{equation*}
		(v_{1}\partial^{\alpha}f_{y}+\frac{v_{1}}{2}\phi_{y}\partial^{\alpha}f,e^{\phi}\partial^{\alpha}f)=(v_{1}[e^{\frac{\phi}{2}}\partial^{\alpha}f]_{y},e^{\frac{\phi}{2}} \partial^{\alpha}f)=0.
	\end{equation*}
	The fourth term on the left hand side of \eqref{4.58} vanishes after integration by parts while for the third and fifth terms on the left hand side of \eqref{4.58}, they
	can be estimated as  follows                                                                           
	\begin{eqnarray*}
		&&|(\frac{v_{1}}{2}\partial^{\alpha}\phi_{y} f,e^{\phi}\partial^{\alpha}f)|\\
		\leq&&
		\eta\varepsilon^{a-1}\| \langle v\rangle^{-\frac{1}{2}}\partial^{\alpha}f\|^{2}
		+C_{\eta}\varepsilon^{1-a}\|\partial^{\alpha}\phi_{y}\|^2_{L_y^{\infty}}\|\langle v_{1}\rangle \langle v\rangle^{\frac{1}{2}}f\|^{2}
		\notag\\
		\leq&&
		C\eta\varepsilon^{a-1}\|\partial^{\alpha}f\|_{\sigma}^{2}
		+C_{\eta}\varepsilon^{1-a}(\|\phi_{yy}\|^{2}+\|\phi_{yyy}\|^{2})\|\langle v\rangle^{\frac{1}{2}}f\|_{w}^{2}
		\notag\\
		\leq&& C\eta\varepsilon^{a-1}\|\partial^{\alpha}f\|_{\sigma}^{2}+C_{\eta}q_3(\tau)\mathcal{H}_{2,l,q}(\tau),
	\end{eqnarray*}
	and
	\begin{equation*}
		|(\partial^{\alpha}\phi_{y} \partial_{v_{1}}f,e^{\phi}\partial^{\alpha}f)|
		\leq C\eta\varepsilon^{a-1}\|\partial^{\alpha}f\|_{\sigma}^{2}+C_{\eta}q_3(\tau)\mathcal{H}_{2,l,q}(\tau).
	\end{equation*}
	Here we have used \eqref{3.14}, \eqref{4.60} and $\langle v_{1}\rangle\leq  \langle v\rangle^{2l}$ with $l\geq\frac{1}{2}$.
	Thanks to \eqref{3.9} and \eqref{4.45a}, the last term on the left hand side of \eqref{4.58} has a lower bound by
	\begin{equation*}
		-\varepsilon^{a-1}(\mathcal{L}\partial^{\alpha}f,e^{\phi}\partial^{\alpha}f)\geq c\varepsilon^{a-1}\|\partial^{\alpha}f\|_{\sigma}^{2}.
	\end{equation*}
	
	Next we go to bound the right hand side of \eqref{4.58}, we use the similar arguments as \eqref{7.10} and \eqref{7.19} to get
	\begin{eqnarray*}
		&&\varepsilon^{a-1}|(\partial^{\alpha}\Gamma(f,\frac{M-\mu}{\sqrt{\mu}})
		+\partial^{\alpha}\Gamma(\frac{M-\mu}{\sqrt{\mu}},f)+\partial^{\alpha}\Gamma(\frac{G}{\sqrt{\mu}},\frac{G}{\sqrt{\mu}}),e^{\phi}\partial^{\alpha}f)|
		\notag\\
		&&\leq C\eta\varepsilon^{a-1}\|\partial^{\alpha}f\|^{2}_{\sigma}
		+C_{\eta}(\eta_{0}+k^{\frac{1}{12}}\varepsilon^{\frac{3}{5}-\frac{2}{5}a})\mathcal{D}_{2,l,q}(\tau)
		+C_{\eta}\varepsilon^{\frac{7}{5}+\frac{1}{15}a}(\delta+\varepsilon^{a}\tau)^{-\frac{4}{3}}.
	\end{eqnarray*}
	By applying \eqref{1.10}, \eqref{3.16}, \eqref{4.45a}, \eqref{7.25aa}, Lemma \ref{lem7.2},
	\eqref{3.18}, \eqref{3.21} and \eqref{3.22}, one has
	\begin{eqnarray}
		\label{6.30b}
		&&|(\frac{\partial^{\alpha}P_{0}(v_{1}\sqrt{\mu}f_{y})}{\sqrt{\mu}}-\frac{1}{\sqrt{\mu}}\partial^{\alpha}P_{1}\{v_{1}M(\frac{|v-u|^{2}
			\widetilde{\theta}_{y}}{2R\theta^{2}}+\frac{(v-u)\cdot\widetilde{u}_{y}}{R\theta})\},e^{\phi}\partial^{\alpha}f)|
		\notag\\
		&&\leq \eta\varepsilon^{a-1}\|\langle v\rangle^{-\frac{1}{2}}\partial^{\alpha}f\|^{2}
		+C_{\eta}\varepsilon^{1-a}\|\langle v\rangle^{\frac{1}{2}}\frac{\partial^{\alpha}P_{0}(v_{1}\sqrt{\mu}f_{y})}{\sqrt{\mu}}\|^{2}
		\notag\\
		&&\hspace{1.5cm}+C_{\eta}\varepsilon^{1-a}\|\langle v\rangle^{\frac{1}{2}}\mu^{-\frac{1}{2}}\partial^{\alpha}P_{1}\{v_{1}M(\frac{|v-u|^{2}
			\widetilde{\theta}_{y}}{2R\theta^{2}}+\frac{(v-u)\cdot\widetilde{u}_{y}}{R\theta})\}\|^{2}
		\notag\\
		&&\leq C\eta\varepsilon^{a-1}\|\partial^{\alpha}f\|^{2}_{\sigma}+C_{\eta}\varepsilon^{1-a}
		(\|\partial^{\alpha}f_{y}\|^{2}_{\sigma}+\|(\partial^{\alpha}\widetilde{u}_{y},\partial^{\alpha}\widetilde{\theta}_{y})\|^{2})\notag\\
		&&\quad+C_{\eta}k^{\frac{1}{12}}\varepsilon^{\frac{3}{5}-\frac{2}{5}a}\mathcal{D}_{2,l,q}(\tau).
	\end{eqnarray}
The remaining terms in \eqref{4.58} can be bounded by 
	\begin{eqnarray*}
		&&|(\partial^{\alpha}[\frac{\phi_{y}\partial_{v_{1}}\overline{G}}{\sqrt{\mu}}]-\partial^{\alpha}[\frac{P_{1}(v_{1}\overline{G}_{y})}{\sqrt{\mu}}]
		-\partial^{\alpha}[\frac{\overline{G}_{\tau}}{\sqrt{\mu}}],e^{\phi}\partial^{\alpha}f)|
		\notag\\
		&& \leq C\eta\varepsilon^{a-1}\|\partial^{\alpha}f\|^{2}_{\sigma}
		+C_{\eta}k^{\frac{1}{12}}\varepsilon^{\frac{3}{5}-\frac{2}{5}a}\mathcal{D}_{2,l,q}(\tau)
		+C_{\eta}\varepsilon^{\frac{7}{5}+\frac{1}{15}a}(\delta+\varepsilon^{a}\tau)^{-\frac{4}{3}}.
	\end{eqnarray*}
Therefore,	substituting the above  estimates into \eqref{4.58} and taking $\eta>0$ small enough, we can prove \eqref{4.59} holds.
	This completes the proof of Lemma \ref{lem.6.4}.
\end{proof}
From Lemma \ref{lem6.1} and Lemma \ref{lem.6.4}, we immediately have 
\begin{lemma}\label{lem.6.5}
	It holds that
	\begin{align}
		\label{4.61}
		&\|(\widetilde{\rho}_y,\widetilde{u}_y,\widetilde{\theta}_y)(\tau)\|^{2}+\|f_y(\tau)\|^{2}
		+\|\widetilde{\phi}_y(\tau)\|^{2}+\varepsilon^{2b-2a}\|\widetilde{\phi}_{yy}(\tau)\|^{2}\notag\\
		&\quad+\varepsilon^{a-1}\int^{\tau}_{0}\|f_{y}(s)\|_{\sigma}^{2}\,ds
		\notag\\
		&\hspace{0.5cm}+\varepsilon^{1-a}\int^{\tau}_{0}\big\{\|(\widetilde{\rho}_{yy},\widetilde{u}_{yy},\widetilde{\theta}_{yy})(s)\|^{2}
		+\|\widetilde{\phi}_{yy}(s)\|^{2}+\varepsilon^{2b-2a}\|\widetilde{\phi}_{yyy}(s)\|^{2}\big\}\,ds
		\notag\\
		&\leq C\varepsilon^{2(1-a)}\|(\widetilde{\rho}_{yy},\widetilde{u}_{yy},\widetilde{\theta}_{yy})(\tau)\|^{2}
		+Ck^{\frac{1}{3}}\varepsilon^{\frac{6}{5}-\frac{4}{5}a}
		+C\varepsilon^{1-a}\int^{\tau}_{0}\|f_{yy}(s)\|^{2}_{\sigma}\,ds
		\notag\\
		&\quad+C(\eta_{0}+k^{\frac{1}{12}}\varepsilon^{\frac{3}{5}-\frac{2}{5}a}
		+k^{\frac{1}{12}}\varepsilon^{\frac{3}{5}a-\frac{2}{5}})\int^{\tau}_{0}\mathcal{D}_{2,l,q}(s)\,ds\notag\\
		&\quad+C\int^{\tau}_{0}q_{3}(s)\mathcal{H}_{2,l,q}(s)\,ds.
	\end{align}
\end{lemma}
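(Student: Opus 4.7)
The plan is to combine the fluid-side estimate \eqref{4.57} with the non-fluid estimate \eqref{4.59} into a single differential inequality, then integrate in time and absorb error terms using the smallness of $k$, $\eta_0$, and $\varepsilon$, together with the initial datum bound \eqref{5.50A}. Concretely, I would first multiply \eqref{4.59} by a sufficiently large constant $C_3$ so that the $c\varepsilon^{a-1}C_3\|f_y\|_\sigma^2$ dissipation absorbs the $C\varepsilon^{1-a}\|f_y\|_\sigma^2$ produced by \eqref{4.57} when estimating $N_1$-type residues (recall that $\varepsilon^{a-1}\gg \varepsilon^{1-a}$ for $a\le 1$). Adding the two inequalities produces a time-derivative of a functional which is comparable (up to the $N_2$ and the mixed term $\kappa_6\varepsilon^{1-a}(\widetilde u_{1y},\widetilde\rho_{yy})$) to $\|(\widetilde\rho_y,\widetilde u_y,\widetilde\theta_y)\|^2+\|f_y\|^2+\|\widetilde\phi_y\|^2+\varepsilon^{2b-2a}\|\widetilde\phi_{yy}\|^2$, plus the dissipation rate on the left appearing in \eqref{4.61}.

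Next I would integrate in $\tau\in[0,\tau_1]$. At the endpoint $\tau$, both $N_2(\tau)$ and $\kappa_6\varepsilon^{1-a}(\widetilde u_{1y},\widetilde\rho_{yy})$ must be controlled without losing positivity: Cauchy--Schwarz combined with \eqref{4.14} and the Burnett bound \eqref{7.4} gives
\begin{equation*}
|N_2(\tau)|+\varepsilon^{1-a}|(\widetilde u_{1y},\widetilde\rho_{yy})|\leq \eta\bigl(\|(\widetilde\rho_y,\widetilde u_y,\widetilde\theta_y)\|^2+\|f_y\|^2\bigr)+C_\eta\varepsilon^{2(1-a)}\|(\widetilde\rho_{yy},\widetilde u_{yy},\widetilde\theta_{yy})\|^2,
\end{equation*}
where the microscopic part of $N_2$ is further bounded in $L^2_{y,v}$ using $\|\langle v\rangle^{\tfrac12}f_y\|$ and the a priori bound \eqref{3.22}; taking $\eta$ small, the $\eta$-terms are absorbed into the functional already present on the left at time $\tau$, producing exactly the $C\varepsilon^{2(1-a)}\|(\widetilde\rho_{yy},\widetilde u_{yy},\widetilde\theta_{yy})(\tau)\|^2$ term displayed in \eqref{4.61}. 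The initial values of $N_2(0)$ and $(\widetilde u_{1y},\widetilde\rho_{yy})(0)$ are swallowed by the $Ck^{1/3}\varepsilon^{6/5-4a/5}$ term thanks to the initial smallness \eqref{5.50A}.

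On the right-hand side of the integrated inequality, the time integral of the forcing $\varepsilon^{\tfrac75+\tfrac1{15}a}(\delta+\varepsilon^a\tau)^{-4/3}$ is computed directly:
\begin{equation*}
\int_0^{\infty}\varepsilon^{\tfrac75+\tfrac{a}{15}}(\delta+\varepsilon^a\tau)^{-\tfrac43}\,d\tau\leq C\varepsilon^{\tfrac75+\tfrac{a}{15}-a}\delta^{-\tfrac13}=Ck^{\tfrac13}\varepsilon^{\tfrac65-\tfrac{4a}{5}},
\end{equation*}
using $\delta=\tfrac1k\varepsilon^{\tfrac35-\tfrac25 a}$ from \eqref{3.21}. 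The remaining coefficient $C\varepsilon^{1-a}\|f_{yy}\|_\sigma^2$ appears intact on the right of \eqref{4.61} (to be absorbed only later via the second-order energy estimate), and the terms proportional to $\mathcal{D}_{2,l,q}$ and $q_3\mathcal{H}_{2,l,q}$ combine across the two lemmas into the stated form.

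The hard part will be handling the boundary term $N_2(\tau)$: it couples second-order fluid derivatives to first-order microscopic derivatives via the Burnett functions, and a crude Cauchy--Schwarz would lose dissipation control. The trick, already employed in Step~1 of Lemma \ref{lem6.1}, is that this coupling can be split into a $\|f_y\|$-type piece (absorbed on the left) and an $\varepsilon^{2(1-a)}\|(\widetilde\rho_{yy},\widetilde u_{yy},\widetilde\theta_{yy})\|^2$ piece, which is precisely why the $\varepsilon^{2(1-a)}$-weighted second-order norm is retained on the right-hand side of \eqref{4.61} and, in turn, motivates the $\varepsilon^{2-2a}$ weighting of the highest-order energy in $\mathcal{E}_{2,l,q}$. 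Everything else amounts to bookkeeping of the coefficients $k^{1/12}\varepsilon^{3/5-2a/5}$, $k^{1/12}\varepsilon^{3a/5-2/5}$, and $\eta_0$, which are uniformly small under \eqref{3.2a} and the hypotheses of Theorem \ref{thm2.1}.
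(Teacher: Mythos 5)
Your overall route is the same as the paper's: add the first-order fluid estimate \eqref{4.57} to the first-order microscopic estimate \eqref{4.59}, integrate in $\tau$, control the endpoint terms $N_2(\tau)$ and $\kappa_6\varepsilon^{1-a}(\widetilde u_{1y},\widetilde\rho_{yy})$ by Cauchy--Schwarz (which is exactly where the $C\varepsilon^{2(1-a)}\|(\widetilde\rho_{yy},\widetilde u_{yy},\widetilde\theta_{yy})(\tau)\|^2$ on the right of \eqref{4.61} comes from, cf.\ \eqref{6.32b}), use \eqref{5.50A} at $\tau=0$, and integrate the forcing $\varepsilon^{\frac75+\frac a{15}}(\delta+\varepsilon^a\tau)^{-\frac43}$ with $\delta=k^{-1}\varepsilon^{\frac35-\frac25 a}$ to get $Ck^{\frac13}\varepsilon^{\frac65-\frac45 a}$; that computation is correct.

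However, the way you set up the linear combination contains a genuine flaw. The right-hand side of \eqref{4.57} contains no term $C\varepsilon^{1-a}\|f_y\|^2_\sigma$ (its microscopic cost is $C\varepsilon^{1-a}\|f_{yy}\|^2_\sigma$, and the residual functional at this order is $N_2$, not $N_1$), so the stated reason for multiplying \eqref{4.59} by a large constant $C_3$ addresses a nonexistent term. The cross term that actually has to be absorbed is $C\varepsilon^{1-a}\|(\widetilde u_{yy},\widetilde\theta_{yy})\|^2$ on the right of \eqref{4.59}, and the only dissipation available for it is $c\varepsilon^{1-a}\|(\widetilde\rho_{yy},\widetilde u_{yy},\widetilde\theta_{yy})\|^2$ coming from \eqref{4.57}. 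With your weighting ($C_3$ large on \eqref{4.59}, weight one on \eqref{4.57}) this absorption is impossible, since $C_3C>c$; dumping the leftover into $\mathcal D_{2,l,q}$ would put an $O(1)$ (not small) constant in front of $\int_0^\tau\mathcal D_{2,l,q}(s)\,ds$, which is incompatible with the small prefactor $\eta_0+k^{\frac1{12}}\varepsilon^{\frac35-\frac25a}+k^{\frac1{12}}\varepsilon^{\frac35a-\frac25}$ required in \eqref{4.61} and would wreck the closing argument in Section \ref{sec.8}. The fix is to reverse the weighting: take $\eqref{4.57}\times C_{\mathrm{large}}+\eqref{4.59}$ (equivalently, a small multiple of \eqref{4.59}), exactly as in the zeroth-order combination $\eqref{4.39}\times C_2+\eqref{4.42}$ used for Lemma \ref{lem4.2}; no large factor on \eqref{4.59} is needed because nothing in \eqref{4.57} competes with the dissipation $c\varepsilon^{a-1}\|f_y\|^2_\sigma$. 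A minor additional point: your endpoint bound for $N_2(\tau)$ should also carry an additive $C_\eta k^{\frac13}\varepsilon^{\frac65-\frac45a}$ coming from the background-profile factors $u_{iy}$, $\theta_y$ inside \eqref{6.12b} (as in \eqref{6.32b}); this is harmless since that term is already present on the right of \eqref{4.61}.
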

\begin{proof}
	In view of \eqref{6.12b}, \eqref{4.14}, Lemma \ref{lem7.2}, \eqref{3.21} and \eqref{3.22}, we get
	\begin{equation}
		\label{6.32b}
		N_2(\tau)
		\leq \eta\|f_y\|^{2}+C_\eta\varepsilon^{2-2a}\|(\widetilde{u}_{yy},\widetilde{\theta}_{yy})\|^{2}
		+C_\eta k^{\frac{1}{3}}\varepsilon^{\frac{6}{5}-\frac{4}{5}a}.
	\end{equation}	
 By the suitable linear combinations of  $\eqref{4.57}$ and \eqref{4.59}, we have 
 the uniform estimate \eqref{4.61} by integrating the resulting equation with respect to $\tau$, where
 \eqref{5.50A} and \eqref{6.32b} with a small $\eta>0$ have used.  This concludes the proof of Lemma \ref{lem.6.5}.
\end{proof}

\subsection{Second order derivative estimates}
In order to complete the derivative estimates of all orders, we still need to derive the second-order derivative estimates.
\begin{lemma}\label{lem.6.6}
	For any small $\eta>0$, it holds that
	\begin{align}
		\label{4.76}
		&\varepsilon^{2(1-a)}\sum_{|\alpha|=2}(\|\partial^{\alpha}(\widetilde{\rho},\widetilde{u},\widetilde{\theta})(\tau)\|^{2}
		+\|\partial^{\alpha}f(\tau)\|^{2}+\|\partial^{\alpha}\widetilde{\phi}(\tau)\|^{2}
		+\varepsilon^{2b-2a}\|\partial^{\alpha}\widetilde{\phi}_{y}(\tau)\|^{2})\notag\\
&\quad		+\varepsilon^{1-a}\sum_{|\alpha|=2}\int^{\tau}_{0}\|\partial^{\alpha}f(s)\|_{\sigma}^{2}\,ds
		\notag\\
		\leq& C(\eta+\eta_{0}+k^{\frac{1}{12}}\varepsilon^{\frac{3}{5}-\frac{2}{5}a})\varepsilon^{1-a}\sum_{|\alpha|=2}\int^{\tau}_{0}
		\big\{\|\partial^{\alpha}(\widetilde{\rho},\widetilde{u},\widetilde{\theta})(s)\|^{2}+\| \partial^\alpha f(s)\|^{2}_{\sigma}\notag\\
&\hspace{6cm}		+\|\partial^{\alpha}\widetilde{\phi}(s)\|^{2}+\varepsilon^{2(1-a)}\|\partial^{\alpha}\widetilde{\phi}_{y}(s)\|^{2}\big\}\, ds
		\notag\\
		&\quad+C_{\eta}k^{\frac{1}{3}}\varepsilon^{\frac{6}{5}-\frac{4}{5}a}+
		C_{\eta}k^{\frac{1}{12}}\varepsilon^{\frac{3}{5}-\frac{2}{5}a}\int^{\tau}_{0}\mathcal{D}_{2,l,q}(s)\,ds\notag\\
		&\quad+C_{\eta}
		\int^{\tau}_{0}q_{3}(s)\mathcal{H}_{2,l,q}(s)\,ds.
	\end{align}	
\end{lemma}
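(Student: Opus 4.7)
The plan is to derive the second order derivative estimates by combining the analogues of the previous steps carried out for the zeroth and first order estimates, but now multiplied by the small factor $\varepsilon^{2(1-a)}$ so that problematic cubic nonlinearities become admissible in the dissipation budget (this is precisely the mechanism behind Remark \ref{rmk.ea}). First I would apply $\partial^\alpha$ with $|\alpha|=2$ to the macro system \eqref{4.31} and take the $L^{2}_{y}$ inner product with a suitable combination of $\partial^{\alpha}(\widetilde{\rho},\widetilde{u},\widetilde{\theta})$ weighted by $(\bar\theta/\bar\rho,1,1/\theta)$, mirroring Lemma \ref{lem6.1}. This produces dissipation $\varepsilon^{1-a}\|\partial^{\alpha}(\widetilde{u}_{y},\widetilde{\theta}_{y})\|^{2}$ at order three in $y$; to recover $\partial^{\alpha}\widetilde{\rho}_{y}$ and the Poisson-induced $\partial^{\alpha}\widetilde{\phi}_{y}$ dissipations, I would differentiate the momentum equation in \eqref{4.31} once more in $y$, pair with $\varepsilon^{1-a}\partial^{\alpha}\widetilde{\rho}_{y}$, and use the third $y$-derivative of the Poisson equation in \eqref{3.10} together with the Taylor expansion \eqref{4.34} to capture the dissipation $\varepsilon^{1-a}(\rho'_{\mathrm{e}}(\bar\phi)\partial^{\alpha}\widetilde{\phi}_{y},\partial^{\alpha}\widetilde{\phi}_{y}) + \varepsilon^{1-a}\varepsilon^{2b-2a}\|\partial^{\alpha}\widetilde{\phi}_{yy}\|^{2}$, exactly as in Lemma \ref{lem.6.3}.

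Next, for the non-fluid part I would apply $\partial^{\alpha}$ with $|\alpha|=2$ to \eqref{3.7} and pair with $e^{\phi}w^{2}(\alpha,0)\partial^{\alpha}f$. The weighted streaming cancellation $v_{1}\partial^{\alpha}f_{y}+\tfrac{v_{1}}{2}\phi_{y}\partial^{\alpha}f = v_{1}[e^{\phi/2}\partial^{\alpha}f]_{y}e^{-\phi/2}$ disposes of the quadratic potential term; the coercivity \eqref{7.6} on $\mathcal{L}$ yields the microscopic dissipation $c\varepsilon^{a-1}\|\partial^{\alpha}f\|^{2}_{\sigma,w}$ up to a compact $L^{2}_{v}$ correction; the nonlinear and mixed collision terms are absorbed via Lemma \ref{lem7.7} (linearization around $M-\mu$) and the crucial Lemma \ref{lem4.8} for the purely nonlinear piece $\Gamma(G/\sqrt{\mu},G/\sqrt{\mu})$, which is exactly the second order weighted nonlinear estimate. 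The troublesome velocity growth in $v_{1}\partial^{\alpha}\phi_{y}f$ and $\partial^{\alpha}\phi_{y}\partial_{v_{1}}f$ is dumped into $q_{2}q_{3}(\tau)\mathcal{H}_{2,l,q}(\tau)$ via the weight $e^{q(\tau)\langle v\rangle}$ as in \eqref{4.41}; the transport terms $\partial^{\alpha}\phi_{y}\partial_{v_{1}}f$ are split so that the top-order $\partial^{\alpha}\phi_{y}$ is put in $L^{2}_{y}$ (giving $\partial^{\alpha}\widetilde{\phi}_{y}$, which sits in the macro dissipation after multiplication by $\varepsilon^{2(1-a)}$) and $\partial_{v_{1}}f$ in $L^{\infty}_{y}$ via 1D embedding and \eqref{3.22}. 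The inhomogeneous $\overline{G}$-terms and the $P_{1}(v_{1}M_{y})$ commutator produce, at second order, factors $\|\partial^{\alpha}(\bar u_{1y},\bar\theta_{y})\|$ controlled by Lemma \ref{lem7.2} and $\|\partial^{\alpha}(\widetilde{u}_{y},\widetilde\theta_{y})\|$ that get absorbed on the left after taking $\eta$ small.

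Combining the two estimates with a suitably large macro weight, I would multiply through by $\varepsilon^{2(1-a)}$ and integrate over $[0,\tau]$. The factor $\varepsilon^{2(1-a)}$ ensures that whenever a cubic nonlinear term forces us to use the estimate $\|\partial^{\alpha}(\widetilde{\rho},\widetilde{u},\widetilde{\theta})\|_{L^{\infty}_{y}}\lesssim k^{1/12}\varepsilon^{(3/5-2a/5)/2}$, the resulting product still lies in $\mathcal{D}_{2,l,q}(\tau)$ with the required small prefactor $k^{1/12}\varepsilon^{3/5-2a/5}$; concretely, terms like $\varepsilon^{2(1-a)}\cdot\varepsilon^{1-a}\|\partial^{\alpha}\partial_{y}(\widetilde{\rho},\widetilde{u},\widetilde{\theta})\|^{2}$ are exactly what we want to absorb into the LHS, while mixed products $\varepsilon^{2(1-a)}\cdot\varepsilon^{1-a}\|\partial^{\alpha}f\|^{2}_{\sigma,w}$ arising from the microscopic source contribution to the macro equation lie inside $\mathcal{D}_{2,l,q}(\tau)$. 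The initial datum bound $k^{1/3}\varepsilon^{6/5-4/5a}$ on the right-hand side of \eqref{4.76} is inherited from the hypothesis \eqref{3.20} via \eqref{5.50A}.

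The main obstacle, in my view, is the control of the top-order Poisson coupling. The linear term $\varepsilon^{2(1-a)}(\partial^{\alpha}\widetilde{\phi}_{yy},\partial^{\alpha}\widetilde{u}_{1y})$ in the momentum equation, after integration by parts and use of \eqref{3.10}$_{5}$, generates $\partial^{\alpha}\widetilde{\rho}_{\tau}$ which is not directly in $\mathcal{D}_{2,l,q}$; handling it requires mimicking Lemma \ref{lem.6.2} at one higher order, expanding $\rho_{\mathrm{e}}(\bar\phi)-\rho_{\mathrm{e}}(\phi)$ via \eqref{4.24}, and producing the time derivative $\tfrac{1}{2}\varepsilon^{2(1-a)}\varepsilon^{2b-2a}\tfrac{d}{d\tau}\|\frac{1}{\sqrt{\rho}}\partial^{\alpha}\widetilde{\phi}_{yy}\|^{2}$ plus the sign-definite term $\tfrac{1}{2}\varepsilon^{2(1-a)}\tfrac{d}{d\tau}(\tfrac{1}{\rho}\partial^{\alpha}\widetilde{\phi}_{y},\rho'_{\mathrm{e}}(\bar\phi)\partial^{\alpha}\widetilde{\phi}_{y})$, whose integrated forms become the potential terms appearing on the LHS of \eqref{4.76}. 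The constraint \eqref{4.25a} (i.e.\ $a\geq 4-5b$) will again be invoked to absorb the profile-driven remainder $\varepsilon^{2b-2a}\|\partial^{\alpha}\bar\phi_{yyy}\|$ into $\varepsilon^{7/5+a/15}(\delta+\varepsilon^{a}\tau)^{-4/3}$, which is why the range \eqref{3.2a} is required here as well. All remaining error terms are handled by the time-derivative bounds in Lemma \ref{lem.5.2a} (used to trade $\partial_{\tau}$ for $\partial_{y}$) and by routine Cauchy-Schwarz splittings.
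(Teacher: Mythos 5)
There is a genuine gap: your plan reproduces the first-order strategy (separate estimates on the fluid system and on the $f$-equation \eqref{3.7}) at $|\alpha|=2$, but at the top order this scheme loses derivatives and cannot close within the two-derivative framework of $\mathcal{E}_{2,l,q}$ and $\mathcal{D}_{2,l,q}$. Concretely, applying $\partial^{\alpha}$ with $|\alpha|=2$ to \eqref{3.7} produces the source $\partial^{\alpha}\big[\frac{1}{\sqrt{\mu}}P_{1}\{v_{1}M(\frac{|v-u|^{2}\widetilde{\theta}_{y}}{2R\theta^{2}}+\frac{(v-u)\cdot\widetilde{u}_{y}}{R\theta})\}\big]$ and $\partial^{\alpha}\big[\frac{P_{0}(v_{1}\sqrt{\mu}f_{y})}{\sqrt{\mu}}\big]$, i.e.\ \emph{third}-order quantities $\partial^{\alpha}(\widetilde{u}_{y},\widetilde{\theta}_{y})$ and $\partial^{\alpha}f_{y}$; your claim that $\|\partial^{\alpha}(\widetilde{u}_{y},\widetilde{\theta}_{y})\|$ "gets absorbed on the left after taking $\eta$ small" fails because these norms appear neither on the left of \eqref{4.76} nor anywhere in $\mathcal{D}_{2,l,q}$ (the dissipation controls fluid derivatives only up to order two, and $f$-derivatives up to $|\alpha|+|\beta|\le 2$). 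Likewise, differentiating the macro system \eqref{4.31} (or \eqref{3.10}) twice forces you to confront $\partial_{y}^{2}$ of $\int v_{1}^{2}G_{y}\,dv$ (resp.\ of $(\int v_{1}^{2}L_{M}^{-1}\Theta\,dv)_{y}$), i.e.\ $G_{yyy}$-type terms, and any integration by parts trades them for third-order fluid derivatives tested against the dissipation — neither option is available. This is precisely why the second-order macro dissipation in $\mathcal{D}_{2,l,q}$ is harvested already in Lemma \ref{lem6.1} (from the first-order fluid estimate), and why Lemma \ref{lem.6.6} only needs to supply the $\varepsilon^{2(1-a)}$-weighted instant energy and the micro dissipation $\varepsilon^{1-a}\|\partial^{\alpha}f\|_{\sigma}^{2}$.

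The paper's proof sidesteps the derivative loss by performing the top-order estimate on the equation \eqref{4.62} for the \emph{full} quantity $F/\sqrt{\mu}$, tested against $e^{\phi}w^{0}\partial^{\alpha}F/\sqrt{\mu}$: the dangerous macro--micro coupling terms then never appear as separate sources (they are internal to $v_{1}(F/\sqrt{\mu})_{y}$), and the individual norms are recovered afterwards from $F=M+\overline{G}+\sqrt{\mu}f$ via \eqref{4.67}, \eqref{4.89A} and \eqref{4.68}. The only genuinely delicate term is then the top-order electric field coupling $I_{4}=(\partial^{\alpha}\phi_{y}\partial_{v_{1}}\partial^{\alpha-\alpha_{1}}M/\sqrt{\mu},e^{\phi}\partial^{\alpha}M/\sqrt{\mu})$, whose linear part $-(e^{\phi_{-}}\partial^{\alpha}\phi_{y},\rho\partial^{\alpha}u_{1})$ is handled in Lemma \ref{lem6.7} by integration by parts combined with the continuity and Poisson equations, producing the signed functional $E_{1}(\tau)$ in \eqref{7.39}; this is where the potential terms $\|\partial^{\alpha}\widetilde{\phi}\|^{2}+\varepsilon^{2b-2a}\|\partial^{\alpha}\widetilde{\phi}_{y}\|^{2}$ on the left of \eqref{4.76} come from. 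Your final paragraph does anticipate a mechanism of this type (a higher-order analogue of Lemmas \ref{lem.6.2}--\ref{lem.6.3}), but it is embedded in the macro-system route that cannot close; without switching to the $F$-formulation (or otherwise eliminating the third-order sources), the proposed proof does not go through.
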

\begin{proof}
	Recall the first equation of \eqref{3.5}, we have from \eqref{3.8} and a direct calculation that
	\begin{multline}
		\label{4.62}
		(\frac{F}{\sqrt{\mu}})_{\tau}+v_{1}(\frac{F}{\sqrt{\mu}})_{y}
		+\frac{v_{1}}{2}\phi_{y}(\frac{F}{\sqrt{\mu}})
		-\phi_{y}\partial_{v_{1}}(\frac{F}{\sqrt{\mu}})
		\\
		=\varepsilon^{a-1}\big\{\mathcal{L} f+\Gamma(f,\frac{M-\mu}{\sqrt{\mu}})+\Gamma(\frac{M-\mu}{\sqrt{\mu}},f)
		+\Gamma(\frac{G}{\sqrt{\mu}},\frac{G}{\sqrt{\mu}})+\frac{L_{M}\overline{G}}{\sqrt{\mu}}\big\}.
	\end{multline}
After acting  $\partial^{\alpha}$ with $|\alpha|=2$ to equation \eqref{4.62} and further taking the inner product of the resulting equation with $e^{\phi}\frac{\partial^{\alpha}F}{\sqrt{\mu}}$  over $\mathbb{R}\times{\mathbb R}^3$, the direct energy estimate gives the identity
	\begin{align}
		\label{4.63}
		&\frac{1}{2}(\frac{\partial^\alpha F}{\sqrt{\mu}},e^{\phi}\frac{\partial^\alpha F}{\sqrt{\mu}})_{\tau}
		-\frac{1}{2}(\frac{\partial^\alpha F}{\sqrt{\mu}},e^{\phi}\phi_{\tau}\frac{\partial^\alpha F}{\sqrt{\mu}})
		+(v_1(\frac{\partial^\alpha F}{\sqrt{\mu}})_{y}+\frac{v_{1}}{2}\phi_{y}\frac{\partial^{\alpha}F}{\sqrt{\mu}},
		e^{\phi}\frac{\partial^\alpha F}{\sqrt{\mu}})
		\notag\\
		&\quad-(\phi_{y}\partial_{v_{1}}(\frac{\partial^{\alpha}F}{\sqrt{\mu}}),e^{\phi}\frac{\partial^\alpha F}{\sqrt{\mu}})\notag\\
		&\quad+\sum_{1\leq\alpha_{1}\leq\alpha}C^{\alpha_{1}}_{\alpha}(\frac{v_{1}}{2}\partial^{\alpha_{1}}\phi_{y}\frac{
			\partial^{\alpha-\alpha_{1}}F}{\sqrt{\mu}}-\partial^{\alpha_{1}}\phi_{y}\partial_{v_{1}}(\frac{\partial^{\alpha-\alpha_{1}}F}{\sqrt{\mu}}),
		e^{\phi}\frac{\partial^\alpha F}{\sqrt{\mu}})
		\notag\\
		&=\varepsilon^{a-1}(\mathcal{L} \partial^\alpha f,e^{\phi}\frac{\partial^\alpha F}{\sqrt{\mu}})
		+\varepsilon^{a-1}(\partial^\alpha\Gamma(f,\frac{M-\mu}{\sqrt{\mu}})
		+\partial^\alpha\Gamma(\frac{M-\mu}{\sqrt{\mu}},f),e^{\phi}\frac{\partial^\alpha F}{\sqrt{\mu}})
		\notag\\
		&\hspace{0.5cm}+\varepsilon^{a-1}(\partial^\alpha\Gamma(\frac{G}{\sqrt{\mu}},\frac{G}{\sqrt{\mu}}),e^{\phi}\frac{\partial^\alpha F}{\sqrt{\mu}})+\varepsilon^{a-1}(\frac{\partial^{\alpha}L_{M}\overline{G}}{\sqrt{\mu}},e^{\phi}\frac{\partial^\alpha F}{\sqrt{\mu}}).
	\end{align}
	We  estimate \eqref{4.63} term by term. We first consider the right hand terms in \eqref{4.63}.
	By a simple computation, one has the following identity
	$$
	M_{y}=M\{\frac{\rho_{y}}{\rho}+\frac{(v-u)\cdot u_{y}}{R\theta}
	+(\frac{|v-u|^{2}}{2R\theta}-\frac{3}{2})\frac{\theta_{y}}{\theta} \}.
	$$
	Then, for $|\alpha|= 2$, it holds that
	\begin{eqnarray}
		\label{4.67}
		\partial^{\alpha}M&&=M\{\frac{\partial^{\alpha}\rho}{\rho}+\frac{(v-u)\cdot\partial^{\alpha}u}{R\theta}+(\frac{|v-u|^{2}}{2R\theta}-\frac{3}{2})\frac{\partial^{\alpha}\theta}{\theta}\}+\cdot\cdot\cdot
		\notag\\
		&&=\{\mu+(M-\mu)\}
		\{\frac{\partial^{\alpha}\rho}{\rho}+\frac{(v-u)\cdot\partial^{\alpha}u}{R\theta}+(\frac{|v-u|^{2}}{2R\theta}-\frac{3}{2})\frac{\partial^{\alpha}\theta}{\theta}\}
		+\cdot\cdot\cdot
		\notag\\
		&&:=\mathbb{J}_{1}
		+\mathbb{J}_{2}+\mathbb{J}_{3}.
	\end{eqnarray}
	Here the terms $\mathbb{J}_{1}$ and $\mathbb{J}_{2}$ are the second-order derivatives of $(\rho,u,\theta)$
	with $\mu$ and $M-\mu$ and $\mathbb{J}_{3}$ is the low order derivatives of $(\rho,u,\theta)$.
	Since $\frac{\mathbb{J}_{1}}{\sqrt{\mu}}\in\ker{\mathcal{L}}$, it follows that
	$(\mathcal{L}f,e^{\phi}\frac{\mathbb{J}_{1}}{\sqrt{\mu}})=0$. For the terms $\frac{\mathbb{J}_{2}}{\sqrt{\mu}}$ and
	$\frac{\mathbb{J}_{3}}{\sqrt{\mu}}$, recalling that $\mathcal{L}f=\Gamma(\sqrt{\mu},f)+\Gamma(f,\sqrt{\mu})$,
	we then use \eqref{7.7}, \eqref{4.45a}, \eqref{7.12}, \eqref{7.25aa}, Lemma \ref{lem7.2}, \eqref{3.21} and \eqref{3.22} to get
	\begin{eqnarray*}
		&&\varepsilon^{a-1}|(\mathcal{L}\partial^{\alpha}f,e^{\phi}\frac{\mathbb{J}_{2}}{\sqrt{\mu}})|
		\leq C\varepsilon^{a-1}\|\partial^{\alpha}f\|_{\sigma}\|\frac{\mathbb{J}_{2}}{\sqrt{\mu}}\|_{\sigma}
		\notag\\
		&&\leq C(\eta_{0}+k^{\frac{1}{12}}\varepsilon^{\frac{3}{5}-\frac{2}{5}a})
		\varepsilon^{a-1}(\|\partial^{\alpha}f\|_{\sigma}^{2}+\|\partial^{\alpha}(\widetilde{\rho},\widetilde{u},\widetilde{\theta})\|^{2})\\
		&&\quad+C\varepsilon^{a-1}\varepsilon^{\frac{7}{5}+\frac{1}{15}a}(\delta+\varepsilon^{a}\tau)^{-\frac{4}{3}},
	\end{eqnarray*}
	and
	\begin{multline*}
		\varepsilon^{a-1}|(\mathcal{L}\partial^{\alpha}f,e^{\phi}\frac{\mathbb{J}_{3}}{\sqrt{\mu}})|\\
		\leq\eta\varepsilon^{a-1}\|\partial^{\alpha}f\|_{\sigma}^{2}
		+C_{\eta}k^{\frac{1}{12}}\varepsilon^{\frac{3}{5}a-\frac{2}{5}}\mathcal{D}_{2,l,q}(\tau)
		+C_{\eta}\varepsilon^{a-1}\varepsilon^{\frac{7}{5}+\frac{1}{15}a}(\delta+\varepsilon^{a}\tau)^{-\frac{4}{3}}.
	\end{multline*}
	It follows from those estimates that
	\begin{eqnarray*}
		&&\varepsilon^{a-1}|(\mathcal{L}\partial^{\alpha}f,e^{\phi}\frac{\partial^{\alpha}M}{\sqrt{\mu}})|\\
		&&\leq  C(\eta+\eta_{0}+k^{\frac{1}{12}}\varepsilon^{\frac{3}{5}-\frac{2}{5}a})\varepsilon^{a-1}(\|\partial^{\alpha}f\|_{\sigma}^{2}
		+\|\partial^{\alpha}(\widetilde{\rho},\widetilde{u},\widetilde{\theta})\|^{2})
		\notag\\
		&&\quad +C_{\eta}k^{\frac{1}{12}}\varepsilon^{\frac{3}{5}a-\frac{2}{5}}\mathcal{D}_{2,l,q}(\tau)
		+C_{\eta}\varepsilon^{a-1}\varepsilon^{\frac{7}{5}+\frac{1}{15}a}(\delta+\varepsilon^{a}\tau)^{-\frac{4}{3}}.
	\end{eqnarray*}
	On the other hand, it holds that
	\begin{multline}
		\label{6.39A}
		\varepsilon^{a-1}|(\mathcal{L}\partial^{\alpha}f,e^{\phi}\frac{\partial^{\alpha}\overline{G}}{\sqrt{\mu}})|\\
		\leq \eta\varepsilon^{a-1}\|\partial^{\alpha}f\|^{2}_{\sigma}
		+C_{\eta}k^{\frac{1}{12}}\varepsilon^{\frac{3}{5}a-\frac{2}{5}}\mathcal{D}_{2,l,q}(\tau)
		+C_{\eta}\varepsilon^{a-1}\varepsilon^{\frac{7}{5}+\frac{1}{15}a}(\delta+\varepsilon^{a}\tau)^{-\frac{4}{3}},
	\end{multline}
	and
	$$
	-\varepsilon^{a-1}(\mathcal{L}\partial^{\alpha}f,e^{\phi}\partial^{\alpha}f)
	\geq c\varepsilon^{a-1}\|\partial^{\alpha}f\|_{\sigma}^{2}.
	$$
	With the above three estimates and the fact that $F=M+\overline{G}+\sqrt{\mu}f$ in hand, we deduce that
	\begin{eqnarray}
		\label{4.89A}
		&&\varepsilon^{a-1}(\mathcal{L}\partial^{\alpha}f,e^{\phi}\frac{\partial^{\alpha}F}{\sqrt{\mu}})\notag\\
		&&\leq -c\varepsilon^{a-1}\|\partial^{\alpha}f\|_{\sigma}^{2}
		+C(\eta+\eta_{0}+k^{\frac{1}{12}}\varepsilon^{\frac{3}{5}-\frac{2}{5}a})\varepsilon^{a-1}(\|\partial^{\alpha}f\|_{\sigma}^{2}
		+\|\partial^{\alpha}(\widetilde{\rho},\widetilde{u},\widetilde{\theta})\|^{2})
		\notag\\
		&&\quad +C_{\eta}k^{\frac{1}{12}}\varepsilon^{\frac{3}{5}a-\frac{2}{5}}\mathcal{D}_{2,l,q}(\tau)
		+C_{\eta}\varepsilon^{a-1}\varepsilon^{\frac{7}{5}+\frac{1}{15}a}(\delta+\varepsilon^{a}\tau)^{-\frac{4}{3}}.
	\end{eqnarray}
	For $|\alpha|=2$, by \eqref{4.67},  \eqref{7.25}, Lemma \ref{lem7.2}, \eqref{3.21} and \eqref{3.22}, we claim that
	\begin{eqnarray}
		\label{4.68}
		\|\frac{\partial^{\alpha}F}{\sqrt{\mu}}\|_{\sigma}^{2}
		&&\leq \|\frac{\sqrt{\mu}\partial^{\alpha}f}{\sqrt{\mu}}\|_{\sigma}^{2}
		+\|\frac{\partial^{\alpha}\overline{G}}{\sqrt{\mu}}\|_{\sigma}^{2}
		+\|\frac{\partial^{\alpha}M}{\sqrt{\mu}}\|_{\sigma}^{2}
		\notag\\
		&&\leq C(\|\partial^{\alpha}f\|^{2}_{\sigma}
		+\|\partial^{\alpha}(\widetilde{\rho},\widetilde{u},\widetilde{\theta})\|^{2})
		+Ck^{\frac{1}{12}}\varepsilon^{\frac{3}{5}-\frac{2}{5}a}\mathcal{D}_{2,l,q}(\tau)\notag\\
 &&\quad		+C\varepsilon^{\frac{7}{5}+\frac{1}{15}a}(\delta+\varepsilon^{a}\tau)^{-\frac{4}{3}},
	\end{eqnarray}
which together with \eqref{7.7}, \eqref{4.45a} and \eqref{7.12} yields that
	\begin{eqnarray}
		\label{4.85}
		&&\varepsilon^{a-1}\{|(\partial^{\alpha}\Gamma(\frac{M-\mu}{\sqrt{\mu}},f),e^{\phi}\frac{\partial^{\alpha}F}{\sqrt{\mu}})|+
		|(\partial^{\alpha}\Gamma(f,\frac{M-\mu}{\sqrt{\mu}}),e^{\phi}\frac{\partial^{\alpha}F}{\sqrt{\mu}})|\}
		\notag\\
		&&\leq C(\eta+\eta_{0}+k^{\frac{1}{12}}\varepsilon^{\frac{3}{5}-\frac{2}{5}a})\varepsilon^{a-1}(\|\partial^{\alpha}f\|^{2}_{\sigma}
		+\|\partial^{\alpha}(\widetilde{\rho},\widetilde{u},\widetilde{\theta})\|^{2})
		\notag\\
		&&\hspace{0.5cm}+C_{\eta}k^{\frac{1}{12}}\varepsilon^{\frac{3}{5}a-\frac{2}{5}}\mathcal{D}_{2,l,q}(\tau)
		+C_{\eta}\varepsilon^{a-1}\varepsilon^{\frac{7}{5}+\frac{1}{15}a}(\delta+\varepsilon^{a}\tau)^{-\frac{4}{3}}.
	\end{eqnarray}
	By similar arguments as \eqref{4.34A} and \eqref{4.68}, we get
	\begin{eqnarray*}
		&&\varepsilon^{a-1}|(\partial^{\alpha}\Gamma(\frac{G}{\sqrt{\mu}},\frac{G}{\sqrt{\mu}}),e^{\phi}\frac{\partial^{\alpha}F}{\sqrt{\mu}})|\notag\\
		&&\leq  C\eta\varepsilon^{a-1}(\|\partial^{\alpha}f\|^{2}_{\sigma}
		+\|\partial^{\alpha}(\widetilde{\rho},\widetilde{u},\widetilde{\theta})\|^{2})
		\notag\\
		&&\quad +C_{\eta}k^{\frac{1}{12}}\varepsilon^{\frac{3}{5}a-\frac{2}{5}}\mathcal{D}_{2,l,q}(\tau)
		+C_{\eta}\varepsilon^{a-1}\varepsilon^{\frac{7}{5}+\frac{1}{15}a}(\delta+\varepsilon^{a}\tau)^{-\frac{4}{3}}.
	\end{eqnarray*}
	In view of \eqref{3.4}, \eqref{4.45a} and \eqref{4.68}, the last term of \eqref{4.63} is bounded by
	\begin{eqnarray}
		\label{4.97b}
		&&\varepsilon^{a-1}|(\frac{\partial^{\alpha}L_{M}\overline{G}}{\sqrt{\mu}},e^{\phi}\frac{\partial^\alpha F}{\sqrt{\mu}})|\notag\\
		&&=|(\frac{1}{\sqrt{\mu}}\partial^{\alpha}P_{1}\{v_{1}M(\frac{|v-u|^{2}
			\overline{\theta}_{y}}{2R\theta^{2}}+\frac{(v-u)\cdot\bar{u}_{y}}{R\theta})\},e^{\phi}\frac{\partial^{\alpha}F}{\sqrt{\mu}})|
		\notag\\
		&&\leq C\|\langle v\rangle^{\frac{1}{2}}\frac{1}{\sqrt{\mu}}\partial^{\alpha}P_{1}\{v_{1}M(\frac{|v-u|^{2}
			\overline{\theta}_{y}}{2R\theta^{2}}+\frac{(v-u)\cdot\bar{u}_{y}}{R\theta})\}\|
		\|\langle v\rangle^{-\frac{1}{2}}\frac{\partial^{\alpha}F}{\sqrt{\mu}}\|
		\notag\\
		&&\leq C\eta\varepsilon^{a-1}(\|\partial^{\alpha}f\|^{2}_{\sigma}
		+\|\partial^{\alpha}(\widetilde{\rho},\widetilde{u},\widetilde{\theta})\|^{2})
		+C_{\eta}k^{\frac{1}{12}}\varepsilon^{\frac{3}{5}a-\frac{2}{5}}\mathcal{D}_{2,l,q}(\tau)\notag\\
		&&\quad+C_{\eta}\varepsilon^{a-1}\varepsilon^{\frac{7}{5}+\frac{1}{15}a}(\delta+\varepsilon^{a}\tau)^{-\frac{4}{3}}.
	\end{eqnarray}

	Now, we turn to consider the terms on the left hand side of \eqref{4.63}, which is the main difficulty.
	By the fact $F=M+\overline{G}+\sqrt{\mu}f$, we deduce from \eqref{4.45a}, \eqref{4.56},  Lemma \ref{lem7.2}, \eqref{3.21} and \eqref{3.22} that
	\begin{eqnarray}
		\label{4.64}
		&&|(\frac{\partial^\alpha F}{\sqrt{\mu}},e^{\phi}\phi_{\tau}\frac{\partial^\alpha F}{\sqrt{\mu}})|\notag\\
		&&=|(e^{\phi}\phi_{\tau}\frac{\partial^{\alpha}(M+\overline{G}+\sqrt{\mu}f)}{\sqrt{\mu}},
		\frac{\partial^{\alpha}(M+\overline{G}+\sqrt{\mu}f)}{\sqrt{\mu}})|
		\notag\\
		&&\leq  |(e^{\phi}\phi_{\tau}\partial^{\alpha}f,\partial^{\alpha}f)|
		+C\|\phi_{\tau}\|_{L_{y}^{\infty}}(\|\langle v\rangle^{-\frac{1}{2}}\partial^{\alpha}f\|^{2}+
		\|\langle v\rangle^{\frac{1}{2}}\frac{\partial^{\alpha}(M+\overline{G})}{\sqrt{\mu}}\|^{2})
		\notag\\
		&&\leq C\eta\varepsilon^{a-1}(\|\partial^{\alpha}f\|^{2}_{\sigma}
		+\|\partial^{\alpha}(\widetilde{\rho},\widetilde{u},\widetilde{\theta})\|^{2})+
		C_{\eta}k^{\frac{1}{12}}\varepsilon^{\frac{3}{5}a-\frac{2}{5}}\mathcal{D}_{2,l,q}(\tau)
		\notag\\
		&&\hspace{0.5cm}+C_{\eta}\varepsilon^{a-1}\varepsilon^{\frac{7}{5}+\frac{1}{15}a}(\delta+\varepsilon^{a}\tau)^{-\frac{4}{3}}
		+C_{\eta}q_{3}(\tau)\varepsilon^{2a-2}\mathcal{H}_{2,l,q}(\tau).
	\end{eqnarray}
	Here we have used the fact that
	\begin{eqnarray*}
		|(\phi_{\tau}\partial^{\alpha}f,e^{\phi}\partial^{\alpha}f)|
		&&\leq C\eta\varepsilon^{a-1}\|\langle v\rangle^{-\frac{1}{2}}\partial^{\alpha}f\|^{2}+C_{\eta}\varepsilon^{1-a}\|\phi_{\tau}\|^2_{L_{y}^{\infty}}
		\|\langle v\rangle^{\frac{1}{2}}\partial^{\alpha}f\|^{2}
		\notag\\
		&&\leq C\eta\varepsilon^{a-1}\|\partial^{\alpha}f\|_{\sigma}^{2}+C_{\eta}q_3(\tau)\|\langle v\rangle^{\frac{1}{2}}\partial^{\alpha}f\|^{2}
		\notag\\
		&&\leq C\eta\varepsilon^{a-1}\|\partial^{\alpha}f\|_{\sigma}^{2}+C_{\eta}q_3(\tau)\varepsilon^{2a-2}\mathcal{H}_{2,l,q}(\tau),
	\end{eqnarray*}
	due to  \eqref{3.14} and \eqref{4.60}. We observe from the integration by parts that
	\begin{eqnarray}
		\label{4.88a}
		&&(v_1(\frac{\partial^\alpha F}{\sqrt{\mu}})_y+\frac{v_{1}}{2}\phi_{y}\frac{\partial^{\alpha}F}{\sqrt{\mu}},e^{\phi}\frac{\partial^\alpha F}{\sqrt{\mu}})-(\phi_{y}\partial_{v_{1}}(\frac{\partial^{\alpha}F}{\sqrt{\mu}}),e^{\phi}\frac{\partial^\alpha F}{\sqrt{\mu}})
		\notag\\
		&&=(v_{1}(e^{\frac{\phi}{2}}\frac{\partial^{\alpha}F}{\sqrt{\mu}})_{y},e^{\frac{\phi}{2}}\frac{\partial^\alpha F}{\sqrt{\mu}})+\frac{1}{2}((\frac{\partial^{\alpha}F}{\sqrt{\mu}})^{2},\partial_{v_{1}}(\phi_{y}e^{\phi}))=0.
	\end{eqnarray}
The last term on the left hand side of \eqref{4.63} is very complicated. We need to handle it carefully. We first write
	\begin{align}
		\label{4.65}
		-&(\frac{v_{1}}{2}\partial^{\alpha_{1}}\phi_{y}\frac{
			\partial^{\alpha-\alpha_{1}}F}{\sqrt{\mu}}-\partial^{\alpha_{1}}\phi_{y}\partial_{v_{1}}(\frac{\partial^{\alpha-\alpha_{1}}F}{\sqrt{\mu}}),
		e^{\phi}\frac{\partial^\alpha F}{\sqrt{\mu}})\notag\\
		= &(\partial^{\alpha_{1}}\phi_{y}\frac{\partial_{v_{1}}\partial^{\alpha-\alpha_{1}}F}{\sqrt{\mu}}
		,e^{\phi}\frac{\partial^\alpha F}{\sqrt{\mu}}).
		\notag\\
		=&
		(\partial^{\alpha_{1}}\phi_{y}\frac{\partial_{v_{1}}\partial^{\alpha-\alpha_{1}}M}{\sqrt{\mu}},
		e^{\phi}\frac{\partial^\alpha M}{\sqrt{\mu}})+(\partial^{\alpha_{1}}\phi_{y}\frac{\partial_{v_{1}}\partial^{\alpha-\alpha_{1}}M}{\sqrt{\mu}},
		e^{\phi}\frac{\partial^\alpha(\overline{G}+\sqrt{\mu}f)}{\sqrt{\mu}})
		\notag\\
		&+(\partial^{\alpha_{1}}\phi_{y}\frac{\partial_{v_{1}}\partial^{\alpha-\alpha_{1}}\overline{G}}{\sqrt{\mu}},
		e^{\phi}\frac{\partial^\alpha F}{\sqrt{\mu}})
		+(\partial^{\alpha_{1}}\phi_{y}\frac{\partial_{v_{1}}\partial^{\alpha-\alpha_{1}}(\sqrt{\mu}f)}{\sqrt{\mu}},
		e^{\phi}\frac{\partial^\alpha(M+\overline{G})}{\sqrt{\mu}})
		\notag\\
		&+(\partial^{\alpha_{1}}\phi_{y}\frac{\partial_{v_{1}}\partial^{\alpha-\alpha_{1}}(\sqrt{\mu}f)}{\sqrt{\mu}},
		e^{\phi}\frac{\partial^\alpha(\sqrt{\mu}f)}{\sqrt{\mu}})
		\notag\\
		:=&I_{4}+I_5+I_6+I_7+I_8,
	\end{align}
for $1\leq|\alpha_{1}|\leq|\alpha|=2$. We postpone the estimate of $I_4$ to Lemma \ref{lem6.7} later on. To bound $I_{5}$, for $\alpha_{1}=\alpha=2$, we observe 
	$$
	(\partial^{\alpha}\phi_{y}\partial_{v_{1}}M,
	e^{\phi}\partial^\alpha G\frac{1}{M})=-(\partial^{\alpha}\phi_{y}\frac{(v_{1}-u_{1})}{R\theta},
	e^{\phi}\partial^\alpha G)=0,
	$$
which combined with $G=\overline{G}+\sqrt{\mu}f$, \eqref{4.45a}, \eqref{4.12A} and \eqref{7.12} yields that
	\begin{eqnarray*}
		|I_{5}|=&&|(\partial^{\alpha_{1}}\phi_{y}\frac{\partial_{v_{1}}\partial^{\alpha-\alpha_{1}}M}{\sqrt{\mu}},
		e^{\phi}\frac{\partial^\alpha(\overline{G}+\sqrt{\mu}f)}{\sqrt{\mu}})|
		\notag\\
		=&&|(\partial^{\alpha}\phi_{y}\partial_{v_{1}}M,
		e^{\phi}\partial^\alpha G[\frac{1}{\mu}-\frac{1}{M}])
		+(\partial^{\alpha}\phi_{y}\partial_{v_{1}}M,e^{\phi}\partial^\alpha G\frac{1}{M})|
		\notag\\
		\leq &&C(\eta_{0}+k^{\frac{1}{12}}\varepsilon^{\frac{3}{5}-\frac{2}{5}a})\|\partial^{\alpha}\widetilde{\phi}_{y}\|\|\partial^{\alpha}f\|_{\sigma}
		+Ck^{\frac{1}{12}}\varepsilon^{\frac{3}{5}a-\frac{2}{5}}\mathcal{D}_{2,l,q}(\tau)\\
&&		+C\varepsilon^{a-1}\varepsilon^{\frac{7}{5}+\frac{1}{15}a}(\delta+\varepsilon^{a}\tau)^{-\frac{4}{3}}.
	\end{eqnarray*}
	The case $|\alpha_1|=1$ has the same bound as \eqref{6.39A}. It follows that
	\begin{eqnarray*}
		|I_{5}|
		\leq&& \eta\varepsilon^{a-1}\|\partial^{\alpha}f\|_{\sigma}^{2}+C(\eta_{0}+k^{\frac{1}{12}}\varepsilon^{\frac{3}{5}-\frac{2}{5}a})\|\partial^{\alpha}\widetilde{\phi}_{y}\|\|\partial^{\alpha}f\|_{\sigma}
		\notag\\	
		&&+C_{\eta}k^{\frac{1}{12}}\varepsilon^{\frac{3}{5}a-\frac{2}{5}}\mathcal{D}_{2,l,q}(\tau)
		+C_{\eta}\varepsilon^{a-1}\varepsilon^{\frac{7}{5}+\frac{1}{15}a}(\delta+\varepsilon^{a}\tau)^{-\frac{4}{3}}.
	\end{eqnarray*}
To bound $I_{6}$, we use \eqref{4.45a}, \eqref{4.68}, \eqref{7.24}, \eqref{7.25}, Lemma \ref{lem7.2}, \eqref{3.21} and \eqref{3.22}  to get
	\begin{eqnarray*}
		|I_{6}|&&\leq \eta\varepsilon^{a-1}\|\langle v\rangle^{-\frac{1}{2}}\frac{\partial^\alpha F}{\sqrt{\mu}}\|^{2}+ C_{\eta}\varepsilon^{1-a}\|\langle v\rangle^{\frac{1}{2}}\partial^{\alpha_{1}}\phi_{y}\frac{\partial_{v_{1}}\partial^{\alpha-\alpha_{1}}\overline{G}}{\sqrt{\mu}}\|^{2}
		\notag\\
		&&\leq C\eta\varepsilon^{a-1}(\|\partial^{\alpha}f\|^{2}_{\sigma}
		+\|\partial^{\alpha}(\widetilde{\rho},\widetilde{u},\widetilde{\theta})\|^{2})+C_{\eta}k^{\frac{1}{12}}\varepsilon^{\frac{3}{5}a-\frac{2}{5}}\mathcal{D}_{2,l,q}(\tau)\\
&&\quad		+C_{\eta}\varepsilon^{a-1}\varepsilon^{\frac{7}{5}+\frac{1}{15}a}(\delta+\varepsilon^{a}\tau)^{-\frac{4}{3}},
	\end{eqnarray*}
while for the term $I_{7}$, it holds that
	\begin{eqnarray*}
		|I_{7}|=&&|(\partial^{\alpha-\alpha_{1}}(\sqrt{\mu}f),\partial^{\alpha_{1}}\phi_{y}e^{\phi}\partial_{v_{1}}[\frac{\partial^\alpha(M+\overline{G})}{\mu}])|
		\notag\\
		\leq&& \eta\varepsilon^{a-1}\|\langle v\rangle^{\frac{1}{2}}\sqrt{\mu}\partial_{v_{1}}
		[\frac{\partial^\alpha(M+\overline{G})}{\mu}]\|^2+C_\eta\varepsilon^{1-a}
		\|\langle v\rangle^{-\frac{1}{2}}\partial^{\alpha-\alpha_{1}}f\partial^{\alpha_{1}}\phi_{y}\|^2
		\notag\\
		\leq&& C\eta\varepsilon^{a-1}\|\partial^{\alpha}(\widetilde{\rho},\widetilde{u},\widetilde{\theta})\|^{2}+C_{\eta}k^{\frac{1}{12}}\varepsilon^{\frac{3}{5}a-\frac{2}{5}}\mathcal{D}_{2,l,q}(\tau)\\
		&&+C_{\eta}\varepsilon^{a-1}\varepsilon^{\frac{7}{5}+\frac{1}{15}a}(\delta+\varepsilon^{a}\tau)^{-\frac{4}{3}}.
	\end{eqnarray*}
	As for the term $I_{8}$, if $|\alpha_1|=1$, we derive from \eqref{4.45a}, \eqref{3.16}, \eqref{3.14} and \eqref{4.60} that
	\begin{eqnarray*}
		|I_{8}|=&&|(\partial^{\alpha_1}\phi_{y}[\partial_{v_1}\partial^{\alpha-\alpha_1}f-\frac{v_1}{2}\partial^{\alpha-\alpha_1}f], e^{\phi}\partial^\alpha f)|
		\notag\\
		\leq&& C\|\partial^{\alpha_1}\phi_{y}\|_{L_{y}^{\infty}}(\|\langle v\rangle^{\frac{1}{2}}\partial_{v_1}\partial^{\alpha-\alpha_1}f\|+\|\langle v_{1}\rangle\langle v\rangle^{\frac{1}{2}}\partial^{\alpha-\alpha_1}f\|)\|\langle v\rangle^{-\frac{1}{2}}\partial^\alpha f\|
		\notag\\
		\leq&& C\eta\varepsilon^{a-1}\|\langle v\rangle^{-\frac{1}{2}}\partial^\alpha f\|^2
		+C_{\eta}q_{3}(\tau)(\|\langle v\rangle^{\frac{1}{2}}\partial_{v_1}\partial^{\alpha-\alpha_1}f\|^{2}+\|\langle v\rangle^{\frac{1}{2}}\partial^{\alpha-\alpha_1}f\|_{w}^{2})
		\notag\\
		\leq&& C\eta\varepsilon^{a-1}\| \partial^\alpha f\|^{2}_{\sigma}
		+C_{\eta}q_{3}(\tau)\mathcal{H}_{2,l,q}(\tau),
	\end{eqnarray*}
	where we have used the fact that $\langle v_{1}\rangle\leq\langle v\rangle^{2(l-|\alpha-\alpha_1|)}$
	for $|\alpha-\alpha_{1}|=1$ with $l\geq2$. Similarly, the case $\alpha_1=\alpha$ can be estimated as
	\begin{eqnarray*}
		|I_{8}|\leq&& C\|\partial^{\alpha_1}\phi_{y}\|(\|\langle v\rangle^{\frac{1}{2}}\partial_{v_1}\partial^{\alpha-\alpha_1}f\|_{L_{y}^{\infty}}+\|\langle v_{1}\rangle\langle v\rangle^{\frac{1}{2}}\partial^{\alpha-\alpha_1}f\|_{L_{y}^{\infty}})\|\langle v\rangle^{-\frac{1}{2}}\partial^\alpha f\|
		\notag\\
		\leq&& C\eta\varepsilon^{a-1}\| \partial^\alpha f\|^{2}_{\sigma}+C_{\eta}q_{3}(\tau)\mathcal{H}_{2,l,q}(\tau).
	\end{eqnarray*}
	It follows from the above two estimates that
	\begin{equation}
		\label{4.91a}
		|I_{8}|\leq C\eta\varepsilon^{a-1}\| \partial^\alpha f\|^{2}_{\sigma}
		+C_{\eta}q_{3}(\tau)\mathcal{H}_{2,l,q}(\tau).
	\end{equation}
	Combining the above estimates of $I_{4}$, $I_{5}$, $I_{6}$, $I_{7}$ and $I_{8}$ together, we have from \eqref{4.65} that
	\begin{align}
		\label{6.49A}
		& \frac{d}{d\tau}E_{1}(\tau)-\sum_{1\leq\alpha_{1}\leq\alpha}C^{\alpha_{1}}_{\alpha}(\frac{v_{1}}{2}\partial^{\alpha_{1}}\phi_{y}\frac{
			\partial^{\alpha-\alpha_{1}}F}{\sqrt{\mu}}-\partial^{\alpha_{1}}\phi_{y}\partial_{v_{1}}(\frac{\partial^{\alpha-\alpha_{1}}F}{\sqrt{\mu}}),
		e^{\phi}\frac{\partial^\alpha F}{\sqrt{\mu}})\notag\\
		&\leq
		+C(\eta+\eta_{0}+k^{\frac{1}{12}}\varepsilon^{\frac{3}{5}-\frac{2}{5}a})\big\{\varepsilon^{a-1}(\|\partial^{\alpha}(\widetilde{\rho},\widetilde{u},\widetilde{\theta})\|^{2}
		+\| \partial^\alpha f\|^{2}_{\sigma}+\|\partial^{\alpha}\widetilde{\phi}\|^{2})\notag\\
	&\hspace{7cm}	+\varepsilon^{1-a}\|\partial^{\alpha}\widetilde{\phi}_{y}\|^{2}\big\}\notag\\
&\quad	+C_{\eta}k^{\frac{1}{12}}\varepsilon^{\frac{3}{5}a-\frac{2}{5}}\mathcal{D}_{2,l,q}(\tau)	+C_{\eta}\varepsilon^{a-1}\varepsilon^{\frac{7}{5}+\frac{1}{15}a}(\delta+\varepsilon^{a}\tau)^{-\frac{4}{3}}\notag\\
&\quad		+C_{\eta}q_{3}(\tau)\mathcal{H}_{2,l,q}(\tau).
	\end{align}
Here, as mentioned before, recall that we have used \eqref{4.66} for the estimate of $I_4$ in Lemma \ref{lem6.7} later.

	Therefore, substituting \eqref{4.89A}, \eqref{4.85}-\eqref{4.88a} and \eqref{6.49A} into \eqref{4.63},
	and further multiplying it by $\varepsilon^{2(1-a)}$, we can obatin for any small $\eta>0$,
	\begin{eqnarray}
		\label{4.75}
		&&\varepsilon^{2(1-a)}\frac{d}{d\tau}\sum_{|\alpha|=2}
		(\frac{1}{2}\|e^{\frac{\phi}{2}}\frac{\partial^\alpha F}{\sqrt{\mu}}\|^{2}+E_{1}(\tau))
		+c\varepsilon^{1-a}\sum_{|\alpha|=2}\|\partial^{\alpha}f\|_{\sigma}^{2}
		\notag\\
		\leq&& C(\eta+\eta_{0}+k^{\frac{1}{12}}\varepsilon^{\frac{3}{5}-\frac{2}{5}a})\notag\\
	&&\times	
		\sum_{|\alpha|=2}\big\{\varepsilon^{1-a}
		(\|\partial^{\alpha}(\widetilde{\rho},\widetilde{u},\widetilde{\theta})\|^{2}+\| \partial^\alpha f\|^{2}_{\sigma}
		+\|\partial^{\alpha}\widetilde{\phi}\|^{2})+\varepsilon^{3(1-a)}\|\partial^{\alpha}\widetilde{\phi}_{y}\|^{2}\big\}
		\notag\\
		&&+C_{\eta}\varepsilon^{\frac{7}{5}+\frac{1}{15}a}(\delta+\varepsilon^{a}\tau)^{-\frac{4}{3}}\notag\\
		&&+ C_{\eta}k^{\frac{1}{12}}\varepsilon^{\frac{3}{5}-\frac{2}{5}a}\mathcal{D}_{2,l,q}(\tau)
		+C_{\eta}q_{3}(\tau)\mathcal{H}_{2,l,q}(\tau).
	\end{eqnarray}
Recall $E_{1}(\tau)$ given \eqref{7.39} and use $F=M+\overline{G}+\sqrt{\mu}f$, we can claim that
	\begin{eqnarray*}
		&&\varepsilon^{2(1-a)}\sum_{|\alpha|=2}(\|e^{\frac{\phi}{2}}\frac{\partial^{\alpha}F}{\sqrt{\mu}}(\tau)\|^{2}+E_{1}(\tau))
		\notag\\
		&&\geq c\varepsilon^{2(1-a)}\sum_{|\alpha|=2}(\|\partial^{\alpha}(\widetilde{\rho},\widetilde{u},\widetilde{\theta})\|^{2}
		+\|\partial^{\alpha}f\|^{2}+\|\partial^{\alpha}\widetilde{\phi}\|^{2}
		+\varepsilon^{2b-2a}\|\partial^{\alpha}\widetilde{\phi}_{y}\|^{2})\notag\\
		&&\quad-Ck^{\frac{1}{3}}\varepsilon^{\frac{6}{5}-\frac{4}{5}a},
	\end{eqnarray*}
	and
	\begin{equation*}
		\varepsilon^{2(1-a)}\sum_{|\alpha|=2}(\|e^{\frac{\phi}{2}}\frac{\partial^{\alpha}F}{\sqrt{\mu}}(0)\|^{2}
		+E_{1}(0))
		\leq Ck^{\frac{1}{3}}\varepsilon^{\frac{6}{5}-\frac{4}{5}a}.
	\end{equation*}
By integrating \eqref{4.75} with respect to $\tau$ and employing the above two estimates,
we can obtain the desired estimate \eqref{4.76}. This completes the proof of Lemma \ref{lem.6.6}.
\end{proof}
For deducing \eqref{6.49A} in the proof of Lemma \ref{lem.6.6}  above, we have used the following estimate.
\begin{lemma}\label{lem6.7} 
	Recall $I_4$ in \eqref{4.65}. It holds that
	\begin{align}
		\label{4.66}
		I_{4}\leq&
		-\frac{d}{d\tau}E_{1}(\tau)+C(\eta+\eta_{0}+k^{\frac{1}{12}}\varepsilon^{\frac{3}{5}-\frac{2}{5}a})
		\big\{\varepsilon^{a-1}(\|\partial^{\alpha}(\widetilde{\rho},\widetilde{u},\widetilde{\theta})\|^{2}
		+\|\partial^{\alpha}\widetilde{\phi}\|^{2})\notag\\
		&\qquad\qquad\qquad\qquad\qquad\qquad\qquad\qquad\qquad+\varepsilon^{1-a}\|\partial^{\alpha}\widetilde{\phi}_{y}\|^{2}\big\}
		\notag\\
		&+C_{\eta}k^{\frac{1}{12}}\varepsilon^{\frac{3}{5}a-\frac{2}{5}}\mathcal{D}_{2,l,q}(\tau)
		+C_{\eta}\varepsilon^{a-1}\varepsilon^{\frac{7}{5}+\frac{1}{15}a}(\delta+\varepsilon^{a}\tau)^{-\frac{4}{3}}.
	\end{align}
	Here the function $E_{1}(\tau)$ is defined by
	\begin{align}
		\label{7.39}
		E_{1}(\tau)&=\frac{1}{2}\varepsilon^{2b-2a}(e^{\phi_{-}}\partial^{\alpha}\widetilde{\phi}_{y},
		\partial^{\alpha}\widetilde{\phi}_{y})+
		\frac{1}{2}(e^{\phi_{-}}\partial^{\alpha}\widetilde{\phi},
		\rho'_{\mathrm{e}}(\bar{\phi})\partial^{\alpha}\widetilde{\phi})\notag\\
		&\qquad-\frac{1}{2}(e^{\phi_{-}}\partial^{\alpha}\widetilde{\phi},
		\partial^{\alpha}\widetilde{\phi}\int^{\phi}_{\bar{\phi}}\rho''_{\mathrm{e}}(\varrho)\,d\varrho ).
	\end{align}
\end{lemma}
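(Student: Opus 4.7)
\textbf{Proof proposal for Lemma \ref{lem6.7}.} The term $I_4$ is purely macroscopic: using $\partial_{v_1}M=-(v_1-u_1)M/(R\theta)$ and the expansion $\partial^\beta M=M\cdot P^\beta[\rho,u,\theta]$ with $P^\beta$ polynomial in $v$ whose coefficients contain derivatives of $(\rho,u,\theta)$ up to order $|\beta|$, the $v$-integral in $I_4$ reduces to an explicit Gaussian moment times a polynomial in $\partial^{\gamma}(\rho,u,\theta)$ with $|\gamma|\leq|\alpha|$. Only the odd-in-$(v-u)$ pieces from $\partial^\beta M$ survive against $\partial_{v_1}M$, so the contribution that carries two top-order derivatives is proportional to $\int \partial^{\alpha_1}\phi_y\cdot g(\rho,u,\theta)\cdot\partial^{\alpha-\alpha_1}u_1\cdot\partial^{\alpha}u_1\,dy$ with a smooth coefficient $g$. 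All other contributions to $I_4$ carry at most one top-order derivative against at most one other; by the 1D Sobolev inequality, the a priori bound \eqref{3.22}, and Lemma \ref{lem7.2}, they are absorbed directly into the $C_\eta k^{1/12}\varepsilon^{3a/5-2/5}\mathcal{D}_{2,l,q}$ and $C_\eta\varepsilon^{7/5+a/15}(\delta+\varepsilon^a\tau)^{-4/3}$ terms on the right-hand side of \eqref{4.66}. The case $1\leq|\alpha_1|<|\alpha|$ (in which no derivative falls on $M$ the second time) can be handled directly by Cauchy--Schwarz and the imbedding, producing only error of the claimed form.

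The critical case is $|\alpha_1|=|\alpha|=2$. Here, after replacing $e^\phi$ by $e^{\phi_-}$ (the difference contributes a $C(\eta_0+k^{1/12}\varepsilon^{3/5-2a/5})$ factor that is absorbed), the leading contribution is of the form $c_*\int e^{\phi_-}\partial^\alpha\widetilde\phi_y\cdot\partial^\alpha\widetilde u_1\,dy$ for a positive smooth $c_*=c_*(\rho,u,\theta)$. The plan is to integrate by parts in $y$, use the continuity equation (first line of \eqref{4.31}) differentiated $\partial^\alpha$ times in the form $\rho\partial^\alpha\widetilde u_{1y}=-\partial^\alpha\widetilde\rho_\tau+(\text{lower order})$, and then invoke the Poisson equation \eqref{3.10}$_5$ combined with the Taylor expansion \eqref{4.34}:
\begin{equation*}
-\varepsilon^{2b-2a}\widetilde\phi_{yy}+\rho_e'(\bar\phi)\widetilde\phi=\widetilde\rho+J_7+\varepsilon^{2b-2a}\bar\phi_{yy},
\end{equation*}
where $J_7=\int_{\bar\phi}^{\phi}(\varrho-\phi)\rho_e''(\varrho)\,d\varrho$. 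Differentiating this identity $\partial^\alpha$ times and then in $\tau$ yields an expression for $\partial^\alpha\widetilde\rho_\tau$ in terms of $\varepsilon^{2b-2a}\partial^\alpha\widetilde\phi_{yy\tau}$, $\partial^\alpha[\rho_e'(\bar\phi)\widetilde\phi]_\tau$, $\partial^\alpha J_{7\tau}$, and the decaying profile $\varepsilon^{2b-2a}\partial^\alpha\bar\phi_{yy\tau}$.

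Substituting, the first piece produces, after one further integration by parts in $y$,
\begin{equation*}
\varepsilon^{2b-2a}\int e^{\phi_-}\partial^\alpha\widetilde\phi\cdot\partial^\alpha\widetilde\phi_{yy\tau}\,dy=-\tfrac{1}{2}\tfrac{d}{d\tau}\bigl(\varepsilon^{2b-2a}(e^{\phi_-}\partial^\alpha\widetilde\phi_y,\partial^\alpha\widetilde\phi_y)\bigr),
\end{equation*}
recovering the first term of $E_1(\tau)$; the second piece gives $-\tfrac{1}{2}\tfrac{d}{d\tau}(e^{\phi_-}\partial^\alpha\widetilde\phi,\rho_e'(\bar\phi)\partial^\alpha\widetilde\phi)$ up to a $\rho_e''(\bar\phi)\bar\phi_\tau$ remainder controlled by Lemma \ref{lem7.2}; and the $J_7$ piece, using $\partial_\phi J_7=-(\rho_e'(\phi)-\rho_e'(\bar\phi))=-\int_{\bar\phi}^{\phi}\rho_e''(\varrho)\,d\varrho$, yields precisely the third term of $E_1(\tau)$ plus cubic remainders in $\widetilde\phi$. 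The residual contributions from $\bar\phi$ and from replacing $e^\phi$ by $e^{\phi_-}$ are estimated as in \eqref{4.23} and \eqref{4.71b} via the wave-profile decay $(\delta+\varepsilon^a\tau)^{-1}$ of Lemma \ref{lem7.2}, the assumption \eqref{4.25a}, and \eqref{4.56}.

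The main obstacle is the bookkeeping in the critical case: ensuring that after the two-fold integration by parts (once in $y$ to swap $\partial^\alpha\widetilde\phi_y$ for $\partial^\alpha\widetilde\phi$ and use the continuity equation, then in $y$ again inside the Poisson-equation substitution), the signs line up so that the three structural quantities identified above can all be grouped into exactly $-\tfrac{d}{d\tau}E_1(\tau)$, with every uncanceled remainder either (i) quadratic in $\partial^\alpha(\widetilde\rho,\widetilde u,\widetilde\theta,\widetilde\phi)$ with a small prefactor $\eta+\eta_0+k^{1/12}\varepsilon^{3/5-2a/5}$, (ii) bounded by $\mathcal{D}_{2,l,q}(\tau)$ via \eqref{3.22}, or (iii) integrable rarefaction-profile error $\varepsilon^{7/5+a/15}(\delta+\varepsilon^a\tau)^{-4/3}$. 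The delicate point is that $\partial^\alpha\widetilde\rho_\tau$ is not in the dissipation functional and must be eliminated entirely via the Poisson equation; Lemma \ref{lem.5.2a} is not used here since we need an exact time-derivative structure rather than a bound.
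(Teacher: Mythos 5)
Your proposal follows essentially the same route as the paper: replace $e^{\phi}$ (and the factor $\frac{1}{R\theta}$) by $e^{\phi_{-}}$ up to small-coefficient differences, integrate by parts in $y$ on the critical term $-(e^{\phi_{-}}\partial^{\alpha}\widetilde{\phi}_{y},\rho\partial^{\alpha}\widetilde{u}_{1})$, eliminate $\partial^{\alpha}\widetilde{\rho}_{\tau}$ through the continuity equation and the Poisson equation combined with the Taylor expansion \eqref{4.34}, and read off the three exact time derivatives constituting $-\frac{d}{d\tau}E_{1}(\tau)$, with all remainders absorbed via the profile decay and a priori bounds exactly as in \eqref{4.23} and \eqref{4.61a}. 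The only minor quibbles are the sign convention in your definition of $J_{7}$ and your closing remark: the commutator remainders containing $\partial^{\alpha-\alpha_{1}}\widetilde{\phi}_{\tau}$ still require converting time derivatives into spatial ones as in \eqref{4.56}, so Lemma \ref{lem.5.2a} is in fact invoked for the error terms even though the main structure is an exact time derivative.
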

\begin{proof}
	Recall the definition $I_4$ given in \eqref{4.65}. If $\alpha_{1}=\alpha$, we write
	\begin{equation}
		\label{7.33}
		I_{4}=(\partial^{\alpha}\phi_{y}\partial_{v_{1}}M,e^{\phi}\partial^\alpha M[\frac{1}{\mu}-\frac{1}{M}])
		+(\partial^{\alpha}\phi_{y}\partial_{v_{1}}M,e^{\phi}\partial^\alpha M\frac{1}{M})
		:=I^{1}_{4}+I^{2}_{4}.
	\end{equation}
To estimate $I^1_4$, by \eqref{7.12}, \eqref{4.12A}, Lemma \ref{lem7.2}, \eqref{3.21} and \eqref{3.22} , we get
	\begin{eqnarray}
		\label{6.54B}
		|I^{1}_{4}|\leq&& C(\eta_{0}+k^{\frac{1}{12}}\varepsilon^{\frac{3}{5}-\frac{2}{5}a})
		\|\partial^{\alpha}\phi_{y}\|\|\frac{\partial^\alpha M}{\sqrt{\mu}}\|
		\notag\\
		\leq&& C(\eta_{0}+k^{\frac{1}{12}}\varepsilon^{\frac{3}{5}-\frac{2}{5}a})
		(\varepsilon^{1-a}\|\partial^{\alpha}\widetilde{\phi}_{y}\|^{2}
		+\varepsilon^{a-1}\|\partial^{\alpha}(\widetilde{\rho},\widetilde{u},\widetilde{\theta})\|^{2})
		\notag\\
		&&+Ck^{\frac{1}{12}}\varepsilon^{\frac{3}{5}a-\frac{2}{5}}\mathcal{D}_{2,l,q}(\tau)
		+C\varepsilon^{a-1}\varepsilon^{\frac{7}{5}+\frac{1}{15}a}(\delta+\varepsilon^{a}\tau)^{-\frac{4}{3}}.
	\end{eqnarray}
The term $I^{2}_{4}$ is very complicated. We observe from \eqref{1.7} and \eqref{1.5} that
	\begin{eqnarray}
		\label{6.54A}
		I^{2}_{4}&&=-(\partial^{\alpha}\phi_{y}\frac{(v_{1}-u_{1})}{R\theta}, e^{\phi}\partial^\alpha M)
		\notag\\
		&&=-(\frac{1}{R\theta}e^{\phi}\partial^{\alpha}\phi_{y},2\rho_yu_{1y})
		-(\frac{1}{R\theta}e^{\phi}\partial^{\alpha}\phi_{y},\rho\partial^{\alpha}u_{1}).
	\end{eqnarray}
	The first term of \eqref{6.54A} can be dominated easily by
	\begin{eqnarray}
		\label{7.34b}
		&&|(\frac{1}{R\theta}e^{\phi}\partial^{\alpha}\phi_{y},2\rho_yu_{1y})|
		\notag\\
		&&\leq \eta\varepsilon^{1-a}\|\partial^{\alpha}\widetilde{\phi}_{y}\|^{2}
		+C_{\eta}k^{\frac{1}{12}}\varepsilon^{\frac{3}{5}a-\frac{2}{5}}\mathcal{D}_{2,l,q}(\tau)\notag\\
		&&\quad+C_{\eta}\varepsilon^{a-1}\varepsilon^{\frac{7}{5}+\frac{1}{15}a}(\delta+\varepsilon^{a}\tau)^{-\frac{4}{3}}.
	\end{eqnarray}

	However the second term of \eqref{6.54A} is subtle since it is a linear term related to the highest order electric field. To bound it,
	we  write
	\begin{multline}
		\label{7.34}
		-(\frac{1}{R\theta}e^{\phi}\partial^{\alpha}\phi_{y},\rho\partial^{\alpha}u_{1})
		=-([\frac{1}{R\theta}e^{\phi}-e^{\phi}]\partial^{\alpha}\phi_{y},\rho\partial^{\alpha}u_{1})
		\\
		-([e^{\phi}-e^{\phi_{-}}]\partial^{\alpha}\phi_{y},\rho\partial^{\alpha}u_{1})
		-(e^{\phi_{-}}\partial^{\alpha}\phi_{y},\rho\partial^{\alpha}u_{1}).
	\end{multline}
	Thanks to $R=\frac{2}{3}$ and \eqref{3.24}, then
	$$
	|\frac{1}{R\theta}-1|=|(\frac{3}{2}-\theta)\frac{1}{\theta}|
	\leq C(\eta_{0}+k^{\frac{1}{12}}\varepsilon^{\frac{3}{5}-\frac{2}{5}a}).
	$$
	We thereupon deduce from this, \eqref{4.45a}, Lemma \ref{lem7.2} and \eqref{3.21} that
	\begin{eqnarray}
		\label{7.37A}
		&&|([\frac{1}{R\theta}e^{\phi}-e^{\phi}]\partial^{\alpha}\phi_{y},\rho\partial^{\alpha}u_{1})|
		\leq  C(\eta_{0}+k^{\frac{1}{12}}\varepsilon^{\frac{3}{5}-\frac{2}{5}a})\|\partial^{\alpha}\phi_{y}\|\|\partial^{\alpha}u_{1}\|
		\notag\\
		&&\leq  C(\eta_{0}+k^{\frac{1}{12}}\varepsilon^{\frac{3}{5}-\frac{2}{5}a})(\varepsilon^{1-a}\|\partial^{\alpha}\widetilde{\phi}_{y}\|^{2}
		+\varepsilon^{a-1}\|\partial^{\alpha}\widetilde{u}_{1}\|^{2})\notag\\
		&&\quad+C\varepsilon^{a-1}\varepsilon^{\frac{7}{5}+\frac{1}{15}a}(\delta+\varepsilon^{a}\tau)^{-\frac{4}{3}}.
	\end{eqnarray}
	The difference of $\bar{\phi}$ and $\phi_{-}$ is small in terms of \eqref{2.2}, \eqref{1.27} and the assumptions $(\mathcal{A})$,
	namely
	\begin{equation*}
		|\bar{\phi}-\phi_{-}|=|\rho^{-1}_{\mathrm{e}}(\bar{\rho})-\rho^{-1}_{\mathrm{e}}(\rho_{-})|
		\leq C|\bar{\rho}-\rho_{-}|\leq C(|\bar{\rho}-1|+|1-\rho_{-}|)\leq C\eta_{0},
	\end{equation*}
	which further implies that
	$$
	|e^{\phi}-e^{\phi_{-}}|\leq C(|\widetilde{\phi}|+|\bar{\phi}-\phi_{-}|)
	\leq C(\eta_{0}+k^{\frac{1}{12}}\varepsilon^{\frac{3}{5}-\frac{2}{5}a}).
	$$
	This and a similar argument as \eqref{7.37A} together leads us to
	\begin{eqnarray}
		\label{7.38A}
		&&|([e^{\phi}-e^{\phi_{-}}]\partial^{\alpha}\phi_{y},\rho\partial^{\alpha}u_{1})|\notag\\
		&&\leq  C(\eta_{0}+k^{\frac{1}{12}}\varepsilon^{\frac{3}{5}-\frac{2}{5}a})(\varepsilon^{1-a}\|\partial^{\alpha}\widetilde{\phi}_{y}\|^{2}
		+\varepsilon^{a-1}\|\partial^{\alpha}\widetilde{u}_{1}\|^{2})
		\notag\\
		&&+C_{\eta}\varepsilon^{a-1}\varepsilon^{\frac{7}{5}+\frac{1}{15}a}(\delta+\varepsilon^{a}\tau)^{-\frac{4}{3}}.
	\end{eqnarray}
For	the last term of \eqref{7.34}, it is equivalent to
	\begin{eqnarray}
		\label{7.35a}
		-(e^{\phi_{-}}\partial^{\alpha}\bar{\phi}_{y},\rho\partial^{\alpha}u_{1})
		-(e^{\phi_{-}}\partial^{\alpha}\widetilde{\phi}_{y},\rho\partial^{\alpha}\bar{u}_{1})
		-(e^{\phi_{-}}\partial^{\alpha}\widetilde{\phi}_{y},\rho\partial^{\alpha}\widetilde{u}_{1}).
	\end{eqnarray}

	For the first and second terms of \eqref{7.35a}, we have from \eqref{4.45a}, Lemma \ref{lem7.2}, \eqref{3.21} and \eqref{3.22} that
	\begin{multline}
		\label{6.60B}
		|(e^{\phi_{-}}\partial^{\alpha}\bar{\phi}_{y},\rho\partial^{\alpha}u_{1})|
		\leq C\|\partial^{\alpha}\bar{\phi}_{y}\|(\|\partial^{\alpha}\widetilde{u}_{1}\|+\|\partial^{\alpha}\bar{u}_{1}\|)
		\\
		\leq \eta\varepsilon^{1-a}\|\partial^{\alpha}\widetilde{u}_{1}\|^2
		+C_\eta\varepsilon^{a-1}\varepsilon^{\frac{7}{5}+\frac{1}{15}a}(\delta+\varepsilon^{a}\tau)^{-\frac{4}{3}},
	\end{multline}
	and
	\begin{equation}
		\label{6.61B}
		|(e^{\phi_{-}}\partial^{\alpha}\widetilde{\phi}_{y},\rho\partial^{\alpha}\bar{u}_{1})|
		\leq\eta\varepsilon^{1-a}\|\partial^{\alpha}\widetilde{\phi}_y\|^{2}
		+C_{\eta}\varepsilon^{a-1}\varepsilon^{\frac{7}{5}+\frac{1}{15}a}(\delta+\varepsilon^{a}\tau)^{-\frac{4}{3}}.
	\end{equation}

For	the last term of \eqref{7.35a}, we observe from the integration by parts that
	\begin{multline}
		\label{7.35aa}
		-(e^{\phi_{-}}\partial^{\alpha}\widetilde{\phi}_{y},\rho\partial^{\alpha}\widetilde{u}_{1})\\
		=
		(e^{\phi_{-}}\partial^{\alpha}\widetilde{\phi},\partial^{\alpha}[\rho\widetilde{u}_{1}]_{y})
		+\sum_{1\leq\alpha_{1}\leq\alpha}C^{\alpha_{1}}_{\alpha}
		(e^{\phi_{-}}\partial^{\alpha}\widetilde{\phi}_{y},\partial^{\alpha_{1}}\rho\partial^{\alpha-\alpha_{1}}\widetilde{u}_{1}).
	\end{multline}
From the first equation of \eqref{3.10}, it is straightforward to check that
	\begin{equation}
		\label{7.35}
		(e^{\phi_{-}}\partial^{\alpha}\widetilde{\phi},\partial^{\alpha}[\rho\widetilde{u}_{1}]_{y})
		=-(e^{\phi_{-}}\partial^{\alpha}\widetilde{\phi},\partial^{\alpha}\widetilde{\rho}_{\tau})
		-(e^{\phi_{-}}\partial^{\alpha}\widetilde{\phi},\partial^{\alpha}(\bar{u}_{1}\widetilde{\rho})_{y}).
	\end{equation}

To estimate the first term of \eqref{7.35}, we have from the last equation of \eqref{3.10} that
	\begin{equation}
		\label{7.36}
		-(e^{\phi_{-}}\partial^{\alpha}\widetilde{\phi},\partial^{\alpha}\widetilde{\rho}_{\tau})
		=(e^{\phi_{-}}\partial^{\alpha}\widetilde{\phi},
		\varepsilon^{2b-2a}\partial^{\alpha}\phi_{yy\tau}
		+\partial^{\alpha}[\rho_{\mathrm{e}}(\bar{\phi})-\rho_{\mathrm{e}}(\phi)]_{\tau}).
	\end{equation}
Using the integration by parts and performing the similar calculations as \eqref{4.61a}, we get
	\begin{eqnarray}
		\label{7.41a}
		&&\varepsilon^{2b-2a}(e^{\phi_{-}}\partial^{\alpha}\widetilde{\phi},
		\partial^{\alpha}\phi_{yy\tau})\notag\\
		&&=-\varepsilon^{2b-2a}(e^{\phi_{-}}\partial^{\alpha}\widetilde{\phi}_y,
		\partial^{\alpha}\widetilde{\phi}_{y\tau})+\varepsilon^{2b-2a}(e^{\phi_{-}}\partial^{\alpha}\widetilde{\phi},
		\partial^{\alpha}\bar{\phi}_{yy\tau})
		\notag\\
		&&\leq -\frac{1}{2}\varepsilon^{2b-2a}\frac{d}{d\tau}(e^{\phi_{-}}\partial^{\alpha}\widetilde{\phi}_{y},
		\partial^{\alpha}\widetilde{\phi}_{y})+Ck^{\frac{1}{12}}\varepsilon^{\frac{3}{5}a-\frac{2}{5}}\mathcal{D}_{2,l,q}(\tau)
		\notag\\
		&&\quad +C\varepsilon^{\frac{7}{5}+\frac{1}{15}a}(\delta+\varepsilon^{a}\tau)^{-\frac{4}{3}}.
	\end{eqnarray}
	In addition, it holds from \eqref{4.34} that
	\begin{align}
		\label{7.37}
		&(e^{\phi_{-}}\partial^{\alpha}\widetilde{\phi},
		\partial^{\alpha}[\rho_{\mathrm{e}}(\bar{\phi})-\rho_{\mathrm{e}}(\phi)]_{\tau})
		\notag\\
		=&-(e^{\phi_{-}}\partial^{\alpha}\widetilde{\phi},
		\partial^{\alpha}[\rho'_{\mathrm{e}}(\bar{\phi})\widetilde{\phi}]_{\tau})
		+(e^{\phi_{-}}\partial^{\alpha}\widetilde{\phi},\partial^{\alpha}J_{7\tau})
		\notag\\
		\leq& -\frac{1}{2}\frac{d}{d\tau}(e^{\phi_{-}}\partial^{\alpha}\widetilde{\phi},
		\rho'_{\mathrm{e}}(\bar{\phi})\partial^{\alpha}\widetilde{\phi})+\frac{1}{2}\frac{d}{d\tau}(e^{\phi_{-}}\partial^{\alpha}\widetilde{\phi},
		\partial^{\alpha}\widetilde{\phi}\int^{\phi}_{\bar{\phi}}\rho''_{\mathrm{e}}(\varrho)\,d\varrho )
		\notag\\
		&+C\eta\varepsilon^{a-1}\|\partial^{\alpha}\widetilde{\phi}\|^{2}
		+C_{\eta}k^{\frac{1}{12}}\varepsilon^{\frac{3}{5}a-\frac{2}{5}}\mathcal{D}_{2,l,q}(\tau)\notag\\
		&+C_{\eta}\varepsilon^{a-1}\varepsilon^{\frac{7}{5}+\frac{1}{15}a}(\delta+\varepsilon^{a}\tau)^{-\frac{4}{3}},
	\end{align}
	where in the last inequality the following two estimates have been used:
	\begin{eqnarray*}
		&&-(e^{\phi_{-}}\partial^{\alpha}\widetilde{\phi},\partial^{\alpha}[\rho'_{\mathrm{e}}(\bar{\phi})\widetilde{\phi}]_{\tau})
		\notag\\
		=&&-\frac{1}{2}\frac{d}{d\tau}(e^{\phi_{-}}\partial^{\alpha}\widetilde{\phi},
		\rho'_{\mathrm{e}}(\bar{\phi})\partial^{\alpha}\widetilde{\phi})
		+\frac{1}{2}(e^{\phi_{-}}\partial^{\alpha}\widetilde{\phi},[\rho'_{\mathrm{e}}(\bar{\phi})]_{\tau}\partial^{\alpha}\widetilde{\phi})
		\notag\\
		&&-\sum_{1\leq\alpha_{1}\leq\alpha}C^{\alpha_{1}}_{\alpha}(e^{\phi_{-}}\partial^{\alpha}\widetilde{\phi},
		\partial^{\alpha_{1}}[\rho'_{\mathrm{e}}(\bar{\phi})]\partial^{\alpha-\alpha_{1}}\widetilde{\phi}_{\tau})
		-(e^{\phi_{-}}\partial^{\alpha}\widetilde{\phi},\partial^{\alpha}[\rho''_{\mathrm{e}}(\bar{\phi})\bar{\phi}_{\tau}\widetilde{\phi}])
		\notag\\
		\leq&& -\frac{1}{2}\frac{d}{d\tau}(e^{\phi_{-}}\partial^{\alpha}\widetilde{\phi},
		\rho'_{\mathrm{e}}(\bar{\phi})\partial^{\alpha}\widetilde{\phi})
		+Ck^{\frac{1}{12}}\varepsilon^{\frac{3}{5}a-\frac{2}{5}}\mathcal{D}_{2,l,q}(\tau)\notag\\
		&&\quad+C\varepsilon^{a-1}\varepsilon^{\frac{7}{5}+\frac{1}{15}a}(\delta+\varepsilon^{a}\tau)^{-\frac{4}{3}},
	\end{eqnarray*}
	and
	\begin{eqnarray*}
		(e^{\phi_{-}}\partial^{\alpha}\widetilde{\phi},\partial^{\alpha}J_{7\tau})
		=&&(e^{\phi_{-}}\partial^{\alpha}\widetilde{\phi},\partial^{\alpha}\big[(\widetilde{\phi}_{\tau}
		+\bar{\phi}_{\tau})\int^{\phi}_{\bar{\phi}}\rho''_{\mathrm{e}}(\varrho)\,d\varrho
		-\bar{\phi}_{\tau}\widetilde{\phi}\rho''_{\mathrm{e}}(\bar{\phi})\big])
		\notag\\
		\leq&& \frac{1}{2}\frac{d}{d\tau}(e^{\phi_{-}}\partial^{\alpha}\widetilde{\phi},
		\partial^{\alpha}\widetilde{\phi}\int^{\phi}_{\bar{\phi}}\rho''_{\mathrm{e}}(\varrho)d\varrho )
		+\eta\varepsilon^{a-1}\|\partial^{\alpha}\widetilde{\phi}\|^{2}
		\notag\\
		&&+C_{\eta}k^{\frac{1}{12}}\varepsilon^{\frac{3}{5}a-\frac{2}{5}}\mathcal{D}_{2,l,q}(\tau)
		+C_{\eta}\varepsilon^{a-1}\varepsilon^{\frac{7}{5}+\frac{1}{15}a}(\delta+\varepsilon^{a}\tau)^{-\frac{4}{3}}.
	\end{eqnarray*}
	Recalling  $E_{1}(\tau)$ given by \eqref{7.39}, we plug  \eqref{7.37} and \eqref{7.41a} into \eqref{7.36} to get
	\begin{multline}
		\label{7.39a}
		-\big(e^{\phi_{-}}\partial^{\alpha}\widetilde{\phi},\partial^{\alpha}\widetilde{\rho}_{\tau}\big)
		\leq-\frac{d}{d\tau}E_{1}(\tau)
		+C\eta\varepsilon^{a-1}\|\partial^{\alpha}\widetilde{\phi}\|^{2}
		\\
		+C_{\eta}k^{\frac{1}{12}}\varepsilon^{\frac{3}{5}a-\frac{2}{5}}\mathcal{D}_{2,l,q}(\tau)
		+C_{\eta}\varepsilon^{a-1}\varepsilon^{\frac{7}{5}+\frac{1}{15}a}(\delta+\varepsilon^{a}\tau)^{-\frac{4}{3}}.
	\end{multline}
	The second term on the right hand side of \eqref{7.35} can be controlled by
	\begin{eqnarray*}
		&&(e^{\phi_{-}}\partial^{\alpha}\widetilde{\phi}_{y},\partial^{\alpha}(\bar{u}_{1}\widetilde{\rho}))\\
		&&=(e^{\phi_{-}}\partial^{\alpha}\widetilde{\phi}_{y},\bar{u}_{1}\partial^{\alpha}\widetilde{\rho})
		+\sum_{1\leq\alpha_{1}\leq\alpha}C^{\alpha_{1}}_{\alpha}
		(e^{\phi_{-}}\partial^{\alpha}\widetilde{\phi}_{y},\partial^{\alpha_{1}}\bar{u}_{1}\partial^{\alpha-\alpha_{1}}\widetilde{\rho})
		\notag\\
		&&\leq  C\eta\varepsilon^{1-a}\|\partial^{\alpha}\widetilde{\phi}_{y}\|^{2}
		+C_{\eta}k^{\frac{1}{12}}\varepsilon^{\frac{3}{5}a-\frac{2}{5}}\mathcal{D}_{2,l,q}(\tau)
		+C_{\eta}\varepsilon^{a-1}\varepsilon^{\frac{7}{5}+\frac{1}{15}a}(\delta+\varepsilon^{a}\tau)^{-\frac{4}{3}},
	\end{eqnarray*}
	where we have used the last equation of \eqref{3.10} and the similar calculations as \eqref{4.61a} to get
	\begin{eqnarray*}
		&&(e^{\phi_{-}}\partial^{\alpha}\widetilde{\phi}_{y},\bar{u}_{1}\partial^{\alpha}\widetilde{\rho})\\
		&&=\frac{1}{2}(e^{\phi_{-}}\partial^{\alpha}\widetilde{\phi}_{y},
		\bar{u}_{1y}\varepsilon^{2b-2a}\partial^{\alpha}\widetilde{\phi}_{y})\\
		&&\quad-(e^{\phi_{-}}\partial^{\alpha}\widetilde{\phi}_{y},
		\bar{u}_{1}\varepsilon^{2b-2a}\partial^{\alpha}\bar{\phi}_{yy}
		+\partial^{\alpha}[\rho_{\mathrm{e}}(\bar{\phi})-\rho_{\mathrm{e}}(\phi)])
		\notag\\
		&&\leq  \eta\varepsilon^{1-a}\|\partial^{\alpha}\widetilde{\phi}_{y}\|^{2}
		+C_{\eta}k^{\frac{1}{12}}\varepsilon^{\frac{3}{5}a-\frac{2}{5}}\mathcal{D}_{2,l,q}(\tau)
		+C_{\eta}\varepsilon^{a-1}\varepsilon^{\frac{7}{5}+\frac{1}{15}a}(\delta+\varepsilon^{a}\tau)^{-\frac{4}{3}}.
	\end{eqnarray*}
Substituting this and  \eqref{7.39a} into \eqref{7.35}, we get
	\begin{multline}
	\label{7.45a}
	(e^{\phi_{-}}\partial^{\alpha}\widetilde{\phi},\partial^{\alpha}[\rho\widetilde{u}_{1}]_{y})
		\leq-\frac{d}{d\tau}E_{1}(\tau)
	+C\eta\varepsilon^{a-1}\|\partial^{\alpha}\widetilde{\phi}\|^{2}+C\eta\varepsilon^{1-a}\|\partial^{\alpha}\widetilde{\phi}_{y}\|^{2}
	\\
	+C_{\eta}k^{\frac{1}{12}}\varepsilon^{\frac{3}{5}a-\frac{2}{5}}\mathcal{D}_{2,l,q}(\tau)
	+C_{\eta}\varepsilon^{a-1}\varepsilon^{\frac{7}{5}+\frac{1}{15}a}(\delta+\varepsilon^{a}\tau)^{-\frac{4}{3}}.	
\end{multline}
This completes the proof of the first term of \eqref{7.35aa}. The second term of \eqref{7.35aa} can be done similarly. 
Consequently, we have from \eqref{7.45a} and \eqref{7.35aa} that
	\begin{eqnarray*}
		-(e^{\phi_{-}}\partial^{\alpha}\widetilde{\phi}_{y},\rho\partial^{\alpha}\widetilde{u}_{1})
		\leq&&
		-\frac{d}{d\tau}E_{1}(\tau)+C\eta\varepsilon^{a-1}\|\partial^{\alpha}\widetilde{\phi}\|^{2}
		+C\eta\varepsilon^{1-a}\|\partial^{\alpha}\widetilde{\phi}_{y}\|^{2}
		\notag\\
		&&+C_{\eta}k^{\frac{1}{12}}\varepsilon^{\frac{3}{5}a-\frac{2}{5}}\mathcal{D}_{2,l,q}(\tau)
		+C_{\eta}\varepsilon^{a-1}\varepsilon^{\frac{7}{5}+\frac{1}{15}a}(\delta+\varepsilon^{a}\tau)^{-\frac{4}{3}},
	\end{eqnarray*}
which together with \eqref{6.60B}, \eqref{6.61B} and \eqref{7.35a} gives rise to
	\begin{eqnarray*}
		&&-(e^{\phi_{-}}\partial^{\alpha}\phi_{y},\rho\partial^{\alpha}u_{1})\\
		&&\leq
		-\frac{d}{d\tau}E_{1}(\tau)+C\eta\varepsilon^{a-1}\|\partial^{\alpha}\widetilde{\phi}\|^{2}
		+C\eta\varepsilon^{1-a}(\|\partial^{\alpha}\widetilde{\phi}_{y}\|^{2}+\|\partial^{\alpha}\widetilde{u}_{1}\|^2)
		\notag\\
		&&\quad+C_{\eta}k^{\frac{1}{12}}\varepsilon^{\frac{3}{5}a-\frac{2}{5}}\mathcal{D}_{2,l,q}(\tau)
		+C_{\eta}\varepsilon^{a-1}\varepsilon^{\frac{7}{5}+\frac{1}{15}a}(\delta+\varepsilon^{a}\tau)^{-\frac{4}{3}}.
	\end{eqnarray*}
This completes the proof of	the last term of \eqref{7.34}. Plugging this, \eqref{7.37A} and \eqref{7.38A} into \eqref{7.34}, then further combining it with \eqref{7.34b} and \eqref{6.54A}, we deduce that
	\begin{eqnarray*}
		I^{2}_{4}\leq&&-\frac{d}{d\tau}E_{1}(\tau)
		+C(\eta+\eta_{0}+k^{\frac{1}{12}}\varepsilon^{\frac{3}{5}-\frac{2}{5}a})
		\big\{\varepsilon^{1-a}\|\partial^{\alpha}\widetilde{\phi}_{y}\|^{2}\\
		&&\qquad\qquad\qquad\qquad\qquad\qquad\qquad\qquad+\varepsilon^{a-1}(\|\partial^{\alpha}(\widetilde{\rho},\widetilde{u},\widetilde{\theta})\|^{2}+\|\partial^{\alpha}\widetilde{\phi}\|^{2})\big\}
		\notag\\
		&&+C_{\eta}k^{\frac{1}{12}}\varepsilon^{\frac{3}{5}a-\frac{2}{5}}\mathcal{D}_{2,l,q}(\tau)
		+C_{\eta}\varepsilon^{a-1}\varepsilon^{\frac{7}{5}+\frac{1}{15}a}(\delta+\varepsilon^{a}\tau)^{-\frac{4}{3}},
	\end{eqnarray*}
which together with	 \eqref{6.54B} and \eqref{7.33} yields that
	\begin{align}
		\label{6.70A}
		I_{4} &\leq-\frac{d}{d\tau}E_{1}(\tau)+C(\eta+\eta_{0}+k^{\frac{1}{12}}\varepsilon^{\frac{3}{5}-\frac{2}{5}a})
		\big\{\varepsilon^{1-a}\|\partial^{\alpha}\widetilde{\phi}_{y}\|^{2}\notag\\
		&\qquad\qquad\qquad\qquad\qquad\qquad\qquad\qquad+\varepsilon^{a-1}(\|\partial^{\alpha}(\widetilde{\rho},\widetilde{u},\widetilde{\theta})\|^{2}+\|\partial^{\alpha}\widetilde{\phi}\|^{2})\big\}
		\notag\\
		&\quad+C_{\eta}k^{\frac{1}{12}}\varepsilon^{\frac{3}{5}a-\frac{2}{5}}\mathcal{D}_{2,l,q}(\tau)
		+C_{\eta}\varepsilon^{a-1}\varepsilon^{\frac{7}{5}+\frac{1}{15}a}(\delta+\varepsilon^{a}\tau)^{-\frac{4}{3}}.
	\end{align}
This completes the proof of the case $\alpha_{1}=\alpha=2$.

For the case $\alpha_{1}=1$, $I_{4}$ can also be controlled by the right terms of \eqref{6.70A}.
Then the desired estimate \eqref{4.66} follows. We thus finish the proof of Lemma \ref{lem6.7}.
\end{proof}

Combining those estimates in Lemma \ref{lem.6.6} and Lemma \ref{lem.6.5}, we are able to obtain all the derivative estimates for both the fluid and non-fluid parts.
\begin{lemma}\label{lem4.3}
	It holds that
	\begin{align}
		\label{4.77}
		&\sum_{|\alpha|=1}\big\{(\|\partial^{\alpha}(\widetilde{\rho},\widetilde{u},\widetilde{\theta})(\tau)\|^{2}
		+\|\partial^{\alpha}f(\tau)\|^{2})+(\|\partial^{\alpha}\widetilde{\phi}(\tau)\|^{2}
		+\varepsilon^{2b-2a}\|\partial^{\alpha}\widetilde{\phi}_{y}(\tau)\|^{2})\big\}
		\notag\\
		&+\varepsilon^{2-2a}\sum_{|\alpha|=2}
		\big\{(\|\partial^{\alpha}(\widetilde{\rho},\widetilde{u},\widetilde{\theta})(\tau)\|^{2}
		+\|\partial^{\alpha}f(\tau)\|^{2})\notag\\
		&\qquad\qquad\qquad\qquad+
		(\|\partial^{\alpha}\widetilde{\phi}(\tau)\|^{2}+\varepsilon^{2b-2a}\|\partial^{\alpha}\widetilde{\phi}_{y}(\tau)\|^{2})\big\}
		\notag\\
		&+\varepsilon^{a-1}\sum_{|\alpha|=1}\int^{\tau}_{0}\|\partial^{\alpha}f(s)\|_{\sigma}^{2}\,ds\notag\\
		&+\varepsilon^{1-a}\sum_{|\alpha|=2}\int^{\tau}_{0}\big\{\|\partial^{\alpha}(\widetilde{\rho},\widetilde{u},
		\widetilde{\theta})(s)\|^{2}+\|\partial^{\alpha}f(s)\|_{\sigma}^{2}\big\}\,ds
		\notag\\
		&+\varepsilon^{1-a}\sum_{|\alpha|=2}\int^{\tau}_{0}\big\{\|\partial^{\alpha}\widetilde{\phi}(s)\|^{2}
		+\varepsilon^{2b-2a}\|\partial^{\alpha}\widetilde{\phi}_{y}(s)\|^{2}\big\}\,ds
		\notag\\
		&\leq Ck^{\frac{1}{3}}\varepsilon^{\frac{6}{5}-\frac{4}{5}a}
		+C(\eta_{0}+k^{\frac{1}{12}}\varepsilon^{\frac{3}{5}-\frac{2}{5}a}+k^{\frac{1}{12}}\varepsilon^{\frac{3}{5}a-\frac{2}{5}})\int^{\tau}_{0}\mathcal{D}_{2,l,q}(s)\,ds\notag\\
		&\quad +C\int^{\tau}_{0}q_{3}(s)\mathcal{H}_{2,l,q}(s)\,ds.
	\end{align}
\end{lemma}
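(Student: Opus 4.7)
The proof of \eqref{4.77} consists in a linear combination of Lemma \ref{lem.6.5} and Lemma \ref{lem.6.6}. The plan is to form $\eqref{4.61} + C_0 \cdot \eqref{4.76}$ for a sufficiently large constant $C_0 > 0$ independent of $\varepsilon$, $k$, and $\eta_0$, then absorb the remaining small-coefficient terms using the smallness of parameters and finally integrate in $\tau$ against the initial data bound \eqref{3.20}.

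In the first step, the two terms in \eqref{4.61} that are not already present on the left of \eqref{4.77}, namely the pointwise-in-$\tau$ term $C\varepsilon^{2(1-a)}\|(\widetilde{\rho}_{yy},\widetilde{u}_{yy},\widetilde{\theta}_{yy})(\tau)\|^{2}$ and the dissipation integral $C\varepsilon^{1-a}\int^{\tau}_{0}\|f_{yy}(s)\|^{2}_{\sigma}\,ds$, match exactly the structure of corresponding contributions sitting on the left-hand side of \eqref{4.76}. Choosing $C_0$ large enough makes these absorptions automatic. After this step, the remaining right-hand side of $C_0\cdot\eqref{4.76}$ contains the dangerous prefactor $C_0\,C\,(\eta + \eta_0 + k^{1/12}\varepsilon^{3/5-2a/5})$ multiplying the second-order integrated quantity $\sum_{|\alpha|=2}\int_0^\tau \{\|\partial^\alpha(\widetilde{\rho},\widetilde{u},\widetilde{\theta})\|^{2} + \|\partial^\alpha f\|_\sigma^2 + \|\partial^\alpha\widetilde{\phi}\|^2 + \varepsilon^{2(1-a)}\|\partial^\alpha\widetilde{\phi}_y\|^2\}\,ds$. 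With $C_0$ now fixed, we then choose $\eta > 0$ and $\eta_0 > 0$ small enough and use the a priori smallness of $k$ and $\varepsilon$ to make this prefactor smaller than any desired fraction of one, and hence absorb these terms into the matching second-order dissipation on the left of \eqref{4.77}.

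The one subtlety concerns the $\widetilde{\phi}_y$ contribution: on the right of \eqref{4.76} the factor is $\varepsilon^{3(1-a)}$, whereas the target dissipation in \eqref{4.77} carries $\varepsilon^{1-a}\cdot\varepsilon^{2b-2a} = \varepsilon^{1 + 2b - 3a}$. Because $b \leq 1$, we have $\varepsilon^{2-2b} \leq 1$ for small $\varepsilon$, which guarantees $\varepsilon^{3(1-a)} \leq \varepsilon^{1+2b-3a}$, so absorption succeeds. The remaining source terms of $\eqref{4.61}$ and $\eqref{4.76}$, i.e.\ the initial data contribution bounded by $Ck^{1/3}\varepsilon^{6/5-4a/5}$ (via \eqref{5.50A}), the quadratic dissipation remainder $C(\eta_0 + k^{1/12}\varepsilon^{3/5-2a/5} + k^{1/12}\varepsilon^{3a/5-2/5})\int_0^\tau \mathcal{D}_{2,l,q}(s)\,ds$, and the quartic dissipation $C\int_0^\tau q_3(s)\mathcal{H}_{2,l,q}(s)\,ds$, already sit exactly in the form prescribed by the right of \eqref{4.77}. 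Integrating the resulting differential-in-$\tau$ inequality over $[0,\tau_1]$ therefore yields \eqref{4.77}.

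The main obstacle will be the bookkeeping: ensuring that all absorptions are compatible simultaneously and that each absorbed term indeed lies in the intended dissipation slot with the right power of $\varepsilon$. This is handled by a fixed ordering of the parameters: first $C_0$ is chosen large based on the absolute constants in \eqref{4.61} and \eqref{4.76}; then $\eta$ and $\eta_0$ are chosen small depending on $C_0$; finally $k$ and $\varepsilon$ are taken small depending on all previous choices (consistent with \eqref{3.22} and the hypotheses of Theorem \ref{thm2.1}). Under this order, the absorption argument closes and \eqref{4.77} follows.
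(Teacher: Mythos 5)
Your proposal is correct and follows essentially the same route as the paper: Lemma \ref{lem4.3} is proved there precisely by a suitable linear combination of \eqref{4.61} and \eqref{4.76}, choosing $\eta$, $\eta_0$, $k$ and $\varepsilon$ small, with the only structural requirement being $\varepsilon^{2(1-a)}\|\partial^\alpha\widetilde{\phi}_y\|^2\leq\varepsilon^{2b-2a}\|\partial^\alpha\widetilde{\phi}_y\|^2$, i.e.\ $b\leq 1$ (the paper's \eqref{4.94}), which you identified correctly. Your explicit ordering of the constants ($C_0$ first, then $\eta,\eta_0$, then $k,\varepsilon$) is just a more detailed bookkeeping of the same absorption argument.
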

\begin{proof}
	By a suitable linear combination of \eqref{4.61} and \eqref{4.76}, we can obtain  the expected estimate \eqref{4.77}
	by choosing $\eta$, $\eta_{0}$ and $\varepsilon$ small enough and requiring
	\begin{equation}
		\label{4.94}
		\varepsilon^{2b-2a}\|\partial^{\alpha}\widetilde{\phi}_{y}\|^{2}\geq
		\varepsilon^{2(1-a)}\|\partial^{\alpha}\widetilde{\phi}_{y}\|^{2}\quad \Leftrightarrow \quad b\leq1.
	\end{equation}
 Then Lemma \ref{lem4.3} is proved.
\end{proof}

Finally, combining those estimates in Lemma \ref{lem4.3} and Lemma \ref{lem4.2}, we  conclude the energy estimate on solutions without any weight.

\begin{lemma}\label{lem4.1}
It holds that	
	\begin{eqnarray}
		\label{4.1b}
		&&\mathcal{E}_{2}(\tau)+\int^{\tau}_{0}\|\sqrt{\bar{u}_{1y}}(\widetilde{\rho},\widetilde{u}_{1},\widetilde{\theta})(s)\|^{2}\,ds+\int^{\tau}_{0}\mathcal{D}_{2}(s)\,ds
		\notag\\
		&&\leq Ck^{\frac{1}{3}}\varepsilon^{\frac{6}{5}-\frac{4}{5}a}
		+C(\eta_{0}+k^{\frac{1}{12}}\varepsilon^{\frac{3}{5}-\frac{2}{5}a}
		+k^{\frac{1}{12}}\varepsilon^{\frac{3}{5}a-\frac{2}{5}})\int^{\tau}_{0}\mathcal{D}_{2,l,q}(s)\,ds\notag\\
		&&\quad+C\int^{\tau}_{0}q_{3}(s)\mathcal{H}_{2,l,q}(s)\,ds.
	\end{eqnarray}
	Here $\mathcal{E}_{2}(\tau)$ and  $\mathcal{D}_{2}(\tau)$ are given by
	\begin{eqnarray*}
		\mathcal{E}_{2}(\tau):=&&\sum_{|\alpha|\leq1}(\|\partial^{\alpha}(\widetilde{\rho},\widetilde{u},\widetilde{\theta})(\tau)\|^{2}
		+\|\partial^{\alpha}f(\tau)\|^{2})
		\notag\\
		&&\quad+\varepsilon^{2-2a}\sum_{|\alpha|=2}(\|\partial^{\alpha}(\widetilde{\rho},\widetilde{u},\widetilde{\theta})(\tau)\|^{2}
		+\|\partial^{\alpha}f(\tau)\|^{2})+E(\tau),
	\end{eqnarray*}
	and
	\begin{eqnarray}
		\label{3.21a}
		\mathcal{D}_{2}(\tau):&&=\varepsilon^{1-a}\sum_{1\leq|\alpha|\leq 2}(\|\partial^{\alpha}(\widetilde{\rho},\widetilde{u},\widetilde{\theta})(\tau)\|^{2}
		+\|\partial^{\alpha}\widetilde{\phi}(\tau)\|^{2}
		+\varepsilon^{2b-2a}\|\partial^{\alpha}\widetilde{\phi}_{y}(\tau)\|^{2})
		\notag\\
		&&\hspace{1cm}+\varepsilon^{a-1}\sum_{|\alpha|\leq1}\|\partial^\alpha f(\tau)\|^2_{\sigma}
		+\varepsilon^{1-a}\sum_{|\alpha|=2}\|\partial^\alpha f(\tau)\|^2_{\sigma}.
	\end{eqnarray}
\end{lemma}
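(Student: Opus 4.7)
The plan is to combine the zeroth-order estimate \eqref{4.44} of Lemma \ref{lem4.2} with the first- and second-order estimate \eqref{4.77} of Lemma \ref{lem4.3} via a suitable linear combination, and then absorb the borderline terms on the right-hand side into the dissipation on the left.

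First, I would observe that the functional $\mathcal{E}_{2}(\tau)$ is the sum of three pieces that are already controlled individually: the zeroth-order part $\|(\widetilde{\rho},\widetilde{u},\widetilde{\theta})\|^{2}+\|f\|^{2}$ and the Poisson piece $\|\widetilde{\phi}\|^{2}+\varepsilon^{2b-2a}\|\widetilde{\phi}_{y}\|^{2}$ sit inside the left-hand side of \eqref{4.44}, while the first-order norm with $|\alpha|=1$ and the scaled second-order norm with prefactor $\varepsilon^{2-2a}$ sit inside the left-hand side of \eqref{4.77}. Similarly the target dissipation $\mathcal{D}_2(\tau)$ splits as the zeroth-order microscopic dissipation $\varepsilon^{a-1}\|f\|^{2}_{\sigma}$, appearing on the left of \eqref{4.44}, plus the higher-order fluid and microscopic dissipations appearing on the left of \eqref{4.77}, plus the first-order fluid dissipation $\varepsilon^{1-a}\|(\widetilde{\rho}_{y},\widetilde{u}_{y},\widetilde{\theta}_{y},\widetilde{\phi}_{y})\|^{2}+\varepsilon^{1-a}\varepsilon^{2b-2a}\|\widetilde{\phi}_{yy}\|^{2}$ which appears already on the left of \eqref{4.44}.

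Next, I would form the combination $\eqref{4.44}+M\cdot\eqref{4.77}$ for a sufficiently large constant $M>0$. The two troublesome terms on the right of \eqref{4.44} are $C\varepsilon^{2(1-a)}\|(\widetilde{\rho}_{y},\widetilde{u}_{y},\widetilde{\theta}_{y})(\tau)\|^{2}$ (an instant term) and $C\varepsilon^{1-a}\int_{0}^{\tau}\|f_{y}(s)\|^{2}_{\sigma}\,ds$ (a time-integrated microscopic term). The first is absorbed by the instant first-order norm on the left of \eqref{4.77}, since $\varepsilon^{2(1-a)}\leq 1$ and the first-order norm $\|\partial^{\alpha}(\widetilde{\rho},\widetilde{u},\widetilde{\theta})(\tau)\|^{2}$ with $|\alpha|=1$ appears there without any small prefactor; thus choosing $M$ large compensates the constant $C$. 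The second is absorbed by the first-order microscopic dissipation $\varepsilon^{a-1}\int_{0}^{\tau}\|\partial^{\alpha}f(s)\|^{2}_{\sigma}\,ds$ from \eqref{4.77}, using that $\varepsilon^{1-a}\leq\varepsilon^{a-1}$ holds trivially when $a\leq 1$ together with the largeness of $M$. The constants $C\eta_{0}+Ck^{1/12}\varepsilon^{3/5-2a/5}+Ck^{1/12}\varepsilon^{3a/5-2/5}$ in front of $\int_{0}^{\tau}\mathcal{D}_{2,l,q}\,ds$ are kept small by the smallness of $\eta_{0}$, $k$ and $\varepsilon$, and can be left on the right-hand side as in the statement (or absorbed once we close the full a priori estimate against \eqref{3.22}).

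Then I would observe that the condition $b\leq 1$ (cf.\ \eqref{4.94}) guarantees $\varepsilon^{2b-2a}\geq\varepsilon^{2(1-a)}$, so the Poisson-type dissipation terms $\varepsilon^{2b-2a}\|\partial^{\alpha}\widetilde{\phi}_{y}\|^{2}$ in $\mathcal{D}_{2}(\tau)$ indeed appear with the correct scaling on the left-hand side of the combined estimate. Likewise the scaled instant second-order Poisson energy $\varepsilon^{2-2a}\varepsilon^{2b-2a}\|\partial^{\alpha}\widetilde{\phi}_{y}\|^{2}$ in $E(\tau)$ is dominated by the corresponding term with only $\varepsilon^{2b-2a}$ in \eqref{4.77}. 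Initial data is handled by \eqref{5.50A}, which gives $\mathcal{E}_{2}(0)\leq Ck^{1/3}\varepsilon^{6/5-4a/5}$ and is absorbed into the first term on the right.

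The only mildly delicate point is the tracking of the two scales $\varepsilon^{1-a}$ (macroscopic viscous scale) and $\varepsilon^{a-1}$ (microscopic collision scale): one must verify that every absorption inequality respects both, in particular when comparing $\varepsilon^{1-a}\|f_{yy}\|^{2}_{\sigma}$ on the right of \eqref{4.61} with the left-hand dissipation $\varepsilon^{1-a}\|\partial^{\alpha}f\|^{2}_{\sigma}$ for $|\alpha|=2$ provided by \eqref{4.76}. This is routine: the microscopic second-order dissipation in \eqref{4.77} has prefactor $\varepsilon^{1-a}$ exactly matching the $\varepsilon^{1-a}\|f_{yy}\|^{2}_{\sigma}$ term, and again a large $M$ handles the constant. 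I do not anticipate any serious obstacle; the main thing is bookkeeping the powers of $\varepsilon^{1-a}$, $\varepsilon^{a-1}$ and $\varepsilon^{2b-2a}$ to see that everything fits in the assembled functionals $\mathcal{E}_{2}$ and $\mathcal{D}_{2}$ as defined in the statement.
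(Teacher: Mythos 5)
Your proposal is correct and is essentially the paper's proof: the paper adds $\kappa_{8}\times\eqref{4.44}$ to \eqref{4.77} with $C\kappa_{8}<\tfrac14$, which is the same (up to rescaling) as your combination $\eqref{4.44}+M\cdot\eqref{4.77}$ with $M$ large, and it performs exactly your two absorptions — the instant term $C\varepsilon^{2(1-a)}\|(\widetilde{\rho}_{y},\widetilde{u}_{y},\widetilde{\theta}_{y})\|^{2}$ into the first-order instant norm, and $C\varepsilon^{1-a}\int\|f_{y}\|_{\sigma}^{2}$ into $\varepsilon^{a-1}\int\|\partial^{\alpha}f\|_{\sigma}^{2}$ via $\varepsilon^{1-a}\leq\varepsilon^{a-1}$ for $a\leq1$ (this is \eqref{5.15a}). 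Your side remarks on $b\leq1$ and the initial data are harmless but not needed at this stage, since those points are already built into \eqref{4.77} and \eqref{4.44}.
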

\begin{proof}
Adding \eqref{4.44}$\times \kappa_8$ together with \eqref{4.77}, then further letting $C\kappa_8<1/4$ and requiring 
	\begin{equation}
		\label{5.15a}
		\varepsilon^{a-1}\int^{\tau}_{0}\|\partial^{\alpha}f(s)\|^{2}_{\sigma}\,ds\geq \varepsilon^{1-a}\int^{\tau}_{0}\|\partial^{\alpha}f(s)\|^{2}_{\sigma}\,ds
		\quad\Leftrightarrow\quad  a\leq 1,
	\end{equation}
we can obtain the desired estimate \eqref{4.1b}. This completes the proof of Lemma \ref{lem4.1}.
\end{proof}

\section{Weighted energy estimates}\label{sec.7}
In this section we consider some estimates with the weight function $w(\alpha,\beta)$ in \eqref{3.12} in order to close the a priori estimate.
The weight functions are acted on  the microscopic component $f$ for the equation \eqref{3.7}.

\subsection{Low order weighted estimates}
We follow the approach in \cite{Guo-JAMS} to carry out the weighted estimates for $f$.
We start from weighted estimates on zeroth and first order.
\begin{lemma}\label{lem7.1A}
	Under the conditions listed in Lemma \ref{lem.5.1A}, it holds that
	\begin{eqnarray}
		\label{5.3}
		&&\sum_{|\alpha|\leq 1}\big\{\frac{1}{2}\frac{d}{d\tau}\|e^{\frac{\phi}{2}}\partial^\alpha f\|_w^2+
		q_{2}q_{3}(\tau)\|\langle v\rangle^{\frac{1}{2}}e^{\frac{\phi}{2}}\partial^\alpha f\|^2_w
		+c\varepsilon^{a-1}\|\partial^\alpha f\|^2_{\sigma,w}\big\}
		\notag\\
		&&\leq C\varepsilon^{a-1}\sum_{|\alpha|\leq 1}\|\partial^\alpha f\|^2_{\sigma}
		+C\varepsilon^{1-a}\sum_{|\alpha|\leq 1}(\|\partial^\alpha f_y\|^2_{\sigma}
		+\|(\partial^\alpha\widetilde{u}_y,\partial^\alpha\widetilde{\theta}_y)\|^2)
		\notag\\
		&&\quad+C(\eta_{0}+k^{\frac{1}{12}}\varepsilon^{\frac{3}{5}-\frac{2}{5}a})\mathcal{D}_{2,l,q}(\tau)\notag\\
		&&\quad+C\varepsilon^{\frac{7}{5}+\frac{1}{15}a}(\delta+\varepsilon^{a}\tau)^{-\frac{4}{3}}
		+Cq_3(\tau)\mathcal{H}_{2,l,q}(\tau).
	\end{eqnarray}	
\end{lemma}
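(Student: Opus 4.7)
The plan is to apply $\partial^\alpha$ with $|\alpha|\leq 1$ to the microscopic equation \eqref{3.7}, then take the $L^2_{y,v}$ inner product of the resulting equation against the weighted test function $e^{\phi}w^2(\alpha,0)\,\partial^\alpha f$, and sum the outcomes. The essential new ingredient relative to the zeroth-order estimate in Lemma \ref{lem.5.4A} is that the time derivative of the weight $w^2 = \langle v\rangle^{4(l-|\alpha|)}e^{2q(\tau)\langle v\rangle}$ produces, via $q'(\tau) = -q_2q_3(\tau)$,
\begin{equation*}
\bigl(\partial^\alpha f_\tau,\, e^{\phi}w^2\partial^\alpha f\bigr)
= \tfrac12\tfrac{d}{d\tau}\|e^{\phi/2}\partial^\alpha f\|_w^2
+ q_2q_3(\tau)\|\langle v\rangle^{1/2}e^{\phi/2}\partial^\alpha f\|_w^2
- \tfrac12\bigl(\phi_\tau,\, e^{\phi}w^2(\partial^\alpha f)^2\bigr),
\end{equation*}
so that the quartic dissipation on the left-hand side of \eqref{5.3} appears automatically; the spurious contribution with $\phi_\tau$ will be handled by the same mechanism that controls the nonlinear electric potential, namely $|\phi_\tau|\cdot\|\langle v\rangle^{1/2}w\partial^\alpha f\|^2 \leq \eta\varepsilon^{a-1}\|\partial^\alpha f\|_{\sigma,w}^2 + C_\eta q_3(\tau)\mathcal{H}_{2,l,q}(\tau)$ by the definition \eqref{3.14} of $q_3$.

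Next I would dispatch the remaining streaming and transport contributions. The streaming term is, as in the proof of Lemma \ref{lem.5.4A}, killed by the choice of weight $e^{\phi/2}$:
\begin{equation*}
\bigl(v_1\partial^\alpha f_y + \tfrac{v_1}{2}\phi_y\partial^\alpha f,\, e^{\phi}w^2\partial^\alpha f\bigr)
= \bigl(v_1\partial_y[e^{\phi/2}\partial^\alpha f],\, e^{\phi/2}w^2\partial^\alpha f\bigr)=0,
\end{equation*}
using that $w$ is independent of $y$. The term $-\phi_y\partial_{v_1}\partial^\alpha f$ is handled by integration by parts in $v_1$, producing $\phi_y\,\partial_{v_1}w^2$; since $|\partial_{v_1}w^2|\lesssim w^2$ in view of the factor $e^{q(\tau)\langle v\rangle}$ with $q\ll 1$, this is estimated exactly as the $\phi_\tau$ term, splitting as $\eta\varepsilon^{a-1}\|\partial^\alpha f\|_{\sigma,w}^2 + C_\eta q_3(\tau)\mathcal{H}_{2,l,q}(\tau)$. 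The linearized operator term is controlled from below using \eqref{7.6} in Lemma \ref{lem7.5}, with the compact residue $|\chi_\eta\partial^\alpha f|_2^2$ absorbed into $\varepsilon^{a-1}\|\partial^\alpha f\|_\sigma^2$ on the right of \eqref{5.3}. The RHS nonlinear collision terms $\Gamma(f,\tfrac{M-\mu}{\sqrt\mu})$, $\Gamma(\tfrac{M-\mu}{\sqrt\mu},f)$ and $\Gamma(\tfrac{G}{\sqrt\mu},\tfrac{G}{\sqrt\mu})$ are absorbed through \eqref{7.9} and \eqref{7.18}, while the source terms involving $P_0$, $P_1$, $\overline{G}_\tau$, $\overline{G}_y$ and $\phi_y\partial_{v_1}\overline G$ are estimated using \eqref{7.25aa} together with the bounds \eqref{7.24}--\eqref{4.28A} and \eqref{L}; these give the $\varepsilon^{1-a}(\|\partial^\alpha f_y\|_\sigma^2 + \|(\partial^\alpha\widetilde u_y,\partial^\alpha\widetilde\theta_y)\|^2)$ and $\varepsilon^{\frac75+\frac{a}{15}}(\delta+\varepsilon^a\tau)^{-4/3}$ contributions in \eqref{5.3}.

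For the case $|\alpha|=1$, extra commutator terms appear when $\partial^\alpha$ falls on the coefficients: namely $\tfrac{v_1}{2}\partial^\alpha\phi_y\cdot f$ and $\partial^\alpha\phi_y\cdot\partial_{v_1}f$ arising from the distribution of $\partial^\alpha$ among $\phi_y$, $f$ and $\partial_{v_1}f$. The main point is that these terms involve $L^\infty_y$ norms of $\phi_{yy}, \phi_{yyy}$, which are exactly the quantities appearing in $q_3(\tau)$ in \eqref{3.14}; Cauchy--Schwarz with a velocity split $\langle v\rangle^{-1/2}\cdot\langle v\rangle^{1/2}$ bounds them by $\eta\varepsilon^{a-1}\|\partial^\alpha f\|_{\sigma,w}^2 + C_\eta q_3(\tau)\mathcal{H}_{2,l,q}(\tau)$.

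The main obstacle will be precisely this last step: closing the large-velocity growth carried by the nonlinear electric-potential terms $\tfrac{v_1}{2}\phi_y f$ and $\phi_y\partial_{v_1}f$, which, unlike in \cite{Guo-JAMS}, cannot be absorbed using integrability in time of $\|\phi_y\|_{L^\infty_y}$ because no pointwise-in-time decay of the potential is available here. The weight $e^{q(\tau)\langle v\rangle}$ with the specific decreasing profile $q(\tau)=q_1-q_2\int_0^\tau q_3(s)\,ds$ is tailored so that the $q_2q_3(\tau)\|\langle v\rangle^{1/2}\cdot\|_w^2$ dissipation generated on the left-hand side has exactly the right $\langle v\rangle^{1/2}$-gain and exactly the right time-weight to absorb all such top-velocity nonlinear electric potential contributions, completing the proof of \eqref{5.3} after summing over $|\alpha|\leq 1$ and choosing $\eta$ small.
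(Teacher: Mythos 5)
Your plan is correct and follows essentially the same route as the paper's proof: weighted energy estimate on $\partial^\alpha$ of \eqref{3.7} against $e^{\phi}w^2(\alpha,0)\partial^\alpha f$, quartic dissipation from $[w^2]_\tau=-2q_2q_3\langle v\rangle w^2$, cancellation of the streaming plus $\tfrac{v_1}{2}\phi_y$ term via $e^{\phi/2}$, integration by parts in $v_1$ with $|\partial_{v_1}w|\lesssim w$, the coercivity \eqref{7.6} with the compact residue absorbed into $\varepsilon^{a-1}\|\partial^\alpha f\|_\sigma^2$, and the $q_3(\tau)\mathcal{H}_{2,l,q}$ absorption of all electric-potential commutators. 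Only a minor citation slip: for $|\beta|=0$, $|\alpha|\leq1$ the relevant collision bounds are \eqref{7.10} and \eqref{7.19} rather than \eqref{7.9} and \eqref{7.18}, which are the mixed-derivative versions.
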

\begin{proof}
	After acting  $\partial^\alpha$ with $|\alpha|\leq 1$ to equation \eqref{3.7} and further multiplying it by $e^{\phi}w^2(\alpha,0)\partial^\alpha f$,
	the direct energy estimate gives the identity
	\begin{align}
		\label{5.1}
		&(\partial^\alpha f_{\tau},e^{\phi}w^2(\alpha,0)\partial^\alpha f)
		+(v_1\partial^\alpha f_{y}+\frac{v_{1}}{2}\partial^{\alpha}(\phi_{y} f),e^{\phi}w^2(\alpha,0)\partial^\alpha f)
		\notag\\
		&-(\partial^{\alpha}(\phi_{y} \partial_{v_{1}}f),e^{\phi}w^2(\alpha,0)\partial^\alpha f)
		-\varepsilon^{a-1}(\mathcal{L}\partial^\alpha f,e^{\phi}w^2(\alpha,0)\partial^\alpha f)
		\notag\\
		=&\varepsilon^{a-1}(\partial^\alpha\Gamma(f,\frac{M-\mu}{\sqrt{\mu}})+\partial^\alpha\Gamma(\frac{M-\mu}{\sqrt{\mu}},f)
		+\partial^\alpha\Gamma(\frac{G}{\sqrt{\mu}},\frac{G}{\sqrt{\mu}}),e^{\phi}w^2(\alpha,0)\partial^\alpha f)
		\notag\\
		&+(\partial^\alpha \mathbb{K},e^{\phi}w^2(\alpha,0)\partial^\alpha f)
		\notag\\
		&+(\frac{\partial^{\alpha}(\phi_{y}\partial_{v_{1}}\overline{G})}{\sqrt{\mu}}-\frac{\partial^\alpha P_1( v_1\overline{G}_{y})}{\sqrt{\mu}}-\frac{ \partial^\alpha\overline{G}_{\tau}}{\sqrt{\mu}}, e^{\phi}w^2(\alpha,0)\partial^\alpha f),
	\end{align}
where for brevity we have denoted
\begin{equation}
\label{def.adK}
\mathbb{K}=\frac{ P_0( v_1\sqrt{\mu}f_{y})}{\sqrt{\mu}}-\frac{1}{\sqrt{\mu}}
		 P_1\{v_1M(\frac{|v-u|^2 \widetilde{\theta}_y}{2R\theta^2}
		+\frac{(v-u)\cdot  \widetilde {u}_{y}}{R\theta})\}.
\end{equation}
	We  estimate each term for \eqref{5.1}. First note that
	\begin{eqnarray}
		\label{5.2a}
		[w^{2}(\alpha,\beta)]_\tau =-2q_{2}q_{3}(\tau)\langle v\rangle w^{2}(\alpha,\beta),
	\end{eqnarray}
	due to  \eqref{3.12} and \eqref{3.13}. Then we deduce that
	\begin{eqnarray}
		\label{7.4c}
		&&(\partial^\alpha f_{\tau},e^{\phi}w^2(\alpha,0)\partial^\alpha f)
		\notag\\
		=&&\frac{1}{2}\frac{d}{d\tau}(\partial^\alpha f,e^{\phi}w^2(\alpha,0)\partial^\alpha f)
		-\frac{1}{2}(\partial^\alpha f,e^{\phi}[w^2(\alpha,0)]_{\tau}\partial^\alpha f)\notag\\
		&&-\frac{1}{2}(\partial^\alpha f,e^{\phi}\phi_{\tau}w^2(\alpha,0)\partial^\alpha f)
		\notag\\
		\geq&&\frac{1}{2}\frac{d}{d\tau}\|e^{\frac{\phi}{2}}\partial^\alpha f\|_w^2
		+q_{2}q_{3}(\tau)\|\langle v\rangle^{\frac{1}{2}}e^{\frac{\phi}{2}}\partial^\alpha f\|^2_w\notag\\
		&& -C\eta\varepsilon^{a-1}\|\partial^\alpha f\|_{\sigma,w}^2-C_{\eta}q_3(\tau)\mathcal{H}_{2,l,q}(\tau).
	\end{eqnarray}
	Here we have used \eqref{4.45a}, \eqref{3.16}, \eqref{3.14} and \eqref{4.60} such that 
	\begin{eqnarray*}
		&&|(\partial^\alpha f,e^{\phi}\phi_{\tau}w^2(\alpha,0)\partial^\alpha f)|
		\notag\\
		&&\leq \eta\varepsilon^{a-1}\|\langle v\rangle^{-\frac{1}{2}}w(\alpha,0)\partial^\alpha f\|^2
		+C_{\eta}\varepsilon^{1-a}\|\phi_{\tau}\|^2_{L_y^{\infty}}\|\langle v\rangle^{\frac{1}{2}}w(\alpha,0)\partial^\alpha f\|^2
		\notag\\
		&&\leq C\eta\varepsilon^{a-1}\|\partial^\alpha f\|_{\sigma,w}^2
		+C_{\eta}q_3(\tau)\|\langle v\rangle^{\frac{1}{2}}\partial^\alpha f\|_{w}^2
		\notag\\
		&&\leq C\eta\varepsilon^{a-1}\|\partial^\alpha f\|_{\sigma,w}^2+C_{\eta}q_3(\tau)\mathcal{H}_{2,l,q}(\tau).
	\end{eqnarray*}
	For the second term on the left hand side of \eqref{5.1}, it is equivalent to
	\begin{equation*}
		(v_1\partial^\alpha f_{y}+\frac{v_{1}}{2}\phi_{y} \partial^{\alpha}f,e^{\phi}w^2(\alpha,0)\partial^\alpha f)
		+(\frac{v_{1}}{2}\partial^{\alpha}\phi_{y} f,e^{\phi}w^2(\alpha,0)\partial^\alpha f).
	\end{equation*}
	Here if $|\alpha|=0$, the second term vanishes, and if $|\alpha|=1$, this term exists, which can be dominated by
	\begin{eqnarray}
		\label{7.4b}	
		&&C\|\partial^{\alpha}\phi_{y}\|_{L_{y}^{\infty}}\|\langle v\rangle^{-\frac{1}{2}}\langle v\rangle^{2}w(\alpha,0)f\|
		\|\langle v\rangle^{-\frac{1}{2}}w(\alpha,0)\partial^\alpha f\|
		\notag\\
		&&\leq \eta\varepsilon^{a-1}\|\partial^\alpha f\|^2_{\sigma,w}+
		C_\eta\varepsilon^{1-a}\|\partial^{\alpha}\phi_{y}\|^2_{L_{y}^{\infty}}\|\langle v\rangle^{-\frac{1}{2}}w(0,0)f\|^2
		\notag\\	
		&&\leq\eta\varepsilon^{a-1}\|\partial^\alpha f\|^2_{\sigma,w}
		+C_{\eta}q_3(\tau)\mathcal{H}_{2,l,q}(\tau),
	\end{eqnarray}
	where in the second inequality we have used $\langle v\rangle^{2}w(\alpha,0)= w(0,0)$ with $|\alpha|=1$. Observing that
	\begin{equation}
		\label{7.4A}
		(v_1\partial^\alpha f_{y}+\frac{v_{1}}{2}\phi_{y} \partial^{\alpha}f,e^{\phi}w^2(\alpha,0)\partial^\alpha f)
		=(v_{1}[e^{\frac{\phi}{2}}\partial^\alpha f]_{y},w^2(\alpha,0)e^{\frac{\phi}{2}}\partial^\alpha  f)=0,
	\end{equation}
	then it holds from the above estimates that
	\begin{equation}
		\label{7.7b}
		|(v_1\partial^\alpha f_{y}+\frac{v_{1}}{2}\partial^{\alpha}(\phi_{y} f),e^{\phi}w^2(\alpha,0)\partial^\alpha f)|
		\leq \eta\varepsilon^{a-1}\|\partial^\alpha f\|^2_{\sigma,w}
		+C_{\eta}q_3(\tau)\mathcal{H}_{2,l,q}(\tau).
	\end{equation}
	To bound the third term on the left hand side of \eqref{5.1}, we frist write
	\begin{multline}
		\label{5.2}
		-(\partial^{\alpha}(\phi_{y} \partial_{v_{1}}f),e^{\phi}w^2(\alpha,0)\partial^\alpha f)\\
		=-(\phi_{y}\partial_{v_1}\partial^\alpha f,e^{\phi}w^2(\alpha,0)\partial^\alpha f)
		-(\partial^\alpha\phi_{y}\partial_{v_1}f,e^{\phi}w^2(\alpha,0)\partial^\alpha f).
	\end{multline}
	Here if $|\alpha|=0$, the last term of \eqref{5.2} vanishes, and if $|\alpha|=1$, this term exists and it has the same bound as \eqref{7.4b}.
	After integration by parts, the first term of \eqref{5.2} can be dominated by
	\begin{eqnarray*}
		&&\frac{1}{2}|(\phi_{y}\partial^\alpha f,e^{\phi}\partial_{v_1}[w^{2}(\alpha,0)]\partial^\alpha f)|
		\notag\\
		\leq&& C\|\phi_{y}\|_{L^\infty_y}\|\langle v\rangle^{\frac{1}{2}}w(\alpha,0)\partial^{\alpha}f\|
		\| \langle v\rangle^{-\frac{1}{2}}w(\alpha,0)\partial^\alpha f\|
		\notag\\
		\leq&& \eta\varepsilon^{a-1}\|\partial^\alpha f\|^2_{\sigma,w}
		+C_{\eta}q_3(\tau)\mathcal{H}_{2,l,q}(\tau).
	\end{eqnarray*}
	Here we have used \eqref{3.12} and \eqref{3.23} to get
	\begin{eqnarray}
		\label{5.4a}
		|\partial_{v_{1}}w(\alpha,0)|=|2(l-|\alpha|)\frac{v_{1}}{\langle v\rangle^{2}}w(\alpha,0)+
		q(\tau)\frac{v_{1}}{\langle v\rangle}w(\alpha,0)|\leq
		Cw(\alpha,0).
	\end{eqnarray}
	So, from the above estimates, we get
	\begin{equation}
		\label{7.10c}
		|(\partial^{\alpha}(\phi_{y} \partial_{v_{1}}f),e^{\phi}w^2(\alpha,0)\partial^\alpha f)|
		\leq C\eta\varepsilon^{a-1}\|\partial^\alpha f\|^2_{\sigma,w}
		+C_{\eta}q_3(\tau)\mathcal{H}_{2,l,q}(\tau).
	\end{equation}
	For the last term on the left hand side of \eqref{5.1}, we derive from \eqref{7.6} and \eqref{4.45a} that
	$$
	-\varepsilon^{a-1}(\mathcal{L}\partial^\alpha f,e^{\phi}w^2(\alpha,0)\partial^\alpha f)
	\geq c\varepsilon^{a-1}\|\partial^\alpha f\|^2_{\sigma,w}-C\varepsilon^{a-1}\|\partial^\alpha f\|^2_{\sigma}.
	$$
	
	Next we are going to bound the terms on the right hand side of \eqref{5.1}.
	In view of  \eqref{7.10} and \eqref{7.19}, one has
	\begin{eqnarray*}
		&&\varepsilon^{a-1}|(\partial^\alpha\Gamma(f,\frac{M-\mu}{\sqrt{\mu}})+\partial^\alpha\Gamma(\frac{M-\mu}{\sqrt{\mu}},f)
		+\partial^\alpha\Gamma(\frac{G}{\sqrt{\mu}},\frac{G}{\sqrt{\mu}}),e^{\phi}w^2(\alpha,0)\partial^\alpha f)|
		\notag\\
		&&\leq C\eta\varepsilon^{a-1}\|\partial^\alpha f\|_{\sigma,w}^2
		+C_{\eta}(\eta_{0}+k^{\frac{1}{12}}\varepsilon^{\frac{3}{5}-\frac{2}{5}a})\mathcal{D}_{2,l,q}(\tau)
		+C_{\eta}\varepsilon^{\frac{7}{5}+\frac{1}{15}a}(\delta+\varepsilon^{a}\tau)^{-\frac{4}{3}}.
	\end{eqnarray*}
Recall \eqref{def.adK}. In a similar way as \eqref{6.30b}, we can verify that
	\begin{multline*}
		|(\partial^\alpha \mathbb{K},e^{\phi}w^2(\alpha,0)\partial^\alpha f)|\leq \eta\varepsilon^{a-1}\|\partial^\alpha f\|^2_{\sigma,w}\\
		+
		C_\eta\varepsilon^{1-a}(\| \partial^\alpha f_y\|_{\sigma}^2+\|(\partial^\alpha\widetilde{u}_y,\partial^\alpha\widetilde{\theta}_y)\|^2)
		+C_{\eta}k^{\frac{1}{12}}\varepsilon^{\frac{3}{5}-\frac{2}{5}a}\mathcal{D}_{2,l,q}(\tau),
	\end{multline*}
	while for the rest terms containing $\overline{G}$ in \eqref{5.1}, we can deduce that
	\begin{eqnarray*}
		&&|(\frac{\partial^{\alpha}(\phi_{y}\partial_{v_{1}}\overline{G})}{\sqrt{\mu}}-\frac{\partial^\alpha P_1( v_1\overline{G}_{y})}{\sqrt{\mu}}-\frac{ \partial^\alpha\overline{G}_{\tau}}{\sqrt{\mu}}, e^{\phi}w^2(\alpha,0)\partial^\alpha f)|
		\notag\\ 
		&&\leq \eta\varepsilon^{a-1}\|\partial^\alpha f\|^2_{\sigma,w}
		+C_{\eta}k^{\frac{1}{12}}\varepsilon^{\frac{3}{5}-\frac{2}{5}a}\mathcal{D}_{2,l,q}(\tau)
		+C_{\eta}\varepsilon^{\frac{7}{5}+\frac{1}{15}a}(\delta+\varepsilon^{a}\tau)^{-\frac{4}{3}}.
	\end{eqnarray*}
	Therefore, \eqref{5.3} follows from all the above estimates by summing over $|\alpha|\leq 1$ and choosing
	$\eta>0$ sufficiently small. This completes the proof of Lemma \ref{lem7.1A}.
\end{proof}
\subsection{Second-order weighted estimates}
In what follows we give the weighted estimates on second order.
\begin{lemma}\label{lem7.2A}
	Under the conditions listed in Lemma \ref{lem.5.1A}, it holds that
	\begin{eqnarray}
		\label{5.8}
		&&\varepsilon^{2(1-a)}
		\sum_{|\alpha|=2}\big\{\frac{1}{2}\frac{d}{d\tau}\|e^{\frac{\phi}{2}}\frac{\partial^\alpha F}{\sqrt{\mu}}\|_w^2+\frac{1}{2}q_{2}q_{3}(\tau)\|\langle v\rangle^{\frac{1}{2}}e^{\frac{\phi}{2}}\partial^\alpha f\|_{w}^2+c\varepsilon^{a-1}\| \partial^\alpha f\|_{\sigma,w}^2\big\}
		\notag\\
		&&\leq C\varepsilon^{1-a}\sum_{|\alpha|=2}(\| \partial^\alpha f\|_{\sigma}^2+\|\partial^\alpha(\widetilde{\rho},\widetilde{u},\widetilde{\theta})\|^2)
		+C\varepsilon^{3(1-a)}\sum_{|\alpha|=2}\|\partial^{\alpha}\widetilde{\phi}_{y}\|^{2}
		\notag\\
		&& \quad+Ck^{\frac{1}{12}}\varepsilon^{\frac{3}{5}-\frac{2}{5}a}\mathcal{D}_{2,l,q}(\tau)
		+ C\varepsilon^{\frac{7}{5}+\frac{1}{15}a}(\delta+\varepsilon^{a}\tau)^{-\frac{4}{3}}\notag\\
		&&\quad+ Cq_{3}(\tau)\mathcal{H}_{2,l,q}(\tau).
	\end{eqnarray}	
\end{lemma}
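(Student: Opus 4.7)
The plan is to mirror the proof of Lemma \ref{lem.6.6} with the weight $w^2(\alpha,0)$ inserted into every inner product. Concretely, I apply $\partial^\alpha$ with $|\alpha|=2$ to the equation \eqref{4.62} for $F/\sqrt\mu$, take the $L^2$ pairing with $e^\phi w^2(\alpha,0)\,\partial^\alpha F/\sqrt\mu$, and multiply the resulting identity by $\varepsilon^{2(1-a)}$. The desired $\frac{d}{d\tau}\|e^{\phi/2}\partial^\alpha F/\sqrt\mu\|_w^2$ and the quartic dissipation $q_2q_3(\tau)\|\langle v\rangle^{1/2}\,\cdot\,\|_w^2$ emerge from the time derivative once we use \eqref{5.2a} to differentiate the weight; the inconvenient $\phi_\tau$ factor from $e^\phi$ is controlled exactly as in \eqref{4.64}, producing an additional $q_3(\tau)\mathcal{H}_{2,l,q}(\tau)$. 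To pass from $F/\sqrt\mu$ to $f$ inside the quartic term, I decompose $F/\sqrt\mu=M/\sqrt\mu+\overline G/\sqrt\mu+f$ and absorb the cross terms with a small fraction of the quartic dissipation, paying a fluid norm that is already part of $\mathcal{D}_{2,l,q}$ and $\mathcal{H}_{2,l,q}$.

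The transport/potential trio $v_1\partial_y+\tfrac12 v_1\phi_y-\phi_y\partial_{v_1}$ acting on $\partial^\alpha F/\sqrt\mu$ integrates to zero against $e^\phi w^2(\alpha,0)\partial^\alpha F/\sqrt\mu$ up to a commutator from $\partial_{v_1}w^2$; by \eqref{5.4a} this commutator is bounded by $\eta\varepsilon^{a-1}\|\partial^\alpha f\|_{\sigma,w}^2+C_\eta q_3(\tau)\mathcal{H}_{2,l,q}(\tau)$. The linearized operator contributes the coercive piece $c\varepsilon^{a-1}\|\partial^\alpha f\|_{\sigma,w}^2$ via \eqref{7.6}, using that the $\mathbb J_1/\sqrt\mu$ piece in \eqref{4.67} lies in $\ker\mathcal L$; the $\mathbb J_2/\sqrt\mu$, $\mathbb J_3/\sqrt\mu$, and $\partial^\alpha\overline G/\sqrt\mu$ pieces are handled exactly as in \eqref{4.89A}, using \eqref{7.12}, \eqref{7.25aa}, and Lemma \ref{lem7.2}. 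The nonlinear collisional terms are dealt with by the weighted trilinear bound \eqref{7.18} together with the decomposition \eqref{4.68}, and the inhomogeneity $\varepsilon^{a-1}\partial^\alpha L_M\overline G/\sqrt\mu$ is estimated as in \eqref{4.97b}.

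The main obstacle is the commutator sum $\sum_{1\le\alpha_1\le\alpha}C^{\alpha_1}_\alpha(\tfrac{v_1}{2}\partial^{\alpha_1}\phi_y\frac{\partial^{\alpha-\alpha_1}F}{\sqrt\mu}-\partial^{\alpha_1}\phi_y\partial_{v_1}\frac{\partial^{\alpha-\alpha_1}F}{\sqrt\mu},e^\phi w^2(\alpha,0)\frac{\partial^\alpha F}{\sqrt\mu})$, i.e., the weighted analog of the $I_4$--$I_8$ split in \eqref{4.65}. The pieces $I_5$--$I_8$ go through by the same case analysis as in Lemma \ref{lem.6.6}, now weighted: $I_5$ uses the small factor $\eta_0+k^{1/12}\varepsilon^{3/5-2a/5}$, $I_6$ and $I_7$ use \eqref{7.24}--\eqref{7.25} together with Lemma \ref{lem7.2}, and $I_8$ is controlled by $\eta\varepsilon^{a-1}\|\partial^\alpha f\|_{\sigma,w}^2+C_\eta q_3(\tau)\mathcal{H}_{2,l,q}(\tau)$ exactly as in \eqref{4.91a}. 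The hardest subterm is the weighted $I_4$ for $\alpha_1=\alpha$; splitting it as in \eqref{7.33}, the $I_4^1$ part is absorbed using smallness, and $I_4^2$ reduces, via the moment identity \eqref{6.54A}, to a pairing that is schematically $-(\partial^\alpha\phi_y/(R\theta),\rho\,\partial^\alpha u_1)$ with bounded weighted velocity moments. Crucially, the overall prefactor $\varepsilon^{2(1-a)}$ splits this pairing as $\varepsilon^{3(1-a)}\|\partial^\alpha\widetilde\phi_y\|^2+\varepsilon^{1-a}\|\partial^\alpha\widetilde u\|^2$ by Young's inequality, matching exactly the two terms retained on the right of \eqref{5.8}; the remaining pieces are absorbed into $Ck^{1/12}\varepsilon^{3/5-2a/5}\mathcal{D}_{2,l,q}(\tau)$ and into the forcing $C\varepsilon^{7/5+a/15}(\delta+\varepsilon^a\tau)^{-4/3}$ as in \eqref{7.34b}--\eqref{7.38A}. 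Note that, unlike Lemma \ref{lem6.7}, no auxiliary functional $E_1(\tau)$ needs to be constructed here, because the $\varepsilon^{2(1-a)}$ scaling already renders the highest-order linear electric term absorbable directly on the right-hand side.
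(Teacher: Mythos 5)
Your proposal follows essentially the same route as the paper's proof: apply $\partial^\alpha$ to \eqref{4.62}, pair with $e^{\phi}w^2(\alpha,0)\partial^{\alpha}F/\sqrt{\mu}$, extract the quartic dissipation from the weight via \eqref{5.2a}, pass from $F$ to $f$ through $F=M+\overline{G}+\sqrt{\mu}f$, and --- crucially, as you observe --- dispense with any analog of the functional $E_{1}(\tau)$ of Lemma \ref{lem6.7}, since after multiplying by $\varepsilon^{2(1-a)}$ the highest-order electric term can be paid directly by $\varepsilon^{3(1-a)}\|\partial^{\alpha}\widetilde{\phi}_{y}\|^{2}$ and $\varepsilon^{1-a}\|\partial^{\alpha}(\widetilde{\rho},\widetilde{u},\widetilde{\theta})\|^{2}$ on the right of \eqref{5.8}; this is exactly the paper's mechanism. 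Two caveats. First, your claim that the $\mathbb{J}_{1}/\sqrt{\mu}$ contribution vanishes because it lies in $\ker\mathcal{L}$ does not survive the weighting: $w^{2}(\alpha,0)$ is $v$-dependent, so $\langle\mathcal{L}\partial^{\alpha}f,\,w^{2}(\alpha,0)\mathbb{J}_{1}/\sqrt{\mu}\rangle$ need not be zero. The term is nevertheless harmless: the paper makes no kernel argument here at all and simply bounds the full pairing of $\mathcal{L}\partial^{\alpha}f$ against $w^{2}(\alpha,0)\partial^{\alpha}(M+\overline{G})/\sqrt{\mu}$ by Cauchy--Schwarz, yielding $\varepsilon^{a-1}(\|\partial^{\alpha}f\|_{\sigma}^{2}+\|\partial^{\alpha}(\widetilde{\rho},\widetilde{u},\widetilde{\theta})\|^{2})$ plus the forcing, which sits on the right of \eqref{5.8} after the $\varepsilon^{2(1-a)}$ scaling; so your argument goes through with the same fix you already apply to $\mathbb{J}_{2}$ and $\mathbb{J}_{3}$. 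Second, minor bookkeeping: for $|\alpha|=2$, $|\beta|=0$ the relevant weighted nonlinear collision estimate is \eqref{4.34A} of Lemma \ref{lem4.8} rather than \eqref{7.18}, and the paper treats the commutator sum more coarsely than your $I_{4}$--$I_{8}$ analysis, splitting it as $J_{9}^{1}+J_{9}^{2}+J_{9}^{3}$ in \eqref{5.7a} and using weighted Cauchy--Schwarz directly, with no need to revisit the moment identity \eqref{6.54A}; your more detailed route reaches the same bound.
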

\begin{proof}
	We take $\partial^{\alpha}$ with $|\alpha|=2$ of \eqref{4.62}
	and take the inner product of the resulting equation with $e^{\phi}w^2(\alpha,0)\frac{\partial^\alpha F}{\sqrt{\mu}}$ over $\mathbb{R}\times{\mathbb R}^3$ to get
	\begin{eqnarray}
		\label{5.4}
		&&((\frac{\partial^\alpha F}{\sqrt{\mu}})_\tau,e^{\phi}w^2(\alpha,0)\frac{\partial^\alpha F}{\sqrt{\mu}})
		+(v_1(\frac{\partial^\alpha F}{\sqrt{\mu}})_{y}+\frac{v_{1}}{2}\phi_{y}\frac{\partial^{\alpha}F}{\sqrt{\mu}},e^{\phi}w^2(\alpha,0)\frac{\partial^\alpha F}{\sqrt{\mu}})
		\notag\\
		&&-(\phi_{y}\partial_{v_{1}}(\frac{\partial^{\alpha}F}{\sqrt{\mu}}),e^{\phi}w^2(\alpha,0)\frac{\partial^\alpha F}{\sqrt{\mu}})\notag\\
		&&
		+\sum_{1\leq\alpha_{1}\leq\alpha}C^{\alpha_{1}}_{\alpha}(\frac{v_{1}}{2}\partial^{\alpha_{1}}\phi_{y}\frac{
			\partial^{\alpha-\alpha_{1}}F}{\sqrt{\mu}}-\partial^{\alpha_{1}}\phi_{y}\partial_{v_{1}}(\frac{\partial^{\alpha-\alpha_{1}}F}{\sqrt{\mu}}),
		e^{\phi}w^2(\alpha,0)\frac{\partial^\alpha F}{\sqrt{\mu}})
		\notag\\
		&&=\varepsilon^{a-1}(\mathcal{L} \partial^\alpha f,e^{\phi}w^2(\alpha,0)\frac{\partial^\alpha F}{\sqrt{\mu}})+\varepsilon^{a-1}(\partial^\alpha\Gamma(f,\frac{M-\mu}{\sqrt{\mu}})\notag\\
		&&\quad+\partial^\alpha\Gamma(\frac{M-\mu}{\sqrt{\mu}},f),e^{\phi}w^2(\alpha,0)\frac{\partial^\alpha F}{\sqrt{\mu}})
		\notag\\
		&&\quad+\varepsilon^{a-1}(\partial^\alpha\Gamma(\frac{G}{\sqrt{\mu}},\frac{G}{\sqrt{\mu}}),e^{\phi}w^2(\alpha,0)\frac{\partial^\alpha F}{\sqrt{\mu}})\notag\\
		&&\quad+\varepsilon^{a-1}(\frac{\partial^{\alpha}L_{M}\overline{G}}{\sqrt{\mu}},e^{\phi}w^2(\alpha,0)\frac{\partial^\alpha F}{\sqrt{\mu}}).
	\end{eqnarray}
	We  estimate \eqref{5.4} term by term. Similar to \eqref{4.68}, for $|\alpha|=2$, we can claim that
	\begin{eqnarray}
		\label{5.7}
		\|\frac{\partial^\alpha F}{\sqrt{\mu}}\|_{\sigma,w}^2
		\leq&& C(\|\partial^\alpha f\|_{\sigma,w}^2+\|\partial^\alpha(\widetilde{\rho},\widetilde{u},\widetilde{\theta})\|^2)
		\notag\\
		&&+Ck^{\frac{1}{12}}\varepsilon^{\frac{3}{5}-\frac{2}{5}a}\mathcal{D}_{2,l,q}(\tau)
		+C\varepsilon^{\frac{7}{5}+\frac{1}{15}a}(\delta+\varepsilon^{a}\tau)^{-\frac{4}{3}}.
	\end{eqnarray}
	From $F=M+\overline{G}+\sqrt{\mu}f$ and \eqref{3.23}, it is direct to verify that
	\begin{eqnarray*}
		&&q_{2}q_{3}(\tau)\|e^{\frac{\phi}{2}}\langle v\rangle^{\frac{1}{2}}\frac{\partial^\alpha F}{\sqrt{\mu}}\|_{w}^2\\
		&&\geq \frac{1}{2}q_{2}q_{3}(\tau)\|e^{\frac{\phi}{2}}\langle v\rangle^{\frac{1}{2}}\partial^\alpha f\|_{w}^2
		-Cq_{2}q_{3}(\tau)\|\langle v\rangle^{\frac{1}{2}}\frac{\partial^\alpha (M +\overline{G})}{\sqrt{\mu}}\|_{w}^2
		\notag\\
		&&\geq \frac{1}{2}q_{2}q_{3}(\tau)\|e^{\frac{\phi}{2}}\langle v\rangle^{\frac{1}{2}}\partial^\alpha f\|_{w}^2
		-C\varepsilon^{a-1}\|\partial^\alpha(\widetilde{\rho},\widetilde{u},\widetilde{\theta})\|^2
		\notag\\
		&&\hspace{1cm}-Ck^{\frac{1}{12}}\varepsilon^{\frac{3}{5}a-\frac{2}{5}}\mathcal{D}_{2,l,q}(\tau)
		-C\varepsilon^{a-1}\varepsilon^{\frac{7}{5}+\frac{1}{15}a}(\delta+\varepsilon^{a}\tau)^{-\frac{4}{3}}.
	\end{eqnarray*}
	In a similar way as \eqref{4.64}, we can verify that
	\begin{eqnarray*}
		&&|(\frac{\partial^\alpha F}{\sqrt{\mu}},e^{\phi}\phi_{\tau}w^2(\alpha,0)\frac{\partial^\alpha F}{\sqrt{\mu}})|\\
		&&\leq C\eta\varepsilon^{a-1}(\|\partial^{\alpha}f\|_{\sigma,w}^{2}
		+\|\partial^{\alpha}(\widetilde{\rho},\widetilde{u},\widetilde{\theta})\|^{2})
		+C_{\eta}k^{\frac{1}{12}}\varepsilon^{\frac{3}{5}a-\frac{2}{5}}\mathcal{D}_{2,l,q}(\tau)
		\notag\\
		&&\quad +C_{\eta}\varepsilon^{a-1}\varepsilon^{\frac{7}{5}+\frac{1}{15}a}(\delta+\varepsilon^{a}\tau)^{-\frac{4}{3}}
		+C_{\eta}q_3(\tau)\varepsilon^{2a-2}\mathcal{H}_{2,l,q}(\tau).
	\end{eqnarray*}
	With the above two estimates and \eqref{5.2a} in hand, we get
	\begin{eqnarray*}
		&&((\frac{\partial^\alpha F}{\sqrt{\mu}})_\tau,e^{\phi}w^2(\alpha,0)\frac{\partial^\alpha F}{\sqrt{\mu}})
		\notag\\
		=&&\frac{1}{2}\frac{d}{d\tau}\|e^{\frac{\phi}{2}}\frac{\partial^\alpha F}{\sqrt{\mu}}\|_w^2
		+q_{2}q_{3}(\tau)\|\langle v\rangle^{\frac{1}{2}}e^{\frac{\phi}{2}}\frac{\partial^\alpha F}{\sqrt{\mu}}\|_{w}^2
		-\frac{1}{2}(\frac{\partial^\alpha F}{\sqrt{\mu}},e^{\phi}\phi_{\tau}w^2(\alpha,0)\frac{\partial^\alpha F}{\sqrt{\mu}}).
		\notag\\
		\geq&& \frac{1}{2}\frac{d}{d\tau}\|e^{\frac{\phi}{2}}\frac{\partial^\alpha F}{\sqrt{\mu}}\|_w^2
		+\frac{1}{2}q_{2}q_{3}(\tau)\|e^{\frac{\phi}{2}}\langle v\rangle^{\frac{1}{2}}\partial^\alpha f\|_{w}^2
		- C\eta\varepsilon^{a-1}\|\partial^{\alpha}f\|_{\sigma,w}^{2}\notag\\
		&&-C_{\eta}\varepsilon^{a-1}\|\partial^\alpha(\widetilde{\rho},\widetilde{u},\widetilde{\theta})\|^2-C_{\eta}k^{\frac{1}{12}}\varepsilon^{\frac{3}{5}a-\frac{2}{5}}\mathcal{D}_{2,l,q}(\tau)\notag\\
		&&-C_{\eta}\varepsilon^{a-1}\varepsilon^{\frac{7}{5}+\frac{1}{15}a}(\delta+\varepsilon^{a}\tau)^{-\frac{4}{3}}
		-C_{\eta}q_3(\tau)\varepsilon^{2a-2}\mathcal{H}_{2,l,q}(\tau).
	\end{eqnarray*}
	The second term on the left hand side of \eqref{5.4} vanishes by employing the same way as \eqref{7.4A}, while for
	the third term on the left hand side of \eqref{5.4}, we can use integration by parts, \eqref{5.4a} and the similar calculations as \eqref{4.64} to get
	\begin{eqnarray*}
		&&|(\phi_{y}\partial_{v_{1}}(\frac{\partial^{\alpha}F}{\sqrt{\mu}}),e^{\phi}w^2(\alpha,0)\frac{\partial^\alpha F}{\sqrt{\mu}})|
		=|(w(\alpha,0)\phi_{y}\frac{\partial^{\alpha}F}{\sqrt{\mu}},e^{\phi}\partial_{v_{1}}w(\alpha,0)\frac{\partial^\alpha F}{\sqrt{\mu}})|
		\notag\\
		&&\hspace{0.5cm}\leq  C\eta\varepsilon^{a-1}(\|\partial^{\alpha}f\|_{\sigma,w}^{2}
		+\|\partial^{\alpha}(\widetilde{\rho},\widetilde{u},\widetilde{\theta})\|^{2})+
		C_{\eta}k^{\frac{1}{12}}\varepsilon^{\frac{3}{5}a-\frac{2}{5}}\mathcal{D}_{2,l,q}(\tau)
		\notag\\
		&&\hspace{1cm}+C_{\eta}\varepsilon^{a-1}\varepsilon^{\frac{7}{5}+\frac{1}{15}a}(\delta+\varepsilon^{a}\tau)^{-\frac{4}{3}}
		+C_{\eta}q_3(\tau)\varepsilon^{2a-2}\mathcal{H}_{2,l,q}(\tau).
	\end{eqnarray*}
	The last term on the left hand side of \eqref{5.4} can be treated in the similar way as \eqref{4.65}.
	For $1\leq|\alpha_{1}|\leq|\alpha|=2$, we have from $F=M+\overline{G}+\sqrt{\mu}f$ that
	\begin{eqnarray}
		\label{5.7a}
		&&-(\frac{v_{1}}{2}\partial^{\alpha_{1}}\phi_{y}\frac{
			\partial^{\alpha-\alpha_{1}}F}{\sqrt{\mu}}-\partial^{\alpha_{1}}\phi_{y}\partial_{v_{1}}(\frac{\partial^{\alpha-\alpha_{1}}F}{\sqrt{\mu}}),
		e^{\phi}w^2(\alpha,0)\frac{\partial^\alpha F}{\sqrt{\mu}})\notag\\
		&&=(\partial^{\alpha_{1}}\phi_{y}\frac{\partial_{v_{1}}\partial^{\alpha-\alpha_{1}}F}{\sqrt{\mu}}
		,e^{\phi}w^2(\alpha,0)\frac{\partial^\alpha F}{\sqrt{\mu}})
		\notag\\
		&&=(\partial^{\alpha_{1}}\phi_{y}\frac{\partial_{v_{1}}\partial^{\alpha-\alpha_{1}}(M+\overline{G})}{\sqrt{\mu}},
		e^{\phi}w^2(\alpha,0)\frac{\partial^\alpha F}{\sqrt{\mu}})\notag\\
		&&\quad+(\partial^{\alpha_{1}}\phi_{y}\frac{\partial_{v_{1}}\partial^{\alpha-\alpha_{1}}(\sqrt{\mu}f)}{\sqrt{\mu}},
		e^{\phi}w^2(\alpha,0)\frac{\partial^\alpha(M+\overline{G})}{\sqrt{\mu}})
		\notag\\
		&&\quad+(\partial^{\alpha_{1}}\phi_{y}\frac{\partial_{v_{1}}\partial^{\alpha-\alpha_{1}}(\sqrt{\mu}f)}{\sqrt{\mu}},
		e^{\phi}w^2(\alpha,0)\frac{\partial^\alpha(\sqrt{\mu}f)}{\sqrt{\mu}})\notag\\
		&&:=J^{1}_{9}+J^{2}_{9}+J^{3}_{9}.
	\end{eqnarray}
	We need to bound the terms from $J^{1}_{9}$ to $J^{3}_{9}$ in \eqref{5.7a}. Firstly, $J^{1}_{9}$ can be estimated as follows.
	When $\alpha_{1}=\alpha$, it is bounded by
	\begin{eqnarray*}
		|J^{1}_{9}|\leq&& \eta\varepsilon^{a-1}\|\langle v\rangle^{-\frac{1}{2}}w(\alpha,0)\frac{\partial^\alpha F}{\sqrt{\mu}}\|^2
		+C_{\eta}\varepsilon^{1-a}\|\langle v\rangle^{\frac{1}{2}}w(\alpha,0)\partial^{\alpha}\phi_{y}
		\frac{\partial_{v_{1}}(M+\overline{G})}{\sqrt{\mu}}\|^{2}
		\notag\\
		\leq&& C\eta\varepsilon^{a-1}(\|\partial^\alpha f\|_{\sigma,w}^2+\|\partial^\alpha(\widetilde{\rho},\widetilde{u},\widetilde{\theta})\|^2)
		+C_{\eta}\varepsilon^{1-a}\|\partial^{\alpha}\widetilde{\phi}_{y}\|^{2}
		\notag\\
		&&+C_{\eta}k^{\frac{1}{12}}\varepsilon^{\frac{3}{5}a-\frac{2}{5}}\mathcal{D}_{2,l,q}(\tau)
		+C_{\eta}\varepsilon^{a-1}\varepsilon^{\frac{7}{5}+\frac{1}{15}a}(\delta+\varepsilon^{a}\tau)^{-\frac{4}{3}},
	\end{eqnarray*}
	where we have used \eqref{3.16}, \eqref{5.7}, Lemma \ref{lem7.2}, \eqref{3.21} and \eqref{3.22}. When $\alpha_{1}\neq\alpha$, it has the same bound as the above.
	The term $J^{2}_{9}$ can be handled in the similar manner as $J^{1}_{9}$, while for $J^{3}_{9}$, we follow the similar approach as \eqref{4.91a} to get
	\begin{eqnarray*}
		|J^{3}_{9}|\leq C\eta\varepsilon^{a-1}\| \partial^\alpha f\|^{2}_{\sigma,w}+C_{\eta}q_{3}(\tau)\mathcal{H}_{2,l,q}(\tau).
	\end{eqnarray*}
	Therefore, combining the estimates $J^{1}_{9}$ with $J^{2}_{9}$ and $J^{3}_{9}$, we get
	\begin{eqnarray*}
		&&\sum_{1\leq\alpha_{1}\leq\alpha}C^{\alpha_{1}}_{\alpha}|(\frac{v_{1}}{2}\partial^{\alpha_{1}}\phi_{y}\frac{
			\partial^{\alpha-\alpha_{1}}F}{\sqrt{\mu}}-\partial^{\alpha_{1}}\phi_{y}\partial_{v_{1}}(\frac{\partial^{\alpha-\alpha_{1}}F}{\sqrt{\mu}}),
		e^{\phi}w^2(\alpha,0)\frac{\partial^\alpha F}{\sqrt{\mu}})|
		\notag\\
		&&\leq C\eta\varepsilon^{a-1}(\|\partial^\alpha f\|_{\sigma,w}^2+\|\partial^\alpha(\widetilde{\rho},\widetilde{u},\widetilde{\theta})\|^2)
		+C_{\eta}\varepsilon^{1-a}\|\partial^{\alpha}\widetilde{\phi}_{y}\|^{2}
		\notag\\
		&&\quad+C_{\eta}k^{\frac{1}{12}}\varepsilon^{\frac{3}{5}a-\frac{2}{5}}\mathcal{D}_{2,l,q}(\tau)
		+C_{\eta}\varepsilon^{a-1}\varepsilon^{\frac{7}{5}+\frac{1}{15}a}(\delta+\varepsilon^{a}\tau)^{-\frac{4}{3}}
		+C_{\eta}q_{3}(\tau)\mathcal{H}_{2,l,q}(\tau).
	\end{eqnarray*}
	
	Next we move to bound the terms on the right hand side of \eqref{5.4}. We can separate the first term on the right hand side of \eqref{5.4}
	into two parts by using $F=M+\overline{G}+\sqrt{\mu}f$, namely
	$$
	-\varepsilon^{a-1}(\mathcal{L} \partial^\alpha f, e^{\phi} w^2(\alpha,0)\partial^\alpha f)
	\geq c\varepsilon^{a-1}\|\partial^\alpha f\|_{\sigma,w}^2-C\varepsilon^{a-1}\|\partial^\alpha f\|_{\sigma }^2,
	$$
	and
	\begin{eqnarray*}
		&&\varepsilon^{a-1}|(\mathcal{L} \partial^\alpha f, e^{\phi}w^2(\alpha,0)\frac{ \partial^\alpha(M+\overline{G}) }{\sqrt{\mu}})|\\
		&&\leq C\varepsilon^{a-1}\|\partial^\alpha f\|_\sigma\|w^2(\alpha,0)\frac{ \partial^\alpha(M+\overline{G})}{\sqrt{\mu}}\|_\sigma
		\notag\\
		&&\leq C \varepsilon^{a-1}(\|\partial^\alpha f\|_\sigma^2+\|\partial^\alpha(\widetilde{\rho},\widetilde{u},\widetilde{\theta})\|^2)
		+Ck^{\frac{1}{12}}\varepsilon^{\frac{3}{5}a-\frac{2}{5}}\mathcal{D}_{2,l,q}(\tau)\\
&&\quad		+C\varepsilon^{a-1}\varepsilon^{\frac{7}{5}+\frac{1}{15}a}(\delta+\varepsilon^{a}\tau)^{-\frac{4}{3}},
	\end{eqnarray*}
	in terms of \eqref{7.6} and the fact that $\mathcal{L}g=\Gamma(g,\sqrt{\mu})+\Gamma(\sqrt{\mu}, g)$.
	So, we have
	\begin{eqnarray*}
		&&\varepsilon^{a-1}(\mathcal{L} \partial^\alpha f, e^{\phi}w^2(\alpha,0)\frac{\partial^\alpha F}{\sqrt{\mu}})\\\
		\leq&&
		-c\varepsilon^{a-1}\|\partial^\alpha f\|_{\sigma,w}^2
		+C\varepsilon^{a-1}(\|\partial^\alpha f\|_{\sigma}^2+\|\partial^\alpha(\widetilde{\rho},\widetilde{u},\widetilde{\theta})\|^2)
		\notag\\
		&&+Ck^{\frac{1}{12}}\varepsilon^{\frac{3}{5}a-\frac{2}{5}}\mathcal{D}_{2,l,q}(\tau)
		+C\varepsilon^{a-1}\varepsilon^{\frac{7}{5}+\frac{1}{15}a}(\delta+\varepsilon^{a}\tau)^{-\frac{4}{3}}.
	\end{eqnarray*}
	The second term  on the right hand side of \eqref{5.4} can be handled in the similar way as \eqref{4.85}, which can be bounded by
	\begin{eqnarray}
		\label{7.13b}
		&&C(\eta+\eta_{0}+k^{\frac{1}{12}}\varepsilon^{\frac{3}{5}-\frac{2}{5}a})\varepsilon^{a-1}(\|\partial^{\alpha}f\|^{2}_{\sigma,w}	+\|\partial^{\alpha}(\widetilde{\rho},\widetilde{u},\widetilde{\theta})\|^{2})
		\notag\\
		&&\hspace{1cm}+C_{\eta}k^{\frac{1}{12}}\varepsilon^{\frac{3}{5}a-\frac{2}{5}}\mathcal{D}_{2,l,q}(\tau)
		+C_{\eta}\varepsilon^{a-1}\varepsilon^{\frac{7}{5}+\frac{1}{15}a}(\delta+\varepsilon^{a}\tau)^{-\frac{4}{3}}.
	\end{eqnarray}
	For the third term on the right hand side of \eqref{5.4}, using \eqref{4.34A} and \eqref{5.7}, we deduce that
	\begin{eqnarray*}
		&&\varepsilon^{a-1}|(\partial^\alpha\Gamma(\frac{G}{\sqrt{\mu}},\frac{G}{\sqrt{\mu}}),e^{\phi}w^2(\alpha,0)\frac{\partial^\alpha F}{\sqrt{\mu}})|
		\notag\\
		&&\leq C\eta\varepsilon^{a-1}(\|\partial^{\alpha}f\|^{2}_{\sigma,w}
		+\|\partial^{\alpha}(\widetilde{\rho},\widetilde{u},\widetilde{\theta})\|^{2})\\
&&\quad		+C_{\eta}k^{\frac{1}{12}}\varepsilon^{\frac{3}{5}a-\frac{2}{5}}\mathcal{D}_{2,l,q}(\tau)
		+C_{\eta}\varepsilon^{a-1}\varepsilon^{\frac{7}{5}+\frac{1}{15}a}(\delta+\varepsilon^{a}\tau)^{-\frac{4}{3}},
	\end{eqnarray*}
	while for the last term on the right hand side of \eqref{5.4}, we can follow the similar approach as  \eqref{4.97b} to get the same bound as \eqref{7.13b}.
	
	Substituting all the above estimates into \eqref{5.4} and choosing $\eta>0$ small enough, then
	multiplying the resulting equation by $\varepsilon^{2(1-a)}$, the desired estimate \eqref{5.8} follows.
	We thus complete the proof of Lemma \ref{lem7.2A}.
\end{proof}
Combining those estimates in Lemma \ref{lem7.1A} and Lemma\ref{lem7.2A}, we are able to obtain the following result.
\begin{lemma}\label{lem5.2}
	It holds that
	\begin{eqnarray}
		\label{5.9}
		&&\sum_{|\alpha|\leq 1}\|\partial^\alpha f(\tau)\|_w^2
		+\varepsilon^{2(1-a)}\sum_{|\alpha|=2}\|\partial^\alpha f(\tau)\|_w^2
		\notag\\
		&&+c\varepsilon^{a-1}\int^{\tau}_{0}\big\{\sum_{|\alpha|\leq 1}\|\partial^\alpha f(s)\|^2_{\sigma,w}+\varepsilon^{2(1-a)}\sum_{|\alpha|=2}\|\partial^\alpha f(s)\|^2_{\sigma,w}\big\}\,ds
		\notag\\
		&&+q_{2}\int^{\tau}_{0}q_{3}(s)\big\{\sum_{|\alpha|\leq 1}\|\langle v\rangle^{\frac{1}{2}}\partial^\alpha f(s)\|^2_w
		+\varepsilon^{2(1-a)}\sum_{|\alpha|=2}\|\langle v\rangle^{\frac{1}{2}}\partial^\alpha f(s)\|^2_w\big\}\,ds
		\notag\\
		\leq&& C\varepsilon^{2(1-a)}\sum_{|\alpha|=2}\|\partial^\alpha(\widetilde{\rho},\widetilde{u},\widetilde{\theta})(\tau)\|^2
		+Ck^{\frac{1}{3}}\varepsilon^{\frac{6}{5}-\frac{4}{5}a}+
		C\int^{\tau}_{0}\mathcal{D}_{2}(s)\,ds
		\notag\\
		&&+C(\eta_{0}+k^{\frac{1}{12}}\varepsilon^{\frac{3}{5}-\frac{2}{5}a})\int^{\tau}_{0}\mathcal{D}_{2,l,q}(s)\,ds
		+C\int^{\tau}_{0}q_{3}(s)\mathcal{H}_{2,l,q}(s)\,ds.
	\end{eqnarray}
\end{lemma}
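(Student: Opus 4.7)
The plan is to derive \eqref{5.9} by linearly combining the two weighted differential inequalities already established, namely \eqref{5.3} in Lemma~\ref{lem7.1A} and \eqref{5.8} in Lemma~\ref{lem7.2A}, and then integrating in $\tau$. Since \eqref{5.8} already carries the prefactor $\varepsilon^{2(1-a)}$ while \eqref{5.3} is at the natural zeroth scale, simply adding them with coefficient one produces, on the left, exactly the combination of the low-order weighted energy $\sum_{|\alpha|\le 1}\|e^{\phi/2}\partial^\alpha f\|_w^2$, the second-order weighted energy $\varepsilon^{2(1-a)}\sum_{|\alpha|=2}\|e^{\phi/2}\frac{\partial^\alpha F}{\sqrt{\mu}}\|_w^2$, the quadratic weighted dissipation at the $\varepsilon^{a-1}$ scale, and the quartic dissipation $q_2 q_3(\tau)\|\langle v\rangle^{1/2}\,\cdot\,\|_w^2$ appearing in \eqref{5.9}.

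First I would rewrite the $F$-norms in terms of $f$ via $F=M+\overline{G}+\sqrt{\mu}f$: the low-order terms $\partial^\alpha M$ and $\partial^\alpha \overline{G}$ with $|\alpha|=2$ are controlled, after multiplication by $\varepsilon^{2(1-a)}$, by $\varepsilon^{2(1-a)}\|\partial^\alpha(\widetilde{\rho},\widetilde{u},\widetilde{\theta})\|^2$ plus the acceptable error terms, using \eqref{4.68} and \eqref{5.7} together with \eqref{7.24}--\eqref{7.25} and the bounds on $\bar{u}_{1y},\bar\theta_y$ in Lemma~\ref{lem7.2}. Using \eqref{4.45a} the factor $e^{\phi/2}$ is $\approx 1$, so the left-hand energy reduces to the $f$-norms displayed in \eqref{5.9} up to the acceptable term $C\varepsilon^{2(1-a)}\sum_{|\alpha|=2}\|\partial^\alpha(\widetilde{\rho},\widetilde{u},\widetilde{\theta})(\tau)\|^2$ that appears on the right of \eqref{5.9}. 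The initial weighted energy $\sum_{|\alpha|\le 1}\|\partial^\alpha f(0)\|_w^2+\varepsilon^{2(1-a)}\sum_{|\alpha|=2}\|\partial^\alpha f(0)\|_w^2$ is bounded by $Ck^{1/3}\varepsilon^{6/5-4a/5}$ through the initial-data hypothesis \eqref{3.20} and \eqref{5.50A}.

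Next I would absorb the error terms produced by the right-hand sides of \eqref{5.3} and \eqref{5.8} into $\int_0^\tau \mathcal{D}_2(s)\,ds$ and $(\eta_0+k^{1/12}\varepsilon^{3/5-2a/5})\int_0^\tau\mathcal{D}_{2,l,q}(s)\,ds$. Concretely, the time integral of $\varepsilon^{a-1}\sum_{|\alpha|\le 1}\|\partial^\alpha f\|_\sigma^2$ and $\varepsilon^{1-a}\sum_{|\alpha|\le 1}(\|\partial^\alpha f_y\|_\sigma^2+\|(\partial^\alpha\widetilde{u}_y,\partial^\alpha\widetilde{\theta}_y)\|^2)$ from \eqref{5.3} lies inside $\int_0^\tau\mathcal{D}_2(s)\,ds$ by definition \eqref{3.21a} (using \eqref{5.15a} for the $f$-piece with $|\alpha|\le 1$ and $a\le 1$). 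From \eqref{5.8}, the term $\varepsilon^{1-a}\sum_{|\alpha|=2}(\|\partial^\alpha f\|_\sigma^2+\|\partial^\alpha(\widetilde{\rho},\widetilde{u},\widetilde{\theta})\|^2)$ is directly part of $\mathcal{D}_2$, while $\varepsilon^{3(1-a)}\sum_{|\alpha|=2}\|\partial^\alpha\widetilde\phi_y\|^2$ is handled using \eqref{4.94}: $\varepsilon^{3(1-a)}=\varepsilon^{1-a}\cdot\varepsilon^{2(1-a)}\leq \varepsilon^{1-a}\cdot\varepsilon^{2b-2a}$, so it is also dominated by the Poisson contribution in $\mathcal{D}_2$. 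The slow-decay source $\varepsilon^{7/5+a/15}(\delta+\varepsilon^a\tau)^{-4/3}$ integrates, via $\int_0^\infty(\delta+\varepsilon^a\tau)^{-4/3}\,d\tau\le C\varepsilon^{-a}\delta^{-1/3}=Ck^{1/3}\varepsilon^{-a-(1/5-2a/15)}$, to yield at most $Ck^{1/3}\varepsilon^{6/5-4a/5}$, thanks to \eqref{3.21}.

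The main difficulty I anticipate is not the absorption itself but tracking the $F$-to-$f$ reduction in the quartic dissipation from Lemma~\ref{lem7.2A}: the coefficient there is $\tfrac12 q_2 q_3(\tau)\|\langle v\rangle^{1/2}e^{\phi/2}\partial^\alpha f\|_w^2$, which appears with a constant smaller than what is claimed in \eqref{5.9}, and the residual macroscopic piece $q_2q_3(\tau)\|\langle v\rangle^{1/2}\partial^\alpha(\widetilde\rho,\widetilde u,\widetilde\theta)\|^2$ generated when passing from $\frac{\partial^\alpha F}{\sqrt\mu}$ to $\partial^\alpha f$ must be controlled. This is handled by \eqref{3.23}, which gives $q_2\int_0^\tau q_3(s)\,ds\le 2C_1 k^{1/12}$, so such terms are absorbed into $C\int_0^\tau \mathcal{D}_2(s)\,ds$ after noting $\varepsilon^{2(1-a)}q_2q_3(s)\le 2C_1 k^{1/12}\varepsilon^{1-a}$ by \eqref{3.23}. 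Once these microscopic–macroscopic transitions are made bookkeeping-carefully and $\eta>0$ in Lemma~\ref{lem7.1A}--\ref{lem7.2A} is chosen sufficiently small (and $k,\varepsilon$ so small that the absorption constants close), summing \eqref{5.3} and \eqref{5.8}, integrating in $\tau$, and collecting the error terms yields precisely \eqref{5.9}.
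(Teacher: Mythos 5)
Your proposal is correct and follows essentially the same route as the paper: adding \eqref{5.3} and \eqref{5.8}, integrating in $\tau$, converting the weighted $F$-energy to the $f$-energy via $F=M+\overline{G}+\sqrt{\mu}f$ (which produces the $\varepsilon^{2(1-a)}\sum_{|\alpha|=2}\|\partial^\alpha(\widetilde\rho,\widetilde u,\widetilde\theta)\|^2$ term and the $Ck^{1/3}\varepsilon^{6/5-4a/5}$ initial-data contribution), and closing with \eqref{3.20}, \eqref{4.94} and the definition \eqref{3.21a} of $\mathcal{D}_2$. Your additional bookkeeping on the quartic-dissipation constant and the slow-decay source is consistent with how these are absorbed into the $C\int_0^\tau q_3\mathcal{H}_{2,l,q}\,ds$ and $Ck^{1/3}\varepsilon^{6/5-4a/5}$ terms, so no gap remains.
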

\begin{proof}
	Adding \eqref{5.8} to \eqref{5.3} and integrating the resulting equation with respect to $\tau$ and using
	the following two estimates, namely
	\begin{multline*}
		\varepsilon^{2(1-a)}\sum_{|\alpha|=2}\|e^{\frac{\phi}{2}}\frac{\partial^\alpha F(\tau)}{\sqrt{\mu}}\|_w^2\\
		\geq c\varepsilon^{2(1-a)}\sum_{|\alpha|=2}(\|\partial^{\alpha}f\|_{w}^{2}
		-\|\partial^{\alpha}(\widetilde{\rho},\widetilde{u},\widetilde{\theta})\|^{2})-Ck^{\frac{1}{3}}\varepsilon^{\frac{6}{5}-\frac{4}{5}a},
	\end{multline*}
	and
	\begin{equation*}
		\varepsilon^{2(1-a)}\sum_{|\alpha|=2}\|e^{\frac{\phi}{2}}\frac{\partial^\alpha F(0)}{\sqrt{\mu}}\|_w^2
		\leq Ck^{\frac{1}{3}}\varepsilon^{\frac{6}{5}-\frac{4}{5}a},
	\end{equation*}
	then the  expected estimates \eqref{5.9} follows by combining this, \eqref{3.20}, \eqref{4.94} and \eqref{3.21a}. 
	This hence completes the proof of Lemma \ref{lem5.2}.
\end{proof}
\subsection{Weighted mixed derivative estimates}
Next we give the weighted estimates on mixed derivatives.
\begin{lemma}\label{lem5.3}
	It holds that
	\begin{align}
		\label{5.13}
		&\sum_{|\alpha|+|\beta|\leq 2,|\beta|\geq1}\big\{\|\partial^\alpha_\beta f(\tau)\|_w^2
		+q_{2}\int^{\tau}_{0}q_{3}(s)\|\langle v\rangle^{\frac{1}{2}}\partial^\alpha_\beta f(s)\|^2_w\,ds\notag\\	&\qquad\qquad\qquad\qquad+c\varepsilon^{a-1}\int^{\tau}_{0}\|\partial^\alpha_\beta f(s)\|^2_{\sigma,w}\,ds \big\}
		\notag\\
		\leq& Ck^{\frac{1}{3}}\varepsilon^{\frac{6}{5}-\frac{4}{5}a}
		+C\varepsilon^{1-a}\int^{\tau}_{0}\sum_{ 1\leq|\alpha|\leq 2}\|\partial^\alpha(\widetilde {u},\widetilde{\theta})(s)\|^2\,ds
		\nonumber\\
		&+C\varepsilon^{a-1}\int^{\tau}_{0}\{\sum_{ |\alpha|\leq 1}\|\partial^\alpha f(s)\|_{\sigma,w}^2+\varepsilon^{2(1-a)}\sum_{ |\alpha|=2}\|\partial^\alpha f(s)\|_{\sigma,w}^2\}\,ds
		\notag\\
		&+C(\eta_{0}+k^{\frac{1}{12}}\varepsilon^{\frac{3}{5}-\frac{2}{5}a})\int^{\tau}_{0}\mathcal{D}_{2,l,q}(s)\,ds+
		C\int^{\tau}_{0}q_{3}(s)\mathcal{H}_{2,l,q}(s)\,ds.
	\end{align}
\end{lemma}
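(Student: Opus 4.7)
\textbf{Proof plan for Lemma \ref{lem5.3}.} The approach is to apply $\partial^{\alpha}_{\beta}$ with $|\beta|\geq 1$ and $|\alpha|+|\beta|\leq 2$ to the equation \eqref{3.7} for $f$, test against $e^{\phi}w^{2}(\alpha,\beta)\partial^{\alpha}_{\beta}f$, and then sum over all such $(\alpha,\beta)$. The analytic pieces parallel the zeroth/first order weighted estimates already carried out in Lemma \ref{lem7.1A} and Lemma \ref{lem7.2A}, so the main novelty is bookkeeping of the new velocity-derivative commutators.

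The first step is the time derivative: using \eqref{5.2a} on $\partial_{\tau}w^{2}(\alpha,\beta)$ produces the positive quartic term $q_{2}q_{3}(\tau)\|\langle v\rangle^{\frac{1}{2}}\partial^{\alpha}_{\beta}f\|_{w}^{2}$ on the left, while the $e^{\phi}\phi_{\tau}$ piece is absorbed as in \eqref{7.4c} into $\eta\varepsilon^{a-1}\|\partial^{\alpha}_{\beta}f\|_{\sigma,w}^{2}+C_{\eta}q_{3}(\tau)\mathcal{H}_{2,l,q}(\tau)$ via the definitions \eqref{3.14}, \eqref{4.60}. For the streaming/electric field trio $v_{1}\partial_{y}f+\tfrac{v_{1}}{2}\phi_{y}f-\phi_{y}\partial_{v_{1}}f$, the principal part still satisfies the cancellation $(v_{1}[e^{\phi/2}\partial^{\alpha}_{\beta}f]_{y},e^{\phi/2}w^{2}\partial^{\alpha}_{\beta}f)=0$ and the commutator $\phi_{y}\partial_{v_{1}}\partial^{\alpha}_{\beta}f$ is handled by integration by parts in $v$ together with \eqref{5.4a}, as in \eqref{7.10c}. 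The new ingredient is the $\partial_{\beta}$-commutator on $v_{1}\partial_{y}f$, which produces $\beta_{1}\partial^{\alpha}_{\beta-e_{1}}f_{y}$, together with the corresponding commutators from $\partial^{\alpha}_{\beta}(\phi_{y}\partial_{v_{1}}f)$ that yield $\sum_{\alpha_{1}\leq\alpha}C^{\alpha_{1}}_{\alpha}\partial^{\alpha_{1}}\phi_{y}\partial^{\alpha-\alpha_{1}}_{\beta+e_{1}}f$ with one higher velocity derivative; both are controlled by Cauchy--Schwarz and the weight monotonicity $w(\alpha,\beta)\leq w(\alpha,\beta-e_{1})$, producing either $\eta\varepsilon^{a-1}\|w\partial^{\alpha}_{\beta}f\|_{\sigma}^{2}$ plus $C_{\eta}\varepsilon^{1-a}\|w\partial^{\alpha}_{\beta-e_{1}}f_{y}\|_{\sigma}^{2}$ (absorbable because either $|\beta-e_{1}|\geq 1$, going back to the LHS dissipation, or $|\beta-e_{1}|=0$, giving the lower-order fluxes $\|\partial^{\alpha}f_{y}\|_{\sigma,w}^{2}$ already controlled by Lemma \ref{lem5.2}) or, for the electric-field commutator with $|\alpha_{1}|\geq 1$, absorbed by $q_{3}(\tau)\mathcal{H}_{2,l,q}(\tau)$ via the definition of $q_{3}$.

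For the linear collision operator, Lemma \ref{lem7.5} (specifically \eqref{7.5}) gives the dissipation $c\|\partial^{\alpha}_{\beta}f\|_{\sigma,w}^{2}$ modulo an $\eta$-fraction of same-$|\beta|$ terms and a $C_{\eta}$-multiple of strictly lower $|\beta_{1}|<|\beta|$ terms; after summing over all $|\alpha|+|\beta|\leq 2$, $|\beta|\geq 1$, the same-$|\beta|$ contributions are absorbed by choosing $\eta$ small, while the $|\beta_{1}|<|\beta|$ tail either reappears inside the sum (still $|\beta_{1}|\geq 1$, absorbable) or reduces to $|\beta_{1}|=0$, feeding the RHS $\varepsilon^{a-1}\int\sum_{|\alpha|\leq 1}\|\partial^{\alpha}f\|_{\sigma,w}^{2}$ term. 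The nonlinear pieces $\Gamma(\tfrac{M-\mu}{\sqrt{\mu}},f)$, $\Gamma(f,\tfrac{M-\mu}{\sqrt{\mu}})$ and $\Gamma(\tfrac{G}{\sqrt{\mu}},\tfrac{G}{\sqrt{\mu}})$ are disposed of by Lemmas \ref{lem7.7} and \ref{lem7.8} (the estimates \eqref{7.9} and \eqref{7.18}); the inhomogeneities from $P_{1}(v_{1}M_{y})$, $\overline{G}_{\tau}$, $P_{1}(v_{1}\overline{G}_{y})$ and $\phi_{y}\partial_{v_{1}}\overline{G}$ are bounded by Cauchy--Schwarz against the velocity-moment estimates \eqref{7.24}--\eqref{7.25} of $\overline{G}$ and its derivatives, producing the remaining fluid-dissipation contributions $\varepsilon^{1-a}\|\partial^{\alpha}(\widetilde{u},\widetilde{\theta})\|^{2}$ on the right-hand side (for the $|\alpha|=1,|\beta|=1$ case this gives the $\sum_{1\leq|\alpha|\leq 2}$ quantity claimed in \eqref{5.13}) together with the admissible $C\varepsilon^{\frac{7}{5}+\frac{1}{15}a}(\delta+\varepsilon^{a}\tau)^{-4/3}$ time-integrable remainder.

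The main obstacle will be the chain of commutator terms $\partial^{\alpha}_{\beta-e_{1}}f_{y}$ generated by the streaming operator: when $|\beta|=2$ with $|\alpha|=0$ these are themselves mixed derivatives of total order two, so they can be controlled only by the LHS dissipation of this same lemma, which forces the constants $\eta$ to be chosen uniformly over the (finite) family of multi-indices before any absorption is performed. Handling this in a single sweep over $(\alpha,\beta)$ with $|\beta|\geq 1$---rather than by $|\beta|$-induction---requires that the sum $\sum_{|\alpha|+|\beta|\leq 2,|\beta|\geq 1}\|\partial^{\alpha}_{\beta-e_{1}}f_{y}\|_{\sigma,w}^{2}$, after pairing indices, falls back into the same dissipation family with a strictly smaller prefactor. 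Once this is checked, integration in $\tau$ together with the initial-data bound \eqref{3.20} yields \eqref{5.13}.
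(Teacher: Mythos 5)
Your overall scheme---applying $\partial^\alpha_\beta$ to \eqref{3.7}, testing with $e^{\phi}w^2(\alpha,\beta)\partial^\alpha_\beta f$, extracting the quartic term from the weight derivative \eqref{5.2a}, using the streaming cancellation and \eqref{5.4a} for the velocity-weight commutator, invoking Lemma \ref{lem7.5} for weighted coercivity and Lemmas \ref{lem7.7}--\ref{lem7.8} for the collision terms---is exactly the route of the paper up to the final absorption step, and your treatment of the streaming commutator $\partial^\alpha_{\beta-e_1}f_y$ (small $\eta$ prefactor, $\varepsilon^{1-a}\leq\varepsilon^{a-1}$ for $a\leq1$, or reduction to purely spatial fluxes kept on the right-hand side of \eqref{5.13}) is sound.

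The gap is in how you close the estimate over the family $|\alpha|+|\beta|\leq 2$, $|\beta|\geq 1$. The weighted coercivity bound \eqref{7.5} produces, for each index with $|\beta|=2$, a term $C_\eta\sum_{|\beta_1|<|\beta|}\|\partial^\alpha_{\beta_1}f\|^2_{\sigma,w}$ whose constant is \emph{large} (it grows as $\eta\to0$), and for $|\beta_1|=1$ this is again a member of the very family you are bounding. Your claim that such terms are ``absorbable'' in a single equal-weight sweep is not correct: a quantity appearing with coefficient $C_\eta\varepsilon^{a-1}$ on the right cannot be absorbed by the same quantity appearing with the fixed coercivity constant $c\varepsilon^{a-1}$ on the left, no matter how small $\eta$ is, since shrinking $\eta$ only inflates $C_\eta$; only the same-$|\beta|$ terms, which carry the small factor $\eta$, can be absorbed this way. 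This is precisely why the paper closes the argument by induction on $|\beta|$, that is, by a linear combination of the individual estimates \eqref{5.12} in which the $|\beta|=1$ cases are multiplied by a constant large enough that their dissipation dominates the $C_\eta$-weighted $|\beta_1|=1$ contributions generated by the $|\beta|=2$ cases, while the $|\beta_1|=0$ remnants land in the $\varepsilon^{a-1}\sum_{|\alpha|\leq1}\|\partial^\alpha f\|^2_{\sigma,w}$ term on the right of \eqref{5.13}. You also mislocate the main obstacle: it is not the streaming commutators $\partial^\alpha_{\beta-e_1}f_y$ (those carry the harmless small prefactor and can be shifted as you describe), but the $C_\eta$-tail of \eqref{7.5}. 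Once you replace the single sweep by the $|\beta|$-graded combination, the rest of your argument goes through and coincides with the paper's proof.
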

\begin{proof}
	Let $|\alpha|+|\beta|\leq2$ and $|\beta|\geq1$, then $|\alpha|\leq1$.
	Applying $\partial^\alpha_\beta$ to equation \eqref{3.7} and further multiplying it $e^{\phi}w^2(\alpha,\beta)\partial_{\beta}^\alpha f$, we have
	\begin{align}
		\label{5.10}
		&(\partial^\alpha_\beta f_\tau,e^{\phi}w^2(\alpha,\beta)\partial_{\beta}^\alpha f)
		+(v_1\partial^\alpha_\beta f_{y}+\frac{v_{1}}{2}\partial^\alpha_\beta[\phi_{y} f],e^{\phi}w^2(\alpha,\beta)\partial_{\beta}^\alpha f)
		\notag\\
		&+(C_\beta^{\beta-e_1}\partial^\alpha_{\beta-e_1}f_{y},e^{\phi}w^2(\alpha,\beta)\partial_{\beta}^\alpha f)
		+(\frac{1}{2}C_\beta^{\beta-e_1}\partial^\alpha_{\beta-e_1}(\phi_{y} f),e^{\phi}w^2(\alpha,\beta)\partial_{\beta}^\alpha f)
		\notag\\
		&-(\partial^\alpha_\beta[\phi_{y}\partial_{v_{1}}f],e^{\phi}w^2(\alpha,\beta)\partial_{\beta}^\alpha f)
		-\varepsilon^{a-1}(\partial^\alpha_\beta\mathcal{L}f,e^{\phi}w^2(\alpha,\beta)\partial_{\beta}^\alpha f)
		\notag\\
		=&\varepsilon^{a-1}(\partial^\alpha_\beta\Gamma(f,\frac{M-\mu}{\sqrt{\mu}})
		+\partial^\alpha_\beta\Gamma(\frac{M-\mu}{\sqrt{\mu}},f)
		+\partial^\alpha_\beta\Gamma(\frac{G}{\sqrt{\mu}},\frac{G}{\sqrt{\mu}}),e^{\phi}w^2(\alpha,\beta)\partial_{\beta}^\alpha f)
		\notag\\
		&+(\partial^\alpha_\beta \mathbb{K},e^{\phi}w^2(\alpha,\beta)\partial_{\beta}^\alpha f)
		\notag\\
		&+(\partial^\alpha_\beta[\frac{\phi_{y}\partial_{v_{1}}\overline{G}}{\sqrt{\mu}}]-\partial^\alpha_\beta[\frac{P_1( v_1\overline{G}_{y})}{\sqrt{\mu}}]-\partial^\alpha_\beta[\frac{\overline{G}_{\tau}}{\sqrt{\mu}}]
		,e^{\phi}w^2(\alpha,\beta)\partial_{\beta}^\alpha f).
	\end{align}
	Here $e_1=(1,0,0)$ and we recall \eqref{def.adK}  for $\mathbb{K}$.
	
	We need to compute each term for \eqref{5.10}. The first term on the left hand side of \eqref{5.10} can be treated in the similar way as \eqref{7.4c}, which can be dominated by
	\begin{eqnarray*}
		&&(\partial^\alpha_\beta f_\tau,e^{\phi}w^2(\alpha,\beta)\partial^\alpha_\beta f)
		\notag\\
		&&=\frac{1}{2}\frac{d}{d\tau}\|e^{\frac{\phi}{2}}\partial^\alpha_\beta f\|_w^2
		-\frac{1}{2}(\partial^\alpha_\beta f,e^{\phi}[w^2(\alpha,\beta)]_{\tau}\partial^\alpha_\beta f)-\frac{1}{2}(\partial^\alpha_\beta f,[e^{\phi}]_{\tau}w^2(\alpha,\beta)\partial^\alpha_\beta f)
		\notag\\
		&&\geq\frac{1}{2}\frac{d}{d\tau}\|e^{\frac{\phi}{2}}\partial^\alpha_\beta f\|_w^2
		+q_{2}q_{3}(\tau)\|\langle v\rangle^{\frac{1}{2}}e^{\frac{\phi}{2}}\partial^\alpha_\beta f\|^2_w\\
		&&\quad -C\eta\varepsilon^{a-1}\|\partial^\alpha_\beta f\|^2_{\sigma,w}-C_{\eta}q_{3}(\tau)\mathcal{H}_{2,l,q}(\tau).
	\end{eqnarray*}
	Following the similar arguments as \eqref{7.7b}, the second term on the left hand side of \eqref{5.10} can be bounded by
	\begin{eqnarray*}
		&&(v_1\partial^\alpha_\beta f_{y}+\frac{v_{1}}{2}\phi_{y}\partial^{\alpha}_\beta f,e^{\phi}w^2(\alpha,\beta)\partial_{\beta}^\alpha f)+(\frac{v_{1}}{2}\partial^{\alpha}\phi_{y}\partial_\beta f
		,e^{\phi}w^2(\alpha,\beta)\partial^\alpha_\beta f)
		\notag\\
		&&\leq C\eta\varepsilon^{a-1}\|\partial^\alpha_\beta f\|^2_{\sigma,w}+C_{\eta}q_{3}(\tau)\mathcal{H}_{2,l,q}(\tau).
	\end{eqnarray*}
	For the third term on the left hand side of \eqref{5.10}, it is straightforward to get
	\begin{eqnarray*}
		&&|(\partial^\alpha_{\beta-e_1}f_y,e^{\phi}w^2(\alpha,\beta)\partial^\alpha_\beta f)|
		\notag\\
		&&\leq \eta\varepsilon^{1-a}\|\langle v\rangle^{-\frac{1}{2}}w(\alpha,\beta)\partial^\alpha_{\beta-e_1}f_y\|^2
		+C_\eta\varepsilon^{a-1}\|\langle v\rangle^2\langle v\rangle^{-\frac{3}{2}}w(\alpha,\beta) \partial_{e_1}\partial^\alpha_{\beta-e_1}f\|^2
		\notag\\
		&&\leq C\eta\varepsilon^{1-a}\|w(\alpha+1,\beta-e_1)\partial^\alpha_{\beta-e_1}f_y\|^2_\sigma
		+C_\eta\varepsilon^{a-1}\|\langle v\rangle^2w(\alpha,\beta) \partial^\alpha_{\beta-e_1}f\|^2_\sigma
		\notag\\
		&&\leq C\eta\varepsilon^{1-a}\|\partial^\alpha_{\beta-e_1}f_y\|^2_{\sigma,w(\alpha+1,\beta-e_1)}
		+C_\eta\varepsilon^{a-1}\| \partial^\alpha_{\beta-e_1}f\|^2_{\sigma,w(\alpha,\beta-e_1)}.
	\end{eqnarray*}
	Here we have used the facts that \eqref{4.45a}, \eqref{3.16}, $\partial^\alpha_\beta=\partial_{e_1}\partial^\alpha_{\beta-e_1}$ and 
	$$
	\langle v\rangle^2\langle v\rangle^{2(l-|\alpha|-|\beta|)}=\langle v\rangle^{2(l-|\alpha|-|\beta-e_1|)}.
	$$
	For the fourth term on the left hand side of \eqref{5.10}, we use \eqref{4.45a}, \eqref{3.16}, \eqref{3.14} and \eqref{4.60} to get
	\begin{eqnarray*}
		&&|(\partial^\alpha[\phi_{y}\partial_{\beta-e_1}f],e^{\phi}w^2(\alpha,\beta)\partial^\alpha_\beta f)|
		\notag\\
		&&\leq C\sum_{\alpha_{1}\leq \alpha}\|\partial^{\alpha_{1}}\phi_{y}\|_{L_y^\infty}\|\langle v\rangle^{\frac{1}{2}}w(\alpha,\beta)\partial_{\beta-e_1}\partial^{\alpha-\alpha_{1}}f\|
		\|\langle v\rangle^{-\frac{1}{2}}w(\alpha,\beta)\partial^\alpha_\beta f\|
		\notag\\
		&&\leq \eta\varepsilon^{a-1}\|\langle v\rangle^{-\frac{1}{2}}w(\alpha,\beta)\partial^\alpha_\beta f\|^2+C_\eta q_{3}(\tau)\|\langle v\rangle^{\frac{1}{2}}w(\alpha,\beta)\partial_{\beta-e_1}\partial^{\alpha-\alpha_{1}}f\|^2
		\notag\\
		&&\leq C\eta\varepsilon^{a-1}\|\partial^\alpha_\beta f\|^2_{\sigma,w}+C_{\eta}q_{3}(\tau)\mathcal{H}_{2,l,q}(\tau).
	\end{eqnarray*}
	Using the similar arguments as \eqref{7.10c}, the fifth term on the left hand side of \eqref{5.10}  can be dominated by
	\begin{equation*}
		C\eta\varepsilon^{a-1}\|\partial_{\beta}^\alpha f\|_{\sigma,w}^{2}+C_{\eta}q_{3}(\tau)\mathcal{H}_{2,l,q}(\tau).
	\end{equation*}
	For the last term on the left hand side of \eqref{5.10}, we deduce from \eqref{7.5} and \eqref{4.45a} that
	\begin{multline*}
		-\varepsilon^{a-1}(\partial^\alpha_\beta\mathcal{L} f,e^{\phi}w^2(\alpha,\beta)\partial^\alpha_\beta f)\\
		\geq \varepsilon^{a-1}\big(c\|\partial^\alpha_\beta f\|^2_{\sigma,w}-\eta\sum_{|\beta_1|=|\beta|}\|\partial^\alpha_{\beta_1} f\|_{\sigma,w}^2
		-C_\eta\sum_{|\beta_1|<|\beta|}\|\partial^\alpha_{\beta_1}f\|_{\sigma,w}^2\big).
	\end{multline*}
	
	Now we go to bound the terms on the right hand side of \eqref{5.10}.
	The first term on the right hand side of \eqref{5.10} can be directly obtained from \eqref{7.9} and \eqref{7.18}.
	In view of \eqref{1.10}, Lemma \ref{lem7.2}, \eqref{3.21}, \eqref{3.22}, \eqref{7.25aa}, \eqref{7.24}, \eqref{7.25} and \eqref{5.26b}, we can claim that the other terms on the right hand side of \eqref{5.10} are bounded by
	\begin{eqnarray*}
		&&C\eta\varepsilon^{a-1}\|\partial_{\beta}^\alpha f\|_{\sigma,w}^{2}
		+C_\eta\varepsilon^{1-a}(\|\partial^\alpha(\widetilde {u}_{y},\widetilde{\theta}_y)\|^2
		+\|\partial^\alpha f_y\|_\sigma^2)
		\notag\\
		&&\hspace{1cm}+C_{\eta}k^{\frac{1}{12}}\varepsilon^{\frac{3}{5}-\frac{2}{5}a}\mathcal{D}_{2,l,q}(\tau)
		+C_{\eta}\varepsilon^{\frac{7}{5}+\frac{1}{15}a}(\delta+\varepsilon^{a}\tau)^{-\frac{4}{3}}.
	\end{eqnarray*}
	For $|\alpha|+|\beta|\leq2$ with $|\beta|\geq1$ and any $\eta>0$, we conclude form the above estimates and \eqref{5.10} that
	\begin{eqnarray}
		\label{5.12}
		\frac{1}{2}\frac{d}{d\tau}&&\|e^{\frac{\phi}{2}}\partial^\alpha_\beta f\|_w^2
		+q_{2}q_{3}(\tau)\|\langle v\rangle^{\frac{1}{2}}e^{\frac{\phi}{2}}\partial^\alpha_\beta f\|^2_w
		+c\varepsilon^{a-1}\|\partial^\alpha_\beta f\|^2_{\sigma,w}
		\notag\\
		\leq&& C\eta\varepsilon^{a-1}\sum_{|\beta_1|=|\beta|}\|\partial^\alpha_{\beta_1}f\|_{\sigma,w}^2+ C\eta\varepsilon^{1-a}\| \partial^\alpha_{\beta-e_1}f_{y}\|^2_{\sigma,w}
		\notag\\
		&&+C_\eta\varepsilon^{a-1}\sum_{|\beta_1|<|\beta|}\|\partial^\alpha_{\beta_1}f\|_{\sigma,w}^2
		+C_\eta\varepsilon^{1-a}(\|\partial^\alpha(\widetilde {u}_{y},\widetilde{\theta}_y)\|^2+\|\partial^\alpha f_y\|_{\sigma}^2)
		\notag\\
		&&+C_{\eta}(\eta_{0}+k^{\frac{1}{12}}\varepsilon^{\frac{3}{5}-\frac{2}{5}a})\mathcal{D}_{2,l,q}(\tau)\notag\\
		&&+C_{\eta}\varepsilon^{\frac{7}{5}+\frac{1}{15}a}(\delta+\varepsilon^{a}\tau)^{-\frac{4}{3}}+
		C_{\eta}q_{3}(\tau)\mathcal{H}_{2,l,q}(\tau).
	\end{eqnarray}
	Since the coefficient of the first term in the third line is large, we can use the induction on $|\beta|$ in \eqref{5.12} to cancel this term
	and then take $\eta>0$ small enough. For this, by the suitable linear combinations of \eqref{5.12} for all the cases that $|\alpha|+|\beta|\leq2$ with $|\beta|\geq1$,
	then integrating the resulting equation with respect to $\tau$ and using \eqref{3.20} as well as \eqref{3.21}, we prove that \eqref{5.13} holds.
	This completes the proof of Lemma  \ref{lem5.3}.
\end{proof}
Based on Lemma \ref{lem5.2} and Lemma \ref{lem5.3}, we are able to obtain all the weighted energy estimates.
\begin{lemma}
	It holds that
	\begin{eqnarray}
		\label{5.14}
		&&\sum_{|\alpha|\leq 1}\|\partial^\alpha f(\tau)\|^{2}_{w}
		+\varepsilon^{2(1-a)}\sum_{|\alpha|=2}\|\partial^\alpha f(\tau)\|^{2}_{w}
		+\sum_{|\alpha|+|\beta|\leq 2,|\beta|\geq1}\|\partial^\alpha_\beta f(\tau)\|^{2}_{w}
		\notag\\
		&&\hspace{0.5cm}
		+c\varepsilon^{a-1}\int^{\tau}_{0}\big\{\sum_{|\alpha|\leq 1}\|\partial^\alpha f(s)\|^2_{\sigma,w}
		+\varepsilon^{2(1-a)}\sum_{|\alpha|=2}\|\partial^\alpha f(s)\|^2_{\sigma,w}\big\}\,ds
		\notag\\
		&&\hspace{0.5cm}+c \varepsilon^{a-1}\int^{\tau}_{0}\sum_{|\alpha|
			+|\beta|\leq 2,|\beta|\geq1}\|\partial^\alpha_\beta f(s)\|^2_{\sigma,w}\,ds
		+q_{2}\int^{\tau}_{0}q_{3}(s)\mathcal{H}_{2,l,q}(s)\,ds
		\notag\\
		&&\leq C\varepsilon^{2(1-a)}\sum_{|\alpha|=2}\|\partial^\alpha(\widetilde{\rho},\widetilde{u},\widetilde{\theta})(\tau)\|^2
		+Ck^{\frac{1}{3}}\varepsilon^{\frac{6}{5}-\frac{4}{5}a}+C\int^{\tau}_{0}\mathcal{D}_{2}(s)\,ds
		\notag\\
		&&\hspace{0.5cm}
		+C(\eta_{0}+k^{\frac{1}{12}}\varepsilon^{\frac{3}{5}-\frac{2}{5}a})\int^{\tau}_{0}\mathcal{D}_{2,l,q}(s)\,ds
		+C\int^{\tau}_{0}q_{3}(s)\mathcal{H}_{2,l,q}(s)\,ds.
	\end{eqnarray}
\end{lemma}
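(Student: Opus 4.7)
The plan is to assemble \eqref{5.14} as a linear combination of the already-established weighted estimates \eqref{5.9} from Lemma \ref{lem5.2} and \eqref{5.13} from Lemma \ref{lem5.3}. Concretely, I would multiply \eqref{5.13} by a small $\varepsilon$- and $k$-independent constant $\lambda_0 > 0$ and add the result to \eqref{5.9}. The three families of instantaneous weighted norms $\sum_{|\alpha|\leq 1}\|\partial^\alpha f(\tau)\|_w^2$, $\varepsilon^{2(1-a)}\sum_{|\alpha|=2}\|\partial^\alpha f(\tau)\|_w^2$ and $\sum_{|\alpha|+|\beta|\leq 2,|\beta|\geq 1}\|\partial^\alpha_\beta f(\tau)\|_w^2$ combine without modification to give the instantaneous LHS of \eqref{5.14}. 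The three families of quartic contributions $q_2\int_0^\tau q_3 \|\langle v\rangle^{1/2}\cdot\|_w^2\,ds$ package, by the very definition \eqref{4.60} of $\mathcal{H}_{2,l,q}$, into $q_2\int_0^\tau q_3(s)\mathcal{H}_{2,l,q}(s)\,ds$, up to an overall constant renaming $\lambda_0 q_2\mapsto q_2$.

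The main absorption on the right-hand side is as follows. The $\varepsilon^{a-1}$-weighted dissipation terms on the RHS of \eqref{5.13} have precisely the same shape as the dissipation already on the LHS of \eqref{5.9}, and are therefore moved to the LHS for $\lambda_0$ sufficiently small. The term $C\varepsilon^{1-a}\int_0^\tau \sum_{1\leq|\alpha|\leq 2}\|\partial^\alpha(\widetilde{u},\widetilde{\theta})(s)\|^2\,ds$ appearing on the RHS of \eqref{5.13} is, by the definition \eqref{3.21a} of $\mathcal{D}_2$, bounded by $C\int_0^\tau \mathcal{D}_2(s)\,ds$, and so merges with the corresponding contribution already on the RHS of \eqref{5.9}. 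The remaining pieces—the boundary term $C\varepsilon^{2(1-a)}\sum_{|\alpha|=2}\|\partial^\alpha(\widetilde\rho,\widetilde u,\widetilde\theta)(\tau)\|^2$, the initial-data bound $Ck^{1/3}\varepsilon^{6/5-4a/5}$, the small-parameter integral $C(\eta_0+k^{1/12}\varepsilon^{3/5-2a/5})\int_0^\tau\mathcal{D}_{2,l,q}(s)\,ds$, and the quartic RHS contribution $C\int_0^\tau q_3(s)\mathcal{H}_{2,l,q}(s)\,ds$—appear on both inequalities in compatible forms and combine directly.

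There is no genuine analytical difficulty at this stage; the lemma is a bookkeeping consolidation of the weighted estimates already derived. The only subtle point worth flagging is that the constant $\lambda_0$ in the linear combination must be chosen independent of $|\beta|$. This is automatic because the induction over $|\beta|$ that handles the trouble term $-\eta\sum_{|\beta_1|=|\beta|}\|\partial^\alpha_{\beta_1}f\|^2_{\sigma,w}$ from Lemma \ref{lem7.5} has already been executed within the proof of Lemma \ref{lem5.3}, so \eqref{5.13} is stated with a $|\beta|$-uniform constant. Finally, I note that the residual $\int_0^\tau q_3(s)\mathcal{H}_{2,l,q}(s)\,ds$ is deliberately kept on the RHS of \eqref{5.14}; its absorption against the LHS quartic term—requiring $q_2$ large compared to $C$ and hence exploiting the prescription $q_2 = k^{-1/12}$ from Theorem \ref{thm2.1}—is reserved for the final global closure carried out in Section \ref{sec.8}.
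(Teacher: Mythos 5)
Your proposal is correct and coincides with what the paper does: the lemma is stated as an immediate consequence of combining the weighted estimates \eqref{5.9} of Lemma \ref{lem5.2} and \eqref{5.13} of Lemma \ref{lem5.3}, with the $\varepsilon^{a-1}$-weighted dissipation on the right of \eqref{5.13} absorbed by the left of \eqref{5.9} via a small coefficient, the fluid term controlled by $\int_0^\tau\mathcal{D}_{2}(s)\,ds$ through \eqref{3.21a}, and the quartic terms packaged into $q_2\int_0^\tau q_3(s)\mathcal{H}_{2,l,q}(s)\,ds$ by \eqref{4.60}. Your closing remark that the residual $C\int_0^\tau q_3\mathcal{H}_{2,l,q}\,ds$ is only absorbed later using $q_2=k^{-1/12}$ in Section \ref{sec.8} is also exactly how the paper proceeds.
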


\section{Global existence and convergence rate}\label{sec.8}
In this section, we  give the proof of the main result Theorem \ref{thm2.1}  stated in Section \ref{sec.2.1} using the a priori estimates obtained in previous Sections \ref{sec.5}-\ref{sec.7}. In the meantime, we also conclude the proof of Theorem \ref{thm.rs} which is a rough version of Theorem \ref{thm2.1}. 

\medskip
\noindent{\it Proof of Theorem \ref{thm2.1} and Theorem \ref{thm.rs}.}
By a suitable linear combination of \eqref{4.1b} and \eqref{5.14}, there exists a constant $C_{3}>0$ such that
\begin{eqnarray}
	\label{6.1}
	&&\mathcal{E}_{2,l,q}(\tau)+\int^{\tau}_{0}\|\sqrt{\bar{u}_{1y}}(\widetilde{\rho},\widetilde{u}_{1},\widetilde{\theta})(s)\|^{2}\,ds\notag\\
	&&\quad+q_{2}\int^{\tau}_{0}q_{3}(s)\mathcal{H}_{2,l,q}(s)\,ds
	+\int^{\tau}_{0}\mathcal{D}_{2,l,q}(s)\,ds
	\notag\\
	&&\leq C_{3}k^{\frac{1}{3}}\varepsilon^{\frac{6}{5}-\frac{4}{5}a}
	+C_{3}(\eta_{0}+k^{\frac{1}{12}}\varepsilon^{\frac{3}{5}-\frac{2}{5}a}+k^{\frac{1}{12}}\varepsilon^{\frac{3}{5}a-\frac{2}{5}})\int^{\tau}_{0}
	\mathcal{D}_{2,l,q}(s)\,ds\notag\\
	&&\quad+C_{3}\int^{\tau}_{0}q_{3}(s)\mathcal{H}_{2,l,q}(s)\,ds.
\end{eqnarray}
Here $\mathcal{E}_{2,l,q}(\tau)$, $\mathcal{D}_{2,l,q}(\tau)$, $q_{3}(\tau)$ and $\mathcal{H}_{2,l,q}(\tau)$ are given by \eqref{3.17}, \eqref{3.18}, \eqref{3.14} and \eqref{4.60}, respectively. At the moment,  one has to require that the second term on the right hand side of \eqref{6.1} should be absorbed  by the left hand side. Therefore, this leads us to impose
\begin{equation}
	\label{6.2}
	\frac{3}{5}-\frac{2}{5}a\geq 0 \quad \mbox{and} \quad \frac{3}{5}a-\frac{2}{5}\geq0,
	\quad \mbox{that is} \quad \frac{2}{3}\leq a\leq\frac{3}{2}.
\end{equation}
We should point out that the restriction  condition \eqref{6.2} has been used in previous Sections \ref{sec.4}-\ref{sec.7}.
In the meantime we also have used \eqref{4.25a},  \eqref{4.68b}, \eqref{5.15a} and \eqref{6.2}, that is
\begin{equation}
	\label{6.3b}
	\max\{4-5b,\frac{2}{3},\frac{21-10b}{24}\}\leq a\leq1.
\end{equation}
Due to \eqref{6.3b} and \eqref{4.94}, we need to require that
\begin{equation}
	\label{6.4b}
	4-5b\leq 1 \quad \mbox{and}\quad  b\leq1, \quad \mbox{that is}\quad \frac{3}{5}\leq b\leq1.
\end{equation}
That is the reason why we let $b\in[\frac{3}{5},1]$ in \eqref{1.4B} which we start with.
Hence, by \eqref{6.3b} and \eqref{6.4b}, we need to require \eqref{3.2a}
for the choice of the parameter $a$ in the scaling transformation \eqref{3.1} at the beginning.

Choosing $q_{2}=k^{-\frac{1}{12}}$ and letting $\eta_0$ and $k$ be small enough such that
$C_{3}< \frac{1}{4}q_{2}$ and $C_{3}(\eta_{0}+2k^{\frac{1}{12}})<\frac{1}{2}$, we thus have from \eqref{6.1} that
\begin{eqnarray}
	\label{6.3}
	&&\mathcal{E}_{2,l,q}(\tau)+\int^{\tau}_{0}\|\sqrt{\bar{u}_{1y}}(\widetilde{\rho},\widetilde{u}_{1},\widetilde{\theta})(s)\|^{2}\,ds\notag\\
	&&\quad+\frac{1}{2}q_{2}\int^{\tau}_{0}q_{3}(s)\mathcal{H}_{2,l,q}(s)\,ds
	+\frac{1}{2}\int^{\tau}_{0}\mathcal{D}_{2,l,q}(s)\,ds
	\notag\\
	&&\leq C_{3}k^{\frac{1}{3}}\varepsilon^{\frac{6}{5}-\frac{4}{5}a}<
	\frac{1}{2}k^{\frac{1}{6}}\varepsilon^{\frac{6}{5}-\frac{4}{5}a}.
\end{eqnarray}
Then \eqref{6.3} implies that 
\begin{equation}
	\label{6.4}
	\sup_{0\leq\tau\leq\tau_{1}}\mathcal{E}_{2,l,q}(\tau)+\int^{\tau_{1}}_{0}\mathcal{D}_{2,l,q}(\tau)\,d\tau
	<k^{\frac{1}{6}}\varepsilon^{\frac{6}{5}-\frac{4}{5}a},
\end{equation}
for $\tau_{1}\in(0,+\infty)$, which is strictly stronger than \eqref{3.22}. Thus the a priori assumption \eqref{3.22} can be closed by \eqref{6.4}.
The existence and uniqueness of the local solutions to the VPL system near a global Maxwellian
were well known in the torus or the whole space, cf. \cite{Guo-JAMS,Strain-Zhu}.
By a straightforward modification of the argument there, one can obtain the local existence of the solutions to the VPL system \eqref{prefeq} with \eqref{prefeqid1} and \eqref{prefeqid2}
under the assumptions in Theorem \ref{thm2.1}.
For brevity, we omit details of the proof. By the uniform a priori estimates \eqref{3.22} and the local existence of the solution,
the standard continuity argument gives the existence and uniqueness of global solutions
to the VPL system \eqref{prefeq} with \eqref{prefeqid1} and \eqref{prefeqid2}. 
Moreover, the desired estimate \eqref{2.5} holds true. This then completes the proof of Theorem \ref{thm2.1}.

Let's be back to the original Cauchy problem \eqref{1.1} in variables $(t,x)$ with \eqref{1.4B}, \eqref{1.1id}, \eqref{1.4a} and  \eqref{1.5a}  and continue to justify the uniform convergence rate in $\varepsilon$ as in \eqref{thm.rate} which consequently implies \eqref{thm.con}. In fact, by using the embedding inequality,  \eqref{3.17} and \eqref{6.4}, we obtain
\begin{equation}
	\label{6.5}
	\sup_{0\leq\tau\leq+\infty}\{\|(\widetilde{\rho},\widetilde{u},\widetilde{\theta},\widetilde{\phi})(\tau)\|_{L_{y}^{\infty}}
	+\|f(\tau)\|_{L^{\infty}_{y}L^{2}_{v}}\}\leq Ck^{\frac{1}{12}}\varepsilon^{\frac{3}{5}-\frac{2}{5}a}.
\end{equation}
From \eqref{4.27A} and \eqref{3.21}, it is straightforward to check that
\begin{equation}
	\label{6.6}
	\sup_{0\leq\tau\leq+\infty}\|\frac{\overline{G}(\tau)}{\sqrt{\mu}}\|_{L_{y}^{\infty}L_{v}^{2}}
	\leq Ck^{\frac{1}{12}}\varepsilon^{\frac{3}{5}-\frac{2}{5}a}.
\end{equation}
Recalling that $F=M+\overline{G}+\sqrt{\mu}f$, we thereupon conclude from \eqref{3.24}, \eqref{6.5} and \eqref{6.6} that
\begin{eqnarray}
	\label{6.7}
	&&\sup_{0\leq\tau\leq+\infty}\|(\frac{F-M_{[\bar{\rho},\bar{u},\bar{\theta}]}}{\sqrt{\mu}})(\tau)\|_{L_{x}^{\infty}L_{v}^{2}}
	\notag\\
	&&\leq \sup_{0\leq\tau\leq+\infty}\big\{\|(\frac{M-M_{[\bar{\rho},\bar{u},\bar{\theta}]}}{\sqrt{\mu}})(\tau)\|_{L_{y}^{\infty}L_{v}^{2}}
	+\|f(\tau)\|_{L_{y}^{\infty}L_{v}^{2}}+\|\frac{\overline{G}(\tau)}{\sqrt{\mu}}\|_{L_{y}^{\infty}L_{v}^{2}}
	\big\}
	\notag\\
	&&\leq Ck^{\frac{1}{12}}\varepsilon^{\frac{3}{5}-\frac{2}{5}a}.
\end{eqnarray}
In addition, by Lemma \ref{lem7.1} and \eqref{3.21}, we have for $t>0$ that
\begin{equation}
	\label{6.8}
	\|(\bar{\rho},\bar{u},\bar{\theta},\bar{\phi})(t,x)-(\rho^{R},u^{R},\theta^{R},\phi^{R})(\frac{x}{t})\|_{L^{\infty}_{x}}\leq
	C \frac{1}{k}t^{-1}\varepsilon^{\frac{3}{5}-\frac{2}{5}a}\{\ln(1+t)+|\ln \varepsilon|\}.
\end{equation}
Hence, for any given constant $\ell>0$ and all $t\in[\ell,+\infty)$, by using \eqref{6.7} and \eqref{6.8},
there exists a constant $C_{\ell,k}>0$, independent of $\varepsilon$,   such that
\begin{eqnarray*}
	&&\|\frac{F(t,x,v)-M_{[\rho^{R},u^{R},\theta^{R}](x/t)}(v)}{\sqrt{\mu}}\|_{L_{x}^{\infty}L_{v}^{2}}\\
	&&\leq \|\frac{F-M_{[\bar{\rho},\bar{u},\bar{\theta}]}}{\sqrt{\mu}}\|_{L_{x}^{\infty}L_{v}^{2}}
	+\|\frac{M_{[\bar{\rho},\bar{u},\bar{\theta}]}-M_{[\rho^{R},u^{R},\theta^{R}]}}{\sqrt{\mu}}\|_{L_{x}^{\infty}L_{v}^{2}}
	\notag\\
	&&\leq C_{\ell,k}\varepsilon^{\frac{3}{5}-\frac{2}{5}a}|\ln\varepsilon|,
\end{eqnarray*}
and
\begin{equation*}
	\|\phi(t,x)-\phi^{R}(\frac{x}{t})\|_{L^{\infty}_{x}}
	\leq C_{\ell,k}\varepsilon^{\frac{3}{5}-\frac{2}{5}a}|\ln\varepsilon|.
\end{equation*}
This gives \eqref{thm.rate} and hence
ends the proof of Theorem \ref{thm.rs}.\qed

\section{Appendix}\label{sec.9}

In this appendix, we give the details of deriving the estimate \eqref{4.7} for completeness.

\medskip
\noindent{\it Proof of \eqref{4.7}:}
First note that $\theta=\frac{3}{2}\frac{1}{2\pi e}\rho^{\frac{2}{3}}\exp(S)$ due to \eqref{4.1}, then one gets
$$
\theta_{\rho}:=\partial_{\rho}\theta=\frac{1}{2\pi e}\rho^{-\frac{1}{3}}\exp(S)=\frac{2}{3}\frac{\theta}{\rho},\quad \theta_{S}=\frac{3}{2}\frac{1}{2\pi e}\rho^{\frac{2}{3}}\exp(S)=\theta.
$$
By employing this and \eqref{4.4}, we have from an elementary computation that
\begin{eqnarray*}
	\eta_{\bar{\rho}}(\tau,y)&&=-\frac{\rho}{\bar{\rho}}\bar{\theta}(S-\bar{S})-\frac{5}{3\bar{\rho}}\bar{\theta}(\rho-\bar{\rho}),
	\quad \eta_{\bar{u}}(\tau,y)=-\frac{3}{2}\rho(u-\bar{u}),
	\notag\\
	\eta_{\bar{S}}(\tau,y)&&=-\frac{3}{2}\rho\bar{\theta}(S-\bar{S})-\bar{\theta}(\rho-\bar{\rho}),
	\quad \bar{S}_{\tau}=-\frac{2}{3}\frac{\bar{\rho}_{\tau}}{\bar{\rho}}+\frac{\bar{\theta}_{\tau}}{\bar{\theta}}.
\end{eqnarray*}
Using the scaling transformation \eqref{3.1} and the fact that $\bar{u}_{2}=\bar{u}_{3}=0$, we deduce from \eqref{1.28} that
\begin{eqnarray*}
	\bar{\rho}_{\tau}&=&-(\bar{\rho}\bar{u}_{1y}+\bar{\rho}_{y}\bar{u}_{1}),\\ 
	\bar{u}_{1\tau}&=&-\frac{2}{3}
	\frac{\bar{\theta}}{\bar{\rho}}\bar{\rho}_{y}-\frac{2}{3}\bar{\theta}_{y}-\bar{u}_{1}\bar{u}_{1y}-\bar{\phi}_{y},\\ \bar{\theta}_{\tau}&=&-\bar{u}_{1}\bar{\theta}_{y}
	-\frac{2}{3}\bar{\theta}\bar{u}_{1y}.
\end{eqnarray*}
With the help of the above estimates, we have from the straightforward computations that
\begin{eqnarray*}
	&&\nabla_{[\bar{\rho},\bar{u},\bar{S}]}\eta(\tau,y)\cdot(\bar{\rho},\bar{u},\bar{S})_{\tau}\\
	&&=\eta_{\bar{\rho}}(\tau,y)\bar{\rho}_{\tau}
	+\eta_{\bar{u}}(\tau,y)\bar{u}_{\tau}+\eta_{\bar{S}}(\tau,y)\bar{S}_{\tau}
	\notag\\
	&&=\frac{\bar{\theta}}{\bar{\rho}}(\bar{\rho}-\rho)\bar{\rho}_{\tau}-\frac{3}{2}\rho(u-\bar{u})\bar{u}_{\tau}
	-\{\frac{3}{2}\rho(S-\bar{S})+(\rho-\bar{\rho})\}\bar{\theta}_{\tau}
	\notag\\
	&&=\frac{5}{3}(\rho-\bar{\rho})\bar{\theta}\bar{u}_{1y}+\frac{\bar{\theta}}{\bar{\rho}}\rho u_{1}\bar{\rho}_{y}
	+\rho u_{1}\bar{\theta}_{y}-\bar{u}_{1}\bar{\theta}\bar{\rho}_{y}+\frac{3}{2}\rho\bar{u}_{1}(u-\bar{u})\bar{u}_{y}
	\notag\\
	&&\quad-\bar{\rho}\bar{u}_{1}\bar{\theta}_{y}
	-\frac{3}{2}\rho(\bar{S}-S)(\bar{u}_{1}\bar{\theta}_{y}+\frac{2}{3}\bar{\theta}\bar{u}_{1y})
	+\frac{3}{2}\rho(u_{1}-\bar{u}_{1})\bar{\phi}_{y}.
\end{eqnarray*}
Similarly,
\begin{eqnarray*}
	&&\nabla_{[\bar{\rho},\bar{u},\bar{S}]}q(\tau,y)\cdot(\bar{\rho},\bar{u},\bar{S})_{y}\\
	&&=q_{\bar{\rho}}(\tau,y)\bar{\rho}_{y}
	+q_{\bar{u}}(\tau,y)\bar{u}_{y}+q_{\bar{S}}(\tau,y)\bar{S}_{y}
	\notag\\
	&&=\bar{\rho}\bar{\theta}\bar{u}_{1y}-\rho\theta\bar{u}_{1y}
	-\frac{\bar{\theta}}{\bar{\rho}}\rho u_{1}\bar{\rho}_{y}
	-\rho u_{1}\bar{\theta}_{y}+\bar{u}_{1}\bar{\theta}\bar{\rho}_{y}
	\notag\\
	&&\quad-\frac{3}{2}\rho u_{1}(u-\bar{u})\bar{u}_{y}+\bar{\rho}\bar{u}_{1}\bar{\theta}_{y}
	+\frac{3}{2}\rho u_{1}(\bar{S}-S)\bar{\theta}_{y}.
\end{eqnarray*}
Combining the above two estimates, it holds that
\begin{eqnarray*}
	I_{13}:=&&-\nabla_{[\bar{\rho},\bar{u},\bar{S}]}\eta(\tau,y)\cdot(\bar{\rho},\bar{u},\bar{S})_{\tau}
	-\nabla_{[\bar{\rho},\bar{u},\bar{S}]}q(\tau,y)\cdot(\bar{\rho},\bar{u},\bar{S})_{y}\\
	&&+\frac{3}{2}\rho(u_{1}-\bar{u}_{1})\bar{\phi}_{y}
	\notag\\
	=&&\frac{3}{2}\rho\bar{u}_{1y}(u_{1}-\bar{u}_{1})^{2}+\frac{2}{3}\rho\bar{\theta}\bar{u}_{1y}\Psi(\frac{\bar{\rho}}{\rho})
	+\rho\bar{\theta}\bar{u}_{1y}\Psi(\frac{\theta}{\bar{\theta}})\\
	&&+\frac{3}{2}\rho\bar{\theta}_{y}(u_{1}-\bar{u}_{1})(\frac{2}{3}\ln\frac{\bar{\rho}}{\rho}+\ln\frac{\theta}{\bar{\theta}})
	\notag\\
	=&&(\frac{3}{2}\rho-\mathbf{a})\bar{u}_{1y}(u_{1}-\bar{u}_{1})^{2}+\frac{2}{3}\rho\bar{\theta}\bar{u}_{1y}\Psi(\frac{\bar{\rho}}{\rho})
	+\rho\bar{\theta}\bar{u}_{1y}\Psi(\frac{\theta}{\bar{\theta}})\notag\\
	&&+\mathbf{a}\bar{u}_{1y}(u_{1}-\bar{u}_{1})^{2}
	+\frac{3}{2}\rho\bar{\theta}_{y}(u_{1}-\bar{u}_{1})(\frac{2}{3}\ln\frac{\bar{\rho}}{\rho}+\ln\frac{\theta}{\bar{\theta}})\\
	&&+\mathbf{b}(\frac{2}{3}\ln\frac{\bar{\rho}}{\rho}+\ln\frac{\theta}{\bar{\theta}})^{2}-
	\mathbf{b}(\frac{2}{3}\ln\frac{\bar{\rho}}{\rho}+\ln\frac{\theta}{\bar{\theta}})^{2},
\end{eqnarray*}
where $\mathbf{a}>0$ and $\mathbf{b}>0$ be determined later and $\Psi(s)=s-\ln s-1$.
To bound $I_{13}$, we let  $4\bar{u}_{1y}\mathbf{a}\mathbf{b}=(\frac{3}{2}\rho\bar{\theta}_{y})^{2}$ and $0<\mathbf{a}<\frac{3}{2}\rho$
to get
\begin{eqnarray*}
	I_{13}
	\geq&&(\frac{3}{2}\rho-\mathbf{a})\bar{u}_{1y}\widetilde{u}_{1}^{2}+\frac{2}{3}\rho\bar{\theta}\bar{u}_{1y}\Psi(\frac{\bar{\rho}}{\rho})
	+\rho\bar{\theta}\bar{u}_{1y}\Psi(\frac{\theta}{\bar{\theta}})
	-\mathbf{b}(\frac{2}{3}\ln\frac{\bar{\rho}}{\rho}+\ln\frac{\theta}{\bar{\theta}})^{2}
	\notag\\
	\geq&& c_{3}\bar{u}_{1y}(\widetilde{\rho}^{2}+\widetilde{u}_{1}^{2}+\widetilde{\theta}^{2}),
\end{eqnarray*}
for some constant $c_{3}>0$, where we have claimed that
\begin{equation}
	\label{7.32}
	\frac{2}{3}\rho\bar{\theta}\bar{u}_{1y}\Psi(\frac{\bar{\rho}}{\rho})
	+\rho\bar{\theta}\bar{u}_{1y}\Psi(\frac{\theta}{\bar{\theta}})
	-\mathbf{b}(\frac{2}{3}\ln\frac{\bar{\rho}}{\rho}+\ln\frac{\theta}{\bar{\theta}})^{2}\geq c\bar{u}_{1y}\{(\frac{\bar{\rho}}{\rho}-1)^{2}+(\frac{\theta}{\bar{\theta}}-1)^{2}\}.
\end{equation}
In what follows, we prove the inequality \eqref{7.32}. Letting $\mathbf{b}=\frac{1}{5}\rho\bar{\theta}\bar{u}_{1y}>0$ and using
$$
\bar{\theta}_{y}=\frac{1}{2\pi e}e^{S_{*}}\bar{\rho}^{-\frac{1}{3}}\frac{1}{\sqrt{\frac{5}{3}k\bar{\rho}^{-\frac{4}{3}}e^{S_{*}}
		+\bar{\rho}^{-1}\big(\frac{d}{d\rho}\rho^{-1}_{\mathrm{e}}(\bar{\rho})\big)}}
\bar{u}_{1y}
$$
in Lemma \ref{lem7.1} as well as  $4\bar{u}_{1y}\mathbf{a}\mathbf{b}=(\frac{3}{2}\rho\bar{\theta}_{y})^{2}$, one has
$$
\mathbf{a}=\frac{45\rho}{16\bar{\theta}}\{\frac{1}{2\pi e}e^{S_{*}}\bar{\rho}^{-\frac{1}{3}}
\frac{1}{\sqrt{\frac{5}{3}\frac{1}{2\pi e}\bar{\rho}^{-\frac{4}{3}}e^{S_{*}}
		+\bar{\rho}^{-1}\big(\frac{d}{d\rho}\rho^{-1}_{\mathrm{e}}(\bar{\rho})\big)}}\}^{2}<\frac{3}{2}\rho,
$$
where we used $\frac{1}{2\pi e}\exp(S_{*})=\frac{2}{3}\rho_{-}^{-2/3}\theta_{-}$, $\bar{\rho}^{-1}\big(\frac{d}{d\rho}\rho^{-1}_{\mathrm{e}}(\bar{\rho})\big)
=\bar{\rho}^{-1}(\rho'_{\mathrm{e}}(\bar{\phi}))^{-1}>0$ and \eqref{2.2}.
We define the following function
$$
f(x_{1},x_{2})=\frac{2}{3}\Psi(x_{1})+\Psi(x_{2})-\frac{1}{5}(\frac{2}{3}\ln x_{1}+\ln x_{2})^{2}.
$$
It is easy to check that
\begin{eqnarray*}
	&&f(1,1)=f_{x_{1}}(1,1)=f_{x_{2}}(1,1)=0,\quad f_{x_{1}x_{1}}(1,1)=\frac{22}{45},
	\notag\\
	&&f_{x_{1}x_{2}}(1,1)=f_{x_{2}x_{1}}(1,1)=-\frac{4}{15},\quad f_{x_{2}x_{2}}(1,1)=\frac{3}{5}.
\end{eqnarray*}
Note that the Hessian matrix $\det\nabla^{2}f(1,1)=\frac{2}{9}$ by the above facts,
and hence $f(x_{1},x_{2})$ is convex function near the point $(1,1)$. We thereupon conclude that
$$
f(x_{1},x_{2})\geq c[(x_{1}-1)^{2}+(x_{2}-1)^{2}].
$$
Note that $(\frac{\bar{\rho}}{\rho},\frac{\theta}{\bar{\theta}})$ is close enough  to the state $(1,1)$ in terms of \eqref{3.22}.
Taking $x_{1}=\frac{\bar{\rho}}{\rho}$ and $x_{2}=\frac{\theta}{\bar{\theta}}$ in the above inequality,
then we can obtain the expected estimate \eqref{7.32}, and hence completes the proof of \eqref{4.7}.\qed

\medskip

\noindent {\bf Acknowledgment:}\, The research of Renjun Duan was partially supported by the General Research Fund (Project No.~14301719) from RGC of Hong Kong and a Direct Grant from CUHK. The research of Hongjun Yu was supported by the GDUPS 2017 and the NNSFC Grant 11371151. Dongcheng Yang would like to thank Department of Mathematics, CUHK  for hosting his visit in the period 2020-2023.

\medskip

\noindent{\bf Conflict of Interest:} The authors declare that they have no conflict of interest.


\normalsize

\end{document}